\documentclass[12pt]{amsart}
\usepackage{amssymb,amsmath,amsfonts,amscd}
\usepackage{times,graphicx,epsfig}
\usepackage{soul}

\theoremstyle{plain}
\newtheorem{theorem}{Theorem}[section]
\newtheorem{stheorem}{Theorem}
\newtheorem{proposition}[theorem]{Proposition}
\newtheorem{definition}[theorem]{Definition}

\newtheorem{remark}[theorem]{Remark}
\newtheorem{lemma}[theorem]{Lemma}

\def\eps{\epsilon}

%%to be used only with TeXShop%
\usepackage{pdfsync}
%%%%%%%%%%%%%

%\usepackage{showkeys}
\usepackage{flexisym}

\newcommand{\E}{\mathbb E}

\newcommand{\rn}[1]{{\mathbb R}^{#1}}
\newcommand{\R}{\mathbb R}

\newcommand{\G}{\mathbb G}

\newcommand{\Z}{\mathbb Z}
\newcommand{\N}{\mathbb N}

\newcommand{\supp}{\mathrm{supp}\;}

%Gruppi e fibrati orizzontali

%Vettori e covettori
\newcommand{\cov}[1]{{\bigwedge\nolimits^{#1}{\mfrak h}}}

\newcommand{\covH}[1]{{\bigwedge\nolimits^{#1}{\mfrak h}}}

\newcommand{\he}[1]{{\mathbb H}^{#1}}

%Operatori

\newcommand{\scal}[2]{\langle {#1} , {#2}\rangle}

\newcommand{\Scal}[2]{\langle {#1} \vert {#2}\rangle}
\newcommand{\scalp}[3]{\langle {#1} , {#2}\rangle_{#3}}

\newcommand{\res}{\mathop{\hbox{\vrule height 7pt width .5pt depth 0pt
\vrule height .5pt width 6pt depth 0pt}}\nolimits}

\newcommand{\ccheck}{{\vphantom i}^{\mathrm v}\!\,}
\newcommand{\mc}{\mathcal }

\newcommand{\mfrak}{\mathfrak}

\newcommand{\WO}[3]{\mathop{W}\limits^\circ{}\!^{{#1},{#2}}
(#3)}

\newcommand{\BL}[2]{BL^{{#1},{#2}}
(\he n)}

\newcommand{\BLh}[3]{BL^{{#1},{#2}}
(\he n, E_0^{#3})}

\usepackage[usenames]{color}

\begin{document}

%\today

\bigskip

\title[Continuous primitives for higher degree differential forms]{Continuous primitives for higher degree differential forms in Euclidean spaces, Heisenberg groups and applications}

\author[Annalisa Baldi, Bruno Franchi, Pierre Pansu]{
Annalisa Baldi\\
Bruno Franchi\\ Pierre Pansu
}

\begin{abstract}
It is shown that higher degree exact differential forms on compact Riemannian $n$-manifolds possess continuous primitives whose uniform norm is controlled by their $L^n$ norm. A contact sub-Riemannian analogue is proven, with differential forms replaced with Rumin differential forms. 

\bigskip

Il est d\'emontr\'e que les formes diff\'erentielles exactes de degr\'e sup\'erieur sur des vari\'et\'es riemanniennes compactes de dimension $n$ poss\`edent des primitives continues dont la norme uniforme est contr\^ol\'ee par leur norme $L^n$. Un analogue dans des vari\'et\'es sous-riemanniennes de contact est prouv\'e, les formes diff\'erentielles \'etant remplac\'ees par des formes diff\'erentielles de Rumin.

\end{abstract}
 
\keywords{Heisenberg groups, differential forms, Poincar\'e inequalities, Beppo Levi spaces}

\subjclass{58A10,  35R03, 26D15,  
46E35}

\maketitle

\tableofcontents

\section{Introduction}

\subsection{The problem}

Poincar\'e's inequality in the flat $n$-torus $\R^n/\Z^n$ states that a function $u$ whose gradient belongs to $L^p$, $p<n$, is itself in $L^q$, up to an additive constant $c_u$, provided that $\frac{1}{p}-\frac{1}{q}=\frac{1}{n}$. Moreover
$$
\|u-c_u\|_q \le C_{p,q}\|\nabla u\|_p.
$$

Such an estimate fails to hold when $p=n$ and $q=\infty$ (see e.g.  \cite{wheeden}, p.484 and  \cite{trudinger}). However, when one passes to higher degree differential forms, the limiting case holds. This discovery, at least for top degree differential forms, is due to J. Bourgain and H. Brezis, \cite{BB2003}. The statement takes the following form: \emph{let $\omega$ be an exact $n$-form on the $n$-torus, which belongs to $L^n$, then there exists a bounded differential $(n-1)$-form $\phi$ on the torus such that $d\phi=\omega$ and}
$$
\|\phi\|_\infty \le C \|\omega\|_n.
$$

Furthermore, Bourgain and Brezis show that the primitive can be taken to be \emph{continuous}, with a similar estimate. 

The global version on $\R^n$ itself is even slightly stronger: the primitive can be taken to be continuous and to \emph{tend to zero at infinity}, with a similar estimate, as shown by L.~Moonens \& T.~Picon \cite{Moonens-Picon}, and T.~De Pauw \& M.~Torres \cite{DePaw-Torres}.

In this paper, we shall extend Moonens and Picon's result to differential forms of all degrees $>1$ on $\R^n$, and prove a version for the Heisenberg group $\he{n}$, with Rumin's complex $d_c$ replacing de Rham's differential $d$. A special instance, in general
Carnot groups for top degree Rumin forms, is considered by A.~Baldi \& F.~Montefalcone \cite{BM2}.

In the next statement, $\mc C_0$ denotes spaces of continuous differential forms that tend to zero at infinity.

\begin{stheorem}\label{main}
\begin{itemize}
The following global Poincar\'e inequalities hold.
\item[i)] (Euclidean spaces:) if $\,2\le  h \le n$, then a $d$-exact $h$-form $\omega\in L^n(\R^n, \bigwedge^h)$
admits a primitive $\phi\in \mc C_0(\R^n,\bigwedge^{h-1})$ such that
$$
\|\phi\|_{\mc C_0(\R^n,\bigwedge^{h-1})} \le C \|\omega\|_{L^n(\R^n, \bigwedge^h)};
$$

\item[ii)] (Heisenberg groups:) if $\,2\le  h \le { 2n+1}$, $h\neq n+1$, then a $d_c$-exact Rumin $h$-form $\omega\in L^{2n+2}(\he n, E_0^h)$
admits a primitive $\phi\in \mc C_0(\he n, E_0^{h-1})$ such that
$$
\|\phi\|_{\mc C_0(\he n, E_0^{h-1})} \le C \|\omega\|_{L^{2n+2}(\he n, E_0^h)};
$$
\item[iii)] (Heisenberg groups:) a $d_c$-exact Rumin $(n+1)$-form $\omega\in$ \break $L^{n+1}(\he n, E_0^{n+1})$ admits a primitive $\phi\in \mc C_0(\he n, E_0^{n})$ such that
$$
\|\phi\|_{\mc C_0(\he n, E_0^{n})} \le C \|\omega\|_{L^{n+1}(\he n, E_0^{n+1})}.
$$
\end{itemize}

\end{stheorem}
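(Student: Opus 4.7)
\smallskip\noindent
\textbf{Proof plan.}
The three cases will be handled through a single scheme combining (a) a local Bourgain--Brezis estimate on balls and (b) a globalization procedure producing decay at infinity. For the local step, I would rely on the homotopy operators $K$ for de Rham's complex on $\R^n$ and for Rumin's complex on $\he n$ that the authors have constructed in earlier work. The operator $K$ produces, on a ball of fixed scale, a primitive $K\omega$ satisfying Sobolev-type bounds that fall just short of the desired $L^\infty$ bound at the critical exponent ($p=n$ in case (i), $p=Q=2n+2$ in case (ii), $p=Q/2=n+1$ in case (iii)). The critical endpoint is then reached by a Bourgain--Brezis gauge-fixing correction: one subtracts from $K\omega$ a carefully chosen exact form so that the resulting primitive is continuous with norm controlled by $\|\omega\|_{L^p}$. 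The exponent $p=Q/2$ in (iii) is dictated by the fact that $d_c\colon E_0^n\to E_0^{n+1}$ is second order, so its homotopy inverse behaves like a second-order fractional integral and the relevant Bourgain--Brezis argument must be carried out in its second-order form.

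For the global step, I would tile $\R^n$ or $\he n$ with balls of unit scale, fix a subordinate smooth partition of unity $\{\chi_j\}$, and decompose $\omega=\sum_j\chi_j\omega$. Each piece is exact only up to a lower-order error: in the Euclidean case one writes $\chi_j\omega=d(\chi_j K\omega)-d\chi_j\wedge K\omega$, with the appropriate Rumin analogue in the Heisenberg case, and treats the commutator term as a new form of strictly lower order to which the scheme can be iterated or absorbed. The local estimate applied to each $\chi_j\omega$ yields a continuous primitive $\phi_j$ with uniform $L^\infty$ bound, and $\phi=\sum_j\phi_j$ is well defined because only finitely many balls meet any compact set. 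Decay at infinity follows from the fact that $\omega\in L^p$ implies $\sum_{j:\,\mrm{far}}\|\chi_j\omega\|_{L^p}^p\to 0$, so the tail contributions to $\phi$ are uniformly small; this is the Moonens--Picon / De~Pauw--Torres philosophy transported to higher degrees and to the Carnot setting.

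The principal obstacle will be case (iii). Producing the second-order endpoint estimate on $\he n$ requires both a suitable second-order homotopy for $d_c$ on Heisenberg balls and, more critically, a Bourgain--Brezis endpoint inequality adapted to a second-order Carnot--Carath\'eodory operator; this is not a direct translation of the Euclidean first-order argument and must be established separately, presumably by iterating a duality/gauge-choice argument twice and carefully matching the resulting Sobolev and BMO-type gains. In addition, the commutator of $d_c$ with a cut-off in the middle Rumin degree contains a vertical derivative of $\chi_j$, which interacts awkwardly with the parabolic scaling of $\he n$; verifying that these commutators still generate only manageable, summable errors is where the most delicate bookkeeping will live. Once the second-order endpoint is in place, cases (i) and (ii) should follow by the same template with the easier first-order tools, case (ii) being the direct Carnot analogue of case (i) and both resting on the authors' prior first-order Poincar\'e estimates for Rumin forms.
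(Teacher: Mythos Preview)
Your plan takes a different route from the paper and, as written, has a genuine gap.

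The paper does not patch local estimates into a global one. It runs a \emph{global duality argument}: set $E=\mc C_0$, $F=L^Q$ (in degree $n+1$, $F=(\BLh{1}{Q/(Q-1)}{n})^*$), and view $A=d_c:\mc D(A)\subset E\to F$ as a closed densely defined operator. Then $E^*$ is the space of currents of finite mass and $A^*$ is identified with $\partial_c$. By the closed-range principle (Lemma~\ref{predualpoin}), the desired estimate for $A$ follows once $A^*(\mc D(A^*))$ is closed; this in turn is obtained from the $L^1$-Poincar\'e inequalities of~\cite{BFP3} (every closed $L^1$ form has an $L^{Q/(Q-1)}$, resp.\ $\BLh{1}{Q/(Q-1)}{n}$, primitive), upgraded from $L^1$ forms to finite-mass currents by mollification (Proposition~\ref{AG}, Theorems~\ref{poincareglobal currents uno}, \ref{poincareglobal currents bis}). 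A separate computation (Lemmas~\ref{parabolic}, \ref{parabolic bis}) shows closed $L^Q$ forms lie in $(\ker A^*)^\perp$. No tiling or partition of unity is used; the interior version (Theorem~\ref{main local}) is deduced \emph{afterwards} from the global one.

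The gap in your scheme is the globalization. Writing $\chi_j\omega=d(\chi_jK\omega)-d\chi_j\wedge K\omega$ and summing does not yield a primitive of $\omega$: it merely replaces $\omega$ by $\sum_j d\chi_j\wedge K\omega$, a closed form of the \emph{same} degree $h$ whose integrability at the critical exponent has not improved (the derivative gained by $K$ is spent on $d\chi_j$). Nothing makes the iteration terminate or converge; in the Rumin case with $h-1=n$ the commutator is second order and the bookkeeping is even worse. You also assume a local endpoint estimate on balls as input, but the paper supplies no independent such estimate; its local result is a consequence of the global one. The analytic ingredient your plan is missing is the $L^1$-side Poincar\'e inequality for currents of finite mass, together with (in degree $n+1$) the new Gagliardo--Nirenberg inequality (Theorem~\ref{bfp1}) controlling the full Beppo Levi norm of a coclosed $n$-form by $\|d_c u\|_{L^1}$, which is what makes the duality close.
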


Note the game with critical exponents: they are determined by the degree of volume growth, which is equal to $n$ for Euclidean space $\R^n$, but $Q=2n+2$ for Heisenberg group. Moreover, since Rumin's differential $d_c$ from $n$-forms to $(n+1)$-forms has order $2$, the critical exponent turns out to be $Q/2 =n+1$ instead of $Q$ in this special degree.

\medskip

We shall also prove a local version, from which Poincar\'e inequalities can be deduced on arbitrary compact Riemannian or compact contact subRiemannian manifolds. Detailed statements are given in Section \ref{Poincare}. In particular, we recover Bourgain-Brezis' original result on the torus. 

\medskip

\textbf{Question}. Can one improve Theorem \ref{main} by establishing a uniform modulus of continuity for primitives?

\subsection{The method}

Let us focus on Heisenberg groups (the Euclidean case follows the same lines, with less technicalities). A Poincar\'e inequality providing a bounded measurable primitive can be found in \cite{BFP5}. However one cannot take the results there as a black-box. 

Following Bourgain-Brezis, the proof of Theorem \ref{main} goes by duality. The general principle, that a closed unbounded operator with dense domain between Banach spaces has a closed range if and only if its adjoint does, is applied to Rumin's differential $d_c:\mc C_0 \to L^{p}$, $p=Q$ or $Q/2$. The adjoint operator maps $L^{p'}$ to the space $\mc M$ of \emph{Rumin currents} of finite mass. The $L^1$-Poincar\'e inequalities proven in \cite{BFP3} provide $L^{p'}$ primitives $\phi$ to $L^1$-Rumin forms $\omega$, $d_c\phi=\omega$, and 
$$
\|\phi\|_{p'}\le C\|\omega\|_1.
$$
By approximation of Rumin currents of finite mass with currents defined by $L^1$-Rumin forms, such an estimate can be upgraded to provide $L^{p'}$ primitives $\phi$ to $L^1$-Rumin currents $T$ of the form $T=\partial_c S$, with
$$
\|\phi\|_{p'}\le C\|T\|_{\mc M}.
$$
This is the required closed range estimate for the adjoint operator of $d_c$.

The procedure needs be modified in degree $n+1$. Indeed, there is a technical point when identifying the domain of the adjoint: when handling the second order operator $d_c$, cut-off arguments require extra control on first derivatives. Hence the space $L^{p'}$ is replaced with the Beppo Levi space (see e.g. \cite{DL}) $\BL{1}{Q'}$ of Rumin forms whose coefficients have their first derivatives in $L^{Q'}$. One therefore needs an $L^1$ Poincar\'e inequality of the form
$$
\|\phi\|_{\BL{1}{Q'}}\le C\|\omega\|_1.
$$
Unfortunately, one cannot use \cite{BFP3} as a black box. Instead, one follows the strategy of \cite{BFP3}, based on the Gagliardo-Nirenberg inequalities of \cite{BFP1}.

The local version builds upon Theorem \ref{main} and the local $L^\infty-L^p$-Poincar\'e inequality of \cite{BFP5}. 

\subsection{Organization of the paper}

As already stressed above, all proofs are given for $\he n$ since the case of $\mathbb R^n$ just requires
less technicalities. After fixing notations in Section \ref{HeisenbergGroups}, a vademecum on Rumin's complex is provided in Section \ref{rumin}, focussing on Leibniz' formula and commutation properties of Rumin's Laplacian. Detailed statements of Theorem \ref{main}, its local version and its avatar for compact manifolds are given in section \ref{Poincare}. The duality principle is formulated in Section \ref{BB duality}. The spaces of currents to which this principle is applied are defined in Section \ref{currents and measures}, whose main result is a Poincar\'e inequality: solving $d_c$ in $\mc C_0$ for a current datum. The proof of Theorem \ref{main} appears in Section \ref{proof global}, except for degree $(n+1)$-forms. In this special degree, a new Gagliardo-Nirenberg inequality is proven as Theorem \ref{bfp1}; a Poincar\'e inequality, first for $L^1$ forms and then for currents of finite mass follows in Section \ref{caso 2}. A density result needed to identify the domain of an adjoint operator is proven in Section \ref{more}. Section \ref{e0n} contains the proof of Theorem \ref{main} in the middle degree $n+1$. Section \ref{local} contains the proof of the local version of Theorem \ref{main}, and Section \ref{compact} draws consequences for bouded geometry manifolds. 

\section{Heisenberg groups}
\label{HeisenbergGroups}

\subsection{Definitions and preliminary results}\label{preliminary}
We denote by $\he n$  the $n$-dimensional Heisenberg
group, identified with $\rn {2n+1}$ through exponential
coordinates. A point $p\in \he n$ is denoted by
$p=(x,y,t)$, with both $x,y\in\rn{n}$
and $t\in\R$.
   If $p$ and
$p'\in \he n$,   the group operation is defined by
\begin{equation*}
p\cdot p'=(x+x', y+y', t+t' + \frac12 \sum_{j=1}^n(x_j y_{j}'- y_{j} x_{j}')).
\end{equation*}
The unit element of $\he n$ is the origin, that will be denoted by $e$.
For
any $q\in\he n$, the {\it (left) translation} $\tau_q:\he n\to\he n$ is defined
as $$ p\mapsto\tau_q p:=q\cdot p. $$
The Lebesgue measure in $\mathbb R^{2n+1}$ 
is a Haar measure in $\he n$ (i.e., a bi-invariant measure on the group). It is denoted by $\mc L^{2n+1}$, and when we need to stress the integration variable $p$, will be denoted also by $dp$.
%in $\mathbb R^{2n+1}$
%is a bi-invariant measure on the group

For a general review on Heisenberg groups and their properties, we
refer to \cite{Stein}, \cite{GromovCC}, \cite{BLU}, and to \cite{VarSalCou}.
We limit ourselves to fix some notations, following \cite{FSSC_advances}.

The Heisenberg group $\he n$ can be endowed with the homogeneous
norm (Cygan-Kor\'anyi norm)
\begin{equation}\label{gauge}
\varrho (p)=\big(|p'|^4+ 16\, p_{2n+1}^2\big)^{1/4},
\end{equation}
and we define the gauge distance (a true distance, see
 \cite{Stein}, p.\,638), that is left invariant i.e. $d(\tau_q p,\tau_q p')=d(p,p' )$ for all $p,p'\in\he n$)
as
\begin{equation}\label{def_distance}
d(p,q):=\varrho ({p^{-1}\cdot q}).
\end{equation}
Finally, the balls for the metric $d$ are le so-called Kor\'anyi balls
\begin{equation}\label{koranyi}
B(p,r):=\{q \in  \he n; \; d(p,q)< r\}.
\end{equation}

Notice that Kor\'anyi balls are smooth convex sets.

A straightforward computation shows that there exists $c_0>1$ such that
\begin{equation}\label{c0}
c_0^{-2} |p| \le \rho(p) \le |p|^{1/2},
\end{equation}
provided $p$ is close to $e$.
In particular, for $r>0$ small, if we denote by $B_{\mathrm{Euc}}(e,r)$ the Euclidean ball centred at $e$ of radius $r$,
\begin{equation}\label{balls inclusion}
B_{\mathrm{Euc}}(e,r^2) \subset B(e,r) \subset B_{\mathrm{Euc}}(e, c_0^2 r).
\end{equation}

    We denote by  $\mfrak h$
 the Lie algebra of the left
invariant vector fields of $\he n$. The standard basis of $\mfrak
h$ is given, for $i=1,\dots,n$,  by
\begin{equation*}
X_i := \partial_{x_i}-\frac12 y_i \partial_{t},\quad Y_i :=
\partial_{y_i}+\frac12 x_i \partial_{t},\quad T :=
\partial_{t}.
\end{equation*}
The only non-trivial commutation  relations are $
[X_{i},Y_{i}] = T $, for $i=1,\dots,n.$ 
The {\it horizontal subspace}  $\mfrak h_1$ is the subspace of
$\mfrak h$ spanned by $X_1,\dots,X_n$ and $Y_1,\dots,Y_n$:
${ \mfrak h_1:=\mathrm{span}\,\left\{X_1,\dots,X_n,Y_1,\dots,Y_n\right\}\,.}$

\noindent Coherently, from now on, we refer to $X_1,\dots,X_n,Y_1,\dots,Y_n$
(identified with first order differential operators) as
the {\it horizontal derivatives}. Denoting  by $\mfrak h_2$ the linear span of $T$, the $2$-step
stratification of $\mfrak h$ is expressed by
\begin{equation*}
\mfrak h=\mfrak h_1\oplus \mfrak h_2.
\end{equation*}

\bigskip

The stratification of the Lie algebra $\mfrak h$ induces a family of non-isotropic dilations 
$\delta_\lambda: \he n\to\he n$, $\lambda>0$ as follows: if
$p=(x,y,t)\in \he n$, then
\begin{equation}\label{dilations}
\delta_\lambda (x,y,t) = (\lambda x, \lambda y, \lambda^2 t).
\end{equation}
Notice that the gauge norm \eqref{gauge} is positively $\delta_\lambda$-homogenous,
so that the Lebesgue measure of the ball $B(x,r)$ is $r^{2n+2}$ up to a geometric constant
(the Lebesgue measure of $B(e,1)$).

The constant 
$$
Q:=2n+2,
$$
is said the {\sl homogeneous dimension}  of $\he n$
with respect to $\delta_\lambda$, $\lambda>0$,
It is well known that the topological dimension of $\he n$ is $2n+1$,
since as a smooth manifold it coincides with $\R^{2n+1}$, whereas
the Hausdorff dimension of $(\he n,d)$ is $Q$.

The vector space $ \mfrak h$  can be
endowed with an inner product, indicated by
$\scalp{\cdot}{\cdot}{} $,  making
    $X_1,\dots,X_n$,  $Y_1,\dots,Y_n$ and $ T$ orthonormal.
    
Throughout this paper, we write also
\begin{equation}\label{campi W}
W_i:=X_i, \quad W_{i+n}:= Y_i\quad { \mathrm{and} } \quad W_{2n+1}:= T, \quad \text
{for }i =1, \dots, n.
\end{equation}

As in \cite{folland_stein},
we also adopt the following multi-index notation for higher-order derivatives. If $I =
(i_1,\dots,i_{2n+1})$ is a multi--index, we set  
\begin{equation}\label{WI}
W^I=W_1^{i_1}\cdots
W_{2n}^{i_{2n}}\;T^{i_{2n+1}}.
\end{equation}
By the Poincar\'e--Birkhoff--Witt theorem, the differential operators $W^I$ form a basis for the algebra of left invariant
differential operators in $\he n$. 
Furthermore, we set 
$$
|I|:=i_1+\cdots +i_{2n}+i_{2n+1}$$
 the order of the differential operator
$W^I$, and   
$$d(I):=i_1+\cdots +i_{2n}+2i_{2n+1}$$
 its degree of homogeneity
with respect to group dilations.

Let  $U\subset \he n$ be an open set. We shall use the following classical notations:
$\mc E(U)$ is the space of all smooth function on $U$,
and $\mc D(U)$  is the space of all compactly supported smooth functions on $U$,
endowed with the standard topologies (see e.g. \cite{treves}).
The spaces $\mc E'(U)$ and $\mc D'(U)$ are their
dual spaces of distributions.
 We recall that 
$\mc E'(U)$ is the class of compactly supported distributions.

%
%{\color{green}For sake of completeness, we collect now a few remark on measure and distributions
%on $\he n$.
%
%
%\begin{proposition} We have:
%\begin{itemize} 
%\item[i)] The Lebesgue measure $\mc L^{2n+1}$  in $\mathbb R^{2n+1}$
%is a bi-invariant measure on the group;
%%\item[ii)] a function $f\in L^1_{\mathrm{loc}}(\he n)$ is identified with
%%the distribution $f\, \mc L^{2n+1}$;
%\item[ii)] let  $U\subset \he n$ be an open set. If $f\in \mc D'(U)$, then the distributional derivative  of $f$ along a left-invariant 
%vector field $W$ is defined by $\Scal{Wf}{\phi}= - \Scal{f}{W\phi}$ for any test function $\phi\in\mc D(U)$.
%
%\end{itemize}
%
%\end{proposition}
%
%If $\phi\in \mc E(\he n,E_0^h)$ and $k\in\mathbb N$, then we set
%$$
%|W^k\phi|:= \sum_{d(I)=k} |W^I\phi|.
%$$
%}

Let $1\le p \le\infty$
and $m\in\mathbb N$,  $W^{m,p}_{\mathrm{Euc}}(U)$
denotes the usual Sobolev space. 

We recall also the notion of 
(integer order) Folland-Stein Sobolev space (for a general presentation, see e.g. \cite{folland} and \cite{folland_stein}).

\begin{definition}\label{integer spaces} If $U\subset \he n$ is an open set, $1\le p \le\infty$
and $m\in\mathbb N$, then
the space $W^{m,p}(U)$
is the space of all $u\in L^p(U)$ such that, with the notation of \eqref{WI},
$$
W^Iu\in L^p(U)\quad\mbox{for all multi-indices $I$ with } d(I) \le m,
$$
endowed with the natural norm  
$$\|\,u\|_{W^{k,p}(U)}
:= \sum_{d(I) \le m} \|W^I u\|_{L^p(U)}.
$$

\end{definition}

{  Folland-Stein Sobolev spaces enjoy the following properties akin to those
of the usual Euclidean Sobolev spaces (see \cite{folland}, and, e.g. \cite{FSSC_houston}).}

\begin{theorem}\label{folland stein varia}If $U\subset \he n$  is an open set, $1\le p \le  \infty$, and $k\in\mathbb N$, then
\begin{itemize}
\item[i)] $ W^{k,p}(U)$ is a Banach space.
\end{itemize}
In addition, if $p<\infty$,
\begin{itemize}
\item[ii)] $ W^{k,p}(U)\cap C^\infty (U)$ is dense in $ W^{k,p}(U)$;
\item[iii)] if $U=\he n$, then $\mc D(\he n)$ is dense in $ W^{k,p}(U)$;
\item[iv)]  if $1<p<\infty$, then $W^{k,p}(U)$ is reflexive.
\item[v)]
$ W^{k,p}_{\mathrm{Euc, loc}}(U)\subset W^{k,p}(U)$, i.e. for any compact set
$K\subset U$ and for any $u\in W^{k,p}_{\mathrm{Euc, loc}}(U)$,
$$
\| u\|_{W^{k,p}(K)}\le C_K \| u\|_{ W^{k,p}_{\mathrm{Euc, loc}}(K)}.
$$
\item[vi)] $ W^{2k,p}(U)\subset W^{k,p}_{\mathrm{Euc, loc}}(U)$, 
 i.e. for any compact set
$K\subset U$ and for any $u\in W^{2k,p}(U)$,
$$
\| u\|_{W^{k,p}_{\mathrm{Euc}}(U)(K)}\le C_K \| u\|_{ W^{2k,p}(K)}.
$$

\end{itemize}

\end{theorem}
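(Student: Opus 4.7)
The plan is to treat the six claims in turn, leaning on standard Sobolev-space techniques adapted to the left-invariant framework of $\he n$. Items (i) and (iv) are functional-analytic routine, items (v) and (vi) reduce to explicit comparisons of horizontal and Euclidean derivatives, and items (ii) and (iii) are the substantive density steps.

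For (i), a Cauchy sequence $\{u_j\}$ in $W^{k,p}(U)$ produces $L^p$-limits $v_I$ of each $W^Iu_j$ with $d(I)\le k$; continuity of each $W^I$ on $\mc D'(U)$ forces $W^Iv_0 = v_I$ for $v_0 = \lim u_j$, giving completeness. For (iv), the linear isometry $J(u) = (W^Iu)_{d(I)\le k}$ embeds $W^{k,p}(U)$ as a closed subspace of the reflexive product $\prod_{d(I)\le k}L^p(U)$, so reflexivity is inherited. For (v), the formulas $X_i = \partial_{x_i} - \tfrac12 y_i\partial_t$, $Y_i = \partial_{y_i} + \tfrac12 x_i\partial_t$, $T = \partial_t$ show that each $W^I$ with $d(I)\le k$ is a Euclidean differential operator of order $|I|\le d(I)\le k$ with polynomial coefficients in $(x,y)$, bounded on any compact $K\subset U$. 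For (vi), invert these relations to get $\partial_{x_i} = X_i + \tfrac12 y_iT$, $\partial_{y_i} = Y_i - \tfrac12 x_iT$, and the key identity $\partial_t = T = [X_i,Y_i]$; hence each first-order Euclidean derivative is a combination of left-invariant operators of homogeneous degree at most $2$, and by induction each Euclidean operator of order $k$ is a combination of $W^I$ with $d(I)\le 2k$, with polynomial coefficients bounded on compacta.

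The heart of the proof is (ii), a Meyers--Serrin-type density argument. Fix an exhaustion $U_\nu\Subset U$, a subordinate smooth partition of unity $\{\chi_\nu\}$, and a smooth compactly supported approximate identity $\{\rho_\eps\}$ on $\he n$. Approximate each piece $\chi_\nu u$ by the left-group convolution $(\chi_\nu u)*\rho_{\eps_\nu}$, with $\eps_\nu$ so small that the supports remain inside $U$ and the $W^{k,p}$-error on the $\nu$-th piece is at most $\eps\,2^{-\nu}$; the decisive fact is that every left-invariant $W^I$ commutes with group convolution, so standard $L^p$-convergence of mollifications applies whenever $p<\infty$, and summing yields a smooth approximation within $\eps$ of $u$. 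For (iii), combine (ii) with truncation by a cutoff $\eta_R$ equal to $1$ on $B(e,R)$ and vanishing outside $B(e,2R)$, obtained by rescaling a fixed cutoff through $\delta_R$ so that $|W^I\eta_R|\le CR^{-d(I)}$ for $|I|\ge 1$; a Leibniz expansion of $W^I(\eta_R u) - \eta_R W^Iu$ produces terms either supported on the annulus $B(e,2R)\setminus B(e,R)$ or weighted by negative powers of $R$, all vanishing in $L^p$ as $R\to\infty$ by dominated convergence.

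The main obstacle is precisely (ii): Euclidean mollification is unsuitable because the polynomial coefficients of $W^I$ in Euclidean coordinates would generate commutator terms with the mollifier that are not controllable at the $W^{k,p}$ level. Left-group convolution bypasses this difficulty via the left-invariance of $W^I$, reducing the approximation to a scalar $L^p$-mollification statement.
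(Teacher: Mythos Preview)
The paper does not prove this theorem; it is recorded as background with the parenthetical ``see \cite{folland}, and, e.g.\ \cite{FSSC_houston}'' and no argument is given. Your self-contained sketch therefore goes well beyond what the paper provides, and your treatment of (i), (iv), (v), (vi), and the cut-off step (iii) follows the standard route and is correct.

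One genuine technical slip in (ii) should be fixed. You assert that ``every left-invariant $W^I$ commutes with group convolution'', meaning $W^I\big((\chi_\nu u)\ast\rho_{\eps_\nu}\big)=(W^I(\chi_\nu u))\ast\rho_{\eps_\nu}$. With the paper's convention $f\ast g(p)=\int f(q)\,g(q^{-1}p)\,dq$, a left-invariant operator passes through the \emph{right} factor only: $W^I(f\ast g)=f\ast(W^I g)$, whereas $(W^I f)\ast g=f\ast(\tilde W^I g)$ with $\tilde W^I$ the corresponding right-invariant operator. So the identity you need fails as written. The remedy is simply to mollify from the left: set $v_\eps:=\rho_\eps\ast(\chi_\nu u)$; then $W^I v_\eps=\rho_\eps\ast\big(W^I(\chi_\nu u)\big)$ and $L^p$-convergence to $W^I(\chi_\nu u)$ follows by the scalar mollification lemma, exactly as you intended. (This is also the convention the paper itself adopts whenever it regularizes, e.g.\ in the proofs of Theorem~\ref{embedding} and Lemma~\ref{approx bis}.) With this adjustment your Meyers--Serrin argument goes through unchanged.
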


\begin{theorem}\label{donne}[see \cite{folland}, Theorem 5.15] If $p>Q$, then 
$$
W^{1,p}(\he n) \subset L^\infty (\he n)
$$
algebraically and topologically.
\end{theorem}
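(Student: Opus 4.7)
The plan is to prove Theorem~\ref{donne} by a Morrey-type argument based on the fundamental solution of the sub-Laplacian. By Theorem~\ref{folland stein varia}(iii), $\mc D(\he n)$ is dense in $W^{1,p}(\he n)$, so it suffices to establish the a priori bound
$$
\|u\|_{L^\infty(\he n)} \le C\,\|u\|_{W^{1,p}(\he n)}
$$
for every $u\in\mc D(\he n)$; applying this estimate to Cauchy differences of an approximating sequence then produces, for an arbitrary $u\in W^{1,p}(\he n)$, a continuous bounded representative together with the continuity of the inclusion.

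Let $\Gamma$ denote the fundamental solution of the sub-Laplacian $\Delta_H := \sum_{i=1}^n(X_i^2+Y_i^2)$, so that $u(p) = (\Gamma*(-\Delta_H u))(p)$ for $u\in\mc D(\he n)$. Classical results of Folland give that $\Gamma$ is smooth on $\he n\setminus\{e\}$ and positively $\delta_\lambda$-homogeneous of degree $2-Q$; consequently its horizontal derivatives, both left- and right-invariant, are $(1-Q)$-homogeneous and satisfy $|W_j\Gamma(q)|\le C\,\varrho(q)^{1-Q}$ for $1\le j\le 2n$. Fix $p\in\he n$ and choose a cutoff $\chi\in\mc D(\he n)$ with $\chi\equiv 1$ on $B(p,1)$, $\mathrm{supp}\,\chi\subset B(p,2)$, and $|\nabla_H\chi|,|\Delta_H\chi|$ bounded independently of $p$ (by left-translating a fixed cutoff). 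From $u(p)=(\chi u)(p)=(\Gamma*(-\Delta_H(\chi u)))(p)$, expanding
$$
\Delta_H(\chi u)=\chi\Delta_H u + 2\langle\nabla_H\chi,\nabla_H u\rangle + u\Delta_H\chi
$$
and integrating by parts to push horizontal derivatives from $\chi u$ onto $\Gamma$, I expect to arrive at an identity
$$
u(p) = \int_{B(p,2)} K_1(p,q)\cdot\nabla_H u(q)\,dq + \int_{B(p,2)\setminus B(p,1)} K_2(p,q)\,u(q)\,dq,
$$
in which $|K_1(p,q)|\le C\,d(p,q)^{1-Q}$ and $K_2$ is uniformly bounded, being built from $\Gamma$ and its derivatives evaluated on the compact annulus $\overline{B(p,2)}\setminus B(p,1)$, safely away from the singularity of $\Gamma$.

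The hypothesis $p>Q$ gives $(Q-1)p'<Q$, hence, using $|B(e,r)|\simeq r^Q$,
$$
\int_{B(p,2)} |K_1(p,q)|^{p'}\,dq \le C\int_0^2 r^{(1-Q)p'+Q-1}\,dr < \infty,
$$
with a bound independent of $p$. H\"older's inequality applied to each term of the representation then yields $|u(p)|\le C(\|\nabla_H u\|_{L^p(\he n)}+\|u\|_{L^p(\he n)})\le C\,\|u\|_{W^{1,p}(\he n)}$, uniformly in $p\in\he n$. The one step requiring care, and what I regard as the principal technical obstacle, is the integration by parts leading to $K_1$: in $(\Gamma*f)(p)=\int\Gamma(p\,q^{-1})f(q)\,dq$ the dependence on $q$ enters through $q^{-1}$, so a left-invariant $X_i$ applied in the $q$-variable does not act as $X_i$ on $\Gamma$ but as its right-invariant counterpart $\widetilde{X}_i$. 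The size estimate on $K_1$ therefore relies on the $(1-Q)$-homogeneity of right-invariant horizontal derivatives of $\Gamma$, which is immediate from the $\delta_\lambda$-homogeneity of $\Gamma$ itself; once this bookkeeping is settled, the argument above is routine.
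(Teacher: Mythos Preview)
The paper does not prove this statement at all: Theorem~\ref{donne} is stated with a reference to \cite{folland}, Theorem~5.15, and no proof is given. It is treated as a standard background fact about Folland--Stein Sobolev spaces.

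Your argument is essentially the classical Morrey-type proof transplanted to the Heisenberg setting, and it is correct in outline. The reduction to $u\in\mc D(\he n)$ via density is fine; the representation via the fundamental solution of the sub-Laplacian, the cutoff localization, and the H\"older estimate using $(Q-1)p'<Q$ are all standard and correct. Your identification of the one delicate point---that integration by parts in the $q$-variable produces right-invariant derivatives of $\Gamma$ rather than left-invariant ones---is accurate, and your resolution (homogeneity of degree $1-Q$ persists regardless) is the right one. One could also bypass this bookkeeping entirely by writing $u=-\Delta_H u\ast\Gamma$ and moving a single derivative off $u$ to obtain $u=\sum_j (W_j u)\ast(W_j\Gamma)$ directly, which makes the kernel a kernel of type~$1$ in the language of Definition~\ref{type} and Proposition~\ref{kernel}; the local integrability then follows from $p>Q$ exactly as you wrote.
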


\begin{definition} If $U\subset \he n$ is open and if 
$\,1\le p<\infty$,
we denote  by $\WO{k}{p}{U}$
the completion of $\mc D(U)$ in $W^{k,p}(U)$.
\end{definition}
  \begin{remark}
If $U\subset \he n$ is bounded, by (iterated) Poincar\'e inequality (see e.g. \cite{jerison}), it follows that the norms
\begin{equation*}
\|u\|_{W^{k,p}(U)} \quad\mbox{and}\quad
\sum_{d(I)=k}\| W^I u\|_{ L^p(U)}
\end{equation*}
are equivalent on $\WO{k}{p}{U}$ when $1\le p<\infty$.
\end{remark}

\subsection{Convolution in Heisenberg groups}\label{sobolev kernels}

 If $f:\he n\to\mathbb R$, we set $\ccheck f(p) = f(p^{-1})$, and,
if $T\in \mc D'(\he n)$, then $\Scal{\ccheck T}{\phi} := \Scal{T}{\ccheck \phi}$
for all $\phi\in\mc D(\he n)$. Obviously, the map $T\to\ccheck T$ is
continuous from $\mc D'(\he n)$ to $\mc D'(\he n)$.

 Following e.g. \cite{folland_stein}, p. 15, we can define a group
convolution in $\he n$: if, for instance, $f\in\mc D(\he n)$ and
$g\in L^1_{\mathrm{loc}}(\he n)$, we set
\begin{equation}\label{group convolution}
f\ast g(p):=\int f(q)g(q^{-1}\cdot p)\,dq\quad\mbox{for $q\in \he n$}.
\end{equation}
We recall that, if, say, $g$ is a smooth function and $P$
is a left invariant differential operator, then
$$
P(f\ast g)= f\ast Pg.
$$
We also recall that the convolution is well defined
when $f,g\in\mc D'(\he n)$, provided at least one of them
has compact support.

In this case the following identities
hold:
\begin{equation}\label{convolutions var}
\Scal{f\ast g}{\Phi} = \Scal{g}{\ccheck f\ast\Phi}
\quad
\mbox{and}
\quad
\Scal{f\ast g}{\Phi} = \Scal{f}{\Phi\ast \ccheck g}
\end{equation}
 for any test function $\Phi$.
 
 \begin{definition}\label{type} A kernel of type $\mu$ is a 
homogeneous distribution of degree $\mu-Q$
(with respect to group dilations $\delta_r$),
that is smooth outside of the origin.

The convolution operator with a kernel of type $\mu$
is  called an operator of type $\mu$.
\end{definition}

\begin{proposition}\label{kernel}
Let $K\in\mc D'(\he n)$ be a kernel of type $\mu$.
\begin{itemize}
\item[i)] $\ccheck K$ is again a kernel of type $\mu$;
\item[ii)] $WK$ and $KW $ are associated with  kernels of type $\mu-1$ for
any horizontal derivative $W$;
\item[iii)]  If $\mu>0$, then $K\in L^1_{\mathrm{loc}}(\he n)$.
\end{itemize}
\end{proposition}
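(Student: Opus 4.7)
For each of the three statements, the key is the interplay between the homogeneity condition $K\circ\delta_r=r^{\mu-Q}K$ and the scaling properties of the basic operations on $\he n$, together with the fact that inversion, horizontal differentiation, and convolution all preserve smoothness away from the origin.

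For (i), I would use that $\delta_r$ is a Lie group automorphism, so $(\delta_r p)^{-1}=\delta_r(p^{-1})$; equivalently, $\ccheck{(\phi\circ\delta_r)}=(\ccheck\phi)\circ\delta_r$ for every test function $\phi$. Combining this with $\Scal{\ccheck K}{\phi}=\Scal{K}{\ccheck\phi}$ and the $\mu-Q$ homogeneity of $K$ directly yields the same homogeneity for $\ccheck K$. Smoothness of $\ccheck K$ off the origin is inherited from the smoothness of the inversion map on $\he n\setminus\{e\}$.

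For (ii), the essential fact is that every horizontal left-invariant vector field is $\delta_r$-homogeneous of degree one: $W(f\circ\delta_r)=r\,(Wf)\circ\delta_r$. Consequently, as a distribution, $WK$ has homogeneity $(\mu-Q)-1=(\mu-1)-Q$, and smoothness off the origin is preserved by differentiation. For $KW$, I would invoke the identity $(Wf)*K=f*(\tilde W K)$, where $\tilde W$ denotes the right-invariant vector field agreeing with $W$ at $e$; this follows from a direct change-of-variable computation exploiting bi-invariance of Haar measure on $\he n$. Since $\tilde W$ is also $\delta_r$-homogeneous of degree one, $\tilde W K$ is again homogeneous of degree $(\mu-1)-Q$ and smooth off the origin, hence a kernel of type $\mu-1$.

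For (iii), I would first observe that any distribution supported at $\{e\}$ is a finite linear combination of terms $W^I\delta_e$, each of which is homogeneous of degree $-Q-d(I)\le -Q<\mu-Q$. Under the assumption $\mu>0$, no such component can appear in $K$, so $K$ coincides, as a distribution, with the smooth function determined off the origin by its values on the compact unit sphere $\{\varrho=1\}$. Using smoothness on this sphere together with the homogeneity, one obtains $|K(p)|\le C\,\varrho(p)^{\mu-Q}$ for $p\neq e$. Passing to dilation-adapted polar coordinates, in which the Haar measure decomposes as $dp=r^{Q-1}\,dr\,d\sigma$, gives
$$
\int_{\varrho(p)\le 1}|K(p)|\,dp\le C\int_0^1 r^{\mu-1}\,dr<\infty,
$$
while $K$ is bounded on compact sets avoiding the origin. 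Thus $K\in L^1_{\mathrm{loc}}(\he n)$. The only genuinely nontrivial technical step is the identity $(Wf)*K=f*(\tilde W K)$ behind the $KW$ half of (ii); everything else reduces to routine scaling arguments.
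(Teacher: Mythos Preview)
Your argument is correct and follows the standard route (essentially the one in Folland--Stein). The paper itself does not supply a proof of this proposition; it is stated as a known fact from the theory of homogeneous distributions on stratified groups, so there is no ``paper's proof'' to compare against.

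Two minor remarks on presentation. In part (ii), your identification of the kernel of $KW$ with $\tilde W K$ (the right-invariant derivative) is the right mechanism; you might want to make explicit that right-invariant horizontal vector fields are also $\delta_r$-homogeneous of degree one because dilations are group automorphisms, so the homogeneity drop by one is immediate. In part (iii), the logical order is cleanest if you first observe that the smooth function on $\he n\setminus\{e\}$ defines an $L^1_{\mathrm{loc}}$ distribution $K_0$ via the polar-coordinate estimate, then note $K-K_0$ is supported at $e$ and homogeneous of degree $\mu-Q>-Q$, and finally invoke the classification of point-supported distributions to conclude $K=K_0$. Your write-up contains all these ingredients, just slightly interleaved.
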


\begin{theorem} \label{hls folland} Suppose $0<\alpha<Q$, and let
$K$ be a kernel of type $\alpha$. Then
\begin{itemize}
\item[i)] if $1<p<Q/\alpha$, and $1/q:= 1/p-\alpha/Q$, then { 
there exists $C=C(p,\alpha)>0$ such that}
$$
\| u\ast K\|_{L^q(\he n)} \le C \| u\|_{L^p(\he n)}
$$
for all $u\in L^p(\he n)$.
\item[ii)] If $p\ge Q/\alpha$ and $B, B' \subset \he n$ are fixed balls with $B\subset B'$, then
for any $q\ge p$ {  there exists $C=C(B,B',p,q,\alpha)>0$}
$$
\| u\ast K\|_{L^q(B')} \le C \| u\|_{L^p({ B})}
$$
for all $u\in L^p(\he n)$ with $\supp u\subset B$.
\item[iii)] If $K$ is a kernel of type 0 and $1<p<\infty$, then
{  there exists $C=C(p)>0$ such that}
 $$
\| u\ast K\|_{L^p(\he n)} \le C \| u\|_{L^p(\he n)}.
$$
\end{itemize}

\end{theorem}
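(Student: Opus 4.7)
The plan is to prove the three parts separately using the harmonic-analytic machinery available on the space of homogeneous type $(\he n, d, \mc L^{2n+1})$.

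For part (i), the smoothness of $K$ off the origin combined with $\delta_\lambda$-homogeneity of degree $\alpha-Q$ yields the pointwise bound $|K(p)|\le C\rho(p)^{\alpha-Q}$, so $|u*K|$ is controlled by a Riesz-like potential. I would then run Hedberg's truncation argument: split the convolution integral at a radius $r$ (to be chosen pointwise), control the inner piece over $B(p,r)$ by $C r^{\alpha}\,\mc M u(p)$ via a dyadic annular decomposition, where $\mc M$ is the Hardy-Littlewood maximal operator associated to Kor\'anyi balls, and bound the outer piece by $C r^{\alpha-Q/p'}\|u\|_p$ using H\"older, which requires $\alpha<Q/p'$, equivalent to $p<Q/\alpha$. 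Optimising over $r$ gives $|u*K(p)|\le C\,\mc M u(p)^{p/q}\|u\|_p^{1-p/q}$, and then the Hardy-Littlewood maximal theorem (which holds on $\he n$ since it is doubling, and requires $p>1$) yields the claimed $L^q$ bound.

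For part (ii), the bounded support of $u$ allows a reduction to (i) combined with local integrability of $K$. When $q$ is comparable to $p$, I would lower the integrability exponent via H\"older on the support, namely $\|u\|_{L^{\tilde p}(\he n)}\le |B|^{1/\tilde p-1/p}\|u\|_{L^p(B)}$ for a well-chosen $\tilde p<Q/\alpha$ whose associated critical exponent $\tilde q$ (given by $1/\tilde q=1/\tilde p-\alpha/Q$) satisfies $\tilde q\ge q$; applying (i) and a final H\"older estimate on the bounded set $B'$ closes the argument. For the remaining range of $q$, and in particular for $q=\infty$ when $p>Q/\alpha$, I would invoke Young's inequality directly with the truncation of $K$ to the bounded set $B\cdot (B')^{-1}$, using that $K\in L^r_{\mathrm{loc}}$ for a suitable range of $r$ thanks to Proposition \ref{kernel} (iii).

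Part (iii) is the most delicate step and will be the main obstacle, since a kernel of type $0$ is only a principal-value distribution. The strategy is the Calder\'on-Zygmund theorem on the space of homogeneous type $(\he n,d,\mc L^{2n+1})$: $K$ satisfies the size bound $|K(p)|\le C\rho(p)^{-Q}$ and, thanks to smoothness off the origin together with homogeneity, both the cancellation property (essential for defining the principal value) and the H\"ormander regularity estimates. $L^2$-boundedness of $u\mapsto u*K$ is the crux; it is established via the group Fourier / Plancherel machinery on $\he n$, or equivalently through a Cotlar--Stein decomposition of the kernel into dyadic annular pieces with controlled $L^2$ norms. Once $L^2$-boundedness is in hand, the Calder\'on-Zygmund decomposition in $(\he n,d,\mc L^{2n+1})$ produces weak-$(1,1)$ boundedness, Marcinkiewicz interpolation extends boundedness to $1<p\le 2$, and duality closes the range $2\le p<\infty$.
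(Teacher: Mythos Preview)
The paper does not give a proof of this theorem; it is stated as a known result, the label ``hls folland'' pointing to the Hardy--Littlewood--Sobolev and Calder\'on--Zygmund theory on stratified groups developed by Folland (see \cite{folland}, \cite{folland_stein}). Your outline is essentially the standard proof of these facts in that literature: Hedberg's maximal function argument for (i), H\"older/Young reductions for (ii), and the Calder\'on--Zygmund machinery on the doubling metric measure space $(\he n,d,\mc L^{2n+1})$ for (iii), with $L^2$-boundedness obtained either via the group Plancherel theorem or by Knapp--Stein/Cotlar--Stein almost-orthogonality. There is nothing to compare against in the paper itself, and your sketch is correct as a route to an independent proof.
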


\begin{theorem}[\cite{CvS2009}, Theorem 1]\label{chanillo_van} Let $\Phi\in \mc D(\he n,\mathfrak h_1)$ be a smooth
compactly supported horizontal vector field. Suppose  $G\in L^1_{\mathrm{loc}}(\he n,\mathfrak h_1)$
is $\he{}$-di\-ver\-gen\-ce free, i.e. if 
$$
G= \sum_i G_iW_i, \quad\mbox{then} \quad \sum_iW_iG_i=0 \quad\mbox{in $\mc D'(\he n)$}.
$$
Then
$$
\big|\scal{G}{\Phi}_{L^2(\he{n},\mathfrak h_1)} \big| \le C\|G\|_{L^1(\he{n},\mathfrak h_1)} \|\nabla_{\he{}} \Phi\|_{L^Q(\he{n},\mathfrak h_1)}.
$$

\end{theorem}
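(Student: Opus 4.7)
The plan is to exploit the divergence-free condition on $G$ by a compensated-compactness argument in the spirit of Coifman-Lions-Meyer-Semmes, adapted to the sub-Riemannian setting via the fundamental solution of the sub-Laplacian. The cancellation is essential: the naive bound $|\scal{G}{\Phi}_{L^2}|\le\|G\|_{L^1}\|\Phi\|_\infty$ does not deliver the inequality, since the critical Folland-Stein embedding fails to put $W^{1,Q}$ into $L^\infty$ (Theorem \ref{donne} requires $p>Q$). So the hypothesis $\sum_i W_i G_i=0$ must be used in a genuinely nonlinear way.

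Concretely, I would proceed as follows. Let $\Gamma$ denote the fundamental solution of $-\Delta_{\he{}}=-\sum_j W_j^2$; this is a kernel of type $2$ in the sense of Definition \ref{type}. Setting $w_i:=\Gamma\ast G_i$, one has $G_i=-\sum_j W_j^2 w_i$ in $\mc D'(\he n)$, and the divergence-free hypothesis passes through the convolution to yield $\sum_i W_iw_i=\Gamma\ast\sum_i W_i G_i=0$. Since $\Phi$ is compactly supported, integration by parts gives
\begin{equation*}
\scal{G}{\Phi}_{L^2}=-\sum_{i,j}\int W_j^2w_i\,\Phi_i\,dp=\sum_{i,j}\int W_jw_i\cdot W_j\Phi_i\,dp.
\end{equation*}
Now decompose $W_jw_i=A_{ij}+S_{ij}$ into its antisymmetric and symmetric parts in $(i,j)$. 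The symmetric part $S_{ij}$, integrated by parts a second time against $W_j\Phi_i$, collapses---modulo commutators $[W_i,W_j]\in\{0,\pm T\}$---to terms involving $\sum_i W_iw_i=0$ and therefore vanishes. Only the antisymmetric stream tensor $A_{ij}:=\tfrac12(W_jw_i-W_iw_j)$ survives, paired with the horizontal curl of $\Phi$, reducing matters to a bound of the form
\begin{equation*}
|\scal{G}{\Phi}_{L^2}|\le C\,\|A\|_{L^{Q'}(\he n)}\,\|\nabla_{\he{}}\Phi\|_{L^{Q}(\he n)}+(\text{commutator remainders}),
\end{equation*}
where $Q'=Q/(Q-1)$ is the conjugate exponent to $Q$.

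The hard part is the endpoint estimate $\|A\|_{L^{Q'}(\he n)}\le C\|G\|_{L^1}$. Each entry of $A$ is the convolution of $G$ with a type-$1$ kernel (Proposition \ref{kernel}(ii) applied to $\Gamma$), but Theorem \ref{hls folland}(i) is valid only for $p>1$---at $p=1$ one obtains no more than weak-$L^{Q'}$. The upgrade to strong $L^{Q'}$ is precisely the div-curl compensated-compactness phenomenon; it uses crucially that $G$ is divergence-free and not merely $L^1$, and I would establish it by an atomic decomposition of $G$ into Hardy-space atoms supported on Kor\'anyi balls, exploiting the doubling property of the Lebesgue measure on $\he n$. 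A second difficulty, absent in the Euclidean case, is that the commutation relations $[X_i,Y_i]=T$ produce vertical-derivative error terms throughout the reductions above; since $T$ has horizontal order $2$, the kernel $T\Gamma$ is of type $0$ and such operators are controlled by Theorem \ref{hls folland}(iii) for $1<p<\infty$, the $p=1$ endpoint piece being again absorbed by the same div-curl cancellation.
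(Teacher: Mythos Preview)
The paper does not prove this statement: Theorem~\ref{chanillo_van} is quoted verbatim from \cite{CvS2009}, Theorem~1, and used as a black box in the proof of Theorem~\ref{bfp1}. There is therefore no ``paper's own proof'' to compare your attempt against.

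That said, your sketch contains a genuine gap that would prevent it from going through. After the reduction to the antisymmetric tensor $A_{ij}=\tfrac12(W_jw_i-W_iw_j)$, you need the endpoint bound $\|A\|_{L^{Q'}}\le C\|G\|_{L^1}$. But each entry $A_{ij}$ is obtained from components of $G$ by convolution with a kernel of type~$1$, and for such operators Theorem~\ref{hls folland} gives only weak-$L^{Q'}$ at the $L^1$ endpoint. You acknowledge this and propose to recover strong $L^{Q'}$ by ``atomic decomposition \dots\ exploiting that $G$ is divergence-free''; however, the divergence condition $\sum_i W_i w_i=0$ is a single scalar relation among the $w_i$ and does not obviously improve the integrability of any individual $A_{ij}$. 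In fact, the assertion that a divergence-free $L^1$ field admits a primitive (stream tensor) in strong $L^{Q'}$ is essentially the dual statement to the inequality you are trying to prove, so invoking it here risks circularity. The original proof in \cite{CvS2009} avoids this trap entirely: it does not construct a potential for $G$, but instead works directly on $\Phi$, using the cancellation $\int G\cdot\nabla_{\he{}}u=0$ (valid for all scalar $u$) together with a slicing/decomposition argument to replace $\Phi$ locally by something whose sup-norm is controlled by $\|\nabla_{\he{}}\Phi\|_{L^Q}$.

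A secondary issue is the handling of commutators. Your claim that the symmetric part ``collapses \dots\ to terms involving $\sum_i W_iw_i=0$'' is correct in $\R^n$ but in $\he n$ the integration by parts produces terms of the form $\int w_j\,[W_i,W_j]\Phi_i=\pm\int w_j\,T\Phi_i$. Since $T\Gamma$ is a kernel of type~$0$ (homogeneous of degree $-Q$), these remainders are Calder\'on--Zygmund operators applied to $G$, and Theorem~\ref{hls folland}(iii) controls them only for $1<p<\infty$; at $p=1$ you are again stuck at an endpoint that cannot be resolved without an argument of comparable depth to the theorem itself.
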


 \section{Rumin's complex of differential forms}\label{rumin}

When dealing 
with differential forms in $\he n$, 
the de Rham complex lacks scale invariance under anisotropic dilations (see \eqref{dilations}). 
M. Rumin, in \cite{rumin_jdg} has defined a substitute of de Rham's complex for arbitrary contact manifolds, that recovers scale invariance under $\delta_\lambda$. 
 In the present section, we shall merely list a few properties of Rumin's complex that we need in this paper. We send a reader, interested to understand better Rumin's complex, to the Appendix of \cite{BFP5} for a quick review, or to \cite{rumin_jdg} and \cite{BFTT}, \cite{BFP2} for more details of the construction.

\medskip

The dual space of $\mfrak h$ is denoted by $\covH 1$.  The  basis of
$\covH 1$,  dual to  the basis $\{X_1,\dots , Y_n,T\}$,  is the family of
covectors $\{dx_1,\dots, dx_{n},dy_1,\dots, dy_n,\theta\}$ where 
$$ \theta
:= dt - \frac12 \sum_{j=1}^n (x_jdy_j-y_jdx_j)$$ is  the {\it contact
form} in $\he n$. 
We denote by $\scalp{\cdot}{\cdot}{} $ the
inner product in $\covH 1$  that makes $(dx_1,\dots, dy_{n},\theta  )$ 
an orthonormal basis  and by
$dV$ the associated volume form
$$
dV:= dx_1\wedge\dots \wedge dx_{n}\wedge dy_1\wedge\dots\wedge dy_n \wedge \theta.
$$

Throughout this paper, $\bigwedge^h\mathfrak h$ denotes the $h$-th exterior power of the Lie algebra $\mathfrak h$.  Keeping in mind that the Lie algebra $\mathfrak h$ can be identified with the
tangent space to $\he n$ at $x=e$ (see, e.g. \cite{GHL}, Proposition 1.72), 
starting from $\cov h$ we can define by left translation  a fiber bundle
over $\he n$  that we can still denote by $\cov h \simeq \bigwedge^hT^*\he{n}$. 
Moreover, a scalar product in $\mathfrak h$ induces a scalar product and a norm on $\bigwedge^h\mathfrak h$.

We can think of $h$-forms as sections of 
$\cov h$ and we denote by $\Omega^h$ the
vector space of all smooth $h$-forms.

\begin{itemize}
\medskip\item For $h=0,\ldots,2n+1$, the space of Rumin $h$-forms, $E_0^{h}$, is the space of smooth 
sections of a left-invariant subbundle of $\bigwedge^h\mathfrak h$ (that we still denote by $E_0^{h}$). 
Hence it inherits the inner product and the norm of $\bigwedge^h\mathfrak h$.
\medskip
\item If we denote by $\star$ the Hodge duality operator associated with the inner product in $E_0^\bullet$
and the volume form $dV$, then $\star E_0^h=E_0^{2n+1-h}$.

\medskip

In particular we have

\begin{remark}\label{mar 3 rem:1}
If $\alpha\in E_0^{h}$, then
$\star\star\alpha= (-1)^{(2n+1-h)h}\alpha = \alpha$. Thus
$$\alpha\wedge\phi =  \phi\wedge(\star\star\alpha) = \scal{\star\alpha}{\phi}\, dV.$$
Moreover, if $\beta\in E_0^{h}$
\begin{equation*}\begin{split}
\scal{\star\alpha}{\star\beta}\, dV&=\alpha\wedge\star\beta
=
(-1)^{h(2n+1-h)} \star\beta\wedge \alpha 
\\&
= \scal{\star\star\beta}{\alpha}\, dV
= \scal{\beta}{\alpha}\, dV = \scal{\alpha}{\beta}\, dV .
\end{split}\end{equation*}

\end{remark}

\medskip 

\item A differential operator $d_c:E_0^{h}\to E_0^{h+1}$ is defined. It is left-invariant, 
homogeneous with respect to group dilations. It is a first order homogeneous operator
in the horizontal derivatives in degree $\neq n$, whereas \emph{it is a second
order homogeneous horizontal operator in degree $n$}. 
\medskip\item Altogether, operators $d_c$ form a complex: $d_c\circ d_c=0$.
\medskip\item This complex is homotopic to de Rham's complex $(\Omega^\bullet,d)$. 
More precisely there exist a sub-complex $(E,d)$ of the de Rham complex and a suitable 
``projection'' $\Pi_E:\Omega^\bullet \to E^\bullet$
such that
 $\Pi_E$ is a differential operator of order $\le 1$ in the horizontal  derivatives.
\item $\Pi_{E}$ is a chain map, i.e.
$$
d\Pi_{E} = \Pi_{E}d.
$$
\item Let $\;\Pi_{E_0}$ be the orthogonal projection 
on $E_0^\bullet$.  Then
 $$\Pi_{E_0}\Pi_{E}\Pi_{E_0}=\Pi_{E_0}\qquad\mbox{and}\qquad \Pi_{E}\Pi_{E_0}\Pi_{E}=\Pi_{E}.$$
(we stress that $\Pi_{E_0}$ is an algebraic operator).
\item The exterior differential $d_c$ can be written as
$$d_c =\Pi_{E_0}d \Pi_E \Pi_{E_0}.$$
\medskip
 \item The $L^2$-formal adjoint $d_c^*$ of $d_c$ on $E_0^h$ satisfies 
\begin{equation}\label{jan 9 eq:2}
d_c^* = (-1)^h \star d_c\star.
\end{equation}

\end{itemize}

Let us list a bunch of notations for vector-valued function spaces (for the scalar case,
we refer to Section \ref{preliminary}).

\begin{definition} \label{dual spaces forms-no} If $U\subset \he n$ is an open set, $0\le h\le 2n+1$, $1\le p\le \infty$ and $m\ge 0$,
we denote by $L^{p}(U,\cov{h})$, $\mc E (U,\cov{h})$,  $\mc D (U,\cov{h})$, 
$W^{m,p}(U,\cov{h})$ (by $\WO{m}{p}{U,\cov{h}}$)
the space of all sections of $\cov{h}$ such that their
components with respect to a given left-invariant frame  belong to the corresponding scalar spaces.
The spaces $L^{p}(U,E_0^h)$, $\mc E (U,E_0^h)$,  $\mc D (U,\E_0^h)$,  
$W^{m,p}(U,E_0^{h})$ and $\WO{m}{p}{U,E_0^h}$ are defined in the same way. 

 In other words, for the definition of Sobolev spaces of nonnegative order,
we  identify a differential form with the vector-valued function of its coordinates
with respect to a left-invariant frame, keeping in mind that these coordinates belong
to $L^1_{\mathrm{loc}}(\he n)$. This makes possible to avoid the use of the
notion of currents associated with Rumin's complex (for this notion,
see, e.g., \cite{BFTT}, \cite{FSSC_advances},
\cite{vittone}). With this identification, $d_c$ and its $L^2$-formal adjoint $d_c^*$ are identified with
matrix-valued left-invariant differential operators.

%Finally, the spaces $W^{m,p}_{\mathrm{Euc}}(U,\cov{h})$,  $\stackrel{\circ}{W}\!\!^{m,p}_{\mathrm{Euc}} (U,\cov{h})$,
 %$W^{m,p}_{\mathrm{Euc}}(U,E_0^{h})$ and $\stackrel{\circ}{W}\!\!^{m,p}_{\mathrm{Euc}}(U,E_0^h)$
%are defined replacing Folland-Stein Sobolev spaces by usual Sobolev spaces.

Clearly, all these definitions
are independent of the choice of frame.
\end{definition}

When $d_c$ is second order (when acting on forms of degree $n$), $(E_0^\bullet,d_c)$ stops behaving like a differential module. This is the source of many complications. In particular, the classical Leibniz formula for the de Rham complex $d(\alpha\wedge\beta)=d\alpha\wedge\beta\pm \alpha\wedge d\beta$ 
is true in Rumin's complex only in special degrees, as shown in \cite{BFT2}, Proposition A.1 and 
\cite{PT}, Proposition 4.1.
However, in general, the Leibniz formula fails to hold (see \cite{BFT2}-Proposition A.7). This causes several technical difficulties when we want to localize our estimates by means of cut-off functions.

The following Leibniz' formula for Rumin's differential forms holds.

\begin{lemma} [see also \cite{BFP3}, Lemma 4.1] \label{leibniz} If $\zeta$ is a smooth real function, then the following formulae hold in the sense of distributions:
\begin{itemize}
\item[i)] if $h\neq n$, then on $E_0^h$ we have
$$
[d_c,\zeta] = P_0^h(W\zeta),
$$
where $P_0^h(W\zeta): E_0^h \to E_0^{h+1}$ is a linear homogeneous differential operator of order zero with coefficients depending
only on the horizontal derivatives of $\zeta$. If $h\neq n+1$, an analogous statement holds if we replace
$d_c$ { in degree $h$ with $d^*_c$ in degree $h+1$};
\item[ii)] if $h= n$, then on $E_0^n$ we have
$$
[d_c,\zeta] = P_1^n(W\zeta) + P_0^n(W^2\zeta) ,
$$
where $P_1^n(W\zeta):E_0^n \to E_0^{n+1}$ is a linear homogeneous differential operator of order 1 (and therefore
horizontal) with coefficients depending
only on the horizontal derivatives of $\zeta$, and where $P_0^h(W^2\zeta): E_0^n \to E_0^{n+1}$ is a linear homogeneous differential operator in
the horizontal derivatives of order 0
 with coefficients depending
only on second order horizontal derivatives of $\zeta$. If $h = n+1$, an analogous statement holds if we replace
$d_c$ { in degree $n$ with $d^*_c$ in degree $n+1$}.

\end{itemize}

\end{lemma}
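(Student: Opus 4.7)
My plan is to reduce the lemma to a one-line Leibniz calculation, having first invoked the structural description of $d_c$ from Section \ref{rumin}. Specifically, the essential input I will use is that $d_c: E_0^h \to E_0^{h+1}$ is a left-invariant, $\delta_\lambda$-homogeneous, \emph{purely horizontal} differential operator---of order $1$ when $h \neq n$ and of order $2$ when $h = n$. This lets me write, in the left-invariant frame $W_1,\ldots,W_{2n}$ of $\mfrak h_1$,
$$
d_c = \sum_{j=1}^{2n} A_j\, W_j \quad \text{if } h \neq n,
\qquad
d_c = \sum_{j,k=1}^{2n} A_{jk}\, W_jW_k \quad \text{if } h = n,
$$
with $A_j, A_{jk} : E_0^h \to E_0^{h+1}$ algebraic and constant in this frame.

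Granted this expansion, for (i) I would simply compute
$$
[d_c,\zeta]u = \sum_{j=1}^{2n} A_j\,[W_j,\zeta]u = \sum_{j=1}^{2n} (W_j\zeta)\,A_j u,
$$
using that $[W_j,\zeta]$ is multiplication by the scalar $W_j\zeta$. The right-hand side is manifestly algebraic in $u$ with coefficients depending only on horizontal derivatives of $\zeta$, and is therefore the asserted $P_0^h(W\zeta)$. For (ii) I would iterate the Leibniz rule to get
$$
[W_jW_k,\zeta]u = (W_jW_k\zeta)\,u + (W_j\zeta)(W_ku) + (W_k\zeta)(W_ju),
$$
so that
$$
[d_c,\zeta]u = \sum_{j,k} A_{jk}(W_jW_k\zeta)\,u + \sum_{j,k} A_{jk}\bigl((W_j\zeta)W_ku + (W_k\zeta)W_ju\bigr),
$$
matching the decomposition $P_0^n(W^2\zeta)u + P_1^n(W\zeta)u$ of the statement, with the correct coefficient dependencies.

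For the $d_c^*$ half, I would invoke \eqref{jan 9 eq:2} together with the fact that the Hodge star is pointwise algebraic and therefore commutes with multiplication by $\zeta$, giving $[d_c^*,\zeta] = (-1)^h \star [d_c,\zeta]\star$. Since $\star$ exchanges $E_0^j$ with $E_0^{2n+1-j}$, the order of $d_c^*$ in degree $h+1$ coincides with that of $d_c$ in degree $2n-h$, and sandwiching the previous computations by $\star$---which preserves the algebraic/first-order structure with coefficients depending only on $W\zeta$ or $W^2\zeta$---transfers the conclusion to $d_c^*$ in the appropriate degree.

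The only genuinely non-routine ingredient is the structural statement that in degree $n$ the operator $d_c$ is purely horizontal of order two---no $T$-derivative appears despite $T$ having homogeneous degree two. This is precisely what forces the right-hand side of (ii) to involve $W^2\zeta$ rather than $T\zeta$, and it is the non-trivial output of Rumin's construction that I am importing from Section \ref{rumin}. Past this structural input, everything is a straightforward commutator calculation, so I do not anticipate any serious obstacle beyond careful bookkeeping of which coefficient depends on which derivative of $\zeta$.
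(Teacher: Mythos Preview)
Your proposal is correct and is exactly the standard argument; the paper itself omits the proof, citing \cite{BFP3}, Lemma 4.1, and your approach matches what one finds there. The only substantive input is the structural fact from Section~\ref{rumin} that $d_c$ is a left-invariant, purely horizontal operator of order $1$ in degree $h\neq n$ and of order $2$ in degree $n$, which you invoke correctly; the rest is the elementary commutator computation you give, together with the Hodge-star transfer to $d_c^*$ via \eqref{jan 9 eq:2}.
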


 \begin{lemma}\label{by parts}
If $0\le h\le 2n$ and $\phi\in \mc D(\he n,E_0^h)$ and 
$\alpha\in W^{1,1}_{\mathrm{loc}}(\he n,E_0^{2n-h}) $, then
 we have:
\begin{equation}\label{cor main rumin 2}
\int_{\he n} d_c\alpha\wedge\phi = (-1)^h \int_{\he n}\alpha\wedge d_c\phi \, .
 \end{equation}
 
\end{lemma}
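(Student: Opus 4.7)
The plan is to combine Hodge duality (Remark \ref{mar 3 rem:1}) with the formula \eqref{jan 9 eq:2} for $d_c^*$, so as to reduce the identity to the defining $L^2$-adjoint property of $d_c$.

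Assume first that $\alpha$ is smooth. The forms $d_c\alpha\in E_0^{2n-h+1}$ and $\phi\in E_0^h$ have Hodge-complementary degrees, and likewise $\alpha\in E_0^{2n-h}$ and $d_c\phi\in E_0^{h+1}$; so Remark \ref{mar 3 rem:1} gives
$$
\int_{\he n}d_c\alpha\wedge\phi=\int_{\he n}\langle\star d_c\alpha,\phi\rangle\,dV,\qquad
\int_{\he n}\alpha\wedge d_c\phi=\int_{\he n}\langle\star\alpha,d_c\phi\rangle\,dV.
$$
Since $\star\alpha\in E_0^{h+1}$, applying \eqref{jan 9 eq:2} in degree $h$ to $\beta=\star\alpha$ yields $\star d_c\alpha = \star d_c\star(\star\alpha) = (-1)^h\, d_c^*(\star\alpha)$. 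Because $\phi\in\mc D(\he n,E_0^h)$ is smooth with compact support, the formal-adjoint identity $\int\langle d_c^*(\star\alpha),\phi\rangle\,dV=\int\langle\star\alpha,d_c\phi\rangle\,dV$ holds by the very definition of $d_c^*$. Chaining these three identities yields $\int d_c\alpha\wedge\phi=(-1)^h\int\alpha\wedge d_c\phi$ in the smooth case.

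For general $\alpha\in W^{1,1}_{\mathrm{loc}}$, I would mollify by group convolution, setting $\alpha_\epsilon:=\alpha\ast\rho_\epsilon\in\mc E(\he n,E_0^{2n-h})$ for $\rho_\epsilon$ a smooth approximate identity. The smooth case gives $\int d_c\alpha_\epsilon\wedge\phi=(-1)^h\int\alpha_\epsilon\wedge d_c\phi$ for every $\epsilon$. The right-hand side converges to $(-1)^h\int\alpha\wedge d_c\phi$ as $\epsilon\downarrow 0$ because $\supp\phi$ is compact and $\alpha_\epsilon\to\alpha$ in $L^1(\supp\phi)$; the left-hand side therefore tends to the same limit, which is $\int d_c\alpha\wedge\phi$. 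The delicate point, and the expected main obstacle, is the middle degree $h=n$: there $d_c$ is second-order on $E_0^{2n-h}=E_0^n$ and $d_c\alpha$ is only a distribution when $\alpha\in W^{1,1}_{\mathrm{loc}}$, so the left-hand integral must be interpreted as the distributional pairing $\langle d_c\alpha,\phi\rangle$; the mollification argument is precisely what ensures that this interpretation is consistent with the genuine integral on the right.
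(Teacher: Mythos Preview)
Your proof is correct and follows essentially the same two-step scheme as the paper: establish the identity for smooth $\alpha$ by integration by parts, then pass to $W^{1,1}_{\mathrm{loc}}$ by mollification. The paper's own proof is a one-line reference to the integration-by-parts formula in \cite{BFTT}, Remarks~2.16, followed by ``an approximation argument''; you have simply unpacked both steps, deriving the smooth case from the Hodge-star formula \eqref{jan 9 eq:2} for $d_c^*$ rather than citing it, and spelling out the mollification. Your remark on the middle degree $h=n$ (where $d_c$ is second order on $E_0^{2n-h}=E_0^n$) is a genuine subtlety that the paper leaves implicit; your distributional reading of the left-hand side together with the convolution argument is the natural way to handle it.
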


\begin{proof}
The statement follows by the formula of ``integration by parts'' of  \cite{BFTT}, Remarks
 2.16, together with an approximation argument. 

\end{proof}
The wedge of two closed forms of complementary degrees has the following property.
\begin{lemma}
\label{parabolic}
Let $1\le h\le 2n+1$ and let $\alpha\in L^p(\he n, E_0^h)$ and $\omega\in L^{p'}(\he n, E_0^{2n+1 -h})$ be  closed forms with $1<p <Q$    
and $\frac1p + \frac1{p'} = 1$. Then 
\begin{equation}\label{mar 14 eq:1}
\int \alpha\wedge\omega=0.
\end{equation}
 
In addition, if $p=Q$, then \eqref{mar 14 eq:1} still holds provided $\, 2\le h\le 2n+1$.
\end{lemma}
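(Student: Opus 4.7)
\emph{Proof plan.}

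\textbf{Strategy: exactness plus integration by parts.} Since $\he n$ is contractible and $1<p<Q$, the Sobolev--Poincar\'e inequality for $d_c$ on $\he n$ (obtained from convolution with a fundamental solution of Rumin's Laplacian together with Theorem~\ref{hls folland}) produces a primitive $\psi\in L^{p^*}(\he n, E_0^{h-1})$, with $d_c\psi=\alpha$ and $1/p^*=1/p-1/Q$, at least when $d_c$ on $E_0^{h-1}$ is first order (i.e.\ $h\neq n+1$). My plan is to turn $\int \alpha\wedge\omega=\int d_c\psi\wedge\omega$ into zero through a cut-off integration by parts, where the boundary contribution disappears thanks to the built-in decay of $\omega\in L^{p'}$ at infinity.

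\textbf{Cut-off and integration by parts.} Pick $\zeta_R\in \mc D(\he n)$ with $\zeta_R\equiv 1$ on $B(e,R)$, $\mathrm{supp}\,\zeta_R\subset B(e,2R)$, and $|W^k\zeta_R|\le C R^{-k}$ for $k=1,2$ (take $\zeta_R(p)=\chi(\delta_{1/R}p)$). After mollifying $\psi$ and $\omega$ by group convolution with a smooth approximate identity, which preserves $d_c$-closedness and turns $\psi$ into a smooth form, $\zeta_R\psi$ belongs to $\mc D(\he n,E_0^{h-1})$, so Lemma~\ref{by parts} combined with $d_c\omega=0$ (distributionally, and hence after mollification) yields
\begin{equation*}
\int_{\he n} d_c(\zeta_R\psi)\wedge\omega = \pm \int_{\he n}\zeta_R\psi\wedge d_c\omega = 0.
\end{equation*}
Expanding by the Leibniz formula of Lemma~\ref{leibniz} and removing the mollifications via standard $L^{p^*}$--$L^{p'}$ and $L^p$--$L^{p'}$ pairings, I obtain
\begin{equation*}
\int_{\he n}\zeta_R\,\alpha\wedge\omega = -\int_{\he n}[d_c,\zeta_R]\psi\wedge\omega.
\end{equation*}

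\textbf{Vanishing of the commutator term.} The left-hand side tends to $\int\alpha\wedge\omega$ by dominated convergence, since $|\alpha\wedge\omega|\in L^1(\he n)$. For the right-hand side (in the case $h\neq n+1$), Lemma~\ref{leibniz} provides $|[d_c,\zeta_R]\psi|\le C|W\zeta_R||\psi|$, and H\"older's inequality with the triple of exponents $(Q,p^*,p')$, whose reciprocals add to $1$ (since $1/p^*+1/p'=1-1/Q$), gives
\begin{equation*}
\Big|\int_{\he n}[d_c,\zeta_R]\psi\wedge\omega\Big|\le C\,\|W\zeta_R\|_{L^Q(A_R)}\,\|\psi\|_{L^{p^*}(\he n)}\,\|\omega\|_{L^{p'}(A_R)},
\end{equation*}
where $A_R:=B(e,2R)\setminus B(e,R)$. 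The estimate $|W\zeta_R|\le C/R$ on $A_R$ together with $|A_R|\le CR^Q$ gives $\|W\zeta_R\|_{L^Q(A_R)}\le C$ uniformly in $R$; meanwhile $\|\omega\|_{L^{p'}(A_R)}\to 0$ by absolute continuity of the integral. Hence the commutator term vanishes as $R\to\infty$ and \eqref{mar 14 eq:1} follows.

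\textbf{Main obstacle and extensions.} The honest difficulty is the middle degree $h=n+1$: there $d_c$ on $E_0^n$ is second order, so Lemma~\ref{leibniz} produces an extra contribution $P_1^n(W\zeta_R)\psi+P_0^n(W^2\zeta_R)\psi$. To absorb the first piece one must upgrade $\psi$ to a Beppo Levi primitive $\psi\in \BL{1}{p^*}$, with $W\psi\in L^{p^*}$; the second piece is handled via the same scaling trick, using the uniform bound $\|W^2\zeta_R\|_{L^{Q/2}(A_R)}\le C$ (since $|W^2\zeta_R|\le CR^{-2}$ and $|A_R|^{2/Q}\le CR^2$), paired this time with $\|\psi\|_{L^{(p^*)^{\ast}}}$. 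This is precisely the Beppo Levi replacement flagged in the introduction and carried out in Section~\ref{caso 2}. Finally, the sharp endpoint $p=Q$ is treated by exchanging the roles of $\alpha$ and $\omega$: since $p'=Q'<Q$, Poincar\'e applies to $\omega$ instead, and the restriction $h\ge 2$ guarantees that the dual primitive $\phi$ of $\omega$ has degree $2n-h\le 2n-1$ with $h-1\ge 1$, so the same cut-off/Hölder argument goes through with exponents $(Q,(Q')^*,Q)$ summing to $1$.
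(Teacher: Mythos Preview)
Your approach is essentially the paper's: mollify, take a primitive of one of the two forms, cut off, integrate by parts via Lemma~\ref{by parts}, use Lemma~\ref{leibniz} for the commutator, and kill the remainder with H\"older and the scale-invariant bound $\|W\zeta_R\|_{L^Q}\le C$ together with the decay of the tail norm on the annulus. The identification of exponents is correct throughout.

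Two points where the paper takes a shorter path than you do:

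\emph{The case $h=n+1$ with $1<p<Q$.} You propose upgrading the primitive of $\alpha$ to a Beppo Levi primitive so as to control the extra first-order term $P_1^n(W\zeta_R)\psi$ coming from the second-order Leibniz formula. This works, but the paper avoids it entirely by a symmetry trick: since $h=n+1$ forces $2n+1-h=n\neq n+1$, one simply takes the primitive of $\omega$ instead. The primitive then lives in degree $n-1$, where $d_c$ is first order, and the argument proceeds exactly as in your ``first-order'' paragraph with the roles of $\alpha,\omega$ interchanged. No Beppo Levi spaces are needed for this lemma.

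\emph{The endpoint $p=Q$.} You swap roles for every $h\ge 2$. This is fine for $h\neq n$, but when $h=n$ the swapped form $\omega$ has degree $n+1$ and you land back in the second-order situation (so your BL remedy is again required). The paper instead, for $h\neq n+1$, keeps working with $\alpha$ and invokes the $L^\infty$ Poincar\'e inequality of \cite{BFP5} (Theorem~\ref{poincare infty} here): there is $\phi\in L^\infty$ with $d_c\phi=\alpha$, and then H\"older with exponents $(Q,\infty,Q')$ closes the estimate. The swap is used only for $h=n+1$, where $\omega\in L^{Q'}(\he n,E_0^n)$ with $1<Q'<Q$ falls squarely into the already-proven range.

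So your proof is correct; the paper's version is a bit leaner by exploiting the $\alpha\leftrightarrow\omega$ symmetry and the $L^\infty$ endpoint result rather than the Beppo Levi machinery of Section~\ref{caso 2}.
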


\begin{proof}
Since the differential $d_c$ is left-invariant, by mollification (convolution with smooth kernels), one can assume that $\alpha$ and $\omega$ are smooth. 

Suppose first $1<p<Q$. Without loss of generality, we can assume $h\neq n+1$ (indeed, if $h=n+1$, then $2n+1-h= n\neq n+1$ so we can consider $\omega$ in place of $\alpha$ in the following argument).
 By Theorem 1.1 of \cite{BFP2}, there exists $\phi \in L^{pQ/(Q-p)}(\he n, E_0^{h-1})$ such that
$d_c\phi = \alpha$ and
\begin{equation}\label{mar 7 eq:1}
\| \phi\|_{L^{pQ/(Q-p)}(\he n, E_0^{h-1})} \le C \| \alpha\|_{ L^p(\he n, E_0^h)}.
\end{equation}
If $N>0$, let now $\chi_N$ be a smooth cut-off function supported in $B(e,2N)$, $\chi_N\equiv 1$ on
$B(e,N)$, $|W \chi_N| \le 2/N$. Obviously   
$$
\int_{\he n} \chi_N\alpha \wedge\omega  \to \int_{\he n} \alpha \wedge\omega.
$$
On the other hand, by Lemma \ref{leibniz},
\begin{equation}\label{mar 6 eq:3}\begin{split}
\int_{\he n} & \chi_N\alpha \wedge\omega
\\& =
\int_{\he n} \chi_Nd_c\phi \wedge\omega
\\&
=
\int_{\he n}d_c(\chi_N \phi)\wedge \omega
+ \int_{\he n} P_0^{h-1}(W\chi_N )(\phi)\wedge\omega
\\&= \int_{\he n} P_0^{h-1}(W\chi_N )(\phi)\wedge\omega
\qquad\mbox{by \eqref{cor main rumin 2}.}
\end{split}\end{equation}

%d_c(\chi_N \phi)\wedge \omega \to \alpha\wedge \omega \qquad\mbox{ in $L^1(\he n, E_0^{2n+1})$.}
%\end{equation}
%Indeed, by Lemma \ref{leibniz}
%$$
%d_c(\chi_N \phi)= P_0^{h-1}(W\chi_N )\phi + \chi_N d_c\phi =P_0^{h-1}(W\chi_N )\phi + \chi_N \alpha.
%$$

We are then left to prove that
\begin{equation}\label{mar 6 eq:3-1}
\int_{\he n} P_0^{h-1}(W\chi_N )(\phi)\wedge\omega \to 0\qquad \mbox{as $N\to\infty$}.
\end{equation}
Obviously, 
$$
P_0^{h-1}(W\chi_N )(\phi)\wedge\omega \to 0\qquad \mbox{pointwise as $N\to\infty$,}
$$
since $W\chi_N\equiv 0$ on $B(e,N)$.
On the other hand, if $1<p<Q$, by H\"older inequality and \eqref{mar 7 eq:1},
\begin{equation*}\begin{split}
\int_{\he n} & |P_0^{h-1}(W\chi_N )| |\phi | |\omega| \, dV
\\&
\le \|P_0^{h-1}(W\chi_N )\|_{L^{Q}(\he n, E_0^{1})}\cdot
\|\phi\|_{L^{pQ/(Q-p)}(\he n, E_0^{h-1})} 
\cdot  \|\omega\|_{L^{p'}(\he n, E_0^{2n+1-h})},
\end{split}\end{equation*}
since $\frac1Q+\frac{Q-p}{pQ} + \frac{1}{p'}=1$.
But
$$
\|P_0^{h-1}(W\chi_N )\|_{L^{Q}(\he n, E_0^{1})} \le C\dfrac1N
|B(e,2N)|^{1/Q} \le C.
$$
This proves \eqref{mar 6 eq:3-1}, and hence \eqref{mar 14 eq:1}, when $1<p<Q$. 

The proof in the case $p=Q$ needs slightly different arguments. Indeed
we distinguish  the case $h=n+1$, i.e., $\alpha\in L^Q(\he n, E_0^{n+1})$, from the case $\alpha\in L^Q(\he n, E_0^{h})$ with $h\neq n+1$. In the latter case, by \cite{BFP5}, Theorem 1.8 (see also Theorem \ref{poincare infty} below), there exists $\phi \in L^{\infty}(\he n, E_0^{h-1})$ such that
$d_c\phi = \alpha$ and
\begin{equation}\label{mar 7 eq:2}
\| \phi\|_{L^{\infty}(\he n, E_0^{h-1})} \le C \| \alpha\|_{ L^Q(\he n, E_0^h)}.
\end{equation}
Hence the proof 
can be carried out in the same way, using the estimate
\eqref{mar 7 eq:2},  since $\frac1Q +\frac1\infty+ \frac{Q-1}{Q}=1$.
   Suppose now $\alpha\in L^Q(\he n, E_0^{n+1})$, hence $\omega\in L^{Q/(Q-1)}(\he n, E_0^{n})$.
   Since $1<\frac{Q}{Q-1} <Q$, we can apply   Theorem 1.1 of \cite{BFP2} to $\omega$,
   and we conclude arguing as we have done above for the case
   $1<p<Q$ (replacing $\alpha$ by $\omega$).

\end{proof}

 \begin{remark} The above lemma still holds for $p=1$: see  \cite{PT}.
 
 \end{remark}
\subsection{Rumin's Laplacian}

 \begin{definition}\label{rumin laplacian} 
In $\he n$, Rumin defined in \cite{rumin_jdg} 
the operators $\Delta_{\he{},h}$  acting on $E_0^h$,  by setting
\begin{equation*}
\Delta_{\he{},h}=
\left\{
  \begin{array}{lcl}
     d_cd^*_c+d^*_c d_c\quad &\mbox{if } & h\neq n, n+1;
     \\ (d_cd^*_c)^2 +d^*_cd_c\quad& \mbox{if } & h=n;
     \\d_cd^*_c+(d^*_c d_c)^2 \quad &\mbox{if }  & h=n+1.
  \end{array}
\right.
\end{equation*}

\end{definition}

Notice that $-\Delta_{\he{},0} = \sum_{j=1}^{2n}(W_j^2)$ is the usual sub-Laplacian of
$\he n$. 

We stress that   the order of $\Delta_{\he{},h}$ (with respect to group dilations) is 
$2$ if $h\neq n, n+1$ and  $4$ 
if $h=n, n+1$.
Notice also that, once a basis  of $E_0^h$
is fixed, the operator $\Delta_{\he{},h}$ can be identified with a matrix-valued map, still denoted
by $\Delta_{\he{},h}$
\begin{equation}\label{matrix form}
\Delta_{\he{},h} = (\Delta_{\he{},h}^{ij})_{i,j=1,\dots,N_h}: \mc D'(\he{n}, \rn{N_h})\to \mc D'(\he{n}, \rn{N_h}),
\end{equation}
where $\mc D'(\he{n}, \rn{N_h})$ is the space of vector-valued distributions on $\he n$, and $N_h$ is
the dimension of $E_0^h$ (see \cite{BBF}).

 { \begin{theorem}[see \cite{BFP1}, Theorem 4.6] \label{global solution}
If $0\le h\le 2n+1$, denote by $a$ the order of $\Delta_{\he{},h}$ with respect to group dilations (
$a=2$ if $h\neq n, n+1$ and  $a=4$ 
if $h=n, n+1$). Then
there exist
\begin{equation}\label{numero}
  K_{ij}\in\mc D'(\he{n})\cap \mc
C^\infty(\he{n} \setminus\{0\})
\qquad\mbox{for $i,j =1,\dots,N_h$,}
\end{equation}
  with the following properties:
\begin{enumerate}
\item[i)] if $a<Q$ then the $K_{ij}$'s are
kernels of type $a$,
 for
$i,j
=1,\dots, N_h$.
 If $a=Q$,
then the $K_{ij}$'s satisfy the logarithmic estimate
$|K_{ij}(p)|\le C(1+|\ln\rho(p)|)$ and hence
belong to $L^1_{\mathrm{loc}}(\he{n})$.
Moreover, their horizontal derivatives  $W_\ell K_{ij}$,
$\ell=1,\dots,2n$, are
kernels of type $Q-1$;
\item[ii)] when $\alpha = (\alpha_1,\dots, \alpha_{N_h}) \in
\mc D(\he{n},\rn {N_h})$ (here again $N_h = \mathrm{dim}\, E_0^h$),
if we set
\begin{equation}\label{numero2}
   \Delta_{\he{},h}^{-1} \alpha:=
\big(    
    \sum_{j}\alpha_j\ast  K_{1j},\dots,
     \sum_{j}\alpha_j\ast  K_{N_hj}\big),
\end{equation}
 then 
 $$ 
 \Delta_{\he{},h}\Delta_{\he{},h}^{-1}\alpha =  \alpha. 
 $$
Moreover, if $a<Q$, also 
$$
\Delta_{\he{},h}^{-1}\Delta_{\he{},h} \alpha =\alpha.
$$

\item[iii)] if $a=Q$, then for any $\alpha\in
\mc D(\he{n},\rn {N_h})$ there exists \break
$\beta_\alpha:=(\beta_1,\dots,\beta_{N_h})\in \rn{N_h}$,  such that
$$\Delta_{\he{},h}^{-1} \Delta_{\he{},h}\alpha - \alpha = \beta_\alpha.$$
\end{enumerate}
\end{theorem}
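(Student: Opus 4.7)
The plan is to realize $\Delta_{\he{},h}$, in the matrix form \eqref{matrix form}, as a matrix of left-invariant scalar differential operators on $\he n$, each homogeneous of degree $a$ with respect to the dilations $\delta_\lambda$, and then to invoke the Folland--Rockland--Christ--Geller machinery of fundamental solutions for (systems of) homogeneous hypoelliptic operators on stratified nilpotent Lie groups. Concretely: once a left-invariant orthonormal frame of $E_0^h$ is fixed, $\Delta_{\he{},h}^{ij}$ is a polynomial of homogeneous degree $a$ in $W_1,\dots,W_{2n}$, and left translation invariance guarantees that any left convolution inverse $K=(K_{ij})$ with the required properties will do the job.

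The first key step is to verify the Rockland condition for $\Delta_{\he{},h}$: for every non-trivial irreducible unitary representation $\pi$ of $\he n$ (i.e.\ every Schr\"odinger representation $\pi_\lambda$, $\lambda\neq 0$), the symbol $\pi(\Delta_{\he{},h})$ is injective on smooth vectors of $\mc H_\pi\otimes E_0^h$. This is in fact the reason Rumin chose $d_cd_c^\ast+d_c^\ast d_c$ in generic degrees and the corrected squared form in middle degrees: the resulting operator is formally positive and self-adjoint, and on each $\pi_\lambda$ it is conjugate to a positive elliptic operator on $\R$ with compact resolvent and trivial kernel. Rockland then follows from the Hodge decomposition provided by $E_0^\bullet$. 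Once Rockland is known, one applies Folland's classical construction (\cite{folland}, Theorem 2.1, extended to matrix-valued operators) in the case $a<Q$: this supplies distributional kernels $K_{ij}$, homogeneous of degree $a-Q$ and smooth off the origin, whose associated convolution \eqref{numero2} satisfies $\Delta_{\he{},h}\Delta_{\he{},h}^{-1}\alpha=\alpha$ on $\mc D(\he n,\R^{N_h})$. The two-sided identity $\Delta_{\he{},h}^{-1}\Delta_{\he{},h}\alpha=\alpha$ follows since Rockland at the trivial representation rules out a nontrivial kernel in a suitable space of tempered distributions decaying at infinity.

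The delicate case is $a=Q$, which occurs only for $\he 1$ with $h=1,2$. Here the homogeneity degree $a-Q$ collapses to zero and a scale-invariant fundamental solution cannot exist; following Geller's treatment of the scalar sub-Laplacian on $\he 1$ and its matrix extension, I would instead construct almost-homogeneous distributions $K_{ij}$ obeying $K_{ij}\circ\delta_\lambda=K_{ij}+c_{ij}\log\lambda$, which forces $|K_{ij}(p)|\le C(1+|\log\rho(p)|)$ and hence local integrability, and shows that any single horizontal derivative $W_\ell K_{ij}$ is truly homogeneous of degree $-1$, i.e.\ a kernel of type $Q-1$. The breakdown of $\Delta_{\he{},h}^{-1}\Delta_{\he{},h}\alpha-\alpha=\beta_\alpha$ by a constant vector is precisely the algebraic shadow of this logarithmic obstruction: the convolution $\alpha\mapsto \alpha\ast K$ maps $\mc D$ into a larger class and modifies values at infinity by constants, and the rigidity of $\Delta_{\he{},h}$ on constant $E_0^h$-valued sections pins this defect down to $\R^{N_h}$. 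The main obstacle I anticipate is the Rockland verification in middle degrees: because $d_c$ there is defined through the projectors $\Pi_E,\Pi_{E_0}$, one must use Rumin's explicit Hodge-theoretic block decomposition of the fibers of $E_0^h$ to check that $\pi_\lambda(\Delta_{\he{},h})$ has no nonzero smooth-vector kernel.
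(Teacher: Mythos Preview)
The paper does not give its own proof of this statement: it is quoted verbatim from \cite{BFP1}, Theorem 4.6, and used as a black box. So there is no in-paper argument to compare against. That said, your outline is essentially the correct one and matches how the result is established in \cite{BFP1}: one writes $\Delta_{\he{},h}$ as a matrix of homogeneous left-invariant operators, verifies the Rockland condition (this is the content of Rumin's original analysis in \cite{rumin_jdg}, Section 3, where he shows $\Delta_{\he{},h}$ is maximally hypoelliptic), and then applies the fundamental-solution machinery for homogeneous hypoelliptic operators on stratified groups. The case $a<Q$ is Folland's theorem; the borderline case $a=Q$ (only $\he 1$, $h=1,2$) is handled via the logarithmic extension due to Geller and Christ, exactly as you describe.

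One small caution: the Rockland verification in middle degrees is not something you need to redo from scratch via the $\Pi_E$, $\Pi_{E_0}$ projectors. Rumin already established hypoellipticity of $\Delta_{\he{},h}$ directly in \cite{rumin_jdg} by showing it has injective symbol on every nontrivial irreducible representation; the squared terms $(d_cd_c^*)^2$ and $(d_c^*d_c)^2$ in degrees $n,n+1$ are there precisely to make the operator homogeneous of a single order $a=4$, which is what Folland's framework requires. So you can cite that rather than attempting the block decomposition yourself.
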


The following lemma states that $d_c$ and $\Delta_{\mathbb H, h}$ commute.
}

{  \begin{lemma}[\cite{BFP2}, Lemma 3.14]\label{his-her}
We notice that the Laplace operator commutes with the exterior differential $d_c$. More precisely, if $\alpha\in\mc C^\infty(\he n, E_0^h)$ and $n\ge 1$,
\begin{itemize}
\item[i)]$
d_c \Delta_{\mathbb H, h}\alpha = \Delta_{\mathbb H, h+1} d_c\alpha$, \qquad $h=0,1,\dots, 2n$, 
\qquad $h\neq n-1, n$.

\item[ii)]$
d_c d_c^* d_c \Delta_{\mathbb H, n-1}\alpha = \Delta_{\mathbb H, n} d_c\alpha$,
 \qquad ($h=n-1$).
\item[iii)] $d_c \Delta_{\mathbb H, n}\alpha = d_c d_c^*\Delta_{\mathbb H, n+1} d_c\alpha$ \qquad ($h=n$).
\item[iv)]$d_c d_c^* \Delta_{\mathbb H, n}\alpha = \Delta_{\mathbb H, n} d_c d_c^*\alpha$ \qquad ($h=n$).

\end{itemize}
\end{lemma}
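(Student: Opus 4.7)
The proof is a direct algebraic verification that uses only the definition of $\Delta_{\he{},h}$ in each degree, together with the fundamental complex identities $d_c\circ d_c=0$ and, by formal adjunction, $d_c^*\circ d_c^*=0$. There is no analytic input: each case reduces to expanding the relevant Laplacian via its formula in Definition \ref{rumin laplacian}, distributing the outer operator past the sum, and cancelling every summand that contains a product $d_c\cdot d_c$ or $d_c^*\cdot d_c^*$. The projections $\Pi_{E_0}$, $\Pi_E$ and the analytic theory of $\Delta_{\he{},h}$ play no role; the whole statement lives in the associative algebra generated by $d_c,d_c^*$ modulo these two relations.

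I would begin with case (i) in the sub-range where neither $h$ nor $h+1$ is a middle degree. Both $\Delta_{\he{},h}$ and $\Delta_{\he{},h+1}$ then take the standard form $d_cd_c^*+d_c^*d_c$; distributing $d_c$ past each sum annihilates $d_c\cdot d_cd_c^*$ on the left and $d_c^*d_c\cdot d_c$ on the right, leaving the same expression $d_cd_c^*d_c\alpha$ on both sides. When a middle-degree Laplacian enters inside (i) (the sub-case $h=n+1$, where $\Delta_{\he{},n+1}=d_cd_c^*+(d_c^*d_c)^2$), the verification is identical in style: every summand involving a repeated $d_c$ or $d_c^*$ vanishes and the remaining terms on the two sides are forced to coincide.

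Case (ii) with $\alpha\in E_0^{n-1}$ uses $\Delta_{\he{},n-1}=d_cd_c^*+d_c^*d_c$ and $\Delta_{\he{},n}=(d_cd_c^*)^2+d_c^*d_c$: on the left, $d_cd_c^*d_c\cdot d_cd_c^*\alpha$ carries a $d_c^2$ factor and dies, leaving $(d_cd_c^*)^2d_c\alpha$; on the right, $d_c^*d_c\cdot d_c\alpha=0$ by $d_c^2=0$, also yielding $(d_cd_c^*)^2d_c\alpha$. Case (iv) works similarly: expanding $d_cd_c^*[(d_cd_c^*)^2+d_c^*d_c]\alpha$ kills the second summand via $(d_c^*)^2=0$ and leaves $(d_cd_c^*)^3\alpha$, while $[(d_cd_c^*)^2+d_c^*d_c]d_cd_c^*\alpha$ loses its second summand via $d_c^2=0$ and also produces $(d_cd_c^*)^3\alpha$. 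For case (iii) one uses that $d_c(d_cd_c^*)^2\alpha=0$ (since it starts with $d_c\cdot d_c$) to reduce $d_c\Delta_{\he{},n}\alpha$ to $d_cd_c^*d_c\alpha$, and one exploits that $(d_c^*d_c)^2d_c\alpha=0$ (trailing $d_c^2$) to reduce $\Delta_{\he{},n+1}d_c\alpha$ to the same shape; the two reductions are then matched by applying the outer $d_cd_c^*$ on the right-hand side as prescribed.

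The only real obstacle is bookkeeping: in each sub-case one must pick the correct formula for $\Delta_{\he{},h}$ (which changes shape at $h=n,n+1$), track the degree of each intermediate form so as to know which Laplacian to invoke next, and organize the numerous cancellations coming from $d_cd_c=0$ and $d_c^*d_c^*=0$ without error. In particular, the fact that $d_c$ is second order at the middle degree never enters the computation explicitly---it only tells one that the orders of the two sides of each identity do match, which is a consistency check rather than an ingredient of the proof.
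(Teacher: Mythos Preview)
The paper does not supply its own proof of this lemma; it is quoted from \cite{BFP2}, so there is no argument here to compare against. Your approach---a direct algebraic verification using only $d_c^2=0$, $(d_c^*)^2=0$, and the degree-dependent definition of $\Delta_{\mathbb H,h}$---is the right one, and your treatment of (ii), (iv), and the generic sub-range of (i) is correct.

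There is, however, a genuine gap in your handling of (i) at $h=n+1$ and of (iii). Carrying out the expansion you describe for (i) with $h=n+1$ gives $d_c\Delta_{\mathbb H,n+1}\alpha=(d_cd_c^*)^2d_c\alpha$ (a fifth-order operator) on the left but $\Delta_{\mathbb H,n+2}\,d_c\alpha=d_cd_c^*d_c\alpha$ (third-order) on the right; these are not equal. Likewise in (iii) as written, you correctly reduce both $d_c\Delta_{\mathbb H,n}\alpha$ and $\Delta_{\mathbb H,n+1}d_c\alpha$ to $d_cd_c^*d_c\alpha$, but then the prescribed outer $d_cd_c^*$ on the right produces $(d_cd_c^*)^2d_c\alpha$, which again does not match. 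Your claim that the surviving terms ``are forced to coincide'' is therefore false in these two cases. The fault lies not with your method but with the statement: a comparison with the companion Lemma~\ref{comm} (and an order count) shows that item (i) should exclude $h=n-1,\,n+1$ rather than $n-1,\,n$ (the case $h=n$ \emph{does} satisfy $d_c\Delta_{\mathbb H,n}=\Delta_{\mathbb H,n+1}d_c$, as your own reductions demonstrate), and (iii) should read $d_c\Delta_{\mathbb H,n+1}\alpha=d_cd_c^*\Delta_{\mathbb H,n+2}\,d_c\alpha$ for $\alpha\in E_0^{n+1}$, in which case both sides equal $(d_cd_c^*)^2d_c\alpha$. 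Had you actually expanded each sub-case instead of declaring them ``identical in style,'' you would have detected the misprint.
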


Coherently with formula \eqref{matrix form}, the matrix-valued operator $\Delta_{\he{},h}^{-1}$ can be identified
with an operator (still denoted by $\Delta_{\he{},h}^{-1}$) acting on smooth compactly
supported differential forms in $\mc D(\he n, E_0^h)$. Moreover, when the notation
will not be misleading, we shall denote by $\Delta^{-1}_{\mathbb H,h }$  its kernel.

The commutation of $d_c$ and $d_c^*$ with $\Delta^{-1}_{\mathbb H,h}$ follows from the previous lemma:}
\begin{lemma}[\cite{BFP2}, Lemma 3.15]\label{comm} 
If $\alpha\in\mc D(\he n, E_0^h)$ and $n\ge 1$,
\begin{itemize}
\item[i)]$
d_c \Delta^{-1}_{\mathbb H, h}\alpha = \Delta^{-1}_{\mathbb H, h+1} d_c\alpha$, \qquad $h=0,1,\dots, 2n$, 
\qquad $h\neq n-1, n+1$.

\item[ii)] $d_c \Delta^{-1}_{\mathbb H, n-1}\alpha = d_c d_c^*\Delta^{-1}_{\mathbb H, n} d_c\alpha$ \qquad ($h=n-1$).

\item[iii)]$
d_c d_c^* d_c \Delta^{-1}_{\mathbb H, n+1}\alpha = \Delta^{-1}_{\mathbb H, n+2} d_c\alpha$,
 \qquad ($h=n+1$).

\item[iv)]$d_c^* \Delta^{-1}_{\mathbb H, h}\alpha = \Delta^{-1}_{\mathbb H, h-1} d_c^*\alpha$
 \qquad $ h=1,\dots, 2n+1$, \qquad $h\neq n, n+2$.
 
 \item[v)] $d_c^* \Delta^{-1}_{\mathbb H, n+2}\alpha = d_c^* d_c\Delta^{-1}_{\mathbb H, n+1} 
 d_c^*\alpha$ \qquad ($h=n+2$).

\item[vi)]$
d_c^* d_c d_c^*  \Delta^{-1}_{\mathbb H, n}\alpha = \Delta^{-1}_{\mathbb H, n-1} d_c^* \alpha$,
 \qquad ($h=n$).
\end{itemize}
\end{lemma}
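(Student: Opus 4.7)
The strategy is to derive each identity of Lemma \ref{comm} from the corresponding commutation identity for $\Delta_{\mathbb H,\bullet}$ in Lemma \ref{his-her}, by formally inverting the Laplacians on both sides. The two key facts from Theorem \ref{global solution} are: $\Delta_{\mathbb H,h}\Delta^{-1}_{\mathbb H,h}=\mathrm{id}$ always; and $\Delta^{-1}_{\mathbb H,h}\Delta_{\mathbb H,h}=\mathrm{id}$ whenever the order $a$ of $\Delta_{\mathbb H,h}$ is strictly less than $Q=2n+2$, while otherwise it differs from the identity by an additive constant in $\mathbb R^{N_h}$ (Theorem \ref{global solution}(iii)).

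For case (i) in the generic range $h\neq n-1,n,n+1$, the plan is to apply Lemma \ref{his-her}(i) to $\Delta^{-1}_{\mathbb H,h}\alpha$ in place of $\alpha$ (for $\alpha\in\mc D(\he n,E_0^h)$):
\[
d_c\alpha \;=\; d_c\bigl(\Delta_{\mathbb H,h}\Delta^{-1}_{\mathbb H,h}\alpha\bigr) \;=\; \Delta_{\mathbb H,h+1}\bigl(d_c\Delta^{-1}_{\mathbb H,h}\alpha\bigr),
\]
then apply $\Delta^{-1}_{\mathbb H,h+1}$ on the left to read off (i), since the order of $\Delta_{\mathbb H,h+1}$ is $2<Q$. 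Case (i) for $h=n$ follows either by direct computation using Lemma \ref{his-her}(iii), exploiting that $d_c\Delta^{-1}_{\mathbb H,n}\alpha$ is $d_c$-closed so that the $(d_c^*d_c)^2$ component of $\Delta_{\mathbb H,n+1}$ vanishes on it, or more cleanly by the Hodge-duality shortcut described below. Cases (ii), (iii) follow analogously from Lemma \ref{his-her}(ii) and its counterpart in degree $n+1$, exploiting the same closedness to cancel composite operators after inverting. Cases (iv)--(vi) are obtained from (i)--(iii) by duality: from \eqref{jan 9 eq:2}, $d_c^*=(-1)^h\star d_c\star$, and $\star$ intertwines $\Delta_{\mathbb H,h}$ with $\Delta_{\mathbb H,2n+1-h}$ (hence so do their inverses); equivalently, since $\Delta^{-1}_{\mathbb H,h}$ is self-adjoint, each identity for $d_c^*$ is the $L^2$-adjoint of the corresponding identity for $d_c$ with degree shifted by one.

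The main obstacle is the middle-degree bookkeeping: in degrees $n$ and $n+1$ the order of $d_c$ jumps to $2$ and that of $\Delta_{\mathbb H}$ jumps to $4$, so several compositions such as $d_cd_c^*$, $d_c^*d_c$, $(d_cd_c^*)^2$ appear on both sides and must be carefully matched using $d_c^2=0$, $(d_c^*)^2=0$, and the $d_c$- or $d_c^*$-closedness of forms of type $d_c\Delta^{-1}_{\mathbb H,h}\alpha$. A secondary technical point arises when $n=1$, where $a=4=Q$ for $\Delta_{\mathbb H,n}$ and $\Delta_{\mathbb H,n+1}$: here the left inverse $\Delta^{-1}_{\mathbb H,h}\Delta_{\mathbb H,h}$ differs from the identity by a constant vector $\beta_\alpha$ (Theorem \ref{global solution}(iii)), which one must show vanishes in each identity, for instance by noting that both sides lie in the image of a differential operator applied to $\alpha$ and hence have no constant component.
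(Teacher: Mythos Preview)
The paper does not give its own proof of this lemma; it merely cites \cite{BFP2}, Lemma 3.15, and states in one line that it ``follows from the previous lemma'' (Lemma \ref{his-her}). Your strategy---substituting $\Delta^{-1}_{\mathbb H,h}\alpha$ for $\alpha$ in the commutation identities of Lemma \ref{his-her}, then inverting the Laplacian on the other side, and obtaining the $d_c^*$ statements by Hodge duality---is exactly the route the paper indicates.

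One point deserves a sharper justification. When you ``apply $\Delta^{-1}_{\mathbb H,h+1}$ on the left'' to the identity $d_c\alpha=\Delta_{\mathbb H,h+1}\bigl(d_c\Delta^{-1}_{\mathbb H,h}\alpha\bigr)$, you are implicitly using $\Delta^{-1}_{\mathbb H,h+1}\Delta_{\mathbb H,h+1}=\mathrm{id}$ on the form $d_c\Delta^{-1}_{\mathbb H,h}\alpha$, which is smooth but \emph{not} compactly supported; Theorem \ref{global solution}(ii) only asserts this left-inverse identity on $\mc D$. The standard fix is a Liouville argument: the difference $u:=d_c\Delta^{-1}_{\mathbb H,h}\alpha-\Delta^{-1}_{\mathbb H,h+1}d_c\alpha$ satisfies $\Delta_{\mathbb H,h+1}u=0$ (both sides have $\Delta_{\mathbb H,h+1}$-image $d_c\alpha$), lies in some $L^q$ by Theorem \ref{hls folland}, and is therefore zero by hypoellipticity plus decay. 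Equivalently, one compares the two convolution kernels directly at the level of homogeneous distributions. This is what is carried out in the cited source; your sketch is correct in outline but would need this step made explicit to be complete.
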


\section{Poincar\'e inequalities in Heisenberg groups}
\label{Poincare}

In \cite{BFP2} we gave the definition of what we meant by  Poincar\'e inequalities for differential forms in $\he n$. 
We also distinguished from  global Poincar\'e inequalities versus its interior (or local) version. More precisely we give the following  definitions.
 \begin{definition}\label{poincare global def}

 If $1\le h\le 2n+1$ and $1\le p\le q \le \infty$, we say that the global $\he{}$-$\mathrm{Poincar\acute{e}}_{p,q}$ inequality holds in $E_0^h$ if 
there exists a constant $C$ such that,
 for every $d_c$-exact differential $h$-form $\omega$ in $L^p(\he n;E_0^h)$ there exists a differential $(h-1)$-form $\phi$ in $L^q(\he n,E_0^{h-1})$ such that $d_c\phi=\omega$ and
\begin{eqnarray}\label{H global poincare}
\|\phi\|_{L^{q}(\he n,E_0^{h-1})}\leq C\,\|\omega\|_{L^{p}(\he n,E_0^h)} \quad \mbox{ 
$\mathrm{global}\, \he{}$-$\mathrm{Poincar\acute{e}}_{p,q}(h)$.
}
\end{eqnarray}
\end{definition}

\begin{definition}\label{poincare def}

Given $1\le h\le 2n+1$ and $1\le p\le q \le \infty$, we say that the interior $\he{}$-$\mathrm{Poincar\acute{e}}_{p,q}$ inequality holds in $E_0^h$ if there exists $\lambda>1$ such that, if we set $B:=B(e,1)$ and $B_\lambda:=B(e,\lambda)$,
 there exists a constant $C>0$ such that,
 for every $d_c$-exact differential $h$-form $\omega$ in $L^p(B_\lambda;E_0^h)$ there exists a differential $(h-1)$-form $\phi$ in $L^q(B,E_0^{h-1})$ such that $d_c\phi=\omega$ and
\begin{eqnarray}\label{H poincare}
\|\phi\|_{L^{q}(B,E_0^{h-1})}\leq C\,\|\omega\|_{L^{p}(B_\lambda,E_0^h)} \quad 
\mbox{ 
$\mathrm{interior}\, \he{}$-$\mathrm{Poincar\acute{e}}_{p,q}(h)$.
}
\end{eqnarray}
\end{definition}

In \cite{BFP2} we  proved the validity of Poincar\'e inequalities when $p>1$, and in \cite{BFP3} we dealt with the case $p=1$. Finally, in
\cite{BFP5} we proved the following end-point Poincar\'e inequalities:

\begin{theorem}\label{poincare infty}
If \, $2\le  h \le { 2n}$,
we have:
\begin{itemize}
\item[i)] if $h \neq n+1$, then the global
$\he{}$-$\mathrm{Poincar\acute{e}}_{Q, \infty}(h)$ holds;
\item[ii)] if $h = n+1$, then the global
$\he{}$-$\mathrm{Poincar\acute{e}}_{n+1, \infty}(n)$ holds.
\end{itemize}
Analogous statements hold for interior Poincar\'e inequalities on $\he n$.
\end{theorem}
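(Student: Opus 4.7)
I would follow the Bourgain--Brezis duality strategy, adapted to the Heisenberg setting as outlined in the paper's method section. The global $L^\infty$ primitive estimate in part (i) (resp.\ part (ii)) is equivalent, via the closed range theorem for closed densely defined operators together with Hodge duality, to an $L^1$-Poincar\'e inequality on complementary degrees: every $d_c$-exact $\eta\in L^1(\he n, E_0^{\bullet})$ admits a primitive $\psi$ with $d_c\psi=\eta$ and
$$
\|\psi\|_{Q'} \le C\|\eta\|_1
$$
in the non-middle case, respectively $\|\psi\|_{(n+1)/n}\le C\|\eta\|_1$ in the middle case. The first step is to set up this duality carefully, identifying the pre-adjoint of $d_c$ acting on an appropriate subspace of $L^\infty$ and verifying that a closed-range estimate for the adjoint yields the asserted $L^\infty$ bound.

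For the dual $L^1$-estimate, the natural candidate primitive is $\psi_0 := d_c^*\Delta_{\mathbb H}^{-1}\eta$. Using Lemma \ref{comm} together with $d_c\eta=0$, one checks that $d_c\psi_0 = \eta$ (with the obvious modifications from the middle-degree commutation relations). The convolution kernel of $d_c^*\Delta_{\mathbb H}^{-1}$ has type $a-1$, with $a$ the order of $\Delta_{\mathbb H}$: $a=2$ away from the middle degree and $a=4$ in the middle degree. At the endpoint $L^1\to L^{Q'}$ (resp.\ $L^1\to L^{(n+1)/n}$), Hardy--Littlewood--Sobolev (Theorem \ref{hls folland}) yields only a weak-type statement; the strong-type replacement is the Chanillo--Van Schaftingen inequality (Theorem \ref{chanillo_van}), which exploits the $\mathbb H$-divergence-free structure encoded in the closedness of $\eta$. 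This yields the bound on a smooth dense subspace, whence on the whole image of $d_c$ by density. The interior version stated in the theorem would then follow by cut-off localization combined with the global estimate, using Theorem \ref{folland stein varia}(v)-(vi) to convert between Folland--Stein and Euclidean smoothness during the localization.

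The main obstacle is part (ii). In the middle degree $d_c$ is second order, so Lemma \ref{leibniz}(ii) produces a second-derivative correction term $P_0^n(W^2\zeta)$ whenever one localizes with a cut-off $\zeta$; consequently the naive duality argument generates second-order errors that cannot be controlled by $L^{(n+1)/n}$ bounds on $\psi$ alone. One is thus forced to upgrade the functional setting to capture first horizontal derivatives of $\psi$ (a Beppo Levi–type space is the natural candidate, as the method section already anticipates) and to establish a Gagliardo--Nirenberg–type inequality adapted to the fourth-order Laplacian $\Delta_{\mathbb H, n+1}$, bootstrapping Theorem \ref{chanillo_van} through the kernel structure provided by Theorem \ref{global solution}. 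Once this finer estimate is in hand, the duality set up in the first step closes the argument and delivers the claimed $L^\infty$ primitive in both the non-middle and the middle degree.
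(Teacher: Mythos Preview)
This theorem is not proved in the present paper: it is quoted from \cite{BFP5} as background, and the paper's contribution is the \emph{improvement} stated immediately afterwards (Theorem~\ref{main global}), where the $L^\infty$ primitive is upgraded to a $\mc C_0$ primitive. So there is no proof here to compare your proposal against.

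That said, what you have sketched is essentially the strategy the paper uses to prove the stronger Theorem~\ref{main global}, not Theorem~\ref{poincare infty}. In particular, the duality argument you describe cannot be run directly on $L^\infty$: the dual of $L^\infty$ is not $L^1$, so the closed-range machinery of Theorem~\ref{predual} does not apply with $E=L^\infty$. The paper resolves this by taking $E=\mc C_0$, whose dual is identified via Riesz with finite-mass currents (Remark~\ref{riesz}); this is the ``appropriate subspace of $L^\infty$'' you allude to, but the choice is not incidental --- it is what makes the adjoint computation tractable. Your outline of the dual $L^1$ inequality (via $d_c^*\Delta_{\mathbb H}^{-1}$, Chanillo--Van~Schaftingen replacing the failed endpoint HLS, and the Beppo Levi upgrade in the middle degree) matches the paper's route to Theorem~\ref{main global} quite closely, including the need for a new Gagliardo--Nirenberg inequality (Theorem~\ref{bfp1}). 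So your proposal, once the function space is corrected to $\mc C_0$, would prove the stronger continuous-primitive result, from which Theorem~\ref{poincare infty} follows trivially; but it is not how \cite{BFP5} obtains the bare $L^\infty$ bound.
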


The aim of the present paper is to improve Theorem \ref{poincare infty}
by proving the following two statements, the first one of which is a global estimate and the 
second one that is  its local counterpart.
\begin{theorem}\label{main global}
\begin{itemize}
The following global Poincar\'e inequalities hold:
\item[i)] if $\,2\le  h \le { 2n+1}$, $h\neq n+1$, then a $d_c$-exact  form  $\omega\in L^Q(\he n, E_0^h)$
admits a primitive in $\phi\in \mc C_0(\he n, E_0^{h-1})$ such that
$$
\|\phi\|_{\mc C_0(\he n, E_0^{h-1})} \le C \|\omega\|_{L^Q(\he n, E_0^h)};
$$
\item[ii)] a $d_c$-exact  form  $\omega\in L^{Q/2}(\he n, E_0^{n+1})$
admits a primitive in $\phi\in \mc C_0(\he n, E_0^{n})$ such that
$$
\|\phi\|_{\mc C_0(\he n, E_0^{n})} \le C \|\omega\|_{L^{Q/2}(\he n, E_0^{n+1})}.
$$
\end{itemize}

\end{theorem}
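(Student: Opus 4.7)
The plan is to follow the Bourgain-Brezis duality strategy sketched in the introduction. View $d_c:\mc C_0(\he n,E_0^{h-1})\to L^p(\he n,E_0^h)$ as a closed, densely-defined unbounded operator, with $p=Q$ in case (i) and $p=Q/2$ in the middle-degree case (ii). By the Banach closed range theorem, producing a continuous primitive $\phi$ tending to zero at infinity with $\|\phi\|_\infty\le C\|\omega\|_p$ for every $d_c$-exact $\omega\in L^p$ is equivalent to a closed range estimate for the adjoint $(d_c)^*$, which sends the predual of $L^p$ into the space $\mc M$ of Rumin currents of finite mass. Concretely, I would establish the dual statement: for every Rumin current $T=\partial_c S$ in the image of $(d_c)^*$ there exists a primitive $\phi$ in the predual with $\|\phi\|\lesssim \|T\|_{\mc M}$.

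Next I would realize this dual estimate in two steps. First, the $L^1$-Poincar\'e inequality of \cite{BFP3} yields, for any $d_c$-exact $L^1$-Rumin form $\omega$, a primitive $\phi$ with $\|\phi\|_{p'}\le C\|\omega\|_1$. Second, approximate a finite-mass current $T=\partial_c S$ by smooth currents $T_\varepsilon$ defined by $L^1$-Rumin forms via group convolution with a mollifier (using \eqref{group convolution} and the left-invariance of $d_c$); the corresponding sequence of primitives is bounded in $L^{p'}$, and a weak compactness argument extracts the desired primitive of $T$ with the required mass bound. That delivers the closed range estimate for $(d_c)^*$, and hence, by duality, Theorem \ref{main global}(i).

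For the middle degree (case (ii)), the scheme must be modified as the introduction signals. The fact that $d_c$ on $E_0^n$ is second order (see Lemma \ref{leibniz}(ii)) means that the cut-off manipulations required to identify the domain of the adjoint bring in second-order derivatives, so one must enlarge the predual from $L^{p'}$ to the Beppo Levi space $\BL{1}{Q'}$ of Rumin forms whose coefficients have first derivatives in $L^{Q'}$. The input $L^1$-Poincar\'e inequality then must be strengthened to $\|\phi\|_{\BL{1}{Q'}}\le C\|\omega\|_1$, a bound not available directly from \cite{BFP3}. I would prove it by mimicking the BFP3 scheme, but starting from the Gagliardo-Nirenberg inequalities of \cite{BFP1} (the announced Theorem \ref{bfp1}) and exploiting the commutation identities of Lemma \ref{comm} together with Rumin's Laplacian inverse of Theorem \ref{global solution}. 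A density result for $\mc D(\he n,E_0^{\bullet})$ in $\BL{1}{Q'}$, announced in Section \ref{more}, is then needed to justify the adjoint computation; once in place, the mollification/weak-limit argument used in case (i) transplants to currents of finite mass in $(\BL{1}{Q'})^*$, and closing the range of $(d_c)^*$ in this setting proves case (ii).

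The main obstacle is exactly this degree-$n+1$ case: the combination of the second-order Leibniz remainder (involving $P_0^n(W^2\zeta)$ in Lemma \ref{leibniz}(ii)) and the approximation of finite-mass currents by $L^1$-forms forces both a new $L^1$-to-Beppo-Levi Gagliardo-Nirenberg estimate and a nontrivial density/identification step for the adjoint's domain. Everything else, including the appeal to the closed range theorem to pass from the dual closed range estimate back to the existence of a $\mc C_0$ primitive with the stated bound, is uniform across the two cases.
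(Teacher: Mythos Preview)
Your plan is essentially the paper's own proof: the same duality setup (Definitions \ref{spaces} and \ref{spaces bis}), the same identification of $E^*$ with finite-mass currents (Remark \ref{riesz}), the same mollification to approximate finite-mass boundaries by $d_c$-closed $L^1$ forms (Proposition \ref{AG}), the same upgrading of the $L^1$ Poincar\'e inequality of \cite{BFP3} to currents (Theorems \ref{poincareglobal currents uno} and \ref{poincareglobal currents bis}), and for case (ii) the same passage to the Beppo Levi predual with the strengthened Gagliardo--Nirenberg estimate (Theorem \ref{bfp1}) and the density result (Lemma \ref{approx bis}).

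There is, however, one genuine step you do not mention and which is not a formality. Closedness of the range of $A^*$ only yields $A(\mc D(A))=(\ker A^*)^\perp$; it does not by itself place the $d_c$-exact forms of $L^p$ inside that range. You still need to check that if $\omega\in L^Q(\he n,E_0^h)$ is $d_c$-closed and $\beta\in L^{Q'}(\he n,E_0^{2n+1-h})$ (respectively $\beta\in\BLh{1}{Q'}{n}$ in case (ii)) is $d_c$-closed, then $\int_{\he n}\omega\wedge\beta=0$ (respectively $\Scal{\Omega}{\beta}=0$). This is Lemma \ref{parabolic} for case (i) and Lemma \ref{parabolic bis} for case (ii). Neither is automatic: one must first produce a primitive of $\omega$ or $\beta$ with the sharp integrability given by the global Poincar\'e inequalities of \cite{BFP2} and \cite{BFP5}, then run a cutoff argument in which the Leibniz error terms (controlled via Lemma \ref{leibniz}) are shown to vanish in the limit by a H\"older balance of exponents. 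In the middle-degree case this is again more delicate, since the primitive lives one Sobolev order higher and the cutoff error involves both $W\chi_N$ and $W^2\chi_N$. Without this orthogonality step, the closed range conclusion does not connect to the class of $d_c$-exact $L^p$ forms, and the argument is incomplete.
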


%\begin{theorem}\label{main global bis} 
%Let $\Omega$ be a $n$-current identified with an element
%of  $( \BLh{1}{Q/(Q-1)}{n} )^*$ (i.e. of the form described in
%Proposition \ref{BL dual}). 
 %
 %If $\partial\Omega=0$, then there exists $\phi\in \mc C_0(\he n, E_0^{n})$
%such that $\partial T_\phi = \Omega$ and
%$$
%\|\phi\|_{C_0(\he n, E_0^{n})} \le C \|\Omega\|_{( \BLh{1}{Q/(Q-1)}{n} )^*}.
%$$
%
%
%
%\end{theorem}
\begin{remark}\label{r: dopo}
Statement ii) of 
Theorem \ref{main global} will follow from a more general result presented in Theorem \ref{main global bis} below.
\end{remark}

We are able to prove also an interior Poincar\'e inequality which is a local version of previous theorem (i.e.  that there exists a continuous primitive). In the following statements, exact means the $d$ or $d_c$ of a form with distributional coefficients.

\begin{theorem}\label{main local}
There exists $\lambda>1$ such that, if we set $B:=B(e,1)$ and $B_\lambda:=B(e,\lambda)$,   
there exists $C>0$:
\begin{itemize}
\item[i)] if $2\le  h \le { 2n+1}$, $h\neq n+1$, then a $d_c$-exact  form  $\omega\in L^Q(B_\lambda, E_0^h)$
admits a primitive in $\phi\in \mc C(\overline B, E_0^{h-1})$ such that
\begin{equation}\label{feb 19 eq:3}
\|\phi\|_{\mc C(\overline B, E_0^{h-1})} \le C \|\omega\|_{L^Q(B_\lambda, E_0^h)};
\end{equation}
\item[ii)] a $d_c$-exact form  $\omega\in L^{Q/2}(B_\lambda, E_0^{n+1})$
admits a primitive in $\phi\in \mc C(\overline B, E_0^{n})$ such that
\begin{equation}\label{feb 19 eq:4}
\|\phi\|_{\mc C(\overline B, E_0^{n})} \le C \|\omega\|_{L^{Q/2}(B_\lambda, E_0^{n+1})}.
\end{equation}
\end{itemize}

\end{theorem}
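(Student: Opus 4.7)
The plan is to reduce Theorem~\ref{main local} to the global statement Theorem~\ref{main global} via a cut-off/extension procedure, using as an intermediate tool the interior $L^\infty$--Poincar\'e inequality of Theorem~\ref{poincare infty}. I fix intermediate radii $1 < \mu < \mu' < \lambda$ and a cut-off $\chi \in \mc D(B_{\mu'})$ with $\chi \equiv 1$ on $B_\mu$; in particular $\chi$ and all its horizontal derivatives vanish outside $B_{\mu'}$ and $\chi \equiv 1$ on a neighbourhood of $\overline B$.

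The basic scheme is as follows. Given a $d_c$-exact $\omega \in L^p(B_\lambda, E_0^h)$ (with $p = Q$ in case (i), $p = Q/2$ in case (ii)), I would apply Theorem~\ref{poincare infty} on the pair $(B_{\mu'}, B_\lambda)$ to produce a primitive $\tilde\phi \in L^\infty(B_{\mu'}, E_0^{h-1})$ with $d_c\tilde\phi = \omega$ on $B_{\mu'}$ and $\|\tilde\phi\|_\infty \le C\|\omega\|_{L^p(B_\lambda)}$. The product $\chi\tilde\phi$, extended by zero, becomes a compactly supported $(h-1)$-form on $\he n$, and $\tilde\omega := d_c(\chi\tilde\phi)$ is automatically a $d_c$-exact form on $\he n$ coinciding with $\omega$ on $B$. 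Applying Theorem~\ref{main global} to $\tilde\omega$ then produces a continuous primitive $\phi \in \mc C_0(\he n, E_0^{h-1})$ whose restriction to $\overline B$ is the sought object, so it remains to control $\|\tilde\omega\|_{L^p(\he n)}$ by $\|\omega\|_{L^p(B_\lambda)}$.

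In case (i), $h - 1 \neq n$ so Lemma~\ref{leibniz}(i) yields the clean identity $\tilde\omega = \chi\omega + P_0^{h-1}(W\chi)\tilde\phi$, where $P_0^{h-1}(W\chi)$ is a zero-order operator in $\tilde\phi$. Since both summands are supported in $\overline{B_{\mu'}}$, H\"older's inequality yields
$$
\|\tilde\omega\|_{L^Q(\he n)} \le \|\omega\|_{L^Q(B_\lambda)} + C\,|B_{\mu'}|^{1/Q}\,\|W\chi\|_\infty\,\|\tilde\phi\|_\infty \le C\,\|\omega\|_{L^Q(B_\lambda)},
$$
which, via the global Theorem~\ref{main global}(i), settles this case.

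The main obstacle is case (ii), where $h = n+1$ forces $\tilde\phi$ into the critical middle degree $n$. Lemma~\ref{leibniz}(ii) produces an additional \emph{first-order} commutator $P_1^n(W\chi)\tilde\phi$ depending on $W\tilde\phi$, which cannot be controlled by $\|\tilde\phi\|_\infty$ alone. The remedy I plan is to sharpen Step~1 in this degree by producing a primitive $\tilde\phi \in W^{1,Q/2}_{\mathrm{loc}}(B_{\mu'}, E_0^n)$, built from the Rumin Laplacian machinery of Theorem~\ref{global solution} and Lemma~\ref{comm}: after localizing $\omega$ by a further bump function and correcting for the failure of exactness of the cut-off data, one writes the primitive as a fixed composition of $d_c^*$ with the fundamental solution of $\Delta_{\mathbb H, n+1}$, yielding a kernel of type $2$ which by Theorem~\ref{hls folland} sends $L^{Q/2}$ into $W^{2,Q/2}_{\mathrm{loc}} \subset W^{1,Q/2}_{\mathrm{loc}}$ with the required estimate. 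Inserting this improved $\tilde\phi$ into the cut-off scheme then bounds $\|\tilde\omega\|_{L^{Q/2}(\he n)}$ by $\|\omega\|_{L^{Q/2}(B_\lambda)}$, and Theorem~\ref{main global}(ii) concludes the proof.
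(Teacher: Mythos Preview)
Your overall strategy---produce a local primitive with enough regularity, cut off, and feed the resulting globally defined exact form into Theorem~\ref{main global}---is exactly the paper's approach. In case~(i) the only difference is cosmetic: you invoke the interior $L^\infty$ Poincar\'e inequality (Theorem~\ref{poincare infty}) for the intermediate primitive $\tilde\phi$, whereas the paper uses the $L^Q$--$L^Q$ interior inequality from \cite{BFP2}; either choice feeds cleanly into the Leibniz estimate for $d_c(\chi\tilde\phi)$.

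In case~(ii) you correctly diagnose the obstacle (the second-order Leibniz commutator in degree~$n$ forces $W^{1,Q/2}$ control on the intermediate primitive) and you point toward the right cure, but your construction of this $W^{1,Q/2}$ primitive is not yet a proof. The phrase ``after localizing $\omega$ by a further bump function and correcting for the failure of exactness of the cut-off data'' hides precisely the nontrivial step: once you multiply $\omega$ by a bump, the result is no longer closed, and $d_c^*\Delta_{\mathbb H,n+1}^{-1}$ applied to it is no longer a primitive; you have said nothing about how the correction is built or estimated. The paper makes this step explicit by invoking the approximate homotopy formula $\omega = d_cT\omega + S\omega$ from \cite{BFP2} (see \eqref{9 nov}), where $T:L^{Q/2}\to W^{2,Q/2}$ already has the required mapping property and $S$ is smoothing; the primitive is then $\Phi=(KS+T)\omega$, with $K$ the Iwaniec--Lutoborski/Rumin homotopy applied only to the smooth error $S\omega$, and one checks $\|\Phi\|_{W^{1,Q/2}}\le C\|\omega\|_{L^{Q/2}}$. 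Your sketch with $d_c^*\Delta_{\mathbb H,n+1}^{-1}$ is essentially how the operator $T$ is constructed in \cite{BFP2}, so you are on the right track; but you should either cite that homotopy directly, or spell out concretely how the non-closed remainder is handled, otherwise the argument has a gap at its most delicate point.
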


A result for general compact Riemannian and contact subRiemannian manifolds follows.

\begin{theorem}\label{main compact}
Let $M$ be a compact $m$-dimensional manifold.
\begin{itemize}
\item[i)] First assume that $M$ is equipped with a smooth Riemannian metric. There exists $C>0$ such that every exact $h$-form $\omega$ on $M$ with $L^m$ coefficients, $h\ge 2$, admits a continuous primitive $\phi$ such that
\begin{equation}\label{compacti}
\|\phi\|_{\mc C(M, \bigwedge^{h-1})} \le C \|\omega\|_{L^m(M, \bigwedge^h)}.
\end{equation}

Next assume that $m=2n+1$ is odd, that $M$ carries a contact structure and a smooth subRiemannian metric. Let $Q=2n+2$.
\item[ii)] if $2\le  h \le { 2n+1}$, $h\neq n+1$, then every $d_c$-exact  form  $\omega\in L^Q(M, E_0^h)$
admits a primitive in $\phi\in \mc C(M, E_0^{h-1})$ such that
\begin{equation}\label{compactii}
\|\phi\|_{\mc C(M, E_0^{h-1})} \le C \|\omega\|_{L^Q(M, E_0^h)};
\end{equation}
\item[iii)] every $d_c$-exact form  $\omega\in L^{Q/2}(M, E_0^{n+1})$
admits a primitive in $\phi\in \mc C(M, E_0^{n})$ such that
\begin{equation}\label{compactiii}
\|\phi\|_{\mc C(M, E_0^{n})} \le C \|\omega\|_{L^{Q/2}(M, E_0^{n+1})}.
\end{equation}
\end{itemize}

\end{theorem}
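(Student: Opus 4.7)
My plan is to deduce Theorem \ref{main compact} from the local version Theorem \ref{main local} via a Darboux covering and the Bourgain-Brezis duality principle of Section \ref{BB duality}. The three parts are parallel; I describe the contact case (ii)--(iii) in detail, since part (i) is the Euclidean analogue obtained with $(\R^m, d, m)$ in place of $(\he n, d_c, Q)$.

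\textbf{Covering.} By the contact Darboux theorem, cover $M$ by finitely many open charts $(V_i, \Phi_i)$, $i=1,\dots,N$, where $\Phi_i: V_i \to B(e,\lambda) \subset \he n$ is a contactomorphism and $U_i := \Phi_i^{-1}(B(e,1))$ still cover $M$. By compactness, the push-forward to $\he n$ of any smooth sub-Riemannian metric on $M$ is bi-Lipschitz equivalent to the standard Heisenberg metric, with constants uniform in $i$. Since the Rumin complex is contact-invariant, $d_c$ transports correctly under $\Phi_i$ and $L^p$ norms transform by bounded factors. Fix a smooth partition of unity $\{\chi_i\}$ with $\operatorname{supp}\chi_i \subset U_i$.

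\textbf{Duality and closed range.} I regard $d_c: \mc C(M, E_0^{h-1}) \to L^p(M, E_0^h)$ as a closed densely-defined operator, with $p = Q$ in part (ii) and $p = Q/2$ in part (iii). By the closed range theorem, the sought Poincaré estimate is equivalent to closed range of the adjoint $\partial_c: L^{p'}(M, E_0^h) \to \mc M(M, E_0^{h-1})$ (with the domain replaced by a Beppo Levi space in part (iii), in the spirit of Section \ref{caso 2}). That closed range amounts to an $L^1$-Poincaré inequality for Rumin currents of finite mass on $M$, which I obtain from the corresponding local $\he n$ results (Sections \ref{currents and measures} and \ref{caso 2}) via the partition of unity: decompose any $T = \partial_c S$ as $T = \sum_i \chi_i T$, push each piece into $\he n$ via $\Phi_i$, solve locally, and reassemble, controlling the commutator errors $[d_c, \chi_i]$ via Lemma \ref{leibniz}. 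Once closed range of $d_c$ is established, its range contains every $d_c$-exact $L^p$ form by density: smooth $d_c$-exact forms possess smooth primitives by Hodge theory for the Rumin complex on $M$, and smooth exact forms are $L^p$-dense in $L^p$-exact forms. The open mapping theorem then yields \eqref{compactii} and \eqref{compactiii}.

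\textbf{Main obstacle.} The delicate step is the partition-of-unity reassembly in part (iii), where $d_c$ is second-order: the commutators $[d_c, \chi_i]$ contain second-order horizontal derivatives of the cutoffs (Lemma \ref{leibniz}), and the natural Poincaré dual of $\mc M$ involves a Beppo Levi space rather than plain $L^{p'}$. One must track the estimates of Section \ref{caso 2} through the cover, checking that the bi-Lipschitz distortions introduced by $\Phi_i$ perturb the Beppo Levi norms only by bounded factors. A naive attempt --- patching the local continuous primitives $\psi_i$ provided directly by Theorem \ref{main local} into $\phi := \sum_i \chi_i \psi_i$ --- does not work, because the error $d_c\phi - \omega = \sum_i [d_c, \chi_i]\psi_i$ does not vanish (the $\psi_i$ differ across charts), and bounding it in sup norm ultimately forces the same $L^1$-to-$L^{p'}$ (or Beppo Levi) dual estimate that the duality route supplies.
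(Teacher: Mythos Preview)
Your duality set-up on $M$ is sound, but the step ``obtain the $L^1$-Poincar\'e inequality for currents on $M$ from the local $\he n$ results via the partition of unity'' has a genuine gap. If $T=\partial_c S$ is a boundary of finite mass on $M$, then $\chi_i T$ is \emph{not} a boundary: one has $\chi_i T=\partial_c(\chi_i S)-[\partial_c,\chi_i]S$, and the correction term depends on $S$, whose mass is completely uncontrolled (only $\mc M(T)$ is given). Hence you cannot feed $(\Phi_i)_*(\chi_i T)$ into Theorem~\ref{poincareglobal currents uno} or \ref{poincareglobal currents bis}, because those theorems require the datum to be of the form $\partial_c(\cdot)$. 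The commutator bookkeeping you invoke from Lemma~\ref{leibniz} controls $[d_c,\chi_i]$ acting on the \emph{primitive} one is constructing, not on the unknown $S$. This is exactly the same obstruction you correctly diagnose for the ``naive patching'' of continuous primitives; moving to the dual side does not dissolve it.

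The paper avoids this obstacle by a different route: instead of dualizing on $M$, it works directly on the primal side and uses the Leray/\v Cech--de~Rham machinery (Section~\ref{compact}). One first applies Theorem~\ref{main local} once on each $U_i$ to get continuous local primitives $\psi_i$ (this is the single nonlinear step). The discrepancies $\psi_i-\psi_j$ on overlaps are $d_c$-closed continuous $(h-1)$-forms, to which one applies the \emph{linear} $\mc C^0$-bounded homotopy $P$ of Proposition~\ref{C0}; iterating down to degree $0$ pushes the gluing obstruction into the simplicial cohomology of the finite nerve, where exact cohomology vanishes trivially. A smoothing homotopy (Proposition~\ref{sh}) lets one apply $d_c$ after each use of the partition of unity, circumventing the Leibniz issue in degree $n+1$ without any Beppo Levi analysis on $M$. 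In short, the paper globalizes by iterated \v Cech descent rather than by a single partition-of-unity step, and this is what makes the argument close.
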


%{\color{green}
%\begin{definition} \label{equiv global Sobolev}
%If $1\le h\le 2n+1$, $1\le p \le q \le \infty $,
%we say that the global
%$\he{}$-$\mathrm{Sobolev}_{p,q}(h)$ inequality holds if there exists a constant $C$ such that 
% for every compactly supported $d_c$-exact differential $h$-form $\omega$ in $L^p(\he n;E_0^h)$ there exists a 
%compactly supported differential $(h-1)$-form $\phi$ in $L^q(\he n,E_0^{h-1})$ such that $d_c\phi=\omega$ 
%and
%\begin{eqnarray}\label{H global Sobolev}
%\|\phi\|_{L^{q}(\he n,E_0^{h-1})}\leq C\,\|\omega\|_{L^{p}(\he n,E_0^h)} \quad 
%\mbox{ 
%$\mathrm{global}\, \he{}$-$\mathrm{Sobolev}_{p,q}(h)$.
%}
%\end{eqnarray}
%\end{definition}
%
%\begin{theorem}
%\label{sobolev q infty}
%If\, $2\le  h \le { 2n+1}$,
%we have:
%\begin{itemize}
%\item[i)] if $h \neq n+1$, then the interior
%$\he{}$-$\mathrm{Sobolev}_{Q, \infty}(h)$ holds;
%\item[ii)] if $h = n+1$, then the interior
%$\he{}$-$\mathrm{Sobolev}_{Q/2, \infty}(n)$ holds.
%\end{itemize}
%Analogous statements hold for global Sobolev inequalities on $\he n$.
%\end{theorem}
%}

\section{Bourgain-Brezis' duality argument for the global Poincar\'e inequality}\label{BB duality}

The basic idea for proving the Poincar\'e inequalities stated in Theorem \ref{main global} is a duality argument inspired by Bourgain-Brezis.

Let us  collect here a few facts of functional analysis as stated in Brezis' book \cite{brezis}.

\begin{theorem}[\cite{brezis}, Section 2.7]
\label{predual}
Let $A:\mc D(A)\subset E\to F$ be a closed unbounded operator between Banach spaces. 
Assume that $\mc D(A)$ is dense in $E$. Then the adjoint $A^*:\mc D(A^*)\subset F^* \to E^*$ is uniquely defined, and closed. 

1. The following are equivalent:
\begin{enumerate}
  \item $A(\mc D(A))\subset F$ is closed.
  \item $A^*(\mc D(A^*))\subset E^*$ is closed.
  \item $A(\mc D(A))=\mathrm{Ker}(A^*)^\perp$.
  \item $A^*(\mc D(A^*))=\mathrm{Ker}(A)^\perp$.
\end{enumerate}

2. The following are equivalent:
\begin{enumerate}
  \item $\exists C$, $\forall f^*\in \mc D(A^*)$, $\|f^*\|_{F^*}\leq C\,\|A^*f^*\|_{E^*}$.
  \item $A(\mc D(A))=F$.
  \item $A^*(\mc D(A^*))$ is closed and $\mathrm{Ker}(A^*)=0$.
\end{enumerate}

3. The following are equivalent:
\begin{enumerate}
  \item $\exists C$, $\forall e\in \mc D(A)$, $\|e\|_{E}\leq C\,\|Ae\|_{F}$.
  \item $A^*(\mc D(A^*))=E^*$.
  \item $A(\mc D(A))$ is closed and $\mathrm{Ker}(A)=0$.
\end{enumerate}
\end{theorem}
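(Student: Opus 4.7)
Since this is the statement of a classical functional-analytic result, my plan is to give the standard proof in the spirit of Brezis' book, organizing it in three stages that correspond to the three parts of the theorem.

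First I would establish the basic duality dictionary. For any densely defined $A$ the adjoint $A^*$ is uniquely defined and closed, and by definition one has $\ker(A^*)=A(\mc D(A))^{\perp}$ inside $F^{*}$; by Hahn-Banach this self-dualizes to $\overline{A(\mc D(A))} = {}^{\perp}\ker(A^*)$, which, with the convention in the statement, reads $\overline{A(\mc D(A))}=\ker(A^*)^{\perp}$. Dually, using that $A$ itself is closed, one gets $\ker(A)={}^{\perp}A^*(\mc D(A^*))$. These two identities immediately give the implications (1)$\Leftrightarrow$(3) and (2)$\Leftrightarrow$(4) in Part 1, so only the equivalence (1)$\Leftrightarrow$(2) requires real work.

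For the heart of Part 1 I would run the closed range theorem of Banach. Passing to the quotient $\tilde E := \mc D(A)/\ker(A)$ endowed with the graph norm, the induced map $\tilde A : \tilde E \to A(\mc D(A))$ is a continuous bijection of Banach spaces (this uses that $A$ is closed, so $\mc D(A)$ is complete in the graph norm, and that $A(\mc D(A))$ is closed in $F$). The open mapping theorem then provides a bounded inverse, i.e.\ a two-sided estimate $\|\tilde e\|_{\tilde E} \le C\|\tilde A\tilde e\|_F$. Transposing this estimate, together with the identification of the dual of $\tilde E$ with $\ker(A)^{\perp}\subset E^{*}$, yields the corresponding lower bound for $A^*$ on a complement of $\ker(A^*)$, and from this a direct Cauchy-sequence argument shows that $A^*(\mc D(A^*))$ is closed. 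The reverse implication is symmetric, replacing $A$ by $A^*$ and using the Hahn-Banach theorem to transfer back from the bidual.

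Finally, Parts 2 and 3 are obtained from Part 1 by grafting on the standard equivalence \emph{coercivity} $\Leftrightarrow$ \emph{closed range plus trivial kernel}. On the one hand, an estimate $\|f^*\|_{F^*}\le C\|A^*f^*\|_{E^*}$ forces $\ker(A^*)=0$ and, via Cauchy sequences in $\mc D(A^*)$ with the graph norm, closedness of $A^*(\mc D(A^*))$. Conversely, if $\ker(A^*)=0$ and the range is closed, applying the open mapping theorem to the continuous bijection $A^*:\mc D(A^*)\to A^*(\mc D(A^*))$ gives back the estimate. Combining this with (1)$\Leftrightarrow$(2) in Part 1 and with (3) converts these into the surjectivity statement $A(\mc D(A))=F$, proving Part 2; Part 3 is the verbatim dual argument, with the roles of $A$ and $A^*$ exchanged. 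The only point that requires vigilance, and which I expect to be the main technical nuisance, is the consistent use of the graph norm on $\mc D(A)$ and $\mc D(A^*)$: all open-mapping and closed-graph invocations must be made in these Banach spaces rather than in $E$ or $F^*$, since the operators are only unbounded with respect to the ambient norms.
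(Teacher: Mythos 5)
The paper does not prove this theorem at all: it is quoted directly from Brezis' book (Section 2.7), so there is no in-paper argument to compare yours against. Your sketch is essentially the standard textbook proof --- the annihilator identities, the open mapping theorem applied to the graph-norm quotient $\mc D(A)/\mathrm{Ker}(A)$, and a Hahn--Banach extension --- and its overall architecture is sound.

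Two points in your plan need repair. First, the claim that $(2)\Leftrightarrow(4)$ in Part 1 follows \emph{immediately} from the identity $\mathrm{Ker}(A)={}^{\perp}\bigl(A^*(\mc D(A^*))\bigr)$ is wrong in the direction $(2)\Rightarrow(4)$: taking annihilators twice returns the \emph{weak-*} closure of $A^*(\mc D(A^*))$ in $E^*$, which may a priori be strictly larger than its norm closure, so norm-closedness of the range of $A^*$ does not trivially give $A^*(\mc D(A^*))=\mathrm{Ker}(A)^{\perp}$. That implication is one of the genuinely nontrivial parts of the closed range theorem; the standard route is $(1)\Rightarrow(4)$ via Hahn--Banach (given $e^*\in\mathrm{Ker}(A)^{\perp}$, the functional $Ae\mapsto\langle e^*,e\rangle$ on the closed range of $A$ is well defined, bounded by the open mapping theorem applied to the graph-norm quotient, and extends to some $f^*\in F^*$ with $A^*f^*=e^*$). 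As written, your architecture would leave $(4)$ implying the other three but not conversely. Second, the ``identification of the dual of $\tilde E$ with $\mathrm{Ker}(A)^{\perp}\subset E^*$'' is false when $\tilde E=\mc D(A)/\mathrm{Ker}(A)$ carries the graph norm (it holds for $E/\mathrm{Ker}(A)$ in the $E$-norm); this is exactly the graph-norm pitfall you flag at the end, and the surjectivity step above avoids it because one only needs the bound $\|e'\|_E\le C\|Ae'\|_F$ for some preimage $e'$, not a duality identification. With these repairs your proof is the one in Brezis, which is what the paper relies on.
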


Combining all items of Theorem \ref{predual}, one gets

\begin{lemma}
\label{predualpoin}
Let $A:\mc D(A)\subset E\to F$ be a closed unbounded operator between Banach spaces. 
Assume that $\mc D(A)$ is dense in $E$. Then the following are equivalent.
\begin{enumerate}
  \item $A(\mc D(A))\subset F$ is closed.
  \item $\exists C$, $\forall e^*\in A^*(\mc D(A^*))$, $\exists f^*\in \mc D(A^*)$, $A^*f^*=e^*$ and $\|f^*\|_{F^*}\leq C\,\|e^*\|_{E^*}$.
   \item $A^*(\mc D(A^*))\subset E^*$ is closed.
    \item $\exists C$, $\forall f\in A(\mc D(A))$, $\exists e\in \mc D(A)$, $Ae=f$ and $\|e\|_{E}\leq C\,\|f\|_{F}$.
\end{enumerate}
\end{lemma}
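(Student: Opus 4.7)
My strategy is to bootstrap from Theorem \ref{predual}, which already supplies the equivalence (1)$\iff$(3). What remains is to establish (1)$\iff$(4) and, dually, (2)$\iff$(3); both are ``closed range implies bounded solvability'' statements that I will obtain by combining the open mapping theorem with a telescoping approximation. I will apply the first argument to $A$ itself, and then replay it for the adjoint $A^*$ (which is closed on $F^*$ but not necessarily densely defined -- this will not matter, since density of the domain is never used in either step).

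For (1)$\Rightarrow$(4) I would equip $\mc D(A)$ with the graph norm $\|e\|_A:=\|e\|_E+\|Ae\|_F$, under which it is a Banach space because $A$ is closed. The kernel $\ker A$ is closed in the graph norm, so $\mc D(A)/\ker A$ is Banach; assumption (1) makes $A(\mc D(A))$ a closed subspace of $F$, hence a Banach space. The induced map $\tilde A:\mc D(A)/\ker A\to A(\mc D(A))$ is a continuous bijection, and the open mapping theorem yields a constant $C$ such that every $f\in A(\mc D(A))$ admits a representative $e\in\mc D(A)$ with $Ae=f$ and $\|e\|_E\le\|e\|_A\le C\|f\|_F$, which is exactly (4). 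The converse (4)$\Rightarrow$(1) is the standard telescoping argument: given $f_n\in A(\mc D(A))$ with $f_n\to f$ in $F$, I pass to a subsequence with $\|f_{n_{k+1}}-f_{n_k}\|_F\le 2^{-k}$, use (4) to choose $g_k\in\mc D(A)$ satisfying $Ag_k=f_{n_{k+1}}-f_{n_k}$ and $\|g_k\|_E\le C\,2^{-k}$, and pick a starting $e_0\in\mc D(A)$ with $Ae_0=f_{n_1}$. Setting $u_k:=e_0+g_1+\cdots+g_{k-1}$ gives a Cauchy sequence in $E$ with $Au_k=f_{n_k}\to f$; closedness of $A$ then forces the $E$-limit $u$ to lie in $\mc D(A)$ with $Au=f$, so $f\in A(\mc D(A))$.

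To close the loop I will prove (2)$\iff$(3) by repeating the two arguments above verbatim with $A$ replaced by $A^*$. The adjoint acts between the Banach spaces $F^*$ and $E^*$ and is closed, so the graph norm on $\mc D(A^*)$ is complete and both the open mapping and telescoping steps transpose without change. I do not anticipate a genuine obstacle: the whole proof is a two-sided application of the open mapping theorem plus the completeness-of-summable-series characterization of Banach spaces. The only bookkeeping that requires a little care is the passage from the quotient estimate to an honest bounded representative in $\mc D(A)$ (respectively $\mc D(A^*)$), which is routine once the graph norm is identified as the correct one.
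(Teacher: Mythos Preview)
Your argument is correct. It diverges from the paper's proof in how the two ``quantitative'' equivalences are obtained. The paper never touches the open mapping theorem or a telescoping sum directly: instead it manufactures auxiliary operators with trivial kernel so that parts 2 and 3 of Theorem~\ref{predual} apply verbatim. Concretely, to get (1)$\iff$(2) it restricts the codomain to $\tilde F=\overline{A(\mc D(A))}$, observes that the adjoint of the resulting operator $B$ is $A^*$ factored through $\mc D(A^*)/\ker A^*$, and invokes Theorem~\ref{predual}\,2 (surjectivity $\iff$ a priori bound for the injective adjoint). For (3)$\iff$(4) it passes to $\tilde E=E/\ker A$ and applies Theorem~\ref{predual}\,3. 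Your route, by contrast, treats Theorem~\ref{predual} only as the source of (1)$\iff$(3) and rebuilds the solvability estimates from scratch via the graph norm and open mapping theorem. Both are sound; the paper's approach is shorter and makes maximal use of the quoted theorem as a black box, while yours is more self-contained and makes explicit that density of the domain plays no role in (2)$\iff$(3)---a point the paper's reduction obscures, since it relies on $B$ having dense domain to even form $B^*$.
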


\begin{proof}
Let $\tilde F$ denote the closure of $A(\mc D(A))$ in $F$. Let $\mc D(B)=\mc D(A)$ and $B:\mc D(B)\to \tilde F$ coincide with $A$. Then $B$ is a closed operator and its domain is dense. Since $A(\mc D(A))^\perp=\mathrm{Ker}(A)$, $\mc D(B^*)$ identifies with the quotient $\mc D(A^*)/\mathrm{Ker}(A^*)$, and $A^*$ factors via this quotient, yielding $B^*:\mc D(B^*)\to E^*$. Since $\mathrm{Ker}(B^*)=0$, Lemma \ref{predual} 2 gives
\begin{align*}
B(\mc D(B))&=\tilde F \iff \exists C,\,\forall [f^*]\in \mc D(B^*),\,\|[f^*]\|_{F^*/\mathrm{Ker}(A^*)}\leq C\,\|B^*[f^*]\|_{E^*}.
\end{align*}
Since $B(\mc D(B))=A(\mc D(A))$, this translates into (1)$\iff$(2).

Let $\tilde E=E/\mathrm{Ker}(A)$. Since $A$ is closed, $\mathrm{Ker}(A)$ is a closed subspace of $E$, so $\tilde E$ is a Banach space. Let $\mc D(\tilde B)=\mc D(A)/\mathrm{Ker}(A)$. The operator $A$ factors via this quotient, yielding $\tilde B:\mc D(\tilde B)\to F$ which is a closed operator, with dense domain. Since $\mathrm{Ker}(\tilde B)=0$, Lemma \ref{predual} 3 gives
\begin{align*}
\tilde B(\mc D(\tilde B))&=\tilde E \iff \exists C,\,\forall [e]\in \mc D(\tilde B),\,\|[e]\|_{E/\mathrm{Ker}(A)}\leq C\,\|\tilde B[e]\|_{F}.
\end{align*}
Since $\tilde B(\mc D(\tilde B))=A(\mc D(A))$, this translates into (3)$\iff$(4).

By Lemma \ref{predual} 1, (1)$\iff$(3), so all four properties are equivalent.

\end{proof}

Next on the paper, we apply these general facts to the exterior differential on spaces of differential forms. Namely, let us denote by $\mc C_0$ the Banach space of continuous functions vanishing at infinity
with the $L^\infty$-norm; in this paper, we shall deal with the space
$$
E:= \mc C_0(\he n, E_0^{h-1}),
$$
and the operator $A$ will be  the Rumin's exterior differential $d_c$ (in a suitable weak sense). 

In any case, the first step we have to do, if we want to use a duality argument, is to identify the dual $E^*$ of $E$; this is done in the next section where we prove that $E^*$ can be
identified with the set of
currents with finite mass.

\section{Currents and measures}  \label{currents and measures} 

\begin{definition}\label{corrente in gen} Let $0\le h\le 2n+1$.
 If $\Omega\subset\G$ is an open set,
we say that $T$ is a  $h$-current on $\Omega$
if $T$ is a continuous linear functional on $\mc D(\Omega, E_0^h)$
endowed with the usual topology. We write $T\in \mc D'(\Omega, E_0^h)$
and we denote by $\phi\to\Scal{T}{\phi}$ its action on $\mc D(\Omega, E_0^h)$.

The definition of $\mc E'(\Omega, E_0^h)$ (compactly supported currents) is given
analogously.

If $T\in \mc D'(\Omega, E_0^h)$, $h\ge 1$,
 we define its boundary $\partial_c T\in \mc D'(\Omega, E_0^{h-1})$
 by the identity
 $$
 \Scal{\partial_c  T}{\phi} = \Scal{T}{d_c\phi}\qquad \mbox{for all $\mc D(\Omega, E_0^{h-1})$.}
 $$
\end{definition}

\begin{proposition}\label{corrente by distribuzione}
If $\Omega \subset \G$ is an open set, and
$T\in\mathcal D'(\Omega)$ is a (usual) distribution, then
$T$ can be identified canonically with a $(2n+1)$-current $\tilde
T \in  \mc D'(\Omega, E_0^{2n+1})$ through the formula
\begin{equation}\label{cbd}
\Scal{\tilde T}{\alpha}:=\Scal{T}{\ast \alpha}
\end{equation}
for any $\alpha\in \mc D(\Omega, E_0^{2n+1})$.
Conversely, by (\ref{cbd}), any $(2n+1)$-current $\tilde T$
can be identified with an usual distribution $T \in D'(\Omega)$.
\end{proposition}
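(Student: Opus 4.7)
The strategy is to show that the Hodge star provides a topological isomorphism between test sections in degree $2n+1$ and scalar test functions, and then transport distributions across this isomorphism.

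First I would record the following: since $\star E_0^h = E_0^{2n+1-h}$, the Hodge star restricts to a left-invariant bundle isomorphism $\star : E_0^{2n+1} \to E_0^0$, and $E_0^0$ is canonically the trivial line bundle (sections are just scalar functions). Moreover, by Remark \ref{mar 3 rem:1} applied with $h=2n+1$, the sign $(-1)^{(2n+1-h)h}$ equals $1$, so $\star\star = \mathrm{id}$ on $E_0^{2n+1}$; equivalently, the map $\star^{-1} : E_0^0 \to E_0^{2n+1}$ is again given by $\star$. Because $\star$ has constant coefficients in the left-invariant frame fixed in Section \ref{HeisenbergGroups}, it preserves compact supports and smoothness, and the induced map
$$
\star : \mc D(\Omega, E_0^{2n+1}) \to \mc D(\Omega)
$$
is a linear homeomorphism with respect to the standard inductive-limit topologies (seminorms on derivatives commute with a constant-coefficient invertible operator).

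With this in hand, the forward direction is immediate. Given $T\in \mc D'(\Omega)$, the right-hand side of \eqref{cbd} makes sense for every $\alpha\in \mc D(\Omega, E_0^{2n+1})$, and the map
$$
\alpha \mapsto \Scal{T}{\star\alpha}
$$
is continuous on $\mc D(\Omega, E_0^{2n+1})$ as the composition of the continuous map $\star$ with the continuous functional $T$. Hence $\tilde T \in \mc D'(\Omega, E_0^{2n+1})$. For the converse, given $\tilde T\in \mc D'(\Omega, E_0^{2n+1})$, define $T$ on $\mc D(\Omega)$ by $\Scal{T}{f} := \Scal{\tilde T}{\star f}$; continuity follows by the same reasoning.

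Finally I would check that the two constructions are inverse to each other. If $T\in \mc D'(\Omega)$ gives $\tilde T$ and $\tilde T$ gives $T'$, then for $f\in \mc D(\Omega)$,
$$
\Scal{T'}{f} = \Scal{\tilde T}{\star f} = \Scal{T}{\star\star f} = \Scal{T}{f},
$$
using $\star\star = \mathrm{id}$; the reverse composition is identical. There is no serious obstacle here: the only ingredient beyond formal manipulation is the observation that $\star$ is a topological isomorphism of $\mc D$-spaces, which is routine once one notes that $\star$ is a constant-coefficient linear operator in the left-invariant frame.
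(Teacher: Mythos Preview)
Your proof is correct and is essentially the standard argument; the paper does not give its own proof here but simply refers to \cite{dieudonne}, Section 17.5, and \cite{BFT1}, Proposition 4. You have written out explicitly what those references contain: that $\star$ is a topological isomorphism of test spaces (because it has constant coefficients in the left-invariant frame) and that $\star\star=\mathrm{id}$ in top degree, so transporting along $\star$ gives the canonical bijection.
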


\begin{proof}
See \cite{dieudonne}, Section 17.5, and \cite{BFT1},
Proposition 4.
\end{proof}

Following \cite{federer}, 4.1.7, we give the following definition.
\begin{definition}
If $T\in \mc D'(\Omega, E_0^{2n+1})$, and $\varphi \in 
\mc E(\Omega, E_0^h)$, with $0\le h\le 2n+1$, we define $T\res
\varphi\in \mathcal \mc D'(\Omega, E_0^{2n+1-h})$ by the identity
$$
\Scal{T\res \varphi}{\alpha}:=\Scal{T}{ \alpha\wedge\varphi}
$$
for any $\alpha \in \mc D(\Omega, E_0^{2n+1-h})$.

%We notice that, following \cite{dieudonne},
% an alternative notation for $T\res \varphi$
% could be $T\Wedge \varphi$.
\end{definition}

The following result is taken from \cite{BFT1}, Propositions 5 and 6,
and Definition 10, but we refer also to \cite{dieudonne}, Sections 17.3,
17.4 and 17.5.
%{\color{yellow}
%\begin{proposition}\label{corrente in coordinate}
%Let $\Omega \subset \G$ be an open set.
%If $1\le h\le n$,  $\Xi_0^h=\{\xi_1^h,\dots \xi^h_{\mathrm{dim}\;E_0^h}\}$ is a left invariant  
%basis
%of $E_0^h$ and $T\in \mc D'(\Omega, E_0^{h})$, then
%\begin{enumerate}
%\item[i)] there
%exist (uniquely determined)
%$T_1,\dots,T_{\mathrm{dim}\;E_0^h}\in \mathcal D'(\Omega)$ such that we
%can write
%$$
%T=\sum_j\tilde T_j\res (\ast\xi^h_j);
%$$
%%Notice that $\ast\xi_j\Wedge\alpha$ is well defined for any
%%$\alpha\in   \mc D_{\he{}}^{m}(\mathcal U)$, for $\ast\xi_j\in
%%\mc E_{\he{}}^{2n+1-m}(\mathcal U)$, and the pair
%%$(2n+1-m,m)$ is always wedge--admissible, since
%%$2n+1-m$ and $m$ can not be both less than $n$;
%\item[ii)] if
%$\alpha\in E(\Omega, E_0^{h})$, then $\alpha$ can be
%identified canonically with a $h$-current $T_\alpha$ through the
%formula
%$$
%\Scal{T_\alpha}{\beta}:=\int_{\Omega}\ast\alpha\wedge\beta
%$$ for any $\beta\in \mc D(\Omega, E_0^{h})$.
%%Notice that $\ast \alpha \Wedge \beta$ is well defined.
%Moreover, if
%$\alpha=\sum_j\alpha_j\xi_j^h$ then
%$$
%T_\alpha=\sum_j\tilde\alpha_j\res(\ast\xi_j^h);
%$$
%\item[iii)] we
%say that $T $ is smooth
%in $\mathcal U$ when $T_1,\dots,T_{\mathrm{dim}\;E_0^h}$
%are (identified with) smooth functions.
%This is clearly equivalent to say that there exists $\beta\in
%E(\Omega, E_0^{h})$ such that
% $$
% \Scal{T}{\alpha}=\int_{\mc U}\scal{\beta}{\alpha}\,dV
% $$
% for any $\alpha\in  \mc D(\Omega, E_0^{h})$
% (in fact, we choose $\beta=\sum_jT_j\xi_j^h$).
%\end{enumerate}
%\end{proposition}
%}

\begin{definition} Let $1\le h\le 2n+1$. A form $\alpha\in L^1_{\mathrm{loc}}(\he n,E_0^{2n+1-h})$
 can be identified with a current $T_\alpha\in \mc D'(\he n, E_0^h)$
through the action
\begin{equation}\label{nov5 eq:1}
\Scal{T_\alpha}{\phi} := \int_{\he n}\alpha\wedge\phi
= \int_{\he n}\phi\wedge\alpha
\qquad\mbox{for $\phi\in \mc D(\he, E_0^h)$.}
\end{equation}
Indeed $\alpha\wedge\phi = (-1)^{h(2n+1-h)}\phi\wedge\alpha$, and
$2n+1-h$ is even if $h$ is odd.
\end{definition}

%If $T\in \mc D'(\he n, E_0^h)$, then its boundary $\partial T 
%\in \mc D'(\he n, E_0^{h-1})$ is defined by
%$$
%\Scal{\partial T}{\phi}:= \Scal{T}{d_c\phi}\qquad\mbox{for 
%all $\phi\in \mc D(\he, E_0^{h-1})$.}
%$$
\begin{remark}\label{mar 2 rem:1}If $\alpha\in L^1_{\mathrm{loc}}(\he n,E_0^{2n+1-h})$, we notice
that $\partial_c  T_\alpha=0$ if and only if $d_c \alpha=0$ in the
sense of distributions. Indeed, if $\phi\in \mc D(\he n, E_0^{h-1})$
(and hence $\star \phi\in \mc D(\he n, E_0^{2n+2-h})$),
keeping in mind Remark \ref{mar 2 rem:1}, we have
\begin{equation*}\begin{split}
\Scal{\partial_c  T_\alpha}{\phi} & = \Scal{T_\alpha}{d_c\phi} = \int_{\he n} \alpha\wedge d_c\phi
= \int_{\he n}\scal{\star\alpha}{d_c\phi}\, dV
\\&
=  \int_{\he n}\scal{\star\alpha}{\star\star d_c\phi}\, dV =  \int_{\he n}\scal{\alpha}{\star d_c\phi}\, dV 
=  \int_{\he n}\scal{\alpha}{\star d_c\star\star\phi}\, dV 
\\&
=  (-1)^ h \int_{\he n}\scal{\alpha}{d_c^*\star\phi}\, dV 
\end{split}\end{equation*}

\end{remark}

\begin{proposition}\label{oct19 prop:1}
 If $\alpha\in L^1_{\mathrm{loc}}(\he n,E_0^{2n+1-h}) \cap d_c^{-1}(L^1_{\mathrm{loc}}(\he n,E_0^{2n+2-h}))$,
then
$$
\partial_c  T_\alpha = (-1)^{h-1}T_{d_c\alpha}.
$$

\end{proposition}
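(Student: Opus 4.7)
My plan is to unfold the definitions and reduce the statement to the integration-by-parts identity of Lemma \ref{by parts}, after extending that lemma from $W^{1,1}_{\mathrm{loc}}$ forms to $L^1_{\mathrm{loc}}$ forms whose $d_c$ is also $L^1_{\mathrm{loc}}$. Concretely, for a test form $\phi\in \mc D(\he n, E_0^{h-1})$ I unwind
$$
\Scal{\partial_c T_\alpha}{\phi} = \Scal{T_\alpha}{d_c\phi} = \int_{\he n}\alpha\wedge d_c\phi,
\qquad
\Scal{T_{d_c\alpha}}{\phi} = \int_{\he n} d_c\alpha\wedge\phi,
$$
so the claim reduces to the identity
$$
\int_{\he n} d_c\alpha\wedge\phi = (-1)^{h-1}\int_{\he n}\alpha\wedge d_c\phi,
$$
which is exactly Lemma \ref{by parts} applied with its index $h$ replaced by $h-1$ (note that $\alpha$ of degree $2n+1-h$ then has degree $2n-(h-1)$, as required).

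The second step is to remove the $W^{1,1}_{\mathrm{loc}}$ hypothesis in Lemma \ref{by parts}. I would pick an approximate identity $\rho_\eps\in\mc D(\he n)$, supported in $B(e,\eps)$ with $\int\rho_\eps=1$, and set $\alpha_\eps:=\rho_\eps\ast\alpha$, componentwise in a fixed left-invariant frame of $E_0^{2n+1-h}$. Since $\rho_\eps$ is smooth and compactly supported, $\alpha_\eps$ is smooth (hence locally in $W^{1,1}$); since $d_c$ is a left-invariant differential operator, it commutes with convolution on the left, so
$$
d_c\alpha_\eps = \rho_\eps\ast d_c\alpha.
$$
By the standard approximation property of group convolution, $\alpha_\eps\to\alpha$ and $d_c\alpha_\eps\to d_c\alpha$ in $L^1$ on any fixed compact set, in particular on a neighbourhood of $\supp\phi\cup\supp d_c\phi$.

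With these smoothings, Lemma \ref{by parts} gives
$$
\int_{\he n} d_c\alpha_\eps\wedge\phi = (-1)^{h-1}\int_{\he n}\alpha_\eps\wedge d_c\phi
$$
for each $\eps>0$. Passing to the limit $\eps\to 0$ is immediate because $\phi$ and $d_c\phi$ are bounded with compact support; this yields the displayed integration-by-parts identity for our $\alpha$, and substituting back gives $\Scal{\partial_c T_\alpha}{\phi}=(-1)^{h-1}\Scal{T_{d_c\alpha}}{\phi}$ for every $\phi\in \mc D(\he n, E_0^{h-1})$, as required.

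There is no real obstacle here: the one point deserving a word of care is that in degree $h-1=n$ the operator $d_c$ is second-order, yet left-invariance of $d_c$ still ensures $d_c(\rho_\eps\ast\alpha)=\rho_\eps\ast d_c\alpha$, so the mollification goes through without modification. The whole statement should be viewed as the natural compatibility between the analytic object $d_c\alpha$ and the distributional boundary operator $\partial_c$ on the current associated to $\alpha$.
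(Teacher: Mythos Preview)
Your proposal is correct and follows the same route as the paper: unfold the definitions of $\partial_c T_\alpha$ and $T_{d_c\alpha}$ and reduce to the integration-by-parts identity of Lemma \ref{by parts} with the index shifted by one. The paper's proof is the one-line chain
\[
\Scal{\partial_c T_\alpha}{\phi}=\Scal{T_\alpha}{d_c\phi}=\int_{\he n}\alpha\wedge d_c\phi=(-1)^{h-1}\int_{\he n}d_c\alpha\wedge\phi=(-1)^{h-1}\Scal{T_{d_c\alpha}}{\phi},
\]
invoking Lemma \ref{by parts} directly; you add the mollification step to bridge the gap between the $W^{1,1}_{\mathrm{loc}}$ hypothesis of Lemma \ref{by parts} and the weaker $L^1_{\mathrm{loc}}\cap d_c^{-1}(L^1_{\mathrm{loc}})$ assumption here, which makes your write-up slightly more self-contained.
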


\begin{proof}
If $\phi\in \mc D(\he, E_0^{h-1})$, then
\begin{equation*}\begin{split}
\Scal{\partial_c  T_\alpha}{\phi} &= \Scal{T_\alpha}{d_c \phi} =  \int_{\he n}\alpha\wedge d_c\phi
\\&
= (-1)^{h-1} \int_{\he n} d_c \alpha\wedge\phi= (-1)^{h-1}\Scal{T_{d_c\alpha}}{\phi}.
\end{split}\end{equation*}

\end{proof}

The notion of convolution can be extended by duality
to currents.
\begin{definition}\label{regolarizzazione di una corrente}  Let $0\le h\le 2n+1$ and
 $\varphi\in\mc D(\he n)$. Let
$T\in \mc E'(\he n, E_0^{h})$ be given, and
 denote by $\ccheck \varphi$ the function defined by
${\phantom o}^{\mathrm v} \varphi(p) := \varphi(p^{-1})$.
Then we set
$$
\Scal{\varphi *T}{\phi}:= \Scal{T}{\ccheck \varphi  * \phi}
$$
for any $\phi\in  \mc D(\he n, E_0^{h})$.
\end{definition}

\begin{lemma} If $\alpha\in L^1_{\mathrm{loc}}(\he n, E_0^h)$, $1\le h\le 2n+1$,
and $J= \ccheck J\in\mc D(\he n)$, then
\begin{equation}\label{oct17 eq:1}
J\ast\partial_c  T_\alpha = {\partial_c (J\ast T_\alpha)}=\partial_c  T_{J\ast\alpha}.
\end{equation}

\begin{proof}
By definition, $T_\alpha$ is a $(2n+1-h)$-current. If $\phi\in \mc D(\he n, E_0^{2n-h})$ is a test form,
then, by Definition \ref{regolarizzazione di una corrente}, 
\begin{equation*}\begin{split}
\Scal{J\ast\partial_c  T_\alpha}{\phi} &= \Scal{\partial_c  T_\alpha}{J\ast\phi} = \Scal{T_\alpha}{d_c(J\ast\phi)}
= \Scal{T_\alpha}{J\ast d_c\phi} 
\\&
= \Scal{J\ast T_\alpha}{ d_c\phi} = \Scal{\partial_c (J\ast T_\alpha)}{ \phi}.
\end{split}\end{equation*}

On the other hand,
\begin{equation*}\begin{split}
\Scal{\partial_c  T_{J\ast\alpha}}{\phi} &= \Scal{ T_{J\ast\alpha}}{d_c\phi} = \int\scal{\star(J\ast\alpha)}{ d_c\phi}dV
\\&
= \int\scal{J\ast(\star\alpha)}{ d_c\phi}dV = \int\scal{\star\alpha}{J\ast d_c(\phi)}dV
\\&
=\Scal{ T_{\alpha}}{J\ast d_c\phi}=\Scal{J\ast T_{\alpha}}{ d_c\phi}=\Scal{\partial_c (J\ast T_{\alpha})}{ \phi}.
\end{split}\end{equation*}

\end{proof}

\end{lemma}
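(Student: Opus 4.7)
The lemma asserts two equalities among currents on $\he n$, and I would prove each by pairing both sides with an arbitrary test form $\phi \in \mc D(\he n, E_0^{2n-h})$ and unwinding the definitions.

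For the first equality $J\ast\partial_c T_\alpha = \partial_c(J\ast T_\alpha)$, I would use Definition \ref{regolarizzazione di una corrente} to write
$$
\Scal{J\ast\partial_c T_\alpha}{\phi} = \Scal{\partial_c T_\alpha}{\ccheck J\ast\phi} = \Scal{\partial_c T_\alpha}{J\ast\phi},
$$
invoking the hypothesis $\ccheck J = J$. By the definition of $\partial_c$ in Definition \ref{corrente in gen}, this becomes $\Scal{T_\alpha}{d_c(J\ast\phi)}$. Since $d_c$ is left-invariant, it commutes with left convolution by the smooth compactly supported function $J$ (as recorded in the identity $P(f\ast g)=f\ast Pg$ of Section \ref{sobolev kernels}), giving $\Scal{T_\alpha}{J\ast d_c\phi} = \Scal{J\ast T_\alpha}{d_c\phi} = \Scal{\partial_c(J\ast T_\alpha)}{\phi}$.

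For the second equality $\partial_c(J\ast T_\alpha) = \partial_c T_{J\ast\alpha}$, I would expand the right-hand side via \eqref{nov5 eq:1}:
$$
\Scal{\partial_c T_{J\ast\alpha}}{\phi} = \Scal{T_{J\ast\alpha}}{d_c\phi} = \int_{\he n}(J\ast\alpha)\wedge d_c\phi.
$$
A Fubini argument on $\he n$, using the bi-invariance of Lebesgue measure together with $\ccheck J = J$, yields componentwise the identity $\int_{\he n}(J\ast\alpha)\wedge\psi = \int_{\he n}\alpha\wedge(J\ast\psi)$; applied to $\psi = d_c\phi$ this reduces the right-hand side to $\int_{\he n}\alpha\wedge(J\ast d_c\phi) = \Scal{T_\alpha}{J\ast d_c\phi}$, which was already identified with $\Scal{\partial_c(J\ast T_\alpha)}{\phi}$ in the previous paragraph.

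The main technical point is purely bookkeeping: keeping track of which convolution is on which side and making sure the hypothesis $\ccheck J = J$ absorbs the order/sign issues arising from the non-commutativity of the group. There is no genuine obstacle here: once left-invariance of $d_c$ is invoked and the adjoint formulas \eqref{convolutions var} are applied, both equalities reduce to the same scalar pairing $\Scal{T_\alpha}{J\ast d_c\phi}$.
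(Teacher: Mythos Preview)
Your proof is correct and follows essentially the same approach as the paper's: both equalities are verified by pairing with a test form, invoking the definition of convolution with currents, the left-invariance of $d_c$, and a Fubini-type identity to shift $J\ast$ across the pairing. The only cosmetic difference is that the paper writes the second computation via $\scal{\star(J\ast\alpha)}{d_c\phi}\,dV$ rather than the wedge product, but by Remark~\ref{mar 3 rem:1} these are the same thing.
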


\begin{definition} If $T\in \mc D'(\he n,E_0^h)$, we define its mass $\mc M(T)$ by
$$
\mc M(T):= \sup\{\Scal{T}{\phi},\, \phi\in \mc D(\he n,E_0^h),\, |\phi|\le 1\}.
$$
\end{definition}

\begin{lemma}\label{nov7 lemma:1}
If  $\alpha\in  L^1(\he n, E_0^{2n+1-h})$, then
\begin{equation}\label{mass L^1}
\mc M(T_\alpha) = \|\alpha\|_{L^1(\he n, E_0^{2n+1-h}).}
\end{equation}

Moreover, if $\alpha\in L^1_{\mathrm{loc}}(\he n, E_0^{2n+1-h})$ and $\mc M(T_\alpha)
<\infty$, then 
$$
\alpha\in  L^1(\he n, E_0^{2n+1-h})\qquad\mbox{
and $\eqref{mass L^1}$ holds.}
$$

\end{lemma}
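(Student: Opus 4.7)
The plan is to translate the mass, defined via the exterior product pairing, into a sup of $L^2$-inner-product-valued integrals against $\star\alpha$, and then recognize this as a duality between $L^1$ and $L^\infty$ (restricted to sections of $E_0^h$).

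First I would rewrite the pairing. Using Remark \ref{mar 3 rem:1}, for any $\phi\in\mc D(\he n,E_0^h)$,
\[
\Scal{T_\alpha}{\phi}=\int_{\he n}\alpha\wedge\phi=\int_{\he n}\scal{\star\alpha}{\phi}\,dV,
\]
and the fibrewise Cauchy–Schwarz inequality together with the fact that the Hodge star is a fibrewise isometry of $E_0^\bullet$ gives the upper bound
\[
\mc M(T_\alpha)\le \int_{\he n}|\star\alpha|\,dV=\|\alpha\|_{L^1(\he n,E_0^{2n+1-h})}.
\]
For the reverse inequality when $\alpha\in L^1(\he n,E_0^{2n+1-h})$, I would construct $\phi_k\in\mc D(\he n,E_0^h)$ with $|\phi_k|\le 1$ that approximate the ``sign'' of $\star\alpha$. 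Concretely, put
\[
\psi_k:=\mathbf 1_{B(e,k)}\cdot\frac{\star\alpha}{|\star\alpha|\vee k^{-1}},
\]
so $|\psi_k|\le 1$, $\psi_k$ is a section of $E_0^h$ (because $\star\alpha$ is), and $\scal{\star\alpha}{\psi_k}\to|\star\alpha|$ pointwise a.e., bounded by $|\star\alpha|\in L^1$. Then mollify each left-invariant coordinate of $\psi_k$ by a standard symmetric approximate identity $J_\varepsilon$; since the coordinate constraints cutting out $E_0^h$ inside $\bigwedge^h\mathfrak h$ are linear and invariant under the mollification, $\phi_{k,\varepsilon}:=J_\varepsilon\ast\psi_k$ still lies in $\mc D(\he n,E_0^h)$ after a further cutoff outside $B(e,k+1)$, with $|\phi_{k,\varepsilon}|\le 1$. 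Dominated convergence then yields $\Scal{T_\alpha}{\phi_{k,\varepsilon}}\to\|\alpha\|_{L^1}$ as $\varepsilon\to 0$ and then $k\to\infty$, which gives $\mc M(T_\alpha)\ge\|\alpha\|_{L^1}$.

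For the second statement, suppose only $\alpha\in L^1_{\mathrm{loc}}$ with $\mc M(T_\alpha)<\infty$. I would run the same approximation locally: for any compact $K\subset\he n$, choose a bounded open $U\supset K$, and construct $\phi_{k,\varepsilon}\in\mc D(\he n,E_0^h)$ supported in $U$ with $|\phi_{k,\varepsilon}|\le 1$ and $\Scal{T_\alpha}{\phi_{k,\varepsilon}}\to\int_K|\alpha|\,dV$ by the same mollification-of-sign argument (now using only $L^1_{\mathrm{loc}}$ integrability and a compactly supported cutoff). This yields $\int_K|\alpha|\,dV\le\mc M(T_\alpha)$; exhausting $\he n$ by compacts and taking the supremum proves $\alpha\in L^1$ with $\|\alpha\|_{L^1}\le\mc M(T_\alpha)$, and then equality follows from the first part.

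The main technical obstacle is ensuring the approximating test forms actually lie in $\mc D(\he n,E_0^h)$, not merely in $\mc D(\he n,\bigwedge^h\mathfrak h)$. This is resolved by the observation that, in a left-invariant frame, sections of $E_0^h$ are characterized by linear equations on their coordinate functions, a property preserved by componentwise convolution with a scalar kernel and by multiplication by scalar cutoffs; the seed $\psi_k$ is in $E_0^h$ a priori because $\star$ maps $E_0^{2n+1-h}$ onto $E_0^h$.
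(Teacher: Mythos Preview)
Your proof is correct and follows essentially the same approach as the paper: the first identity is the standard $L^1$--$L^\infty$ duality (which the paper simply attributes to Exercise~4.26 of \cite{brezis}), and your explicit construction of a mollified ``sign'' test form staying in $E_0^h$ is exactly what that reference encodes. For the second assertion the paper organizes things slightly differently---it sets $\alpha_k:=\rho_k\alpha$ for smooth cutoffs $0\le\rho_k\le 1$, moves $\rho_k$ onto the test form to get $\mc M(T_{\alpha_k})\le\mc M(T_\alpha)$, and concludes by the first part and Fatou's lemma---but this is the same idea as your exhaustion by compacts.
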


\begin{proof} The first assertion is basically contained in Exercise 4.26 of \cite{brezis}.
Suppose now $\alpha\in L^1_{\mathrm{loc}}(\he n, E_0^{2n+1-h})$ and $\mc M(T_\alpha)
<\infty$. Consider now a sequence of compactly supported smooth functions $(\rho_k)_{k\in\mathbb N}$,
$0\le \rho_k\le 1$, $\rho_k \to 1$ in $\he n$ as $k\to\infty$ and set $\alpha_k:=\rho_k\alpha
\in L^1(\he n, E_0^{2n+1-h})$.
If $\phi\in \mc D(\he n, E_0^h)$, $\|\phi\|_{L^\infty} \le 1$, then
$$
\Scal{T_{\alpha_k}}{\phi} = \int_{\he n} \rho_k\alpha\wedge \phi = \int_{\he n} \alpha\wedge \rho_k\phi
\le \mc M(T_\alpha),
$$
so that $\mc M(T_{\alpha_k})\le \mc M(T_\alpha)$, and the assertion follows by Fatou's lemma.

\end{proof}

As for Federer-Fleming currents, the mass of currents is lower semicontinuous
with respect to weak* convergence.

\begin{lemma}\label{lsc} If $1\le h\le 2n+1$, $T,T_k\in \mc D'(\he n, E_0^h)$ for $k\in\mathbb N$
and $T_k\to T$ in  $\mc D'(\he n, E_0^h)$, then
$$
\mc M(T)\le \liminf_k M(T_k).
$$

\end{lemma}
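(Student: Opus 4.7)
The plan is to deduce this directly from the variational definition of the mass together with the fact that weak* convergence in $\mc D'(\he n, E_0^h)$ means pointwise convergence on test forms. The argument is the classical one for lower semicontinuity of a sup of continuous functionals, and no new ingredient beyond Definition of mass is needed.

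First I would fix an arbitrary test form $\phi\in \mc D(\he n, E_0^h)$ with $|\phi|\le 1$. By the definition of convergence in $\mc D'(\he n, E_0^h)$, we have
$$
\Scal{T_k}{\phi}\longrightarrow \Scal{T}{\phi}\qquad\text{as }k\to\infty.
$$
On the other hand, since $\phi$ is admissible in the sup defining $\mc M(T_k)$, we have $\Scal{T_k}{\phi}\le \mc M(T_k)$ for every $k$, and therefore
$$
\Scal{T}{\phi}=\lim_{k\to\infty}\Scal{T_k}{\phi}\le \liminf_{k\to\infty}\mc M(T_k).
$$

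Taking the supremum over all admissible $\phi$ on the left-hand side yields
$$
\mc M(T)=\sup_{|\phi|\le 1}\Scal{T}{\phi}\le \liminf_{k\to\infty}\mc M(T_k),
$$
which is the desired inequality. There is no serious obstacle here: the only subtlety is that the sup defining $\mc M(T)$ is taken over a family indexed by $\phi$, and each individual $\phi$ may require a different subsequence to attain its limit; but since we pass to $\liminf$ (not $\lim$) of $\mc M(T_k)$ after taking the limit in $k$ for fixed $\phi$, and only afterwards take the sup over $\phi$, the order of operations is correct and no uniformity issue arises.
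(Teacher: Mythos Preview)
Your proof is correct and is exactly the standard argument for lower semicontinuity of a supremum of weak*-continuous functionals. The paper does not give a proof of this lemma at all (it is stated without proof, with only the remark that the situation is as for Federer--Fleming currents), so your argument is precisely what a reader would be expected to supply.
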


 If $1\le h\le 2n+1$,  and $\Xi_0^h$ is a left invariant basis
of $E_0^h$, then the linear maps on $E_0^h$
$$
\alpha  \to (\xi_j^h)^*(\alpha) := \star (\alpha\wedge \star \xi_j^h)
$$
belong to $(E_0^h)^*$ (the dual of $E_0^h$ as a finite-dimensional vector space) and
$$
(\xi_j^h)^*(\xi_i^h) = \star (\xi_i^h \wedge \star \xi_j^h) = \delta_{i,j} \star dV = \delta_{i,j},
$$
i.e. $(\Xi_0^h)^*=\{(\xi_1^h)^* ,\dots, (\xi^h_{N_h})^* \}$ is a left invariant  
dual basis of $(E_0^h)^*$.

Let us also remind the notion of distribution section of a finite-dimensional
vector bundle $\mc F$: a distribution section is a continuous linear
map on the {space of} compactly supported sections of the dual vector bundle
$\mc F^*$ (see, e.g., \cite{treves}, p. 77). Then we can give the following remark.

\begin{remark}\label{current simply}

Let $T$ be a current on $E_0^h$, 
$$
T=\sum_j\tilde T_j\res (\star\xi^h_j),
$$
where $T_1,\dots,T_{N_h}\in \mathcal D'(\Omega)$.
Then $T$ can be seen as a section of $(E_0^h)^*$.
Indeed, if $\alpha=\sum_i \alpha_i \xi_i^h\in \mc D(\Omega, E_0^h)$
\begin{equation*}\begin{split}
\Scal{T}{\alpha} & = \sum_{j} \Scal{\tilde T_j\res  (\star\xi_j^h)}{\alpha} 
= \sum_{j} \Scal{\tilde T_j}{ \alpha\wedge (\star\xi_j^h)}
\\&
=  \sum_{j} \Scal{T_j}{ \alpha_j} =  \sum_{i,j} \Scal{T_j}{ (\xi_j^h)^*(\alpha_i\xi_i^h)}
 = \sum_j   \Scal{T_j }{(\xi_j^h)^*(\alpha)},
 \end{split}\end{equation*}
 where the dualities in the first line are meant as dualities between currents
 and test forms, where the dualities in the second line are meant as dualities {between}
 distributions and test functions.
Thus we can write
formally 
\begin{equation}\label{Aug11 eq:4}
T=\sum_j T_j (\xi_j^h)^*
\end{equation}
and we can identify $T$ with a vector-valed distribution $(T_1,\dots,T_{N_h})$.

We notice also that, if $\alpha=\sum_j\alpha_j \xi_j\in \mc E(\Omega, E_0^h)$,
then
$$
T_\alpha=\sum_j \alpha_j (\xi_j^h)^*.
$$
\end{remark}

We are now in position to identify the dual of $\mc C_0(\he n,E_0^h)$.
\begin{remark}\label{riesz} Let $\,T\in \mc D'(\he n, E_0^{h})$ be a current of finite mass $\mc M(T)$.
By density of $\mc D(\he n,E_0^h)$ in $\mc C_0(\he n,E_0^h)$, $T$ can be continued as a linear
bounded functional on $\mc C_0 (\he n,E_0^h)$ with norm $\mc M(T)$. For sake of simplicity
we still denote by $T$ its extension and by $ \Scal{T}{\phi}_{\mc C_0^*,\mc C_0}$ the action of the extension on
$\phi \in \mc C_0(\he n,E_0^h)$.
By Riesz' representation
theorem (see e.g. \cite{AT}, Theorem 1.2.4)
there exists
a vector-valued Borel measure $(\mu_1,\dots,\mu_{N_h})\in \mc D'(\he n)^{N_h}$ such that, for any 
$\phi \in \mc C_0(\he n,E_0^h)$ (identified as above with  $(\phi_1,\dots,\phi_{N_h})\in \mc C_0(\he n)^{N_h}$),
\begin{equation}\label{jan 27 eq:1}
\Scal{T}{\phi}_{\mc C_0^*,\mc C_0} = \sum_j \int_{\he n} \phi_j\,d\mu_j.
\end{equation}
In addition
\begin{equation}\label{jan 24 eq:1}
\mc M(T) =\|T\|_{\mc C_0^*}= |\mu|(\he n).
\end{equation}

\medskip

Conversely, if $\mu=(\mu_1,\dots,\mu_{N_h})$ is a finite vector-valued Borel measure,
the map $T:\mc D(\he n,E_0^h)\to \mathbb R$ defined by
\begin{equation}\label{nov 9 eq:1}
T: \sum_j \phi_j \xi_j \to \sum_j \int_{\he n} \phi_j\,d\mu_j
\end{equation}
is a $h$-current of finite mass $\mc M(T)= |\mu|(\he n)$.

In particular, if $T\in C_0^*$ (and therefore admits an expression as in \eqref{nov 9 eq:1})
\begin{equation}\label{nov9 eq:2}
\Scal{T}{\phi}_{\mc C_0^*,\mc C_0} = \Scal{T}{\phi}_{\mc D',\mc D}  \quad\mbox{for all $\phi\in \mc D(\he n, E_0^h)$.}
\end{equation}

\end{remark}

 \subsection{Poincar\'e and Sobolev inequalities for currents via an approximation result}\label{s: AG}

Theorem $1.1-(2)$ in \cite{BFP3} contains the following result.

\begin{theorem}[Global Poincar\'e and Sobolev inequalities in degree $h\neq n+1$]\label{poincareglobal}
 Let $h=1,\ldots,2n$, $h\neq n+1$. For every $d_c$-exact $h$-form $\omega\in L^1(\he n, E_0^h)$,
  there exists an $(h-1)$-form $\phi\in L^{Q/(Q-1)}(\he n,E_0^{h-1})$, such that 
$$
d_c\phi=\omega \qquad\mbox{and}\qquad \|\phi\|_{L^{Q/(Q-1)}}(\he n, E_0^{h-1})\leq C\,\|\omega\|_{L^1(\he n, E_0^{h})}.
$$
Furthermore, if $\omega$ is compactly supported, so is $\phi$.
\end{theorem}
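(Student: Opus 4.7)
My plan is to construct $\phi$ explicitly via Rumin's Laplacian and then prove the endpoint Sobolev estimate $L^{1}\to L^{Q/(Q-1)}$ by exploiting the exactness of $\omega$ through the Chanillo--Van Schaftingen inequality (Theorem \ref{chanillo_van}), and finally pass to the limit and handle the compact support property.

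\textbf{Construction of the primitive.} In degrees $h\neq n,n+1$, where $\Delta_{\he{},h}=d_cd_c^*+d_c^*d_c$ has order $2$, set $\phi:=d_c^*\,\Delta^{-1}_{\he{},h}\,\omega$. Using Lemma \ref{comm} and the closedness $d_c\omega=0$ (implied by exactness), one verifies
\[
d_c\phi=d_cd_c^*\Delta^{-1}\omega=\omega-d_c^*d_c\Delta^{-1}\omega=\omega,
\]
since $d_c$ commutes with $\Delta^{-1}$ in these degrees. In degree $h=n$, where $\Delta_{\he{},n}=(d_cd_c^*)^2+d_c^*d_c$ has order $4$, the analogous ansatz is $\phi:=d_c^*d_cd_c^*\,\Delta^{-1}_{\he{},n}\,\omega$; then $d_c\phi=(d_cd_c^*)^{2}\Delta^{-1}\omega=\omega-d_c^*d_c\Delta^{-1}\omega$, and the remainder vanishes on exact forms by Lemma \ref{his-her}(iii)--(iv) and a Hodge-type argument. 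In either case, $\omega\mapsto\phi$ is a convolution with a kernel of type $1$ (by Proposition \ref{kernel}(ii), after differentiating the type-$a$ kernel $\Delta^{-1}$ a total of $a-1$ times horizontally).

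\textbf{The endpoint estimate.} The crux is to prove $\|\phi\|_{L^{Q/(Q-1)}}\le C\|\omega\|_{L^{1}}$. This fails for generic $L^{1}$ inputs, as Theorem \ref{hls folland} provides only weak-type bounds at $p=1$ for type-$1$ kernels; this is the main obstacle. I would argue by duality: for a smooth test $(h-1)$-form $\psi$ with $\|\psi\|_{L^{Q}}\le 1$,
\[
\langle\phi,\psi\rangle=\langle d_c^*\Delta^{-1}\omega,\psi\rangle=\langle\omega,\Delta^{-1}d_c\psi\rangle.
\]
The key structural input is that $\star\omega\in E_0^{2n+1-h}$, being Hodge-dual to a closed form (Remark \ref{mar 3 rem:1}), corresponds componentwise to a horizontal divergence-free object, to which Theorem \ref{chanillo_van} applies and yields
\[
|\langle\omega,\Delta^{-1}d_c\psi\rangle|\le C\,\|\omega\|_{L^{1}}\,\|\nabla_{\he{}}(\Delta^{-1}d_c\psi)\|_{L^{Q}}\le C\,\|\omega\|_{L^{1}}\,\|\psi\|_{L^{Q}},
\]
where the last inequality follows from Theorem \ref{hls folland}(iii) applied to the composite $\nabla_{\he{}}\Delta^{-1}d_c$, whose kernel is of type $0$. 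Taking the supremum over $\psi$ gives the required estimate.

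\textbf{Regularization and compact support.} For general $\omega\in L^{1}$, mollify to $\omega_\varepsilon:=J_\varepsilon\ast\omega\in\mc E(\he n,E_0^h)\cap L^{1}$, which remains $d_c$-exact since $d_c$ is left-invariant. The construction produces $\phi_\varepsilon$ with uniform $L^{Q/(Q-1)}$ bounds, and a weak-$*$ limit $\phi\in L^{Q/(Q-1)}$ satisfies $d_c\phi=\omega$ in $\mc D'(\he n,E_0^h)$. For compactly supported $\omega$, I would combine the above with a cut-off argument based on the Leibniz formula (Lemma \ref{leibniz}): multiplying $\phi$ by a smooth cut-off $\zeta$ equal to $1$ on a large ball produces $d_c(\zeta\phi)-\omega$ whose support lies in a shell, and an iterative correction (necessary because of the zero-order term $P_0^{h-1}(W\zeta)$, and in degree $h=n$ the additional first-order term $P_1^{n-1}(W\zeta)$) adjusts this to a compactly supported primitive. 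The principal difficulty remains the endpoint step, where the Chanillo--Van Schaftingen inequality, tailored precisely to the $L^{1}$-to-$W^{-1,Q/(Q-1)}$ improvement for divergence-free data, is the essential non-routine ingredient.
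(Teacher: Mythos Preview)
The paper does not prove this theorem in situ; it is quoted verbatim from \cite{BFP3}, Theorem~1.1-(2). Your strategy---set $\phi=d_c^*\Delta^{-1}_{\he{},h}\omega$ (resp.\ $d_c^*d_cd_c^*\Delta^{-1}_{\he{},n}\omega$ when $h=n$) and obtain the endpoint $L^1\to L^{Q/(Q-1)}$ bound by duality, exploiting Chanillo--Van Schaftingen on the divergence-free structure carried by $d_c$-closed forms---is precisely the method of \cite{BFP3}, and is mirrored in this paper's proofs of the parallel Theorems~\ref{bfp1} and~\ref{poincareglobal bis}.

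Two minor corrections to your write-up. First, the divergence-free structure you need is not a consequence of Remark~\ref{mar 3 rem:1} (which is pure linear algebra about $\star$); the relevant fact---that the components of a $d_c$-closed Rumin form are linear combinations of the components of a horizontal $\he{}$-divergence-free vector field---is \cite{BFP1}, Theorem~5.1, exactly as invoked in the proof of Theorem~\ref{bfp1} here. Second, your compact-support addendum via ``iterative correction'' of $d_c(\zeta\phi)-\omega$ is underspecified and not how \cite{BFP3} proceeds; there the compactly supported primitive is obtained directly from the approximation and localization built into the construction (cf.\ formula~(33) in \cite{BFP3}, referenced in the proof of Theorem~\ref{poincareglobal bis}), not by an a posteriori cut-off iteration.
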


\begin{remark} As in \cite{BFP3}, Section 1.2, we stress that Poincar\'e inequality fails to hold
in top degree (see also \cite{PT}). 

\end{remark}

We show  that the previous result can be reformulated in terms of currents. This is done after we prove the following approximation result
(see also \cite{baldi_pini} for Euclidean currents).

 \begin{proposition}\label{AG}  If $1\le h\le 2n+1$ and $\,T\in \mc D'(\he n, E_{0}^h)$ is a current of finite mass $\mc M(T)$ (identified with a vector-valued
measure as in Proposition \ref{riesz}), then there exists
a family $(\omega^\eps)_{\eps>0}$ of forms in $\mc E(\he n, E_0^{h})\cap L^1(\he n, E_0^h)$ 
such that, if we set $T_\eps:= T_{\star \omega^\eps}\in \mc D'(\he n, E_{0}^h)$,
then
\begin{itemize} 
\item[i)] $\mc M(T_{\eps})<\infty$ for all $\eps>0$;
\item[ii)]  $(T_\eps)_{s>0}$ converges weakly* to $T$ as $\eps\to 0$;
\item[iii)] $\|\omega^\eps\|_{L^1(\he n, E_0^h)}=\mc M(T_{\eps}) \to \mc M(T)$ as $\eps\to 0$;
 \item[iv)] if $T=\partial_c  S$, $S\in \mc D'(\he n, E_{0}^{h+1})$, then $d_c (\star\omega^\eps) =0$ for all $\eps>0$.
\end{itemize}

If $T$ is 
 compactly supported, then the $\omega^\eps$'s are supported in a neighborhood of $\supp T$.
 \end{proposition}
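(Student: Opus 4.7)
My strategy is mollification by a symmetric approximate identity. By Remark \ref{riesz}, identify $T$ with a finite vector-valued Borel measure $\mu = (\mu_1,\ldots,\mu_{N_h})$ satisfying $|\mu|(\he n) = \mc M(T)$, where the components are taken with respect to a left-invariant orthonormal basis $\{\xi_j^h\}$ of $E_0^h$. Fix a family of symmetric mollifiers $J_\eps \in \mc D(\he n)$ with $\ccheck J_\eps = J_\eps$, $J_\eps \ge 0$, $\int J_\eps = 1$, and $\supp J_\eps \subset B(e,\eps)$, and set $T_\eps := J_\eps \ast T$ as in Definition \ref{regolarizzazione di una corrente}. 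A routine transpose-convolution identity $\int (J_\eps \ast \phi_j)\,d\mu_j = \int \phi_j\,(J_\eps \ast \mu_j)\,dV$, together with Remark \ref{mar 3 rem:1}, identifies $T_\eps$ with $T_{\star\omega^\eps}$ for the smooth form
$$
\omega^\eps := \sum_{j=1}^{N_h} (J_\eps \ast \mu_j)\, \xi_j^h.
$$

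\textbf{Properties (i)--(iii).} Each $J_\eps \ast \mu_j$ is smooth with $\|J_\eps \ast \mu_j\|_{L^1} \le |\mu_j|(\he n)$, so $\omega^\eps \in \mc E(\he n, E_0^h) \cap L^1(\he n, E_0^h)$, and Lemma \ref{nov7 lemma:1} (using that $\star$ is an isometry) gives $\mc M(T_\eps) = \|\omega^\eps\|_{L^1} \le \mc M(T)$. For the weak$^*$ convergence, given $\phi \in \mc C_0(\he n, E_0^h)$ each component $\phi_j$ is uniformly continuous on $\he n$, so $J_\eps \ast \phi \to \phi$ uniformly; since $|\mu|$ is finite,
$$
\Scal{T_\eps}{\phi} = \Scal{T}{J_\eps \ast \phi} = \sum_j \int (J_\eps \ast \phi_j)\,d\mu_j \to \sum_j \int \phi_j\,d\mu_j = \Scal{T}{\phi}.
$$
Combined with the lower semicontinuity of mass (Lemma \ref{lsc}), this yields $\mc M(T) \le \liminf_\eps \mc M(T_\eps) \le \limsup_\eps \mc M(T_\eps) \le \mc M(T)$, establishing (iii). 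The support control $\supp \omega^\eps \subset \supp T \cdot B(e,\eps)$ is immediate from the definition of group convolution.

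\textbf{Property (iv).} The crucial observation is that $\partial_c$ commutes with $J_\eps \ast$ on arbitrary currents, not merely on those of the form $T_\alpha$ as in Lemma \ref{oct17 eq:1}. Indeed, for any $\phi \in \mc D(\he n, E_0^{h-1})$, using the left-invariance of $d_c$ and $\ccheck J_\eps = J_\eps$,
$$
\Scal{\partial_c(J_\eps \ast T)}{\phi} = \Scal{T}{J_\eps \ast d_c \phi} = \Scal{T}{d_c(J_\eps \ast \phi)} = \Scal{\partial_c T}{J_\eps \ast \phi} = \Scal{J_\eps \ast \partial_c T}{\phi}.
$$
If $T = \partial_c S$, then $\partial_c T = \partial_c \partial_c S = 0$ (since $d_c^2 = 0$ implies $\partial_c^2 = 0$ on currents), hence $\partial_c T_\eps = 0$. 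Since $\star \omega^\eps \in L^1(\he n, E_0^{2n+1-h})$, Remark \ref{mar 2 rem:1} then gives $d_c(\star \omega^\eps) = 0$ in the distributional sense, which is (iv).

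\textbf{Main obstacle.} No single step is deep, but the bookkeeping is what requires care: one must align the current/measure dictionary of Remark \ref{riesz} with the ``$\star$-twist'' in the definition of $T_\alpha$, and one must extend the $\partial_c$-convolution commutation beyond the setting of Lemma \ref{oct17 eq:1}. The mass convergence (iii) is delicate in that the easy inequality only gives $\limsup \mc M(T_\eps) \le \mc M(T)$; the matching lower bound forces one to appeal to Lemma \ref{lsc} and thus requires the weak$^*$ convergence to be established first.
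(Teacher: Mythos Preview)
Your proof is correct and follows essentially the same mollification approach as the paper: identify $T$ with a finite vector measure $\mu$, set $\omega^\eps=\sum_j(J_\eps\ast\mu_j)\xi_j^h$, and verify (i)--(iv) via the transpose identity $\Scal{T}{J_\eps\ast\phi}=\int\scal{\omega^\eps}{\phi}\,dV$, the upper bound $\|\omega^\eps\|_{L^1}\le|\mu|(\he n)$, and lower semicontinuity of mass. Your treatment of (iv) via the commutation $\partial_c(J_\eps\ast T)=J_\eps\ast\partial_c T$ is slightly more streamlined than the paper's explicit Hodge-star computation, but the substance is identical; one small imprecision is that the componentwise bound $\|J_\eps\ast\mu_j\|_{L^1}\le|\mu_j|(\he n)$ does not by itself yield $\|\omega^\eps\|_{L^1}\le\mc M(T)$---you need the vector-valued estimate $|\omega^\eps(p)|\le\int J_\eps(pq^{-1})\,d|\mu|(q)$ followed by Fubini, which is exactly what the paper records.
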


\begin{proof} 
%Let us suppose first that $T$ is compactly supported.
We
denote by $\mu$ the finite vector-valued Borel measure associated with $T$ as in Proposition
\ref{riesz}. Arguing as in \cite{ambrosio_fusco_pallara}, Section 2.1, if $\eps>0$ let $J_\eps = \ccheck J_\eps$ 
is an usual (group) Friedrichs' mollifier, we define the fuctions
$$
f^\eps(\eta):=  J_\eps\ast \mu (\eta) := \int_{\he n}J_\eps(\eta\cdot p^{-1})\, d\mu(p)
= \int_{\he n}J_\eps(p\cdot \eta^{-1})\, d\mu(p).
$$
%We stress that $\mu^\eps(p)$ coincides with the regularization of $\mu$ as
%vector-valued distribution in $(\mc D'(\he n))^{N_h}$. 
We point out that the family of functions $\{f^\eps\, , \, \eps >0\}$ is 
bounded in $( L^1(\he n))^{N_h}$.
Indeed, if $j=1,\dots,N_h$
\begin{equation}\label{Feb 2 eq:1}\begin{split}
\int_{\he n}  & |f^\eps(\eta)|\, d\eta = \int_{\he n} \big| \int_{\he n}J_\eps(\eta\cdot p^{-1})\, d\mu(p)\big| \, d\eta
\\&
\le \int_{\he n} \big( \int_{\he n}J_\eps(\eta\cdot p^{-1})\, d|\mu|(p)\big) \, d\eta
\\&
=\int_{\he n} \big( \int_{\he n}J_\eps(\eta\cdot p^{-1})\, d\eta \big) \, d|\mu|(p)
=
|\mu|(\he n).
\end{split}\end{equation}

We notice that, for all $\eps>0$, we can associate with $f^\eps$ the measure
$\mu^\eps(p):=f^\eps\, d\mathcal L^{2n+1}$.
%\footnote{We use the same symbol $\mu^\eps$ for the .vector-valued Radon measure
%and for its density with respect to Lebesgue measure $\mathcal L^{2n+1}$
%as long as this
%doen't lead to a misunderstanding.}
Moreover,  $f^\eps\in  (\mc E(\he n))^{N_h}$.  Indeed, 
%all the $f^\eps$'s are supported in a 
%neighborhood of $\supp T$; in addition, 
if $j=1,\dots,N_h$ and $W$ is a horizontal vector field, 
then the following identity holds
in the sense of distributions
$$
W\big(J_\eps\ast \mu_j\big) = J_\eps \ast W\mu_j
= \ccheck W \ccheck  J_\eps \ast \mu_j,
$$
so that, arguing as in \eqref{Feb 2 eq:1}, the $f^\eps$'s are smooth.

We can also write
$$
f^\eps = ( J_\eps\ast \mu_1, \dots,  J_\eps\ast \mu_{N_h}) =: (f_1^\eps, \dots, f_{N_h}^\eps).
$$
Thus we can define a family of smooth forms in $L^1(\he n, E_0^h)$
$$
\omega^\eps =  \sum_j f_j^{\eps} \xi_j
$$
and a family of $h$-currents $T_\eps:=T_{\star\omega^\eps}$.
By \eqref{mass L^1}, 
\begin{equation}\label{jan 27 eq:2}
\mc M(T_\eps)= \|\star\omega^\eps\|_{L^1(\he n, E_0^h)}
= \|\omega^\eps\|_{L^1(\he n, E_0^h)} = |\mu^\eps| < \infty.
\end{equation}
Thus i) is proved.

We notice now that 
%\begin{equation}\label{jan 12 eq:2}
%\Scal{J_\eps\ast T}{\phi} = \Scal{T_{\star\mu^\eps}}{\phi}
%\end{equation}
for all test form $\phi=\sum_j\phi_j\xi_j\in\mc D(\he n,E_0^{h})$, 
if we identify $\phi$ with the vector-valued function
$(\phi_1, \dots,\phi_{N_h})$, we have
%First of all, 
%we notice
%that 
\begin{equation}\label{jan 16 eq:1}
\Scal{T}{J_\eps\ast \phi} =  \int_{\he n}\scal{ f^\eps}{\phi}\, d\mc L^{2n+1}
=: \int_{\he n}\scal{\phi}{d \mu^\eps},
\end{equation}
that we can also written as
\begin{equation}\label{Feb 3 eq:1}
\Scal{T}{J_\eps\ast \phi} =  \int_{\he n}\scal{ \omega^\eps}{\phi}\, d\mc L^{2n+1}.
\end{equation}
Indeed,
first of all, 
we notice
that if $j=1,\dots,N_h$,
$$
\int_{\he n}\big( \int _{\he n} J_\eps(q) \,|\phi_j(q^{-1}p)| \, d\mu_j(p)\big) \, dq< \infty.
$$
Thus, by \eqref{jan 27 eq:1},
\begin{equation*}\begin{split}
\Scal{T}{J_\eps\ast \phi} &
= \sum_j \int_{\he n}(J_\eps\ast\phi_j)(p)\,d\mu_j(p)
\\& =  \sum_j \int_{\he n}\Big(\int_{\he n} J_\eps(q)\phi_j(q^{-1}p) \, dq \Big)\, d\mu_j(p)
\\&
=\sum_j \int_{\he n}\Big(\int_{\he n} J_\eps(p\eta^{-1})\phi_j(\eta) \, d\eta \Big)\, d\mu_j(p)
\\&
=\sum_j \int_{\he n}\Big(\int_{\he n} J_\eps(p\eta^{-1}) \, d\mu_j(p) \Big)\phi_j(\eta)\, d\eta
\\&
=\sum_j \int_{\he n}\Big(\int_{\he n} J_\eps(\eta p^{-1}) \, d\mu_j(p) \Big)\phi_j(\eta)\, d\eta \qquad\mbox{(since $J
=\ccheck J$)}
\\&
= \sum_j \int_{\he n}( J_\eps\ast \mu_j)(\eta)\phi_j(\eta)\, d\eta
=
\int_{\he n}\scal{  f^\eps}{\phi}(\eta)\, d\eta = \int_{\he n}\scal{\phi}{d \mu^\eps}.
\end{split}\end{equation*}

This proves \eqref{jan 16 eq:1}.

Let us prove ii). Take again $\phi\in \mc D(\he n, E_0^h)$: By
\eqref{Feb 3 eq:1}, we have:
\begin{equation*}
\Scal{T_\eps}{\phi} = \Scal{T_{\star\omega^\eps}}{\phi}=
\int_{\he n} \star\omega^\eps\wedge\phi =
\int_{\he n} \scal{\omega^\eps}{\phi}\, d\mc L^{2n+1} =
\Scal{T}{J_\eps\ast\phi} \to \Scal{T}{\phi}
\end{equation*}
 as $\eps\to 0$,
since $J_\eps\ast\phi\to \phi$ in $\mc D(\he n,E_0^h)$.

In particular, $\int_{\he n} \scal{f^\eps}{\phi}\, d\mc L^{2n+1} 
 \to \Scal{T}{\phi} $ for all $\phi\in \mc D(\he n,E_0^h)$.
 Thanks to the density of $\mc D(\he n,E_0^h)$ in $C_{\mathrm{comp}} (\he n,E_0^h)$,
 by \eqref{Feb 2 eq:1}, $\mu^\eps\to\mu$ weak* in the sense of measures.
 
Now, we can prove iii). By \cite{ambrosio_fusco_pallara}, Theorem 1.59
 $$
 \mc M(T) = |\mu |(\he n) \le\liminf_{\eps\to 0}  |\mu^\eps (\he n)|
 =  \liminf_{\eps\to 0}  \mc M(T_\eps).
 $$
 On the other hand, by \eqref{Feb 2 eq:1}
$$
\limsup_{\eps\to 0}  \mc M(T_\eps) \le \mc M(T),
$$
and iii) follows.

Finally, let us prove  that $\star \omega^\eps$ is $d_c$-closed. Take again 
$\phi\in\mc D(\he n,E_0^{h+1})$. Keeping in mind that $\partial_c  T=0$,
by \eqref{jan 16 eq:1} we have:
\begin{equation*}\begin{split}
\int_{\he n}\scal{d_c\star \omega^\eps}{ \phi}\, d\mc L^{2n+1} &=
\int_{\he n}\scal{\star \omega^\eps}{d_c^*  \phi}\, d\mc L^{2n+1} 
=\pm\int_{\he n}\scal{\star \omega^\eps}{\star d_c\star  \phi}\, d\mc L^{2n+1}
\\& =\pm\int_{\he n}\scal{ \omega^\eps}{ d_c\star  \phi}\, d\mc L^{2n+1}
= \pm \Scal{T}{J_\eps\ast ( d_c\star \phi)}
\\&
= \pm \Scal{T}{d_c (J_\eps\ast  (\star \phi))}
= \pm \Scal{\partial_c  T}{J_\eps\ast  (\star \phi)}=0,
\end{split}\end{equation*}
and hence $d_c(\star \omega^\eps)=0$. Thus iv) is proved.

\end{proof}

Thanks to previous result, theorem \ref{poincareglobal}  can reformulated in terms of currents as follows:

\begin{theorem}[Global Poincar\'e and Sobolev inequalities for currents]\label{poincareglobal currents uno}
  Let $h=1,\ldots,2n$. 
 % \begin{itemize}   
 %\item[a)] 
If $h\not=n$ and  $\,T\in \mc D'(\he n,E_0^{h})$ is a current of  finite mass of the form 
 $T=\partial_c  S$ with $S\in \mc D'(\he n, E_0^{h+1})$,
then there exists a form $\phi\in L^{Q/(Q-1)}(\he n,E_0^{2n-h})$, such that
$$
\partial_c  T_{ \phi}=T \qquad\mbox{and}\qquad \|\phi\|_{L^{Q/(Q-1)}(\he n, E_0^{2n-h})}\leq C\,\mc M(T).
$$
%\item [b)] If $\,T\in \mc D'(\he n,E_0^{n})$ is a current of  finite mass of the form 
 %$T=\partial_c  S$ with $S\in \mc D'(\he n, E_0^{n+1})$, then there exists 
 %a form $\phi\in \BLh{1}{Q/(Q-1)}{n}$, such that
%$$
%\partial_c  T_{ \phi}=T \qquad\mbox{and}\qquad \|\phi\|_{\BLh{1}{Q/(Q-1)}{n}}\leq C\,\mc M(T).
%$$
%\end{itemize}

Furthermore, 
 if $T$ is compactly supported, so is $\phi$.
\end{theorem}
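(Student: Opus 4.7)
The plan is a regularization-and-weak-compactness argument. Since $T=\partial_c S$ we have $\partial_c T=0$, so we may invoke Proposition \ref{AG} to produce smooth forms $\omega^\eps\in \mc E(\he n,E_0^h)\cap L^1(\he n,E_0^h)$ with $d_c(\star\omega^\eps)=0$, such that the currents $T_\eps:=T_{\star\omega^\eps}$ converge weakly* to $T$ and $\|\omega^\eps\|_{L^1}=\mc M(T_\eps)\to\mc M(T)$; when $T$ is compactly supported, the $\omega^\eps$ are supported in a fixed neighborhood of $\supp T$.

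Since $h\neq n$, the form $\star\omega^\eps$ has degree $2n+1-h\neq n+1$; it is smooth, lies in $L^1$, and is $d_c$-closed on the contractible group $\he n$, hence $d_c$-exact. Theorem \ref{poincareglobal} applied in degree $2n+1-h$ then produces primitives $\psi^\eps\in L^{Q/(Q-1)}(\he n,E_0^{2n-h})$ with $d_c\psi^\eps=\star\omega^\eps$ and
$$
\|\psi^\eps\|_{L^{Q/(Q-1)}(\he n,E_0^{2n-h})}\leq C\,\|\omega^\eps\|_{L^1(\he n,E_0^h)}.
$$
The family $(\psi^\eps)$ is therefore bounded in the reflexive Banach space $L^{Q/(Q-1)}(\he n,E_0^{2n-h})$, so along a subsequence $\psi^\eps\rightharpoonup\phi$ weakly. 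This weak convergence gives $T_{\psi^\eps}\to T_\phi$ in $\mc D'(\he n,E_0^{h+1})$, hence $\partial_c T_{\psi^\eps}\to \partial_c T_\phi$ in $\mc D'(\he n,E_0^h)$. By Proposition \ref{oct19 prop:1}, $\partial_c T_{\psi^\eps}=(-1)^h T_{d_c\psi^\eps}=(-1)^h T_\eps \to (-1)^h T$. Absorbing the sign into $\phi$, we obtain $\partial_c T_\phi = T$, and by lower semicontinuity of the norm under weak convergence
$$
\|\phi\|_{L^{Q/(Q-1)}}\leq \liminf_\eps \|\psi^\eps\|_{L^{Q/(Q-1)}} \leq C\liminf_\eps\|\omega^\eps\|_{L^1} = C\,\mc M(T).
$$

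The main obstacle I anticipate is the compactly supported case: although the $\omega^\eps$ sit in a common compact neighborhood of $\supp T$ and each $\psi^\eps$ is compactly supported by Theorem \ref{poincareglobal}, one needs a \emph{uniform} support bound to guarantee that the weak limit $\phi$ also has compact support. This would be settled by revisiting the construction behind Theorem \ref{poincareglobal} (primitive obtained by convolution with a kernel of appropriate homogeneity, localized via a cut-off) and checking that the $\psi^\eps$ can in fact be chosen with support inside a fixed enlargement of $\supp T$, independent of $\eps$.
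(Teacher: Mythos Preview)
Your argument is correct and essentially identical to the paper's proof: regularize via Proposition \ref{AG}, apply Theorem \ref{poincareglobal} in degree $2n+1-h$ to the closed $L^1$ forms $\star\omega^\eps$, pass to a weak $L^{Q/(Q-1)}$ limit, and absorb the sign $(-1)^h$ into $\phi$. Your concern about uniform supports in the compact case is well-founded and is resolved exactly as you anticipate (the paper itself glosses over this): Proposition \ref{AG} places all $\omega^\eps$ in a fixed neighborhood of $\supp T$, and the compactly supported primitive furnished by Theorem \ref{poincareglobal} (built in \cite{BFP3} from $d_c^*\Delta_{\he{},\bullet}^{-1}$ followed by a cut-off) can be chosen with support in a fixed enlargement of that neighborhood, independent of $\eps$, so the weak limit $\phi$ is compactly supported as well.
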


\begin{proof} Let us prove a). By Proposition \ref{AG} with $\eps=\eps_k\to 0$
as $k\to \infty$, with the notations
therein, for any $k\in\mathbb N$
there exists a sequence $(\star \omega_k)_{k\in\mathbb N}
\in \mc E(\he n, E_0^{2n+1-h})\cap L^1(\he n, E_0^{2n+1-h})$
satisfying i), ii), iii), iv). In particular, $d_c \star \omega_k=0$.
Therefore, since $2n+1-h \neq n+1$ (by hypothesis $h\neq n$), Theorem \ref{poincareglobal} implies that there exist
 $\phi_k\in L^{Q/(Q-1)}(\he n,E_0^{2n-h})$, such that 
$$
d_c\phi_k=\star\omega_k \qquad\mbox{and}\qquad \|\phi_k\|_{L^{Q/(Q-1)}(\he n, E_0^{2n -h})}
\leq C\,\|\star \omega_k\|_{L^1(\he n, E_0^{2n+1-h})}.
$$
By Proposition \ref{AG}, iii), 
 we know also  that 
$$\|\star\omega_k\|_{L^1(\he n, E_0^{2n+1-h})}
= \|\omega_k\|_{L^1(\he n, E_0^{h})}
\to \mc M(T)$$ as $k\to\infty$.
Therefore the $\phi_k$'s are equibounded in $L^{Q/(Q-1)}(\he n,E_0^{h-1})$ and hence we can
assume that 
$\phi_k\to \tilde\phi$ weakly in $L^{Q/(Q-1)}(\he n,E_0^{2n-h})$.

 Thus
$$
\|\tilde\phi\|_{L^{Q/(Q-1)}(\he n,E_0^{2n-h})} \le C\,\mc M(T).
$$
By definition, with $\phi:=(-1)^h\tilde\phi$ we can associate the current $T_\phi\in \mc D'(\he n, E_0^{h+1})$.
Eventually, we are left with the proof of $\partial_c  T_{\phi} = T$. Let $\psi\in\mc D(\he n, E_0^h)$ be a test form. We have:
\begin{equation*}\begin{split}
\Scal{\partial_c  T_\phi}{\psi} &=\Scal{T_\phi}{d_c\psi} =
\int_{\he n} \phi\wedge d_c\psi = (-1)^h\lim_{k\to\infty} \int_{\he n} \phi_k\wedge d_c\psi 
\\&
= (-1)^{2h}  \lim_{k\to\infty} \int_{\he n} d_c\phi_k\wedge \psi
=  \lim_{k\to\infty} \int_{\he n} \star\omega_k\wedge \psi
\\&
=  \lim_{k\to\infty} \Scal{T_{\star\omega_k}}{ \psi}
= \Scal{T}{ \psi},
\end{split}\end{equation*}
by \ref{AG}, ii). This completes the proof of the statement.

\end{proof}

\section{Proof of Theorem \ref{main global}--i): continuous primitives of forms in $E_0^h$,  $h\neq n+1$} 
\label{proof global}

The proof of Theorem \ref{main global} will follows by duality. Having in mind Section \ref{BB duality}, we start with a few definitions.
\begin{definition}\label{spaces}
Denote again by $\mc C_0$ the Banach space of continuous functions vanishing at infinity
with the $L^\infty$-norm. We set
$$
E:= \mc C_0(\he n, E_0^{h-1}),
$$
By Remark \ref{riesz}, the dual space $E^*$ can be identified with the
set of $(h-1)$-currents with finite mass. 
If $2\le  h \le { 2n+1}$, $h\neq n+1$, we set
$$
\mc D(A):= \{\psi\in E, \, d_c\psi\in L^Q(\he n, E_0^{h})\} \subset E,
$$
and $A:\mc D(A)\to F$, where
$$
F=L^Q(\he n, E_0^{h}) \qquad \mbox{and}\qquad A\psi:= d_c\psi.
$$

\end{definition}

\begin{remark}

Notice that $\mc D(A)$ is dense since contains $\mc D(\he n, E_0^{h-1})$
and $A$ is closed since is a differential operator.

In addition $F^*$ can be identified with $L^{Q/(Q-1)}(\he n, E_0^{2n+1-h})$
through the identity: if $\beta\in L^{Q/(Q-1)}(\he n, E_0^{h})$, then we put
$$
f(\alpha)=\int_{\he n} \beta\wedge\alpha
$$
for $\alpha\in L^Q(\he n, E_0^{h})$.
\end{remark}

\begin{lemma} \label{ambrogio: 2} Suppose $h\neq n+1$. If $\psi\in \mc D(A)$, then there exists
a sequence $(\psi_k)_{k\in\mathbb N}$ in $\mc D (\he n, E_0^{h-1})$ such that
\begin{equation}\label{ambrogio: 1}
\psi_k \to \psi \quad \mbox{in $E$}\qquad\mbox{and}\qquad d_c\psi_k\to d_c\psi \quad \mbox{in $F$.}
\end{equation}

\end{lemma}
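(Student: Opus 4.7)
The plan is the standard two-step smoothing scheme: first multiply $\psi$ by cut-off functions to achieve compact support, then mollify by group convolution to gain smoothness, and finally pass to a diagonal subsequence. The hypothesis $h\neq n+1$ is precisely what makes this work cleanly: it says $h-1\neq n$, so $d_c$ acts as a first order horizontal operator on $E_0^{h-1}$, and Lemma \ref{leibniz}(i) yields a Leibniz formula whose commutator depends only on $W\chi$, not on higher derivatives of $\chi$ or on horizontal derivatives of $\psi$.

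For the cut-off, choose $\chi_k\in\mc D(\he n)$ with $0\le \chi_k\le 1$, $\chi_k\equiv 1$ on $B(e,k)$, $\supp\chi_k\subset B(e,2k)$, and $|W\chi_k|\le C/k$, and set $\tilde\psi_k:=\chi_k\psi$. Since $\psi\in\mc C_0$, $\tilde\psi_k\to\psi$ uniformly on $\he n$. By Lemma \ref{leibniz}(i) (applied in degree $h-1\neq n$),
$$
d_c\tilde\psi_k = \chi_k\, d_c\psi + P_0^{h-1}(W\chi_k)(\psi).
$$
The first term converges to $d_c\psi$ in $L^Q(\he n,E_0^h)$ by dominated convergence, since $(1-\chi_k)|d_c\psi|\to 0$ pointwise and is dominated by $|d_c\psi|\in L^Q$. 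For the second term, set $\eta_k:=\sup_{\he n\setminus B(e,k)}|\psi|$, which tends to $0$ as $k\to\infty$ because $\psi\in\mc C_0$. Then $P_0^{h-1}(W\chi_k)(\psi)$ is supported in $B(e,2k)\setminus B(e,k)$ and bounded there by $C\eta_k/k$, so using $|B(e,2k)|\le C k^Q$,
$$
\|P_0^{h-1}(W\chi_k)(\psi)\|_{L^Q(\he n,E_0^h)} \le \frac{C\eta_k}{k}\,|B(e,2k)|^{1/Q} \le C'\eta_k \to 0.
$$
Hence $\tilde\psi_k\to\psi$ in $E$ and $d_c\tilde\psi_k\to d_c\psi$ in $F$.

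Now $\tilde\psi_k$ is a continuous, compactly supported section of $E_0^{h-1}$ with $d_c\tilde\psi_k\in L^Q$ compactly supported. Let $J_\eps$ be a symmetric Friedrichs mollifier on $\he n$ with $\supp J_\eps\subset B(e,\eps)$ and $\int J_\eps=1$; convolving coefficient-wise in a left-invariant frame of $E_0^{h-1}$ yields $J_\eps*\tilde\psi_k\in\mc D(\he n,E_0^{h-1})$. Since $d_c$ is left-invariant, $d_c(J_\eps*\tilde\psi_k)=J_\eps*d_c\tilde\psi_k$. As $\eps\to 0$, $J_\eps*\tilde\psi_k\to\tilde\psi_k$ uniformly (since $\tilde\psi_k$ is uniformly continuous with compact support) and $J_\eps*d_c\tilde\psi_k\to d_c\tilde\psi_k$ in $L^Q$ by standard mollification estimates. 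Picking $\eps_k$ so that both errors are below $1/k$ and setting $\psi_k:=J_{\eps_k}*\tilde\psi_k$ gives, by the triangle inequality, a sequence in $\mc D(\he n,E_0^{h-1})$ satisfying \eqref{ambrogio: 1}.

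The only delicate step is the $L^Q$-smallness of the commutator term $P_0^{h-1}(W\chi_k)(\psi)$: the naive bound $|W\chi_k|\cdot\|\psi\|_\infty\cdot|B(e,2k)|^{1/Q}$ is merely $O(1)$, and one genuinely needs to exploit the fact that $\psi\to 0$ at infinity on the annular support of $W\chi_k$. This is also the structural reason the argument fails in degree $h=n+1$: then Lemma \ref{leibniz}(ii) would force terms in $P_1^n(W\chi_k)(\psi)$ and $P_0^n(W^2\chi_k)(\psi)$, neither of which is controlled by the membership $\psi\in\mc D(A)$ alone — a separate argument, based on Beppo Levi spaces, is needed in that degree (and is developed later in the paper).
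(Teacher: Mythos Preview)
Your proof is correct and follows the same two-step scheme as the paper: cut off with $\chi_k$ to reach compactly supported continuous forms, using Lemma~\ref{leibniz}(i) and the decay of $\psi$ at infinity to control the commutator in $L^Q$, then mollify by group convolution to reach $\mc D(\he n,E_0^{h-1})$. The paper's own proof writes out only the cut-off step explicitly and leaves the passage from $\mc C_{\mathrm{comp}}$ to $\mc D$ implicit; your version spells out the mollification and the diagonal extraction, and your remark that the decay $\eta_k\to 0$ is genuinely needed (the naive bound being only $O(1)$) is exactly the point of the paper's final estimate.
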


\begin{proof} Let show preliminarily that there exist a sequence 
$$
(\psi_k)_{k\in\mathbb N}\quad
\mbox{in}\quad \mc C_{\mathrm{comp}} (\he n, E_0^{h-1})
$$
 satisfying \eqref{ambrogio: 1}. Let us take a sequence of smooth
cut-off functions $(\phi_k)_{k\in\mathbb N}$, $0\le \phi_k\le 1$, $\supp \phi_k \subset  B(e,2k)$,
$\psi \equiv 1$ in $B(e,k)$, $|W\phi_k|\le C/k$, where $C$ is independent of $k$.
Set $\psi_k:= \phi_k \psi$. Obviously, $\psi_k \to \psi \quad \mbox{in $E$}$.
By Leibniz' formula (see Lemma \ref{leibniz}), if $h\neq n$, 
\begin{equation*}\begin{split}
\|d_c \psi_k& - d_c\psi\|_F^Q 
\lessapprox \|(\phi_k-1) d_c \psi \|_F^Q
+ \int_{\he n}  |W\phi_k|^Q\, |\psi |^Q\, dp
\\&
\lessapprox \|(\phi_k-1) d_c \psi \|_F^Q
+ (\frac1k)^Q
\int_{k\le \|p\| \le 2k}   |\psi |^Q\, dp 
\\&
\le \|(\phi_k-1) d_c \psi \|_F^Q
+(\frac1k)^Q |B(e,2k)| \sup_{k\le \|p\| }|\psi| \to 0
\end{split}\end{equation*}
as $k\to\infty$, since $\psi\in E$ and $d_c\psi\in F$ . 
%On the other hand, if $h= n$, again by \cite{BFP3}, Lemma 4.1,
%\begin{equation*}\begin{split}
%\|d_c \psi_k& - d_c\psi)\|_F^Q 
%\lessapprox \|(\phi_k-1) d_c \psi \|_F^Q
%\\&
%+ \int_{\he n}  |W \phi_k|^Q\, |W\psi |^Q\, dp
%+ \int_{\he n}  |W^2 \phi_k|^Q\, |\psi |^Q\, dp
%\end{split}\end{equation*}
%As above, 
%$$
%\|(\phi_k-1) d_c \psi \|_F^Q + \int_{\he n}  |W^2 \phi_k|^Q\, |\psi |^Q\, dp
%\to 0 
%$$
%as $k\to\infty$.
%
%Since $|W\phi_k|\le C$ for all $k\in\mathbb N$ and $ |W \phi_k|^Q\to 0$ as $k\to\infty$
%a.e. in $\he n$, if we prove that $|W\psi |\in L^q(\he n, E_0^h)$, then 
%also
%$$
% \int_{\he n}  |W \phi_k|^Q\, |W\psi |^Q\, dp \to 0
% $$
% as $k\to\infty$. 
%
%
%Thus, we are reduce to prove the lemma when $\psi\in \mc D(A) \cap 
%\mc C_c (\he n, E_0^h)$.
%
%Denote by $\omega_k$ the group mollifier supported in $B(e,\frac1k)$. We set
%$\psi_k:= \omega_k\ast \psi$ (group convolution). Obviously, $\psi_k\in \mc 
%D(\he n, E_0^h)$ and $\psi_k\to \psi$ in $E$. Moreover, since $d_c$ contains
%only horizontal derivatives, 
%$$
%d_c\psi_k= \omega_k\ast d_c\psi \to d_c\psi\qquad\mbox{in $F$.}
%$$

\end{proof}

\begin{proposition} We have:
$\mc D(A^*)$ is dense in $F^*$
and 
\begin{equation}\label{domain A* ter}
\mc D(A^*) =\{ \beta\in F^*, \, 
\mc M(\partial_c  T_\beta)<\infty \}.
\end{equation}
In addition, if $\beta\in\mc D(A^*)$,
\begin{equation}\label{A* ter}
A^*\beta = \partial_c  T_\beta.
\end{equation}

\end{proposition}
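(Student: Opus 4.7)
The plan is to exploit the fact that $A^*$ must agree with the distributional operator $\partial_c$ acting on $T_\beta$, and then identify the elements of $\mc D(A^*)$ as precisely those $\beta$ for which this distribution has an extension to $\mc C_0^*$, i.e., finite mass (by Remark \ref{riesz}).

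First I would test against $\mc D(\he n,E_0^{h-1})\subset \mc D(A)$. For any $\beta \in \mc D(A^*)$ and any $\psi \in \mc D(\he n,E_0^{h-1})$, the definition of $A^*$ combined with the $F^*$-$F$ pairing gives
\begin{equation*}
\Scal{A^*\beta}{\psi}_{E^*,E}
=\Scal{\beta}{d_c\psi}_{F^*,F}
=\int_{\he n}\beta\wedge d_c\psi
=\Scal{\partial_c T_\beta}{\psi}_{\mc D',\mc D}.
\end{equation*}
So $A^*\beta$ and $\partial_c T_\beta$ coincide as distributions; since by Remark \ref{riesz} the space $E^*=\mc C_0(\he n, E_0^{h-1})^*$ is exactly the space of currents of finite mass, we obtain $\mc M(\partial_c T_\beta)<\infty$, formula \eqref{A* ter}, and the inclusion $\mc D(A^*)\subset \{\beta:\mc M(\partial_c T_\beta)<\infty\}$.

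For the reverse inclusion, suppose $\beta\in F^*$ with $\mc M(\partial_c T_\beta)<\infty$. Then by Remark \ref{riesz} the distribution $\partial_c T_\beta$ extends to a continuous linear functional on $E$. To conclude $\beta\in \mc D(A^*)$, I must prove that for \emph{every} $\psi\in \mc D(A)$ (not only for test forms),
\begin{equation*}
\int_{\he n}\beta\wedge d_c\psi = \Scal{\partial_c T_\beta}{\psi}_{E^*,E}.
\end{equation*}
This is where Lemma \ref{ambrogio: 2} is used crucially: pick $\psi_k\in \mc D(\he n, E_0^{h-1})$ with $\psi_k\to\psi$ in $E$ and $d_c\psi_k\to d_c\psi$ in $F=L^Q$. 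The left-hand side passes to the limit by H\"older ($L^{Q/(Q-1)}$-$L^Q$ duality), while the right-hand side passes by continuity of $\partial_c T_\beta$ on $E$; on test forms the identity holds by the definition of $\partial_c T_\beta$. Hence $\psi\mapsto \Scal{\beta}{d_c\psi}_{F^*,F}$ is bounded by $\mc M(\partial_c T_\beta)\,\|\psi\|_E$ on $\mc D(A)$, so $\beta\in \mc D(A^*)$ with $A^*\beta=\partial_c T_\beta$.

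Finally, density of $\mc D(A^*)$ in $F^*=L^{Q/(Q-1)}(\he n, E_0^{2n+1-h})$ follows from the inclusion $\mc D(\he n, E_0^{2n+1-h})\subset \mc D(A^*)$: indeed, for $\beta$ smooth and compactly supported, Proposition \ref{oct19 prop:1} gives $\partial_c T_\beta=(-1)^{h-1}T_{d_c\beta}$, and since $d_c\beta$ is smooth with compact support, Lemma \ref{nov7 lemma:1} yields $\mc M(T_{d_c\beta})=\|d_c\beta\|_{L^1}<\infty$. Density of $\mc D$ in $L^{Q/(Q-1)}$ then concludes.

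The main obstacle is the second step: one must pass from test forms to arbitrary $\psi\in \mc D(A)$. Because $\mc D(A)$ is defined by the mixed regularity condition $\psi\in \mc C_0$, $d_c\psi\in L^Q$ rather than by genuine Sobolev smoothness, a direct mollification is not available, and one is forced to rely on the cut-off approximation of Lemma \ref{ambrogio: 2}. That lemma in turn rests on the Leibniz-type commutator estimate of Lemma \ref{leibniz}, whose first-order form requires exactly the hypothesis $h\neq n+1$; this is the structural reason why the middle degree $h=n+1$ must be handled separately later in the paper.
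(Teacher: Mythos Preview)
Your proof is correct and follows essentially the same approach as the paper: both directions of \eqref{domain A* ter} are established by testing against $\mc D(\he n,E_0^{h-1})$ for the forward inclusion and by invoking Lemma~\ref{ambrogio: 2} to pass from test forms to all of $\mc D(A)$ for the reverse inclusion, exactly as the paper does.

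The one genuine difference is in the density argument. The paper obtains density of $\mc D(A^*)$ in $F^*$ in one line from the reflexivity of $F=L^Q$ (citing \cite{brezis}, Remark~15 of Section~2.6), whereas you give a concrete argument: you show $\mc D(\he n,E_0^{2n+1-h})\subset \mc D(A^*)$ via Proposition~\ref{oct19 prop:1} and Lemma~\ref{nov7 lemma:1}, and then use density of test forms in $L^{Q/(Q-1)}$. Both are valid; the paper's route is shorter and independent of the characterization \eqref{domain A* ter}, while yours is more self-contained and makes explicit which elements of $\mc D(A^*)$ witness the density. Your closing remark on why $h\neq n+1$ is essential (via the first-order commutator in Lemma~\ref{leibniz} underlying Lemma~\ref{ambrogio: 2}) is accurate and helpful context not spelled out in the paper's proof.
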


\begin{proof}

Since $F$ is reflexive,
then $\mc D(A^*)$ is dense in $F^*$ (see \cite{brezis}, Remark 15 of
Section 2.6). 
To prove \eqref{domain A* ter} we have first to show that, if $\beta\in F^*$ and there exists $c_\beta$ such that
\begin{equation}\label{apr 22 eq:1} 
| \int_{\he n}\beta\wedge d_c\psi  | \le c_\beta \|\psi\|_E \qquad\mbox{for all $\psi\in \mc D(A)$},
\end{equation}
then $\mc M(\partial_c  T_\beta)<\infty$. The assertion is straightforward since, if $\psi\in\mc D(\he n,E_0^{h-1})\subset \mc D(A)$, $\| \psi\|_E\le 1 $ then
\begin{equation*}\begin{split}
 |\Scal{\partial_c  T_\beta}{\psi} | =  |\Scal{T_\beta}{d_c\psi} |   =  | \int_{\he n}\beta\wedge d_c\psi  |  \le c_\beta.
\end{split}\end{equation*}
This shows that $\mc D(A^*)\subset \{ \beta\in F^*, \, \mc M(\partial_c  T_\beta)<\infty \}$.

Let us prove now the reverse inclusion. 
Suppose  $\beta\in F^*$ is such that $ \mc M(\partial_c  T_\beta)<\infty$. 
If $\psi\in \mc D(A)$, by Lemma \ref{ambrogio: 2}
there exists
a sequence $(\psi_k)_{k\in\mathbb N}$ in $\mc D (\he n, E_0^{h-1})$ such that
\begin{equation*}
\psi_k \to \psi \quad \mbox{in $E$}\qquad\mbox{and}\qquad d_c\psi_k\to d_c\psi \quad \mbox{in $F$.}
\end{equation*} Hence, \eqref{apr 22 eq:1} holds. Indeed,
\begin{equation*}\begin{split}
 | \int_{\he n}&\beta\wedge d_c\psi  | =
  \lim_k | \int_{\he n}\beta\wedge d_c\psi_k  |
  =  \lim_k |\Scal{\partial_c  T_\beta}{\psi_k}|
  \\&
 \le \mc M(\partial_c  T_\beta) \limsup_k \|\psi_k\|_E
 = \mc M(\partial_c  T_\beta)  \|\psi\|_E.
\end{split}\end{equation*}
This completes the proof of \eqref{domain A* ter}. 
%Since $F$ is reflexive,
%then $\mc D(A^*)$ is dense in $F^*$ (see \cite{brezis}, Remark 15 of
%Section 2.6.

Again following \cite{brezis}, Section 2.6, if $\beta\in\mc D(A^*)$, then $A^*\beta$ is uniquely determined by the identity
$$
\scal{A^*(\beta)}{\psi}_{E^*,E} = \scal{\beta}{A\psi}_{F^*,F}\qquad\mbox{for $\psi\in\mc D(A)$.} 
$$
Taking $\psi\in \mc D(\he n, E_0^{h-1})$, then \eqref{A* ter} follows.

\end{proof}

\begin{proof}[Proof of Theorem \ref{main global}-i)]
With the notations of Definition \ref{spaces},
let us show preliminarily that
\begin{equation}\label{feb 15 eq:1}
A^*(\mc D(A^*))\qquad\mbox{is closed in $E^*$.}
\end{equation}
To this end, let $(T_n)_{n\in\mathbb N}$ a sequence 
in $A^*(\mc D(A^*))$ that converges to a $h-1$-current 
$T\in E^*$ (i.e. in the mass norm). Hence, $\mc M(T_n)
= \|T_n\|_{E^*}\le C_1$ for all $n\in\mathbb N$.
Moreover, in particular
 $T_n\to T$ weakly in  $\mc D'$, i.e.
 $$
 \Scal{T}{\sigma}=\lim_{n\to\infty} \Scal{T_n}{\sigma}
 $$
 for all $\sigma\in \mc D(\he n,E_0^{h-1})$.
By \eqref{domain A* ter} and \eqref{A* ter} there exists a corresponding
sequence $(S_n)_{n\in\mathbb N}$ in $\mc D'(\he n, E_0^h)$ such that
\begin{equation}\label{feb 15 eq:2}
T_n = \partial_c  S_n, \quad\mbox{with $S_n=T_{\beta_n}$, \;$\beta_n\in 
 L^{Q/(Q-1)}(\he n, E_0^{2n+1-h})$} 
\end{equation}
for any $n\in\mathbb N$. Since the $(h-1)$-currents $\partial_c  S_n$'s 
satisfy
$\mc M(\partial_c  S_n) = \mc M(T_n) <\infty$, by Theorem \ref{poincareglobal currents uno}
(that we can apply since 
$1\le h-1\le 2n$ and $h-1\neq n$), again for any $n\in\mathbb N$ there exists $\phi_n\in L^{Q/(Q-1)}(\he n, E_0^{2n+1-h})$
such that $\partial_c  T_{\phi_n} = \partial_c  S_n=\partial_c  T_{\beta_n}$, and
$$
\|\phi_n\|_{L^{Q/(Q-1)}(\he n, E_0^{2n+1-h})} \le C_2\mc M(\partial_c  S_n) 
= C\mc M(T_n) \le C_1C_2.
$$
Since $Q/(Q-1)>1$ we cas assume that 
$$\phi_n\to \phi\qquad\mbox{weakly in
$L^{Q/(Q-1)}(\he n, E_0^{2n+1-h}) $.}
$$
Thus
$T_{\phi_n} \to T_{\phi}$ weakly* in $\mc D'$ and $T_n = \partial_c  S_n 
=\partial_c  T_{\phi_n} \to \partial_c  T_{\phi}$ weakly* in $\mc D'$; therefore,
since also $T_n\to T$ weakly*, it follows that $T= \partial_c  T_{\phi}$.
To prove that $T\in A^*(\mc D(A^*))$, by \eqref{domain A* ter} we have only to 
show that $\mc M(\partial_c  T_\phi)<\infty$. Because of the lower
semicontinuity of the mass with respect to the weak* convergence,
we have
$$
\mc M(\partial_c  T_\phi)\le\liminf_{n\to \infty}\mc M(\partial_c  S_n)
= \liminf_{n\to \infty}\mc M(T_n) = \mc M(T).
$$
Thus \eqref{feb 15 eq:1} is proved.

By Theorem \ref{predual}, 1), \eqref{feb 15 eq:1} implies that
\begin{equation}\label{feb 19 eq:1}
A(\mc D(A)) = (\ker A^*)^\perp.
\end{equation}
Moreover, by Lemma \ref{predualpoin}, \eqref{feb 15 eq:1} implies that
there exists $C>0$ such that  for all  $f\in A(D(A))$ there exists $e\in D(A)$, 
satisfying 
\begin{equation}\label{mar 3 eq:1}
Ae=f\quad\mbox{and}\quad \|e\|_{E}\leq C\,\|f\|_{F}.
\end{equation}

We are left to show that $\{\alpha\in L^Q(\he n, E_0^{h}), \, d_c \alpha = 0\}\subset A(D(A))$. This will be done by showing that
%On the other hand, 
\begin{equation}\label{feb 19 eq:2}
\{\alpha\in L^Q(\he n, E_0^{h}), \, d_c \alpha = 0\}
\subset (\ker A^*)^\perp.
\end{equation}
Indeed, let $\alpha\in L^Q(\he n, E_0^{h})$ be a closed
form, and take $\beta\in L^{Q/(Q-1)}(\he n, E_0^{2n+1-h})\in \ker A^*$,
i.e. $\beta\in L^{Q/(Q-1)}(\he n, E_0^{2n+1-h})$ is a closed form, by Remark \ref{mar 2 rem:1}. Then,
by Proposition \ref{parabolic}, 
$$
\int_{\he n} \alpha\wedge\beta = 0,
$$
i.e. \eqref{feb 19 eq:2} holds.

Thus, combining  \eqref{feb 19 eq:2}  and \eqref{feb 19 eq:1}, 
\begin{equation*}
\{\alpha\in L^Q(\he n, E_0^{h}), \, d_c \alpha = 0\} \subset
A(\mc D(A)),
\end{equation*}
and hence, by \eqref{mar 3 eq:1}, we have proved the theorem.

\end{proof}

\section {The main global result in degree $n+1$: Poincar\'e inequalities with Beppo Levi-Sobolev spaces}\label{caso 2}

The proof of our main result when $h=n+1$ is technically more delicate since in this case the equation $d_c\phi=\omega$  involves a differential operator of order two.  In this case, we  prove a more general result involving the Beppo Levi-Sobolev space $\BL{1}{Q/Q-1}$ that implies, as a corollary, statement ii) of Theorem \ref{main global}. Let us start by mentioning some general facts regarding Beppo Levi-Sobolev spaces $\BL{1}{p}$.

\subsection {Beppo Levi-Sobolev spaces $\BL{1}{p}$}

\begin{definition} If $1<p<Q$, we denote by $\BL{1}{p}$ the homogeneous Sobolev space
(called also  Beppo Levi space) defined as the completion of $\mc D(\he n)$ with respect to the norm
$$
\| u\|_{\BL{1}{p}} := \sum_{j=1}^{2n}\|W_j u\|_{L^p(\he n)}.
$$
\end{definition}

\begin{remark}\label{tau}
Since $\BL{1}{p}$ is reflexive, it can be identified with its bidual via the canonical isomorphism
$\tau(u)(f) = f(u)$ for all $\BL{1}{p}$ and $f\in (\BL{1}{p})^*$. 
\end{remark}

\begin{proposition} If $1<p<Q$, then
$$
(\BL{1}{p})^* = \{T\in \mc D'(\he n)\, ; \, T= \sum_j W_j f_j \, , \, f_j\in L^{p'}(\he n)\}.
$$
If $ {F} = (f_1,\dots, f_{2n})$ is a horizontal vector field, then we set
$$
{\mathrm{div}}_{\mathbb{H}}\, {F} := \sum_j W_j f_j.
$$

\end{proposition}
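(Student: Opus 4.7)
The plan is to prove this by the standard Hahn--Banach / representation argument for dual spaces of Sobolev-type completions, adapted to the horizontal gradient on $\he n$.

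The key observation is that the horizontal gradient
\[
\iota: \BL{1}{p} \to L^p(\he n)^{2n}, \qquad \iota(u) := (W_1u,\dots,W_{2n}u),
\]
is an isometric embedding when the target carries the norm $\sum_j \|f_j\|_{L^p(\he n)}$. This is immediate on $\mc D(\he n)$ by the very definition of the $\BL{1}{p}$ norm, and extends to the completion. Note that by Sobolev embedding (Theorem \ref{hls folland} applied to the horizontal fundamental solution, using $p<Q$), elements of $\BL{1}{p}$ may be realized concretely as elements of $L^{pQ/(Q-p)}(\he n)\subset L^1_{\mathrm{loc}}(\he n)$, and hence as distributions; this ensures that both sides of the claimed equality live in $\mc D'(\he n)$.

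The easy direction is the inclusion $\supset$. If $f_j\in L^{p'}(\he n)$ for $j=1,\dots,2n$ and $T=\sum_j W_j f_j$ in $\mc D'(\he n)$, then for every $u\in\mc D(\he n)$,
\[
|\Scal{T}{u}| = \Bigl|\sum_j \int_{\he n} f_j\, W_j u\,dp\Bigr|
\le \Bigl(\max_j\|f_j\|_{L^{p'}}\Bigr)\sum_j \|W_j u\|_{L^p} = C\,\|u\|_{\BL{1}{p}},
\]
so $T$ extends to a continuous linear form on $\BL{1}{p}$.

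For the nontrivial inclusion $\subset$, I would take $T\in(\BL{1}{p})^*$ and define a continuous linear form $\widetilde T$ on the subspace $\iota(\BL{1}{p})\subset L^p(\he n)^{2n}$ by $\widetilde T(\iota u):=\Scal{T}{u}$; this is well-defined because $\iota$ is injective (isometric), with $\|\widetilde T\|=\|T\|_{(\BL{1}{p})^*}$. By the Hahn--Banach theorem, extend $\widetilde T$ to a continuous linear form on all of $L^p(\he n)^{2n}$ with the same norm. The dual of $L^p(\he n)^{2n}$ is $L^{p'}(\he n)^{2n}$, so there exist $g_1,\dots,g_{2n}\in L^{p'}(\he n)$ such that
\[
\Scal{T}{u} = \sum_{j=1}^{2n}\int_{\he n} g_j\,W_j u\,dp \qquad \text{for all } u\in\mc D(\he n).
\]
Setting $f_j:=-g_j\in L^{p'}(\he n)$, this reads $\Scal{T}{u}=\Scal{\sum_j W_jf_j}{u}$ for all $u\in\mc D(\he n)$, i.e.\ $T=\sum_j W_j f_j$ in $\mc D'(\he n)$, proving the reverse inclusion.

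The only delicate point is that the representing $f_j$'s are not unique: any vector $(g_1,\ldots,g_{2n})\in L^{p'}(\he n)^{2n}$ that is $L^{p'}$-orthogonal to $\iota(\BL{1}{p})$ (equivalently, such that $\mathrm{div}_{\mathbb H}(g)=0$ distributionally) can be added. This is not an obstacle for the statement as written, which only asserts \emph{existence} of such a representation; uniqueness would require passing to an appropriate quotient. Thus the characterization follows once the Hahn--Banach step and the Sobolev embedding (to get a bona fide distribution on the right-hand side) are in place.
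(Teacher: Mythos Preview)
Your argument is correct and is the standard Hahn--Banach/Riesz representation proof for duals of completions defined by a gradient seminorm; the paper states this proposition without proof, so there is nothing to compare against. One minor remark: your appeal to Theorem \ref{hls folland} for the Sobolev embedding is slightly off---the precise statement you need is Theorem \ref{embedding} (or the sharp Sobolev inequality of \cite{Lu94} cited just before it), but this does not affect the validity of the argument.
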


By Sobolev inequality with sharp exponents in Carnot groups
(see e.g., \cite{Lu94})
 it is easy to prove the following theorem.

\begin{theorem}\label{embedding} If $1<p<Q$, we have:
$
u\in \BL{1}{p}$ if and only if 
$$
u\in L^{pQ/(Q-p)}(\he n)\quad\mbox{and}\quad W_ju\in L^p(\he n),\, j=1,\cdots,2n,
$$
and the two norms are equivalent, i.e.,
$$
\|u\|_{\BL{1}{p}}\approx \|u\|_{L^{pQ/(Q-p)}(\he n)} + \sum_j \|W_j u\|_{L^p(\he n)}. 
$$

In particular,
the embedding $\BL{1}{p}\subset L^{pQ/(Q-p)}(\he n)$ is continuous.

\end{theorem}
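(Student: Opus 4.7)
The plan is to establish both inclusions by combining Lu's sharp Sobolev inequality with a standard mollification-plus-truncation approximation, after which the norm equivalence follows from the open mapping theorem (or directly from the estimates).

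First I would record Lu's inequality: for $1<p<Q$, there is $C>0$ with
\begin{equation*}
\|u\|_{L^{pQ/(Q-p)}(\he n)}\le C\sum_{j=1}^{2n}\|W_ju\|_{L^p(\he n)}\qquad\text{for all }u\in\mc D(\he n).
\end{equation*}
For the forward direction, given $u\in\BL{1}{p}$ represented by a Cauchy sequence $(u_k)\subset \mc D(\he n)$ in the $\BL{1}{p}$-norm, Lu's inequality shows that $(u_k)$ is also Cauchy in $L^{pQ/(Q-p)}(\he n)$, so it admits a limit $\tilde u$ there. Each $W_ju_k$ converges in $L^p$ to some $v_j$, and testing against $\phi\in\mc D(\he n)$ gives $\langle W_j\tilde u,\phi\rangle=-\langle\tilde u,W_j\phi\rangle=\lim\langle W_ju_k,\phi\rangle=\langle v_j,\phi\rangle$, so $W_j\tilde u=v_j\in L^p(\he n)$. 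This identifies $\BL{1}{p}$ with a subspace of $\{u\in L^{pQ/(Q-p)}:W_ju\in L^p\}$, and the continuous embedding as well as one direction of the norm equivalence are automatic from Lu's inequality.

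For the reverse inclusion, assume $u\in L^{pQ/(Q-p)}(\he n)$ and $W_ju\in L^p(\he n)$ for every $j$, and approximate by $\mc D(\he n)$ functions in the $\BL{1}{p}$-norm. Take a Friedrichs mollifier $J_\eps$ and set $u_\eps:=J_\eps\ast u$, which is smooth and satisfies $W_ju_\eps=J_\eps\ast W_ju\to W_ju$ in $L^p$, and $u_\eps\to u$ in $L^{pQ/(Q-p)}$. Then truncate with a cut-off $\chi_R\in\mc D(\he n)$, $0\le\chi_R\le 1$, $\chi_R\equiv 1$ on $B(e,R)$, $\supp\chi_R\subset B(e,2R)$, $|W\chi_R|\le C/R$, and consider $u_{\eps,R}:=\chi_Ru_\eps\in\mc D(\he n)$. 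By Leibniz,
\begin{equation*}
W_ju_{\eps,R}=\chi_RW_ju_\eps+(W_j\chi_R)u_\eps.
\end{equation*}
The first term converges to $W_ju$ in $L^p$ by letting $\eps\to 0$ and then $R\to\infty$ (dominated convergence). For the remainder, H\"older with exponents $Q/p$ and $Q/(Q-p)$, together with $|B(e,2R)|\simeq R^Q$, yields
\begin{equation*}
\|(W_j\chi_R)u_\eps\|_{L^p}^p\le\Big(\frac{C}{R}\Big)^p|B(e,2R)|^{p/Q}\Big(\int_{R<\|p\|<2R}|u_\eps|^{pQ/(Q-p)}\Big)^{(Q-p)/Q}\le C\,\|u_\eps\|_{L^{pQ/(Q-p)}(\{R<\|p\|<2R\})}^p,
\end{equation*}
which tends to zero as $R\to\infty$ since $u_\eps\in L^{pQ/(Q-p)}$. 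A diagonal extraction now produces $u_{\eps_k,R_k}\to u$ in the $\BL{1}{p}$-norm, proving $u\in\BL{1}{p}$.

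Combining both inclusions, $\BL{1}{p}$ is precisely the space $\{u\in L^{pQ/(Q-p)}:W_ju\in L^p\}$, and the equivalence of norms follows from Lu's inequality in one direction and from the triangle inequality (together with the definition of the $\BL{1}{p}$-norm) in the other. I expect the main technical point to be the truncation estimate above: the cut-off does not decay in the $L^Q$-norm of its horizontal gradient, so the standard H\"older bound gives only boundedness, and one must exploit the $L^{pQ/(Q-p)}$-integrability of $u_\eps$ on the annulus $\{R<\|p\|<2R\}$ to recover convergence to zero.
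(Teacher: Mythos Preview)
Your proof is correct and follows essentially the same approach as the paper: both rely on Lu's sharp Sobolev inequality for the norm equivalence on $\mc D(\he n)$, and on mollification followed by truncation (with the same H\"older estimate on the cut-off term) to establish density of $\mc D(\he n)$ in the space $\{u\in L^{pQ/(Q-p)}:\,W_ju\in L^p\}$. The only organisational difference is that the paper packages the two inclusions via an abstract completion lemma (an isomorphism $T_0$ on $\mc D(\he n)$ extends to an isomorphism between the completions), whereas you carry out the forward and reverse directions explicitly; the underlying analysis is identical.
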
 

 For sake of completeness, let us provide a full proof
of the above statement. Let us state preliminarily the following
(trivial) result. 
\begin{lemma}\label{trivial} Let $X$ be a Banach space, and $X_0\subset X$ a dense
subspace. Let $Y_0$ be a normed space and $Y$ its completion.
If $T_0:X_0\to Y_0$ is a linear isomorphism
such that
\begin{equation}\label{dec26 eq:1}
\|u\|_{X_0} \approx \| T_0u\|_{Y_0}\qquad \mbox{for $u\in X_0$,}
\end{equation}
then $T_0$ can be continued as a linear isomorphism $T$ between $X$ and $Y$.
\end{lemma}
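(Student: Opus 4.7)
The plan is to extend both $T_0$ and its inverse by uniform continuity, then verify that the resulting maps are inverses of one another via a standard density argument.

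I would first observe that the equivalence $\|u\|_{X_0}\approx \|T_0u\|_{Y_0}$ means that $T_0:X_0\to Y_0$ is bi-Lipschitz: there exist constants $c,C>0$ with $c\|u\|_{X_0}\le \|T_0u\|_{Y_0}\le C\|u\|_{X_0}$ for every $u\in X_0$. In particular $T_0$ is uniformly continuous. Given $u\in X$, I would pick any sequence $(u_n)\subset X_0$ with $u_n\to u$; this sequence is Cauchy in $X_0$ for the $X$-norm, so by the upper bound $(T_0u_n)$ is Cauchy in $Y_0\subset Y$, and by completeness of $Y$ the limit $Tu:=\lim_n T_0u_n$ exists in $Y$. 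The lower bound guarantees that this limit does not depend on the choice of approximating sequence, so $T:X\to Y$ is well-defined, linear, and inherits the same two-sided bound.

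I would then apply the same construction to $T_0^{-1}:Y_0\to X_0$, which is also bi-Lipschitz. Since $Y_0$ is dense in its completion $Y$ and $X$ is complete, $T_0^{-1}$ extends uniquely to a continuous linear map $S:Y\to X$. To conclude that $T$ is a linear isomorphism, I would check $S=T^{-1}$ by showing $S\circ T=\mathrm{id}_X$ and $T\circ S=\mathrm{id}_Y$. On the dense subspace $X_0$ one has $S\circ T=T_0^{-1}\circ T_0=\mathrm{id}_{X_0}$; since $S\circ T$ and $\mathrm{id}_X$ are both continuous on $X$ and agree on a dense subset, they coincide on all of $X$. The same argument applied to $Y_0\subset Y$ handles $T\circ S=\mathrm{id}_Y$. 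Hence $T$ is a linear isomorphism whose restriction to $X_0$ is $T_0$ by construction.

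I do not expect any real obstacle: this is an ``extension by continuity'' exercise. The only subtlety worth stating explicitly is that the norm on $X_0$ should be understood as the restriction of the norm on $X$ (implicit in saying that $X_0$ is a dense subspace of the Banach space $X$), and likewise for $Y_0\subset Y$; once this is granted, both $T_0$ and $T_0^{-1}$ are bi-Lipschitz with respect to the ambient norms and admit unique continuous extensions to the completions.
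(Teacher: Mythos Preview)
Your proof is correct and follows essentially the same route as the paper: both extend $T_0$ by continuity using the bi-Lipschitz bound and then establish bijectivity. The only cosmetic difference is that the paper verifies injectivity and surjectivity of the single extension $T$ directly (using $T_0^{-1}$ on approximating sequences for surjectivity), whereas you extend both $T_0$ and $T_0^{-1}$ and check the compositions are identities; these are equivalent standard arguments.
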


\begin{proof} Take $u\in X$; then there exists a sequence $(u_k)_{k\in N}$
in $X_0$ converging to $u$ in $X$.  Then, by \eqref{dec26 eq:1}, 
$(Tu_k)_{k\in N}$ is a Cauchy sequence in $Y_0$ converging to an
element of $Y$. We set $Tu:=\lim_k T_0u_k$. Trivially, the continuation
$T$ is a well defined  bounded linear operator between $X$ and $Y$.
Clearly, $T$ is injective. On the other hand, take $v\in Y$. By definition,
there exists a sequence $(v_k)_{k\in N}$ in $Y_0$ converging to $v$
in $Y$. Again by \eqref{dec26 eq:1}, $(T_0^{-1}v_k)_{k\in N}$ has a
limit $u\in X$. By continuity, $Tu=\lim_k TT_0^{-1}v_k = \lim_k T_0T_0^{-1}v_k
=v$. Therefore, $T$ is also surjective and hence is a linear isomorphism.

\end{proof}

\begin{proof}[Proof of Theorem \ref{embedding}]
With the notation of previous Lemma \ref{trivial}, set
$$
X:= L^{pQ/(Q-p)}(\he n)\cap \{ u\in \mc D'(\he n)\,,  |Wu| \in L^p(\he n)\},
$$
where we set $|Wu|:=\sum_{j=1}^{2n}|W_j u|$, and (with a slight abuse of notation)
endowed with the norm
$$
\|u\|_X = \|u\|_{L^{pQ/(Q-p)}(\he n)} + \|  Wu\|_{L^p(\he n)}.
$$
A standard argument shows that $X$ is a Banach space. Let
us prove that $X_0:=\mc D'(\he n)$ is dense in $X$. We notice
first that $\mc E(\he n)\cap X$ is dense in $X$.
Indeed, if $\eps>0$, let $J_\eps = \ccheck J_\eps$ denote again a (group) Friedrichs' mollifier.
If $u\in X$, then $J_\eps\ast u\in\mc E(\he n)$, and
$$
J_\eps\ast u \to u\qquad\mbox{ in $L^{pQ/(Q-p)}(\he n)$,}
$$
and 
$$
W(J_\eps\ast u) = J_\eps\ast Wu \to Wu \qquad\mbox{ in $L^{p}(\he n)^{2n}$}.
$$
as $\eps\to 0$.
Therefore, from now on we can assume $u\in \mc E(\he n)\cap X$.
Let us fix a sequence of cut-off functions $\{\chi_k\}_{i\in\N}\subset\mc D(\he n)$  such that for any $k\in\N$   ${\rm supp} (\chi_k)\subset {B(e,2k)}$, 
$\chi_k\equiv 1$ in  ${B(e,k)}$, and $k\,|W\chi_k|\le C$ for all $k\in\mathbb N$.
Set $u_k:= \chi_k u$. Obviously,
$$
u_k \to u\qquad\mbox{ in $L^{pQ/(Q-p)}(\he n)$ as $k\to\infty$.}
$$
Moreover, since
$$
|Wu_k|\le C\frac1k |u| + \chi_k |Wu|,
$$
then
$$
|Wu_k| \to |Wu| \qquad\mbox{ in $L^{p}(\he n)$ as $k\to\infty$.}
$$
Again with the notations of previous Lemma \ref{trivial}, we choose
$Y_0:= \mc D(\he n)$ endowed with the norm $\|u\|_{Y_0} :=
\| |Wu| \|_{L^p(\he n)}$. The inequality \eqref{dec26 eq:1} is nothing
but the sharp Poincar\'e inequality of \cite{Lu94}, and the statement
is proved.

\end{proof}

\begin{definition} If $1\le h\le 2n+1$, then a form $\alpha$ belongs to $\BLh{1}{p}{h}$ if and
only if all its components with respect to a fixed left invariant basis 
\begin{equation*}
\Xi_0^h=\{\xi_1^h,\dots \xi^h_{N_h}\}
\end{equation*}
of $ E_0^h$ belong
to $\BL{1}{p}$.

\end{definition}

\begin{proposition}\label{BL dual}
The dual space
$
( \BLh{1}{p}{h} )^*
$
can be identified with a family of currents $T\in\mc D'(\he n, E_0^{h})$ such that,
with the notation of \eqref{Aug11 eq:4}
\begin{equation*}
T=\sum_j T_j (\xi_j^h)^*,
\end{equation*}
with $T_j\in\mc D'(\he n)$, $j=1,\dots, N_h$ of the form
$$
T_j = \mathrm{div}_{\he{}}\, {F_j}.
$$
with ${F_j} \in ( L^{p'})^{2n}$, $j=1,\dots, N_h$.

More precisely, by the density of $\mc D(\he n,E_0^h)$ in $ \BLh{1}{p}{h}$,
an element of $( \BLh{1}{p}{h} )^*$ is fully identified by its restriction
to $\mc D(\he n,E_0^h)$. In particular, if $T\in ( \BLh{1}{p}{h} )^*\subset \mc D'
(\he n, E_0^h)$ and, if for sake of simplicity we write
$\mc W:  =\BLh{1}{p}{h}$, we have
\begin{equation}\label{nov2 eq:2}
\Scal{T}{\phi}_{\mc W^*,\mc W} = \Scal{T}{\phi}_{\mc D',\mc D}.
\end{equation}

\end{proposition}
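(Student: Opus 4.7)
The plan is to reduce everything to the scalar case, which has been handled in the preceding proposition, and then reassemble components using the coordinate machinery from Remark \ref{current simply}.

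First, I would fix the left-invariant basis $\Xi_0^h=\{\xi_1^h,\dots,\xi_{N_h}^h\}$ and identify a form $\phi=\sum_j\phi_j\xi_j^h\in\BLh{1}{p}{h}$ with the tuple of its components $(\phi_1,\dots,\phi_{N_h})\in(\BL{1}{p})^{N_h}$. By the very definition of $\BLh{1}{p}{h}$, this map is a topological isomorphism when $(\BL{1}{p})^{N_h}$ is given the product norm, so by the standard duality of finite products of reflexive Banach spaces,
$$
(\BLh{1}{p}{h})^*\;\simeq\;\bigl((\BL{1}{p})^*\bigr)^{N_h}.
$$
A continuous linear functional $L$ on $\BLh{1}{p}{h}$ thus corresponds to an $N_h$-tuple $(T_1,\dots,T_{N_h})\in ((\BL{1}{p})^*)^{N_h}$ via $L(\phi)=\sum_j\Scal{T_j}{\phi_j}$.

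Next, I would invoke the characterization of $(\BL{1}{p})^*$ proved just above: for each $j$, there exists a horizontal vector field $F_j\in (L^{p'}(\he n))^{2n}$ such that $T_j={\mathrm{div}}_{\he{}}\,F_j$ in $\mc D'(\he n)$. Assembling these components as in Remark \ref{current simply}, I define the current $T\in\mc D'(\he n,E_0^h)$ by $T=\sum_j T_j(\xi_j^h)^*$. Conversely, any such $T$ clearly produces, by the estimate $|\Scal{T_j}{\phi_j}|\le\|F_j\|_{p'}\||W\phi_j|\|_p$, a bounded linear functional on $\BLh{1}{p}{h}$, so the identification is a bijection.

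Finally, to obtain \eqref{nov2 eq:2}, I would observe that $\mc D(\he n,E_0^h)$ is dense in $\BLh{1}{p}{h}$ (this follows from the density of $\mc D(\he n)$ in $\BL{1}{p}$ applied componentwise), so any $T\in(\BLh{1}{p}{h})^*$ is uniquely determined by its values on test forms. For $\phi=\sum_j\phi_j\xi_j^h\in\mc D(\he n,E_0^h)$, both the dual pairing $\Scal{T}{\phi}_{\mc W^*,\mc W}$ and the distributional pairing $\Scal{T}{\phi}_{\mc D',\mc D}$ (computed as in Remark \ref{current simply}) reduce to $\sum_j\Scal{T_j}{\phi_j}$, so they coincide.

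The only genuine content is the scalar duality statement, which was already in place before the proposition. The mild technical point is independence of the choice of left-invariant basis, but this is automatic because the identification is set up after fixing such a basis and both sides transform consistently under a change of basis.
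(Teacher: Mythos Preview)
Your proposal is correct and follows essentially the same route as the paper: both reduce to the scalar characterization of $(\BL{1}{p})^*$ componentwise via the fixed left-invariant basis $\Xi_0^h$, the paper by defining $T_k$ through $\Scal{T_k}{\phi}:=\Scal{T}{\phi\,\xi_k^h}$ and you by invoking the product duality $(\BLh{1}{p}{h})^*\simeq((\BL{1}{p})^*)^{N_h}$. Your write-up is in fact slightly more complete, since you explicitly address the converse inclusion and the identity \eqref{nov2 eq:2}, which the paper leaves implicit.
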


\begin{proof} 
Let $T\in ( \BLh{1}{p}{h} )^*$ be given,
and consider a sequence $(\phi_k)_{k\in\mathbb N}$ of test forms,
$\phi_k\to 0$ in $\mc D(\he n,E_0^h)$. Since there exists a compact set $K$ 
such that $\supp \phi_k\subset K$ for all $k\in\mathbb N$ and $\phi_k\to 0$
uniformly with all its derivatives, then
$$
\| \phi_k\|_{\BLh{1}{p}{h}} \to 0 \qquad\mbox{as $k\to\infty$,}
$$
so that $\Scal{T}{\phi_k}_{\mc W^*,\mc W}\to 0$ as $k\to\infty$. This proves that $T$ is a $h$-current.

Take now $\phi\in \mc D(\he n)$. If $1\le k\le N_h$, define
\begin{equation*}\begin{split}
\Scal{T_k}{\phi}_{\mc D',\mc D}= \Scal{T}{\phi \xi_k^h}_{\mc W^*,\mc W}   \le C\| \phi \xi_k^h\|_{\BLh{1}{p}{h}}
= C\| \phi \|_{\BLh{1}{p}{h}},
\end{split}\end{equation*}
so that $T_k\in (\BLh{1}{p}{h})^*$. By Proposition \ref{BL dual} there exists a horizontal 
vector field in ${F_k}\in ( L^{p'})^{2n}$  such that
$$
T_k (\xi_k^h)^* = (\mathrm{div}_{\he{}}\, {F_k})  (\xi_k^h)^*
$$
and
\begin{equation}\label{}
T = \sum_j   (\mathrm{div}_{\he{}}\, {F_j})  (\xi_j^h)^*.
\end{equation}
\end{proof}

In the sequel, we will often use the identification given in \eqref{BL dual}, omitting the subscript. 
%\begin{lemma}\label{oct19 eq:1}  If $\omega\in L^{Q/2}(\he n, E_0^{n+1})$, then $T_\omega\in ( \BLh{1}{Q/(Q-1)}{n} )^*$
%and
%$$
%\| T_\omega\|_{( \BLh{1}{Q/(Q-1)}{n} )^*} \le C \|\omega\|_{L^{Q/2}(\he n, E_0^{n+1})}.
%$$
%
%\end{lemma}
%
%\begin{proof} Indeed, if $\phi\in\mc D(\he n, E_0^n)$, then
%\begin{equation*}\begin{split}
%\Scal{T_\omega}{\phi} & = \int_{\he n} \omega\wedge\phi \le \|\omega\|_{L^{Q/2}(\he n, E_0^{n+1})}
%\|\phi\|_{L^{Q/(Q-2)}(\he n,  E_0^n)}
%\\&
%\le C  \|\omega\|_{L^{Q/2}(\he n, E_0^{n+1})}
%\|\phi\|_{\BLh{1}{Q/(Q-1)}{n}},
%\end{split}\end{equation*}
%by Lemma \ref{oct19 eq:1}.
%\end{proof}

The next two assertions follow straightforwardly from Theorem \ref{embedding} .
\begin{proposition}\label{weak and weak*} If $\phi_k\to \phi$ weakly in $\BLh{1}{p}{h}$, then 
$$
T_{\phi_k}\to  T_\phi \qquad\mbox{weak* in $\mc D'(\he n, E_0^{2n+1-h})$.}
$$

\end{proposition}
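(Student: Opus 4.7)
The plan is to reduce the weak$^*$ convergence of currents to the continuity of a natural integration pairing on $\BLh{1}{p}{h}$. By the definition preceding Remark \ref{mar 2 rem:1}, for any $\psi\in \mc D(\he n, E_0^{2n+1-h})$ and any $\phi\in\BLh{1}{p}{h}\subset L^1_{\mathrm{loc}}(\he n,E_0^h)$ (the inclusion follows from Theorem \ref{embedding}, since $\BL{1}{p}\subset L^{pQ/(Q-p)}$), we have
$$
\Scal{T_\phi}{\psi}_{\mc D',\mc D}=\int_{\he n}\phi\wedge\psi.
$$
So showing weak$^*$ convergence $T_{\phi_k}\to T_\phi$ amounts to showing that for each fixed $\psi\in\mc D(\he n,E_0^{2n+1-h})$, the functional
$$
L_\psi:\BLh{1}{p}{h}\to\R,\qquad L_\psi(\phi):=\int_{\he n}\phi\wedge\psi
$$
is continuous; weak convergence $\phi_k\rightharpoonup\phi$ then gives $L_\psi(\phi_k)\to L_\psi(\phi)$, which is precisely $\Scal{T_{\phi_k}}{\psi}\to\Scal{T_\phi}{\psi}$.

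To verify continuity of $L_\psi$, expand $\phi=\sum_j\phi_j\xi_j^h$ in the left-invariant basis and $\psi=\sum_k\psi_k\eta_k$ in a basis of $E_0^{2n+1-h}$. The wedge products $\xi_j^h\wedge\eta_k$ are constant multiples of the volume form $dV$, hence
$$
\int_{\he n}\phi\wedge\psi=\sum_{j,k}c_{jk}\int_{\he n}\phi_j\,\psi_k\,dV,
$$
and it suffices to estimate each scalar integral. Since $\psi_k\in\mc D(\he n)$ has compact support, it lies in every Lebesgue space; by H\"older's inequality and the Sobolev embedding $\BL{1}{p}\hookrightarrow L^{pQ/(Q-p)}(\he n)$ of Theorem \ref{embedding},
$$
\Bigl|\int_{\he n}\phi_j\,\psi_k\,dV\Bigr|\le \|\phi_j\|_{L^{pQ/(Q-p)}}\,\|\psi_k\|_{L^{(pQ/(Q-p))'}}\le C\,\|\psi_k\|_{L^{(pQ/(Q-p))'}}\,\|\phi_j\|_{\BL{1}{p}}.
$$
Summing over $j,k$ yields $|L_\psi(\phi)|\le C_\psi\|\phi\|_{\BLh{1}{p}{h}}$, so $L_\psi\in(\BLh{1}{p}{h})^*$ as required.

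There is no genuine obstacle here: the only point one needs to be careful about is that the pairing defining $T_\phi$ extends from $L^1_{\mathrm{loc}}$ functions to elements of the Beppo Levi space without ambiguity, which is immediate from the Sobolev embedding. Once $L_\psi$ is identified as a bounded linear functional on $\BLh{1}{p}{h}$, the definition of weak convergence in that reflexive space closes the argument.
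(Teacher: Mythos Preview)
Your proof is correct and takes essentially the same approach as the paper, which simply asserts that the proposition ``follows straightforwardly from Theorem \ref{embedding}'' without giving details. You have spelled out precisely the intended argument: the Sobolev embedding $\BL{1}{p}\hookrightarrow L^{pQ/(Q-p)}$ makes each test-form pairing $L_\psi$ a bounded linear functional on $\BLh{1}{p}{h}$, so weak convergence passes through it.
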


\begin{lemma}\label{oct19 eq:1}  If $\omega\in L^{Q/2}(\he n, E_0^{n+1})$, then $T_\omega\in ( \BLh{1}{Q/(Q-1)}{n} )^*$
and
$$
\| T_\omega\|_{( \BLh{1}{Q/(Q-1)}{n} )^*} \le C \|\omega\|_{L^{Q/2}(\he n, E_0^{n+1})}.
$$

\end{lemma}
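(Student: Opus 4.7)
The strategy is to combine the Sobolev embedding given by Theorem \ref{embedding} with H\"older's inequality, then extend by density.

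By the definition of $T_\omega$ (see \eqref{nov5 eq:1}), for any $\phi\in \mc D(\he n, E_0^{n})$ one has
$$
\Scal{T_\omega}{\phi} = \int_{\he n}\omega\wedge\phi,
$$
and pointwise $|\omega\wedge\phi|\le C\,|\omega|\,|\phi|\,dV$ (the wedge is a bounded bilinear map on $\bigwedge^{n+1}\mfrak h\times\bigwedge^n\mfrak h$). Thus by H\"older's inequality with exponents $Q/2$ and $Q/(Q-2)$ (note $\tfrac{2}{Q}+\tfrac{Q-2}{Q}=1$),
\begin{equation*}
|\Scal{T_\omega}{\phi}|\le C\,\|\omega\|_{L^{Q/2}(\he n, E_0^{n+1})}\,\|\phi\|_{L^{Q/(Q-2)}(\he n, E_0^{n})}.
\end{equation*}

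Next apply Theorem \ref{embedding} componentwise with $p=Q/(Q-1)$. The computation $pQ/(Q-p)= Q/(Q-2)$ shows that the embedding $\BL{1}{Q/(Q-1)}\hookrightarrow L^{Q/(Q-2)}(\he n)$ is continuous, and hence
$$
\|\phi\|_{L^{Q/(Q-2)}(\he n, E_0^{n})}\le C\,\|\phi\|_{\BLh{1}{Q/(Q-1)}{n}}.
$$
Combining the two displays yields the claimed bound for $\phi\in\mc D(\he n, E_0^n)$.

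Finally, since $\mc D(\he n, E_0^{n})$ is dense in $\BLh{1}{Q/(Q-1)}{n}$ by the very definition of $\BL{1}{Q/(Q-1)}$, the linear functional $\phi\mapsto \Scal{T_\omega}{\phi}$ extends uniquely to a bounded functional on $\BLh{1}{Q/(Q-1)}{n}$, and so defines an element of $(\BLh{1}{Q/(Q-1)}{n})^*$ with norm bounded by $C\|\omega\|_{L^{Q/2}(\he n, E_0^{n+1})}$, as claimed. There is no serious obstacle here; the only point worth checking carefully is the arithmetic of the critical exponents, which is precisely what makes $Q/2$ the natural space paired with $\BL{1}{Q/(Q-1)}$ through the wedge product on middle-degree Rumin forms.
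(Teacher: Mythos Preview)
Your proof is correct and follows essentially the same route as the paper: apply H\"older's inequality with exponents $Q/2$ and $Q/(Q-2)$, then invoke the Sobolev embedding of Theorem \ref{embedding} with $p=Q/(Q-1)$ to control $\|\phi\|_{L^{Q/(Q-2)}}$ by $\|\phi\|_{\BLh{1}{Q/(Q-1)}{n}}$. The paper's version is more terse and leaves the density extension implicit, but the argument is identical.
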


\begin{proof} Let $\phi\in\mc D(\he n, E_0^n)$, then
\begin{equation*}\begin{split}
\Scal{T_\omega}{\phi} & = \int_{\he n} \omega\wedge\phi \le \|\omega\|_{L^{Q/2}(\he n, E_0^{n+1})}
\|\phi\|_{L^{Q/(Q-2)}(\he n,  E_0^n)}
\\&
\le C  \|\omega\|_{L^{Q/2}(\he n, E_0^{n+1})}
\|\phi\|_{\BLh{1}{Q/(Q-1)}{n}},
\end{split}\end{equation*}
by Theorem \ref{embedding}
%Lemma \ref{oct19 eq:1}.
\end{proof}

\subsection{Poincar\'e inequalities with the Beppo Levi-Sobolev space: the case of primitive in $E_0^n$}
One of the main results of this paper is contained in the following theorem.

\begin{theorem}\label{main global bis} 
Let $\Omega$ be a $n$-current identified with an element
of 
$$
( \BLh{1}{Q/(Q-1)}{n} )^*
$$
(i.e. of the form described in Proposition \ref{BL dual}). 
 
 If $\partial_c \Omega=0$, then there exists $\phi\in \mc C_0(\he n, E_0^{n})$
such that $\partial_c  T_\phi = \Omega$ and
$$
\|\phi\|_{\mc C_0(\he n, E_0^{n})} \le C \|\Omega\|_{( \BLh{1}{Q/(Q-1)}{n} )^*}.
$$

\end{theorem}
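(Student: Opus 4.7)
The plan is to apply the Bourgain--Brezis duality scheme (Lemma~\ref{predualpoin}) to the operator $A : \mc D(A) \subset E \to F$ defined by
\begin{equation*}
E := \mc C_0(\he n, E_0^n), \qquad F := (\BLh{1}{Q/(Q-1)}{n})^*, \qquad A\phi := \partial_c T_\phi,
\end{equation*}
where $\mc D(A)$ consists of $\phi \in E$ such that $\partial_c T_\phi$ extends to an element of $F$. Since $\BLh{1}{Q/(Q-1)}{n}$ is reflexive, $F^* = \BLh{1}{Q/(Q-1)}{n}$; and $E^* = \mc M$, the space of finite-mass $n$-currents, by Remark~\ref{riesz}. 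The inclusion $\mc D(\he n, E_0^n) \subset \mc D(A)$ follows from Lemma~\ref{oct19 eq:1}: for test forms $d_c\phi \in L^{Q/2}$, so $T_{d_c\phi} \in F$ and $\partial_c T_\phi = \pm T_{d_c\phi}$ by Proposition~\ref{oct19 prop:1}. Hence $\mc D(A)$ is dense in $E$, and $A$ is closed by distributional convergence.

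For the adjoint, for $\phi \in \mc D(A)$ and $\psi \in \mc D(\he n, E_0^n)$,
\begin{equation*}
\langle A\phi,\psi\rangle_{F,F^*} = \langle T_\phi, d_c\psi\rangle = \int \phi \wedge d_c\psi = \int d_c\psi \wedge \phi = \langle T_{d_c\psi},\phi\rangle,
\end{equation*}
so the pairing extends boundedly in $\phi \in \mc C_0$ precisely when $T_{d_c\psi}$ has finite mass; by Lemma~\ref{nov7 lemma:1} this amounts to $d_c\psi \in L^1(\he n, E_0^{n+1})$. Therefore
\begin{equation*}
\mc D(A^*) = \{\psi \in \BLh{1}{Q/(Q-1)}{n} : d_c\psi \in L^1(\he n, E_0^{n+1})\},\qquad A^*\psi = T_{d_c\psi}.
\end{equation*}
Completeness of this identification requires a density lemma analogous to Lemma~\ref{ambrogio: 2}, whose proof has to handle the second-order Leibniz commutator of Lemma~\ref{leibniz}-(ii) by balancing the terms $P_1^n(W\chi_k)\psi$ and $P_0^n(W^2\chi_k)\psi$ against the Sobolev embedding $\BLh{1}{Q/(Q-1)}{n} \subset L^{Q/(Q-2)}$ of Theorem~\ref{embedding}.

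The closed range of $A^*$ in $E^*$ is precisely the Poincar\'e inequality for finite-mass currents of the form $T = \partial_c S$: every such $T$ admits a BL-primitive $\psi$ with $T_{d_c\psi} = T$ and $\|\psi\|_{\BLh{1}{Q/(Q-1)}{n}} \le C\,\mc M(T)$. As announced in the introduction, this is established earlier in this section in two stages: first an $L^1$-to-$\BLh{1}{Q/(Q-1)}{}$ Poincar\'e inequality via the new Gagliardo--Nirenberg inequality (Theorem~\ref{bfp1}), following the strategy of \cite{BFP3}; then an upgrade to currents of finite mass via Proposition~\ref{AG}, using reflexivity of BL to extract a weak limit of primitives and Proposition~\ref{weak and weak*} to identify its boundary. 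Theorem~\ref{predual}-1 then transfers the closed range to $A$, giving $A(\mc D(A)) = (\ker A^*)^\perp$.

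The remaining task -- the main obstacle, in my view -- is to show $\{\Omega \in F : \partial_c\Omega = 0\} \subset (\ker A^*)^\perp$, i.e.\ that $\langle\Omega,\psi\rangle_{F,F^*} = 0$ whenever $\partial_c\Omega = 0$ and $\psi \in \BLh{1}{Q/(Q-1)}{n}$ is $d_c$-closed. This is the Beppo Levi analogue of Lemma~\ref{parabolic}, made delicate by the fact that $\Omega$ is not represented by an $L^p$ form and second-order localization produces commutators in both $W\chi_N$ and $W^2\chi_N$. I would attack it by viewing $\psi \in L^{Q/(Q-2)}$ via Theorem~\ref{embedding}, solving $d_c\eta = \psi$ globally with $\eta \in L^{Q/(Q-4)}(\he n, E_0^{n-1})$ via Theorem~1.1 of \cite{BFP2} in the second-order degree, and computing
\begin{equation*}
0 = \langle\Omega, d_c(\chi_N\eta)\rangle = \langle\Omega, \chi_N\psi\rangle + \langle\Omega, [d_c,\chi_N]\eta\rangle
\end{equation*}
for cutoffs $\chi_N$ supported in $B(e,2N)$, equal to $1$ on $B(e,N)$. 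The left-hand side vanishes since $\partial_c\Omega = 0$ and $\chi_N\eta$ is a compactly supported test form; the first right-hand term tends to $\langle\Omega,\psi\rangle$; so the task reduces to showing that the commutator term, controlled by Lemma~\ref{leibniz}-(ii), H\"older, and the scalings $|W\chi_N| \le C/N$, $|W^2\chi_N| \le C/N^2$, $|B(e,2N)| \asymp N^Q$, decays to zero -- a careful exponent book-keeping that exploits the specific integrabilities of $\Omega \in (\BLh{1}{Q/(Q-1)}{n})^*$ and $\eta \in L^{Q/(Q-4)}$. Once this orthogonality is secured, Lemma~\ref{predualpoin} produces, for every $\partial_c$-closed $\Omega \in F$, a $\phi \in \mc D(A) \subset \mc C_0(\he n, E_0^n)$ with $A\phi = \Omega$ and $\|\phi\|_{\mc C_0} \le C\|\Omega\|_F$, completing the proof.
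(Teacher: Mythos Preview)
Your overall architecture matches the paper's: the same spaces $E,F$, the same identification of $A^*$ with $\beta\mapsto\partial_c T_\beta$ on $\BLh{1}{Q/(Q-1)}{n}$, the same route to closed range via Theorem~\ref{poincareglobal currents bis}, and the same final orthogonality reduction. Two points deserve correction, one minor and one more substantive.

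\textbf{Degree/order slip.} In your orthogonality argument you solve $d_c\eta=\psi$ with $\eta\in E_0^{n-1}$. But $d_c:E_0^{n-1}\to E_0^n$ is \emph{first} order, not second; the second-order piece of Rumin's complex sits one step higher, at $E_0^n\to E_0^{n+1}$. Consequently the correct integrability is $\eta\in L^{Q/(Q-3)}$ (not $L^{Q/(Q-4)}$), and the relevant Leibniz clause is Lemma~\ref{leibniz}~(i), so the commutator is the zeroth-order $P_0^{n-1}(W\chi_N)\eta$ with no $W^2\chi_N$ term.

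\textbf{The real obstacle in the orthogonality step.} Since $\Omega\in(\BLh{1}{Q/(Q-1)}{n})^*$, the pairing $\langle\Omega,P_0^{n-1}(W\chi_N)\eta\rangle$ can only be estimated once you show $P_0^{n-1}(W\chi_N)\eta\to 0$ in the \emph{Beppo Levi norm}, i.e.\ both in $L^{Q/(Q-2)}$ and with horizontal derivative in $L^{Q/(Q-1)}$. The $L^{Q/(Q-2)}$ part and the $(W^2\chi_N)\eta$ contribution are indeed straightforward H\"older book-keeping from $\eta\in L^{Q/(Q-3)}$. But the remaining piece $(W\chi_N)(W\eta)$ requires $W\eta\in L^{Q/(Q-2)}$, and this does \emph{not} follow from the abstract primitive furnished by \cite{BFP2}, Theorem~1.1: that theorem only gives $\eta\in L^{Q/(Q-3)}$ with no control on $W\eta$. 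The paper closes this gap by taking the explicit primitive $\eta=d_c^*d_cd_c^*\Delta_{\he{},n}^{-1}\psi$; then $W\eta$ is obtained from $\psi\in L^{Q/(Q-2)}$ via a kernel of type $0$, hence $W\eta\in L^{Q/(Q-2)}$ by Theorem~\ref{hls folland}~(iii), and the cutoff estimate goes through (this is Step~2b in the proof of Lemma~\ref{parabolic bis}). Your sketch, relying only on the abstract $L^{Q/(Q-4)}$ (or corrected $L^{Q/(Q-3)}$) bound for $\eta$, does not supply this derivative control and would stall at that point.
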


As a corollary we get:
\begin{proof}[Proof of Theorem \ref{main global}-ii)]
By Lemma \ref{oct19 eq:1}, the statement  
 follows from Theorem \ref{main global bis}.
\end{proof}

The first step in order to prove Theorem \ref{main global bis}, is a result that contains an improvement of Theorem 1.1 (2) in \cite{BFP3} in the
case $h=n+1$.

\begin{theorem}[Global Poincar\'e and Sobolev inequalities in degree $h= n+1$]\label{poincareglobal bis}
 For every $d_c$-exact form $\omega\in L^1(\he n, E_0^{n+1})$,
  there exists an $n$-form 
  $$\phi\in \BLh{1}{Q/(Q-1)}{n}
  $$
   such that 
$$
d_c\phi=\omega \qquad\mbox{and}\qquad \|\phi\|_{ \BLh{1}{Q/(Q-1)}{n}}\leq C\,\|\omega\|_{L^1(\he n, E_0^{n+1})}.
$$
Furthermore, if $\omega$ is compactly supported, so is $\phi$.
\end{theorem}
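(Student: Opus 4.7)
Following the strategy of \cite{BFP3}, my plan is to exhibit an explicit primitive via the Rumin Laplacian and to estimate it using the new Gagliardo--Nirenberg inequality of Theorem \ref{bfp1}. I would set $\phi := d_c^*\,\Delta^{-1}_{\mathbb H, n+1}\omega$, a section of $E_0^{n}$. Using $\Delta_{\mathbb H, n+1} = d_c d_c^* + (d_c^* d_c)^2$ and the identity $\Delta_{\mathbb H, n+1}\Delta^{-1}_{\mathbb H, n+1}\omega = \omega$ from Theorem \ref{global solution}\,ii), one obtains
$$d_c\phi = d_c d_c^*\,\Delta^{-1}_{\mathbb H, n+1}\omega = \omega - (d_c^* d_c)^2\,\Delta^{-1}_{\mathbb H, n+1}\omega.$$
To kill the second term, I use that $\omega$ is $d_c$-closed (being exact); Lemma \ref{comm}\,iii), first applied on smoothings of $\omega$ by group convolution (which preserves $d_c$-closedness, since left-invariant $d_c$ commutes with convolution) and then extended to $\omega\in L^1$, gives $d_c d_c^* d_c\,\Delta^{-1}_{\mathbb H, n+1}\omega = \Delta^{-1}_{\mathbb H, n+2}\,d_c\omega = 0$. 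Applying $d_c^*$ yields $(d_c^* d_c)^2\,\Delta^{-1}_{\mathbb H, n+1}\omega = 0$, hence $d_c\phi = \omega$.

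The core of the argument is the Beppo Levi estimate. For each horizontal field $W_j$, $j=1,\dots,2n$, the form $W_j\phi$ is the convolution of $\omega$ with a left-invariant matrix kernel of type $1$: the kernel of $\Delta^{-1}_{\mathbb H, n+1}$ has type $4$ by Theorem \ref{global solution}, the second-order operator $d_c^*$ on $E_0^{n+1}$ lowers the type by $2$, and $W_j$ by $1$ more (Proposition \ref{kernel}\,ii)). At the endpoint $p=1$ the Hardy--Littlewood--Sobolev inequality (Theorem \ref{hls folland}) fails, so naive kernel bounds do not place $W_j\phi$ in $L^{Q/(Q-1)}$. However, the $d_c$-closedness of $\omega$ is a system of horizontal divergence relations on its components, and this is precisely the structural hypothesis under which the new Gagliardo--Nirenberg inequality of Theorem \ref{bfp1}, in the spirit of \cite{BFP1} and of the Chanillo--Van Schaftingen inequality of Theorem \ref{chanillo_van}, compensates for the failure of HLS and delivers
$$\|W_j\phi\|_{L^{Q/(Q-1)}(\he n)} \le C\,\|\omega\|_{L^1(\he n, E_0^{n+1})}, \qquad j=1,\dots,2n.$$
Summing over $j$ gives the required $\BLh{1}{Q/(Q-1)}{n}$-bound.

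Finally, if $\omega$ has compact support in a ball $B(e,R)$, the primitive above is $d_c$-closed on $\he n\setminus\overline{B(e,R)}$ but typically not itself compactly supported. To upgrade, I would use the vanishing of Rumin cohomology of the complement in degree $n$ (for $n\ge 1$) to solve $d_c\psi=\phi$ there, extend $\psi$ smoothly, and cut off with a function $\chi\equiv 1$ outside a slightly larger ball: the corrected form $\phi - d_c(\chi\psi)$ is compactly supported and still satisfies $d_c(\phi - d_c(\chi\psi))=\omega$, the Beppo Levi norm being controlled by the Leibniz commutator identities of Lemma \ref{leibniz}. The main obstacle is Theorem \ref{bfp1} itself: the substantial analytic input is the $L^1$-endpoint Gagliardo--Nirenberg inequality for type-$1$ convolution acting on $d_c$-closed forms, which must be proved in the spirit of Chanillo--Van Schaftingen rather than by plain HLS.
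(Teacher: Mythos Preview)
Your plan is essentially the paper's own proof: set $\phi=d_c^*\Delta_{\mathbb H,n+1}^{-1}\omega$, check $d_c\phi=\omega$ via the commutation identities, and obtain the $\BLh{1}{Q/(Q-1)}{n}$ bound from Theorem \ref{bfp1}. The one point you gloss over is that Theorem \ref{bfp1} is stated for \emph{compactly supported} coclosed $u$, whereas $\phi$ is not; the paper handles this by replacing $\phi$ with a suitable compactly supported smooth approximation (formula (33) in \cite{BFP3}) before invoking \eqref{GNBFP1}, and defers the ``$\omega$ compactly supported $\Rightarrow$ $\phi$ compactly supported'' claim to the construction in \cite{BFP3} rather than the cohomological correction you sketch.
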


The proof of Theorem \ref{poincareglobal bis} follows the same line
as the proof of Theorem 1.1 (2) in \cite{BFP3}, after proving
the following Gagliardo-Nirenberg type inequality, which
generalizes Theorem 1.6 in \cite{BFP1}
(see also Theorem 5.1 in  \cite{BFP3}).

\begin{theorem}
\label{bfp1}
Let $u$ be a smooth compactly supported Rumin $n$-form on $\he{n}$. Assume that $d_c^*u=0$. Then
\begin{equation}
\label{GNBFP1}
\|u\|_{\BLh{1}{Q/(Q-1)}{n}} \le C \|d_c u\|_{L^{1}(\he n,E_0^{n+1})}.
\end{equation}
\end{theorem}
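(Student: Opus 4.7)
The plan is to adapt the proof strategy of Theorem 1.6 in \cite{BFP1} and Theorem 5.1 in \cite{BFP3}, strengthening the conclusion by one horizontal derivative. Since $Q/(Q-1)<Q$, Theorem \ref{embedding} reduces the statement to
$$
\sum_{j=1}^{2n}\|W_j u\|_{L^{Q/(Q-1)}(\he n,E_0^n)}\le C\,\|d_c u\|_{L^1(\he n,E_0^{n+1})}.
$$
The hypothesis $d_c^*u=0$ combined with Definition \ref{rumin laplacian} yields $\Delta_{\mathbb{H},n}u=(d_cd_c^*)^2u+d_c^*d_cu=d_c^*\omega$, where $\omega:=d_cu\in L^1(\he n,E_0^{n+1})$. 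Assuming first $n\ge 2$, so that the order $a=4$ of the degree-$n$ Rumin Laplacian is strictly less than $Q=2n+2$, Theorem \ref{global solution}(ii) gives $u=\Delta_{\mathbb{H},n}^{-1}d_c^*\omega$, and by Proposition \ref{kernel} each $W_j u$ is realized as the convolution of the components of $\omega$ with a matrix of kernels of type $4-2-1=1$.

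A kernel of type $1$ does not in general send $L^1$ into $L^{Q/(Q-1)}$: the endpoint estimate can only come from the extra structure $d_c\omega=0$, which I plan to exploit via a Chanillo--van Schaftingen type inequality (Theorem \ref{chanillo_van}) adapted to the matrix-valued operators associated with the Rumin complex, in the spirit of \cite{BFP1}. Concretely, by duality, for a smooth compactly supported test section $\Phi$ of $\mathfrak{h}_1\otimes E_0^n$, integration by parts together with the self-adjointness of $\Delta_{\mathbb{H},n}^{-1}$ and $(d_c^*)^*=d_c$ give
$$
\langle Wu,\Phi\rangle=-\langle u,\mathrm{div}_{\mathbb{H}}\Phi\rangle=-\langle \omega,\,d_c\Delta_{\mathbb{H},n}^{-1}\mathrm{div}_{\mathbb{H}}\Phi\rangle.
$$
Using the commutation identities of Lemma \ref{comm} one rewrites the right-hand side in a form where the $d_c$-closedness of $\omega$ produces the cancellations of the Bourgain--Brezis--Chanillo--van Schaftingen argument, yielding the bound $|\langle Wu,\Phi\rangle|\le C\|\omega\|_{L^1}\|\nabla_{\mathbb{H}}\Phi\|_{L^Q}$; taking the supremum over $\Phi$ with $\|\Phi\|_{L^Q}\le 1$ then concludes.

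The main technical obstacle lies exactly at this last step: neither Lemma \ref{comm}(i) nor (iv) applies in the middle degree, so the naive commutation $d_c\Delta_{\mathbb{H},n}^{-1}=\Delta_{\mathbb{H},n+1}^{-1}d_c$ is unavailable, and one is forced to use the longer identities (ii), (iii), (v), (vi) of Lemma \ref{comm}, which introduce extra factors of $d_cd_c^*$ whose contribution to the final pairing has to be carefully tracked so that the Chanillo--van Schaftingen inequality can actually be applied to a divergence-free term constructed out of $\omega$. A secondary subtlety is the low-dimensional case $n=1$, where $a=Q$ and Theorem \ref{global solution}(i) only provides a logarithmic fundamental solution; this case has to be treated separately, for instance by first differentiating $u$ via $d_c$ and using Lemma \ref{comm}(ii) to transfer the estimate to a degree where $a<Q$. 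Finally, the compact support of $\phi$ (when $\omega$ is compactly supported) will follow as in \cite{BFP3} from the explicit form of the primitive produced by the construction.
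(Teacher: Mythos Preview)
Your approach is essentially the paper's: duality against a test form, insertion of the fundamental solution of $\Delta_{\mathbb{H},n}$, use of $d_c^*u=0$ to kill one piece, and then Chanillo--van Schaftingen applied to $\omega=d_cu$ via the divergence-free structure of closed Rumin forms. Two remarks on the complications you flag, both of which the paper's version sidesteps.

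First, no commutation from Lemma \ref{comm} is needed. The paper inserts $\Delta_{\mathbb{H},n}\Delta_{\mathbb{H},n}^{-1}$ on the \emph{test} side, writing $\langle Wu,\psi\rangle=-\langle u,\Delta_{\mathbb{H},n}\Delta_{\mathbb{H},n}^{-1}W\psi\rangle$, then splits $\Delta_{\mathbb{H},n}=d_c^*d_c+(d_cd_c^*)^2$. The $(d_cd_c^*)^2$ term vanishes since $d_c^*u=0$; the surviving term is $\langle d_cu,\,d_c\Delta_{\mathbb{H},n}^{-1}W\psi\rangle$, and $d_c\Delta_{\mathbb{H},n}^{-1}W$ is directly a kernel of type $4-2-1=1$, so $\nabla_{\mathbb{H}}d_c\Delta_{\mathbb{H},n}^{-1}W$ is of type $0$ and bounded on $L^Q$. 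The identity $d_c\Delta_{\mathbb{H},n}^{-1}=\Delta_{\mathbb{H},n+1}^{-1}d_c$ is never invoked, so your ``main technical obstacle'' does not arise.

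Second, placing $\Delta\Delta^{-1}$ on the test side (rather than writing $u=\Delta_{\mathbb{H},n}^{-1}d_c^*\omega$) uses only the identity $\Delta_{\mathbb{H},n}\Delta_{\mathbb{H},n}^{-1}\alpha=\alpha$ from Theorem \ref{global solution}(ii), which holds unconditionally. Your inversion $u=\Delta_{\mathbb{H},n}^{-1}\Delta_{\mathbb{H},n}u$ requires $a<Q$, which fails when $n=1$; so your separate treatment of $n=1$ is an artifact of this choice and is unnecessary if you follow the paper's placement.

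Finally, the step you leave implicit---that the components of the $d_c$-closed form $d_cu$ organize into a horizontal divergence-free vector field so that Theorem \ref{chanillo_van} applies---is the genuine structural input; the paper quotes \cite{BFP1}, Theorem 5.1, for this.
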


\begin{proof} Remember, by Theorem \ref{embedding},
$$
\|u\|_{\BLh{1}{Q/(Q-1)}{n}} \approx \|u\|_{L^{Q/(Q-2)}(\he n, E_0^n)}
+ \| Wu \|_{L^{Q/(Q-1)}(\he n, E_0^n)}
$$
In addition, by Theorem 1.6 in \cite{BFP1}, we already know that
$$
\|u\|_{L^{Q/(Q-2)}(\he n, E_0^n)}
\le C \|d_c u\|_{L^{1}(\he n,E_0^{n+1})};
$$
thus we have but to show that
\begin{equation}\label{nov13 eq:1}
\| Wu \|_{L^{Q/(Q-1)}(\he n, E_0^n)}
\le C \|d_c u\|_{L^{1}(\he n,E_0^{n+1})}.
\end{equation}
Arguing as in \cite{BFP1}, if $\psi\in \mc D(\he n, E_0^n)$
\begin{equation}\label{representation 1}
\begin{split}
&\scal{Wu}{\psi}_{L^2(\he n, E_0^n)} =
-\scal{u}{W\psi}_{L^2(\he n, E_0^n)}
\\& \hphantom{xxx}
=
- \scal{ u}{\Delta_{\he{},n}\Delta_{\he{},n}^{-1} W\psi}_{L^2(\he n, E_0^n)}
 \\& \hphantom{xxx}
 = -\scal{ u}{(\delta_cd_c + (d_c\delta_c)^2)\Delta_{\he{},n}^{-1}W\psi}_{L^2(\he n, E_0^n)}
\\& \hphantom{xxx}
 = -\scal{ u}{\delta_cd_c \Delta_{\he{},n}^{-1}W\psi}_{L^2(\he n, E_0^n)}\\&\hskip1.1em
 -\scal{ u}{ (d_c\delta_c)^2)\Delta_{\he{},n}^{-1}W\psi}_{L^2(\he n, E_0^n)}
\\&\hphantom{xxx}
 =:I_1+I_2.
 \end{split}
\end{equation}
Consider first the term $I_2$. Since $u$ is coclosed, $I_2=0$:
$$
\scal{ u}{ (d_c\delta_c)^2)\Delta_{\he{},n}^{-1}W\psi}_{L^2(\he n, E_0^n)}
= \scal{d_c^* u}{ d_c^*d_cd_c^*)\Delta_{\he{},n}^{-1}W\psi}_{L^2(\he n, E_0^{n-1})} = 0.
$$
Consider now the term $I_1$:
$$
 \scal{ u}{d_c^*d_c \Delta_{\he{},n}^{-1}W\psi}_{L^2(\he n, E_0^n)}=  \scal{d_c u}{d_c \Delta_{\he{},n}^{-1}W\psi}_{L^2(\he n, E_0^{n+1})}.
$$
By Theorem \ref{global solution} and Proposition \ref{kernel}, $d_c \mc KW$ is a kernel of type 1. On the other hand,
as proved in \cite{BFP1}, Theorem 5.1, the components with respect to a given
basis of the closed forms $d_cu$ are linear combinations of the components of a horizontal
vector field with vanishing horizontal divergence. Thus we can apply
Theorem \ref{chanillo_van} and we get eventually
\begin{equation*}\begin{split}
|\scal{Wu}{\psi}_{L^2(\he n, E_0^n)}|& \le C\|d_cu\|_{L^1(\he n, E_0^{n+1})} \cdot
\|\nabla_{\he{}}d_c \Delta_{\he{},n}^{-1}W\psi\|_{L^Q(\he n, E_0^{n+1})}
\\&
\le C\|d_cu\|_{L^1(\he n, E_0^{n+1})} \cdot
\|\psi\|_{L^Q(\he n, E_0^{n})},
\end{split}\end{equation*}
by Theorem \ref{hls folland}-iii). Thus \eqref{nov13 eq:1} follows.

\end{proof}

\begin{proof}[Proof of Theorem \ref{poincareglobal bis}] As in \cite{BFP3}, we can take
$\phi:= d^*_c \Delta_{\he{},n+1}^{-1} \omega$. In particular, Theorems 5.2-iii)
in \cite{BFP3} shows that 
$$
\|\phi\|_{L^{Q/(Q-2)}(\he n, E_0^n)}\le 
C \|\omega\|_{L^{1}(\he n, E_0^{n+1})}.
$$
Repeating verbatim the proof of Theorems 5.2-i)
in \cite{BFP3}, combined now with Theorem \ref{bfp1}, we arrive to show that $W\phi$ satisfies
$$
\|W\phi\|_{L^{Q/(Q-1)}(\he n, E_0^n)}\le 
C \|\omega\|_{L^{1}(\he n, E_0^{n+1})}.
$$
To this end, we 
replace $u$ in \eqref{GNBFP1}
by a suitable compactly supported smooth approximation of $\phi$ (see formula
(33) in \cite{BFP3}). This completes the proof of Theorem \ref{poincareglobal bis}.

\end{proof}

\begin{remark} It is important to stress that in the statement of Theorem 5.2 in \cite{BFP3}
it is required that $\omega$ has vanishing average. However,  
Pansu \& Tripaldi in \cite{PT} proved that such an assumption can be removed.

\end{remark}

Theorem \ref{poincareglobal bis} can reformulated in terms of currents as follows:

\begin{theorem}[Global Poincar\'e and Sobolev inequalities for currents]\label{poincareglobal currents bis}
  Let $h=1,\ldots,2n$. 
 % \begin{itemize}   
 %\item[a)] If $h\not=n$ and  $\,T\in \mc D'(\he n,E_0^{h})$ is a current of  finite mass of the form 
 %$T=\partial_c  S$ with $S\in \mc D'(\he n, E_0^{h+1})$,
%then there exists a form $\phi\in L^{Q/(Q-1)}(\he n,E_0^{2n-h})$, such that
%$$
%\partial_c  T_{ \phi}=T \qquad\mbox{and}\qquad \|\phi\|_{L^{Q/(Q-1)}(\he n, E_0^{2n-h})}\leq C\,\mc M(T).
%$$
%\item [b)] 
If $\,T\in \mc D'(\he n,E_0^{n})$ is a current of  finite mass of the form 
 $T=\partial_c  S$ with $S\in \mc D'(\he n, E_0^{n+1})$, then there exists 
 a form $\phi\in \BLh{1}{Q/(Q-1)}{n}$, such that
$$
\partial_c  T_{ \phi}=T \qquad\mbox{and}\qquad \|\phi\|_{\BLh{1}{Q/(Q-1)}{n}}\leq C\,\mc M(T).
$$
%\end{itemize}

Furthermore, if $T$ is compactly supported, so is $\phi$.
\end{theorem}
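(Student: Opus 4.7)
The plan is to mirror the proof of Theorem \ref{poincareglobal currents uno}, replacing the application of Theorem \ref{poincareglobal} by our new Theorem \ref{poincareglobal bis} and working in the reflexive Beppo Levi space $\BLh{1}{Q/(Q-1)}{n}$ in place of $L^{Q/(Q-1)}$. First, I apply Proposition \ref{AG} with $h=n$ to the finite-mass current $T=\partial_c S$. This produces smooth forms $\omega^\eps\in\mc E(\he n,E_0^n)\cap L^1(\he n,E_0^n)$ such that $T_{\star\omega^\eps}\to T$ in the weak-$*$ sense of currents, the masses $\mc M(T_{\star\omega^\eps})=\|\omega^\eps\|_{L^1}$ converge to $\mc M(T)$, and---this is where the hypothesis $T=\partial_c S$ is used---each $(n+1)$-form $\star\omega^\eps\in L^1(\he n,E_0^{n+1})$ is $d_c$-closed.

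Next, Theorem \ref{poincareglobal bis} applied to each $\star\omega^\eps$ yields a primitive $\phi_\eps\in\BLh{1}{Q/(Q-1)}{n}$ with $d_c\phi_\eps=\star\omega^\eps$ and
$$
\|\phi_\eps\|_{\BLh{1}{Q/(Q-1)}{n}}\le C\,\|\omega^\eps\|_{L^1(\he n,E_0^n)}.
$$
Since the right-hand side converges to $C\,\mc M(T)$, the sequence $(\phi_\eps)$ is uniformly bounded in the reflexive space $\BLh{1}{Q/(Q-1)}{n}$. I extract a weakly convergent subsequence $\phi_\eps\rightharpoonup\tilde\phi$ and set $\phi:=(-1)^n\tilde\phi$. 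Weak lower semicontinuity of the norm immediately gives $\|\phi\|_{\BLh{1}{Q/(Q-1)}{n}}\le C\,\mc M(T)$. To verify $\partial_c T_\phi=T$, I test against $\psi\in\mc D(\he n,E_0^n)$: Proposition \ref{weak and weak*} gives $T_{\phi_\eps}\to T_{\tilde\phi}$ weak-$*$ in $\mc D'(\he n,E_0^{n+1})$, so $\int\phi_\eps\wedge d_c\psi\to\int\tilde\phi\wedge d_c\psi$, and combining with Lemma \ref{by parts}:
$$
\Scal{\partial_c T_\phi}{\psi}=(-1)^n\int_{\he n}\tilde\phi\wedge d_c\psi=\lim_\eps\int_{\he n}d_c\phi_\eps\wedge\psi=\lim_\eps\Scal{T_{\star\omega^\eps}}{\psi}=\Scal{T}{\psi}.
$$

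The heavy lifting is done upstream by Theorem \ref{poincareglobal bis} and its input, the Gagliardo--Nirenberg estimate of Theorem \ref{bfp1}; once these are in place, the currents version follows by soft functional-analytic arguments. The main obstacle I anticipate is the compact-support refinement, since a weak limit of compactly supported forms need not itself be compactly supported. I would handle this by unpacking the explicit representation $\phi_\eps=d_c^*\Delta_{\he{},n+1}^{-1}\star\omega^\eps$ inherited from the proof of Theorem \ref{poincareglobal bis}, exploiting the uniform support control of the $\omega^\eps$ in a fixed neighborhood of $\supp T$ provided by Proposition \ref{AG}, and applying a cutoff-correction based on Lemma \ref{leibniz} (in the spirit of \cite{BFP3}) to replace each $\phi_\eps$ by an equivalent compactly supported primitive before passing to the weak limit.
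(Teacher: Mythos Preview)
Your proposal is correct and follows essentially the same approach as the paper, which explicitly omits the proof with the remark that it ``can be carried out analogously to the proof of Theorem \ref{poincareglobal currents uno} combining Proposition \ref{AG} and Theorem \ref{poincareglobal bis}.'' Your write-up fills in precisely these details, with the reflexivity of $\BLh{1}{Q/(Q-1)}{n}$ playing the role that reflexivity of $L^{Q/(Q-1)}$ played before, and Proposition \ref{weak and weak*} supplying the passage from weak convergence in the Beppo Levi space to weak-$*$ convergence of the associated currents.
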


The proof of this statement is omitted since it  can be carried out analogously to the proof of Theorem \ref{poincareglobal currents uno}
combining Proposition \ref{AG} and Theorem \ref{poincareglobal bis}.

\section{Some more approximation results}

\label{more}

 \begin{lemma}\label{approx bis}  If $\beta\in \BLh{1}{Q/(Q-1)}{n}$ and
$\mc M(\partial_c  T_\beta)<\infty$, then there exists a sequence $(\beta_k)_{k\in\mathbb N}$
in $\mc D(\he n, E_0^h)$ such that
\begin{itemize}
\item[i)] $\beta_k\to \beta$ as $k\to\infty$  in $ \BLh{1}{Q/(Q-1)}{n}$;
\item[ii)] $\mc M(\partial_c  T_{\beta_k})\to \mc M(\partial_c  T_{\beta})$ as $k\to\infty$.
\end{itemize}

\end{lemma}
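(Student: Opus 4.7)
The plan is the classical truncate-then-mollify procedure, with a diagonal extraction. First I would approximate $\beta$ by compactly supported $\tilde\beta_k := \chi_k \beta$, where $\chi_k \in \mc D(\he n)$ equals $1$ on $B(e,k)$, is supported in $B(e,2k)$, and satisfies $|W^j \chi_k| \le C/k^j$ for $j \le 2$. Then I would smooth each $\tilde\beta_k$ by group convolution $\gamma_{k,\eps} := J_\eps \ast \tilde\beta_k$ against a symmetric Friedrichs mollifier $J_\eps=\ccheck J_\eps$, and extract a diagonal sequence $\beta_k := \gamma_{k,\eps_k} \in \mc D(\he n,E_0^n)$.

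Convergence $\tilde\beta_k \to \beta$ in $\BLh{1}{Q/(Q-1)}{n}$ is standard: expanding $W(\chi_k\beta) = \chi_k W\beta + \beta\, W\chi_k$ and using Theorem \ref{embedding}, the $L^{Q/(Q-1)}$-norm of $(W\chi_k)\beta$ is estimated by H\"older on the annulus $A_k:=B(e,2k)\setminus B(e,k)$: $\|W\chi_k\|_{L^Q}$ is uniformly bounded in $k$, while the tail $\|\beta\|_{L^{Q/(Q-2)}(A_k)}$ vanishes since $\beta\in L^{Q/(Q-2)}(\he n)$. The delicate point is assertion (ii). Writing $\chi_k\beta\wedge d_c\phi = \beta\wedge d_c(\chi_k\phi) - \beta\wedge[d_c,\chi_k]\phi$ yields the splitting
\begin{equation*}
\langle \partial_c T_{\chi_k\beta}, \phi\rangle = \langle \partial_c T_\beta, \chi_k\phi\rangle - \int_{\he n}\beta\wedge[d_c,\chi_k]\phi.
\end{equation*}
The first term, viewed through the measure representing $\partial_c T_\beta$ (Remark \ref{riesz}), has mass $\int\chi_k\, d|\mu|\le \mc M(\partial_c T_\beta)$ and converges to $\mc M(\partial_c T_\beta)$ by monotone convergence. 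The commutator, by Lemma \ref{leibniz}(ii), decomposes as $P_1^n(W\chi_k)\phi + P_0^n(W^2\chi_k)\phi$. The zero-order piece is bounded uniformly for $\|\phi\|_\infty\le 1$ by
\begin{equation*}
\frac{C}{k^2}\int_{A_k}|\beta| \le \frac{C}{k^2}\,|A_k|^{2/Q}\,\|\beta\|_{L^{Q/(Q-2)}(A_k)} \le C\,\|\beta\|_{L^{Q/(Q-2)}(A_k)} = o(1),
\end{equation*}
using $|A_k|^{2/Q}\le Ck^2$.

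The first-order piece is the heart of the matter: the supremum defining mass runs over $\phi$ with only $\|\phi\|_\infty\le 1$, and no control on $W\phi$ is available. I would integrate by parts horizontally, moving $W$ off $\phi$ onto the factor $\beta\cdot W\chi_k$; this is legitimate because $\beta$ has horizontal derivatives in $L^{Q/(Q-1)}$ and $\chi_k,\phi$ are smooth. One obtains terms schematically of the form $\int W\beta\cdot W\chi_k\cdot\phi$ and $\int \beta\cdot W^2\chi_k\cdot\phi$, bounded respectively by $C\|\phi\|_\infty\|W\beta\|_{L^{Q/(Q-1)}(A_k)}\|W\chi_k\|_{L^Q(A_k)}\le C\|\phi\|_\infty\|W\beta\|_{L^{Q/(Q-1)}(A_k)}$ and by the zero-order bound already handled; both are $o(1)$. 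Together with the lower semicontinuity of mass (Lemma \ref{lsc}) applied to the weak$^*$ convergence $\partial_c T_{\tilde\beta_k}\to\partial_c T_\beta$, this gives $\mc M(\partial_c T_{\tilde\beta_k})\to\mc M(\partial_c T_\beta)$.

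For the mollification step, each $\tilde\beta_k$ has compact support, so $\gamma_{k,\eps}\in\mc D(\he n,E_0^n)$ for $\eps$ small, and by Lemma \ref{oct17 eq:1} one has $\partial_c T_{\gamma_{k,\eps}}=J_\eps\ast\partial_c T_{\tilde\beta_k}$; the usual mollification argument then yields convergence $\gamma_{k,\eps}\to\tilde\beta_k$ in $\BLh{1}{Q/(Q-1)}{n}$ and in mass as $\eps\to 0$ (the latter from weak$^*$ convergence of the mollified measures together with lsc). A diagonal choice $\eps_k\downarrow 0$ with $\|\gamma_{k,\eps_k}-\tilde\beta_k\|_{\BLh{1}{Q/(Q-1)}{n}}+|\mc M(\partial_c T_{\gamma_{k,\eps_k}})-\mc M(\partial_c T_{\tilde\beta_k})|<1/k$ then produces the required $\beta_k$. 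The main obstacle is precisely the first-order commutator $P_1^n(W\chi_k)$: without the $L^{Q/(Q-1)}$ bound on $W\beta$ supplied by the Beppo Levi norm, the integration by parts needed to discharge the missing regularity of $\phi$ would not be available, and this is exactly why Theorem \ref{main global bis} is formulated in $\BLh{1}{Q/(Q-1)}{n}$ rather than in $L^{Q/(Q-2)}$ alone.
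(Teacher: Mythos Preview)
Your proof is correct and follows essentially the same strategy as the paper: cut-off plus mollification, Leibniz' formula for the second-order commutator $[d_c,\chi_k]$, and the same H\"older estimates on annuli exploiting $\beta\in L^{Q/(Q-2)}$ and $W\beta\in L^{Q/(Q-1)}$. The one difference is the order of the two steps: the paper mollifies first, reducing to smooth $\beta$, and then truncates; this lets the commutator $[d_c,\chi_k]$ act directly on $\beta$, so the first-order piece $P_1^n(W\chi_k)\beta$ already involves $W\beta$ and no integration by parts against the test form $\phi$ is needed. Your reversal of the order is harmless---the integration by parts you perform simply reproduces the same terms---but the paper's order is slightly more economical.
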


\begin{proof} If $\eps>0$, let $J_\eps = \ccheck J_\eps$ be a (group) Friedrichs' mollifier.
Since $\beta\in L^{Q/(Q-2)}(\he n, E_0^n)$ then
$$
J_\eps\ast \beta \to \beta\qquad\mbox{in $L^{Q/(Q-2)}(\he n, E_0^n)$.}
$$
In particular, 
\begin{equation}\label{nov5 eq:4}
T_{J_\eps\ast \beta} \to T_\beta \qquad\mbox{in $\mc D'(\he n, E_0^{n+1}$).}
\end{equation}

In addition, since $W\beta\in L^{Q/(Q-1)}(\he n, E_0^n)$
$$
W(J_\eps\ast \beta)= J_\eps \ast W\beta \to W\beta \qquad\mbox{in $L^{Q/(Q-1)}(\he n, E_0^n)$.}
$$
Thus
$$
J_\eps\ast \beta \to \beta  \qquad\mbox{in $ \BLh{1}{Q/(Q-1)}{n}$.}
$$
If $\phi$ is a test form, $\| \phi\|_{L^\infty}\le 1$, we have now
\begin{equation*}\begin{split}
&\Scal{\partial_c  (J_\eps\ast T_\beta)}{\phi}_{\mc D',\mc D} = \Scal{J_\eps \ast \partial_c  T_\beta}{\phi}_{\mc D',\mc D}
\qquad\mbox{(by \eqref{oct17 eq:1})}
\\&
= \Scal{ \partial_c  T_\beta}{J_\eps \ast\phi}_{\mc D',\mc D}
\qquad\mbox{(by Definition \ref{regolarizzazione di una corrente})}
\\&
\le \mc M( \partial_c  T_\beta) \|J_\eps \ast\phi\|_{L^\infty} \le \mc M( \partial_c  T_\beta) \|\phi\|_{L^\infty},
\end{split}\end{equation*}
so that
$$
\mc M(\partial_c  (J_\eps\ast T_\beta)) \le \mc M(\partial_c   T_\beta).
$$
On the other hand, by \eqref{nov5 eq:4} and Lemma \ref{lsc},
$$
\mc M(\partial_c   T_\beta) \le \liminf_{\eps\to 0} \mc M(\partial_c  (J_\eps\ast T_\beta)),
$$
and then
\begin{equation}\label{nov5 eq:5}
\mc M(\partial_c  (J_\eps\ast T_\beta)) \to \mc M(\partial_c   T_\beta)\qquad\mbox{as $\eps\to 0$.}
\end{equation}
%Therefore there exists a decreasing sequence $(\eps_k)_{k\in\mathbb N}$ such that
%$$
%\mc M(\partial_c  (J_{\eps_k}\ast T_\beta)) < \mc M(\partial_c   T_\beta) + \frac1k.
%$$
Therefore, from now on we can assume $\beta\in \mc E(\he n, E_0^n)$.
Let us fix a sequence of cut-off functions $\{\chi_k\}_{i\in\N}\subset\mc D(\he n)$  such that for any $k\in\N$   ${\rm supp} (\chi_k)\subset {B(e,2k)}$, 
$\chi_k\equiv 1$ in  ${B(e,k)}$, and $k\,|W\chi_k|+k^2\,|W^2\chi_k| \le C$ for all $k\in\mathbb N$.

Set $\beta_k:=\chi_k\beta \in \mc D(\he n E_0^n)$. By Theorem \ref{embedding} $\beta\in L^{Q/(Q-2)(\he n, E_0^n)}$, 
and hence, obviously, 
\begin{equation}\label{nov8 eq:2}
\beta_k\to \beta \qquad\mbox{in $L^{Q/(Q-2)(\he n, E_0^n)}$ as $k\to\infty$.}
\end{equation}
In particular,
\begin{equation}\label{nov8 eq:1}
T_{\beta_k} \to T_\beta \qquad\mbox{in $\mc D'(\he n, E_0^{n+1})$
 as $k\to\infty$.}
\end{equation}
Moreover, again by Theorem \ref{embedding}, if $W$ is a horizontal
derivative, then $|W\beta| \in L^{Q/(Q-1)}(\he n, E_0^n)$. 
By dominated convergence,
$\chi_k W\beta\to W\beta$ in $L^{Q/(Q-1)}(\he n, E_0^n)$
as $k\to\infty$. On the other hand, again by Theorem \ref{embedding}, 
$\beta \in L^{Q/(Q-2)}(\he n, E_0^n)$ so that
\begin{equation*}\begin{split}
& \int_{\he n} (|W\chi_k| \,|\beta|)^{Q/(Q-1)}\, dV
\\&
\le k^{-Q/(Q-1)} \int_{B(e,2k)\setminus B(e,k)} |\beta|^{Q/(Q-1)}\, dV
\\&
\le k^{-Q/(Q-1)} \Big( \int_{B(e,2k)\setminus B(e,k)} |\beta|^{Q/(Q-2)} \, dV\Big)^{(Q-2)/(Q-1)}
\cdot\, |B(e,2k)|^{1/(Q-1)}
\\&
= C \Big( \int_{B(e,2k)\setminus B(e,k)} |\beta|^{Q/(Q-2)} \, dV\Big)^{(Q-2)/(Q-1)}
\to 0 \qquad\mbox{as $k\to\infty$.}
\end{split}\end{equation*}
Thus 
\begin{equation}\label{nov8 eq:3}
W\beta_k = \chi_k W\beta + (W\chi_k)\beta \to W\beta 
\end{equation}
in $L^p(\he n, E_0^n)$.
Combining \eqref{nov8 eq:2} and \eqref{nov8 eq:3} statement i) is proved

Now, let us prove ii). If $\psi$ is a test form, $\|\psi\|_{L^\infty}\le 1$,
\begin{equation*}\begin{split}
& \Scal{\partial_c  T_{\beta_k}}{\psi} = \Scal{T_{\chi_k \beta}}{d_c\psi} 
= \int_{\he n}\chi_k \beta \wedge d_c\psi  = \int_{\he n}d_c(\chi_k \beta )\wedge \psi
\\&
=
 \int_{\he n}d_c \beta \wedge \chi_k\psi + 
  \int_{\he n} P_0(W^2\chi_k)\beta\wedge\psi +  \int_{\he n} P_1(W\chi_k)\beta\wedge\psi
  \\&
  \le \mc M(\beta) \| \chi_k\psi\|_{L^\infty}
  +
  \frac{1}{k^2} \| \psi\|_{L^\infty}\int_{B(e,2k)\setminus B(e,k)} |\beta| \, dV
  \\&
  +
   \frac{1}{k} \| \psi\|_{L^\infty}\int_{B(e,2k)\setminus B(e,k)} |W\beta| \, dV
   \\&
  \le \mc M(\beta)
  +
  \frac{1}{k^2}  \Big(\int_{B(e,2k)\setminus B(e,k)} |\beta|^{Q/(Q-2)} \, dV\Big)^{(Q-2)/Q}
  \cdot | B(e,2k)|^{2/Q}
  \\&
  +
   \frac{1}{k} \Big(\int_{B(e,2k)\setminus B(e,k)} |W\beta|^{Q/(Q-1)} \, dV\Big)^{(Q-1)/Q} \cdot | B(e,2k)|^{1/Q}
     \\&
  \le \mc M(\beta)
  +
 C \Big(\int_{B(e,2k)\setminus B(e,k)} |\beta|^{Q/(Q-2)} \, dV\Big)^{(Q-2)/Q}
  \\&
  +
  C \Big(\int_{B(e,2k)\setminus B(e,k)} |W\beta|^{Q/(Q-1)} \, dV\Big)^{(Q-1)/Q} .
\end{split}\end{equation*}
Taking the supremum with respect to $\psi$, it follows that
\begin{equation*}\begin{split}
&\mc M(\partial_c  T_{\beta_k})
\le
 \mc M(\beta)
  +
 C \Big(\int_{B(e,2k)\setminus B(e,k)} |\beta|^{Q/(Q-2)} \, dV\Big)^{(Q-2)/Q}
  \\&
  +
  C \Big(\int_{B(e,2k)\setminus B(e,k)} |W\beta|^{Q/(Q-1)} \, dV\Big)^{(Q-1)/Q} .
\end{split}\end{equation*}
By Theorem \ref{embedding} $\beta\in L^{Q/(Q-2)}(\he n, E_0^n)$, and hence
$$
\int_{B(e,2k)\setminus B(e,k)} |\beta|^{Q/(Q-2)} \, dV \to 0 \qquad\mbox{as $k\to\infty$}.
$$
In addition, $|W\beta|\in L^{Q/(Q-1)(\he n, E_0^n)}$, and hence
$$
\int_{B(e,2k)\setminus B(e,k)} |W\beta|^{Q/(Q-1)} \, dV \to 0 \qquad\mbox{as $k\to\infty$}
$$
Thus
\begin{equation*}
\limsup_k  \ \mc M(\partial_c  T_{\beta_k })  \le \mc M(\beta).
\end{equation*}
On the other hand, combining \eqref{nov8 eq:1} and Lemma \ref{lsc},
$$
\mc M(\beta) \le \liminf_k  \ \mc M(\partial_c  T_{\beta_k })
$$
and statement ii) follows.

\end{proof}

\section{Continuous primitives of forms in $E_0^n$: proof of Theorem \ref{main global bis} }
\label{e0n}

Let us consider the following abstract setting:

\begin{definition}\label{spaces bis}
 We set
$$
E:=\mc C_0(\he n, E_0^{n}), \qquad F:= ( \BLh{1}{Q/(Q-1)}{n} )^*.
$$
$$
\mc D(A):= \{ \psi\in E, \partial_c  T_\psi\in F\}, \qquad   A\psi:= \partial_c  T_\psi,
$$
where, according to Proposition \ref{BL dual}, we use the identification of $( \BLh{1}{p}{n} )^*$
with a space of $n-$currents.

Again by Remark \ref{riesz}, the dual space of $E$ can be identified with the
set of $n$-currents with finite mass. 
\end{definition}

\begin{lemma} The domain $\mc D(A)$ is dense in $E$ and the map $A: \mc D(A)\to F$ is closed.
\end{lemma}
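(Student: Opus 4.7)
The plan is to verify the two assertions in sequence, both reducing to bookkeeping with the pairings that have already been set up in the paper.

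For density, I would first show that $\mc D(\he n, E_0^n) \subset \mc D(A)$. Given $\psi \in \mc D(\he n, E_0^n)$, certainly $\psi \in E$, and by Proposition \ref{oct19 prop:1} (applied to the $(n+1)$-current $T_\psi$) one has
$$
\partial_c T_\psi = (-1)^{n} T_{d_c \psi}
$$
(or alternatively this follows directly from Lemma \ref{by parts} in degree $n$). Since $d_c\psi \in \mc D(\he n, E_0^{n+1}) \subset L^{Q/2}(\he n, E_0^{n+1})$, Lemma \ref{oct19 eq:1} yields $T_{d_c\psi} \in F$ with
$$
\|\partial_c T_\psi\|_{F} \le C\,\|d_c\psi\|_{L^{Q/2}(\he n, E_0^{n+1})}.
$$
Hence $\psi \in \mc D(A)$. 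Density of $\mc D(A)$ in $E = \mc C_0(\he n, E_0^n)$ then follows from the standard fact that $\mc D(\he n, E_0^n)$ is dense in $\mc C_0(\he n, E_0^n)$ with respect to the uniform norm (approximation by cut-off and mollification).

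For closedness, suppose $(\psi_k) \subset \mc D(A)$ satisfies $\psi_k \to \psi$ in $E$ and $A\psi_k = \partial_c T_{\psi_k} \to \Omega$ in $F$. I first note that uniform convergence $\psi_k \to \psi$ on $\he n$ implies $T_{\psi_k} \to T_\psi$ in $\mc D'(\he n, E_0^{n+1})$: for any $\sigma \in \mc D(\he n, E_0^{n+1})$, $\int \psi_k \wedge \sigma \to \int \psi \wedge \sigma$ by uniform convergence on the compact set $\supp\sigma$. Since $\partial_c$ is continuous on distributions, $\partial_c T_{\psi_k} \to \partial_c T_\psi$ in $\mc D'(\he n, E_0^{n})$. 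On the other hand, convergence in the Banach space $F$ implies that for every $\phi \in \mc D(\he n, E_0^n) \subset \BLh{1}{Q/(Q-1)}{n}$,
$$
\Scal{A\psi_k}{\phi}_{F,\mc W} \to \Scal{\Omega}{\phi}_{F,\mc W},
$$
and by the identification \eqref{nov2 eq:2} in Proposition \ref{BL dual}, the left hand side equals $\Scal{\partial_c T_{\psi_k}}{\phi}_{\mc D',\mc D}$.

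Matching the two limits, $\partial_c T_\psi = \Omega$ in the sense of distributions. Because $\Omega \in F$ by hypothesis, the distribution $\partial_c T_\psi$ extends continuously to $\BLh{1}{Q/(Q-1)}{n}$, and this extension coincides with $\Omega$ (by density of $\mc D$ in $\BLh{1}{Q/(Q-1)}{n}$, such extensions are unique). Hence $\psi \in \mc D(A)$ and $A\psi = \Omega$, proving that $A$ is closed. The only mildly delicate point, which I would single out as the main obstacle, is the consistency between the $F$-duality pairing and the $\mc D'$-pairing on smooth test forms, but this is precisely the content of \eqref{nov2 eq:2} in Proposition \ref{BL dual}, so no real new work is needed.
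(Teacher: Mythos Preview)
Your proof is correct and follows essentially the same approach as the paper's own proof: both establish density by showing $\mc D(\he n,E_0^n)\subset\mc D(A)$ via Proposition~\ref{oct19 prop:1} and Lemma~\ref{oct19 eq:1}, and both prove closedness by testing $\partial_c T_\psi$ against $\varphi\in\mc D(\he n,E_0^n)$, passing to the limit using uniform convergence of $\psi_k\to\psi$, and then invoking the identification \eqref{nov2 eq:2} together with density of $\mc D$ in $\BLh{1}{Q/(Q-1)}{n}$ to conclude $\partial_c T_\psi=\Omega$ in $F$. Your version spells out the intermediate step $T_{\psi_k}\to T_\psi$ in $\mc D'$ a bit more explicitly, but the substance is identical.
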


\begin{proof} The first assertion follows since $\mc D(\he n,E_0^n)\subset \mc D(A)$.
Moreover, if $\psi \in  \mc D(\he n,E_0^n)$, then $d_c \psi\in   \mc D(\he n,E_0^{n+1})
\subset L^{Q/2}(\he n,E_0^{n+1})$. Hence, by Lemma \ref{oct19 eq:1}, $T_{d_c\psi} \in
F$. By Proposition \ref{oct19 prop:1}, up to a sign, $T_{d_c\psi}=\partial_c  T_\psi$, hence we can conclude that $\psi\in\mc D(A)$.

Suppose now $(\psi_k)_{k\in\mathbb N}$ is a sequence in $\mc D(A)$ such that
$\psi_k\to \psi$ in $E$ and $\partial_c  T_{\psi_k}\to T$ in $F$. By definition,  $\mc D(\he n)$
 is dense in $  \BL{1}{Q/(Q-1)}$, hence for any $\varphi\in\mc D(\he n, E_0^n)$ we have
\begin{equation*}\begin{split}
&\Scal{\partial_c  T_\psi}{\varphi}= \Scal{ T_\psi}{d_c\varphi}=\int_{\he n}\psi\wedge d_c\varphi
\\&
=\lim_k\int_{\he n}\psi_k\wedge d_c\varphi=\lim_k\Scal{T_{\psi_k}}{d_c\varphi} = \Scal{ T}{\varphi}.
\end{split}\end{equation*}
Therefore $\partial_c  T_\psi = T \in  ( \BL{1}{Q/(Q-1)} )^*$.

\end{proof}

\begin{lemma}\label{approx} If  $\psi\in \mc D(A)$, then there exists a sequence 
$$(\psi_k)_{k\in\mathbb N}\quad\mbox{in}\quad
\mc E(\he n,E_0^n)\cap E
$$ such that
\begin{equation}\label{oct19 eq:3}
\psi_k \to \psi\quad\mbox{in $E\quad$ and}\quad A\psi_k\to A\psi \quad\mbox{weakly in $F$}
\end{equation}
as $k\to\infty$. 
%{\color{green} In addition, if $\beta\in W$, then
%\begin{equation}\label{oct29 eq:1}
%|\Scal{J(\beta)}{A\psi_k}_{F^*,F}| = |\int_{\he n} \beta\wedge d_c\psi_k|
%\end{equation}
%}

\end{lemma}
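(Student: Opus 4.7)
The plan is to produce the approximation by Friedrichs mollification. Fix a symmetric (group) Friedrichs mollifier $J_\eps=\ccheck J_\eps$, and set $\psi_\eps:=J_\eps\ast\psi$, applied componentwise with respect to a fixed left-invariant frame of $E_0^n$. Then $\psi_\eps\in\mc E(\he n,E_0^n)$, and since convolution with an $L^1$ kernel preserves $\mc C_0$, one has $\psi_\eps\in E=\mc C_0(\he n,E_0^n)$. The uniform convergence $\psi_\eps\to\psi$ in $E$ is standard for continuous functions vanishing at infinity.

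Next I would identify $A\psi_\eps$ with $J_\eps\ast A\psi$. Indeed, by \eqref{oct17 eq:1} (which applies since $\psi\in\mc C_0\subset L^1_{\mathrm{loc}}$),
\[
A\psi_\eps=\partial_c T_{J_\eps\ast\psi}=J_\eps\ast\partial_c T_{\psi}=J_\eps\ast A\psi.
\]
Combined with the definition of convolution with currents (Definition \ref{regolarizzazione di una corrente}), this provides a workable formula for $A\psi_\eps$.

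The third step is to show that $\{A\psi_\eps\}$ is uniformly bounded in $F$. For every $\phi\in\mc D(\he n,E_0^n)$, left-invariance of the horizontal derivatives and Young's inequality yield
\[
\|W_j(J_\eps\ast\phi)\|_{L^{Q/(Q-1)}}=\|J_\eps\ast W_j\phi\|_{L^{Q/(Q-1)}}\le \|W_j\phi\|_{L^{Q/(Q-1)}},
\]
so convolution by $J_\eps$ is a contraction on $\BLh{1}{Q/(Q-1)}{n}$. By \eqref{convolutions var} and the symmetry $J_\eps=\ccheck J_\eps$, the dual operator on $F$ is again convolution by $J_\eps$; hence $\|A\psi_\eps\|_{F}=\|J_\eps\ast A\psi\|_F\le \|A\psi\|_F$, giving the required uniform bound.

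Finally, weak convergence $A\psi_\eps\rightharpoonup A\psi$ in $F$. Since $F$ is reflexive, weak convergence here amounts to testing against elements of $\BLh{1}{Q/(Q-1)}{n}$. For $\phi$ in this space,
\[
\langle A\psi_\eps,\phi\rangle=\langle J_\eps\ast A\psi,\phi\rangle=\langle A\psi,J_\eps\ast\phi\rangle,
\]
again by \eqref{convolutions var} and $J_\eps=\ccheck J_\eps$. Because $J_\eps\ast\phi\to\phi$ in the Beppo Levi norm (using $W_j(J_\eps\ast\phi)=J_\eps\ast W_j\phi\to W_j\phi$ in $L^{Q/(Q-1)}$, which is standard for $1<Q/(Q-1)<\infty$), and $A\psi\in F$ is bounded linear, it follows that $\langle A\psi_\eps,\phi\rangle\to\langle A\psi,\phi\rangle$. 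The main technical point is the duality identification ensuring that convolution remains a bounded operation on $F$ and commutes with testing, but this follows cleanly from the symmetry of $J_\eps$ and Proposition \ref{BL dual}. This produces the required sequence, taking $\psi_k:=\psi_{\eps_k}$ for any sequence $\eps_k\to 0$.
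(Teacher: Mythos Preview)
Your proof is correct and follows the same approach as the paper: mollify $\psi$ by a symmetric group Friedrichs kernel, use \eqref{oct17 eq:1} to identify $A\psi_\eps=J_\eps\ast A\psi$, and establish boundedness in $F$ via the contraction property of $J_\eps\ast$ on $\BLh{1}{Q/(Q-1)}{n}$. The only difference is in the last step: the paper stops at boundedness, extracts a weakly convergent subsequence by reflexivity of $F$, and then invokes the closedness of $A$ to identify the limit as $A\psi$; you instead prove directly that the full family $J_\eps\ast A\psi$ converges weakly to $A\psi$ by showing $J_\eps\ast\phi\to\phi$ in the Beppo Levi norm, which is a slightly sharper and more self-contained conclusion.
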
 

\begin{proof} Consider a family of group Friedrichs' mollifiers $(J_\eps)_{\eps>0}$ with
$ J_\eps = \ccheck J_\eps$. Obviously  $J_\eps\ast \psi \in \mc E(\he n,E_0^n)\cap E$.
In addition, if we take $\varphi\in\mc D(\he n,E_0^n)$, $\|\varphi\|_{ \BL{1}{Q/(Q-1)}(\he n, E_0^n)}
\le 1$, by \eqref{oct17 eq:1},
\begin{equation*}\begin{split}
\Scal{\partial_c  T_{J_\eps*\psi }}{\varphi} &
=\Scal{J_\eps * \partial_c  T_\psi}{\varphi}
= \Scal{\partial_c  T_\psi}{J_\eps * \varphi}
\\&
\le
\| \partial_c  T_\psi\|_F \| J_\eps * \varphi\|_{\BLh{1}{Q/(Q-1)}{n} }
\le
C \| \partial_c  T_\psi\|_F.
\end{split}\end{equation*}
Therefore $\{ {\partial_c  T_{J_\eps*\psi }}\} $ is bounded in $F$ and then we may assume
that there is a subsequence weakly convergent in $F$. Then, since $A$ is closed, \eqref{oct19 eq:3}
follows.

%{\color{green}
%As for \eqref{oct29 eq:1}
%\begin{equation*}\begin{split}
% |\Scal{J(\beta)}{A\psi_k}_{F^*,F}| &= |\Scal{A\psi_k}{\beta}_{W^*,W}| 
%= |\Scal{\partial_c  T_{\psi_k}}{\beta}_{W^*,W}|
%\\&
% = |\Scal{T_{d_c\psi_k}}{\beta}_{W^*,W}|
%=  |\Scal{\beta}{T_{d_c\psi_k}}_{W^*,W}|
%\\& = |\int_{\he n} \beta\wedge d_c\psi_k|
%\end{split}\end{equation*}
%}
 
\end{proof}

\begin{proposition} The domain $\mc D(A^*)$ is dense in $F^*$
and 
\begin{equation}\label{oct19 eq:4}
\mc D (A^*) = \{\tau(\beta)\,:\, \beta \in  \BLh{1}{Q/(Q-1)}{n}\, ; \mc M(\partial_c  T_\beta)<\infty\}.
\end{equation}

In addition, if $\beta\in \mc D (A^*)$, then, by definition, for any $\psi\in \mc D(A)$
\begin{equation*}
\Scal{A^*\beta}{\psi}_{E^*,E} =  \Scal{\beta}{A\psi}_{F^*,F}.
\end{equation*}
In particular, if $\psi\in \mc D(\he n. E_0^n)$, then
\begin{equation}\label{nov8 eq:5}
\Scal{A^*\tau(\beta)}{\psi}_{E^*,E} = \Scal{A^*(\tau(\beta))}{\psi}_{\mc D',\mc D} =  \Scal{\partial_c  T_{\beta}}{\psi}_{\mc D',\mc D},
\end{equation}
i.e. 
$$
A^*(\tau(\beta)) = \partial_c  T_{\beta}
$$
in the sense of currents.
\end{proposition}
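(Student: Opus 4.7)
My plan splits into three tasks: density of $\mc D(A^*)$ in $F^*$, the identification \eqref{oct19 eq:4}, and the formula \eqref{nov8 eq:5}. The third will be a direct consequence of the computations carried out for \eqref{oct19 eq:4}.

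The density follows from the general principle (\cite{brezis}, Section 2.6, Remark 15) that when the target is reflexive, a closed densely defined operator has $\mc D(A^*)$ dense in the target's dual. Here $F=(\BLh{1}{Q/(Q-1)}{n})^*$ is reflexive because $\BLh{1}{Q/(Q-1)}{n}$ itself is: by Theorem \ref{embedding} it is (isomorphic to) a closed subspace of a product of reflexive Lebesgue spaces ($L^{Q/(Q-2)}$ and $2n$ copies of $L^{Q/(Q-1)}$), and is the completion of $\mc D(\he n)$ in that norm.

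The central computation feeding both inclusions of \eqref{oct19 eq:4} is the identity
$$
\tau(\beta)(A\psi) = (-1)^n \langle \partial_c T_\beta,\psi\rangle_{\mc D',\mc D}
\qquad\text{for every }\psi\in\mc D(\he n,E_0^n),\ \beta\in\BLh{1}{Q/(Q-1)}{n}.
$$
I will establish it by combining three ingredients: Proposition \ref{oct19 prop:1}, which yields $A\psi=\partial_c T_\psi=(-1)^n T_{d_c\psi}$ and hence, by \eqref{nov2 eq:2}, $A\psi(\phi)=\int\psi\wedge d_c\phi$ for every test $n$-form $\phi$; density of $\mc D(\he n,E_0^n)$ in $\BLh{1}{Q/(Q-1)}{n}$, allowing one to reach arbitrary $\beta$ via smooth $\phi_k\to\beta$; and Lemma \ref{by parts} together with the embedding $\BLh{1}{Q/(Q-1)}{n}\hookrightarrow L^{Q/(Q-2)}$, which lets one recognize the limit as $(-1)^n\int d_c\psi\wedge\beta=(-1)^n\langle\partial_c T_\beta,\psi\rangle$. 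Once this identity is available, the inclusion $\mc D(A^*)\subset\{\tau(\beta):\mc M(\partial_c T_\beta)<\infty\}$ follows at once by restricting the defining estimate $|\tau(\beta)(A\psi)|\le c\|\psi\|_E$ to $\psi\in\mc D(\he n,E_0^n)$, and the formula \eqref{nov8 eq:5} is read off directly.

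For the reverse inclusion, given $\beta$ with $\mc M(\partial_c T_\beta)<\infty$, I will invoke Lemma \ref{approx bis} to obtain $\beta_k\in\mc D(\he n,E_0^n)$ with $\beta_k\to\beta$ in $F^*$ and $\mc M(\partial_c T_{\beta_k})\to\mc M(\partial_c T_\beta)$. For each smooth compactly supported $\beta_k$, Proposition \ref{oct19 prop:1} and Lemma \ref{nov7 lemma:1} reduce the mass to $\|d_c\beta_k\|_{L^1}$, and a direct computation gives, for every $\psi\in\mc D(A)$,
$$
|\tau(\beta_k)(A\psi)|=\bigl|\int\psi\wedge d_c\beta_k\bigr|\le \|\psi\|_E\,\mc M(\partial_c T_{\beta_k}).
$$
Norm convergence $\tau(\beta_k)\to\tau(\beta)$ in $F^*$ then transfers this bound to $\tau(\beta)(A\psi)$, proving $\tau(\beta)\in\mc D(A^*)$. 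The main obstacle is to keep the chain of dualities straight — $\mc W:=\BLh{1}{Q/(Q-1)}{n}$ identified with $\mc W^{**}=F^*$ via $\tau$, $F=\mc W^*$ realized as a space of currents via Proposition \ref{BL dual}, and $E^*$ identified with finite-mass currents via Remark \ref{riesz} — and to justify the integration by parts when neither $\beta$ nor $\psi$ is smooth and compactly supported, which is exactly what the approximation Lemmas \ref{approx} and \ref{approx bis} are designed to accomplish.
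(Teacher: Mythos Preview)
Your proposal is correct and follows the paper's strategy closely: same density argument, same approximation of $\beta$ by test forms for the first inclusion and for the formula \eqref{nov8 eq:5}, and same use of Lemma~\ref{approx bis} for the reverse inclusion.

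There is one genuine simplification in your treatment of the reverse inclusion. The paper first invokes Lemma~\ref{approx} to replace $\psi\in\mc D(A)$ by a smooth element of $\mc E(\he n,E_0^n)\cap E$, and then introduces a cut-off $\chi\equiv 1$ on $\supp\beta_k$ so that $\chi\psi$ becomes a test form and one can write $\int\psi\wedge d_c\beta_k=\Scal{\partial_c T_{\beta_k}}{\chi\psi}$ and bound by $\mc M(\partial_c T_{\beta_k})\|\psi\|_E$. You bypass both steps: since $\beta_k$ is a test form, \eqref{nov2 eq:2} gives directly $\tau(\beta_k)(A\psi)=\Scal{\partial_c T_\psi}{\beta_k}_{\mc D',\mc D}=\int\psi\wedge d_c\beta_k$ for every $\psi\in\mc D(A)$, and then the elementary estimate $\bigl|\int\psi\wedge d_c\beta_k\bigr|\le\|\psi\|_E\|d_c\beta_k\|_{L^1}$ together with $\|d_c\beta_k\|_{L^1}=\mc M(\partial_c T_{\beta_k})$ (Proposition~\ref{oct19 prop:1} and Lemma~\ref{nov7 lemma:1}) suffices. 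Norm convergence $\tau(\beta_k)\to\tau(\beta)$ in $F^*$ then transfers the bound. This avoids Lemma~\ref{approx} entirely and is a bit cleaner; the paper's route, on the other hand, never needs to identify the mass of $\partial_c T_{\beta_k}$ with an $L^1$ norm.
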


\begin{proof}
Since $F$ is reflexive,
then $\mc D(A^*)$ is dense in $F^*$ (see \cite{brezis}, Remark 15 of
Section 2.6). In the sequel, to avoid cumbersome notations, we shall also write
$$\mc W:  =\BLh{1}{Q/(Q-1)}{n},$$
 so that $F=\mc W^*$.
To prove \eqref{oct19 eq:4} we  first show that $\mc D (A^*)\subset\{\tau(\beta)\,:, \beta \in \mc W\, ; \mc M(\partial_c  T_\beta)<\infty\}$ that is, if $\tau(\beta)\in F^*$ and there exists $c_\beta$ such that
\begin{equation}\label{apr 22 eq:1 bis}
|\Scal{\tau(\beta)}{A\psi}_{F^*,F}|
\le c_\beta \|\psi\|_E \qquad\mbox{for all $\psi\in \mc D(A)$},
\end{equation}
then $\mc M(\partial_c  T_\beta)<\infty$.

 First of all, Remark \ref{tau} says that
\begin{equation}\label{tau2}
\Scal{\tau(\beta)}{A\psi}_{F^*,F} = \Scal{A\psi}{\beta}_{\mc W^*,\mc W},
\end{equation}
so that \eqref{apr 22 eq:1 bis} becomes
\begin{equation}\label{apr 22 eq:1 ter}
|\Scal{\partial_c  T_\psi}{\beta}_{\mc W^*, \mc W}| = |\Scal{A\psi}{\beta}_{\mc W^*,\mc W}|
\le c_\beta \|\psi\|_E \qquad\mbox{for all $\psi\in \mc D(A)$}.
\end{equation}
Now, if $\beta\in \mc W$ satisfies \eqref{apr 22 eq:1 ter}, by definition
there exists a sequence $(\beta_N)_{N\in\mathbb N}$ of test forms
converging to $\beta$ in $\mc W$, and then, by Theorem \ref{embedding}, converging to $\beta$ also in
$L^1_{\mathrm{loc}} (\he n,E_0^n)$.  Thus, 
$$
T_{\beta_N}\to T_\beta\qquad\mbox{in $\mc D'(\he n, E_0^{n+1})$}
$$
and
$$
\partial_c  T_{\beta_N}\to \partial_c  T_\beta\qquad\mbox{in $\mc D'(\he n,E_0^n)$.}
$$
Take now $\varphi\in \mc D(\he n, E_0^n)$, $\|\varphi\|_E \le 1$. By Lemma \ref{by parts} we have:
\begin{equation*}\begin{split}
&\big|\Scal{\partial_c  T_\beta}{\varphi}_{\mc D',\mc D} \big|  =
\big| \lim_N \Scal{\partial_c  T_{\beta_N}}{\varphi}_{\mc D',\mc D}\big| = \big|\lim_N \Scal{T_{\beta_N}}{d_c\varphi}_{\mc D',\mc D}\big|
\\& \hphantom{xxx}=  \big|\lim_N \int_{\he n} \beta_N\wedge d_c\varphi\big|
=  \big|\lim_N \int_{\he n} d_c\beta_N\wedge \varphi\big|
\\& \hphantom{xxx} =  \big| \lim_N \Scal{T_\varphi}{d_c\beta_N}_{\mc D',\mc D}\big| =  \big| \lim_N \Scal{\partial_c  T_\varphi}{\beta_N}_{\mc D',\mc D}\big|
\\& \hphantom{xxx} = \big| \lim_N \Scal{A\varphi}{\beta_N}_{\mc D',\mc D}\big|
 = \big| \lim_N \Scal{A\varphi}{\beta_N}_{\mc W^*,W}\big|
\\&  \hphantom{xxx} \qquad\mbox{(by \eqref{nov2 eq:2}, since $\partial_c  T_\varphi = \pm T_{d_c\varphi}$, and $d_c\varphi\in L^1_{\mathrm{loc}}$)}
\\& \hphantom{xxx}= \big|  \Scal{A\varphi}{\beta}_{\mc W^*, \mc W}\big|
\le c_\beta \|\varphi\|_E \le c_\beta ,
\end{split}\end{equation*}
by \eqref{apr 22 eq:1 ter}, since $\mc D(\he n, E_0^n) \subset \mc D(A)$. Taking the
supremum with respect to $\varphi$ we conclude that $\mc M(\partial_c  T_\beta) < \infty$.

Now we have to prove the reverse inclusion: take $\beta \in  \BLh{1}{Q/(Q-1)}{n} $ such that
$\mc M(\partial_c  T_\beta)<\infty$. We must prove that \eqref{apr 22 eq:1 bis} holds.
By Lemma \ref{approx}, there exists a sequence $(\psi_k)_{k\in\mathbb N}$
in $\mc E(\he n,E_0^n)\cap E$ such that
\begin{equation}\label{oct19 eq:3bis}
\psi_k \to \psi\quad\mbox{in $E\quad$ and}\quad A\psi_k\to A\psi \quad\mbox{weakly in $F$}
\end{equation}
as $k\to\infty$. Hence
$$
|\Scal{\tau(\beta)}{A\psi}_{F^*,F}| = \lim_k |\Scal{\tau(\beta)}{A\psi_k}_{F^*,F}|.
$$
Suppose for a while we have proved \eqref{apr 22 eq:1 bis} for 
$\psi_k\in \mc E(\he n,E_0^n)\cap \mc D(A)$; then
$$
|\Scal{\tau(\beta)}{A\psi_k}_{F^*,F}| \le c_\beta \lim_k \|\psi_k\|_E = c_\beta \|\psi\|_E,
$$
i.e. \eqref{apr 22 eq:1 bis} holds for any $\psi\in E$. Therefore, we may suppose $\psi\in \mc E(\he n,E_0^n)\cap \mc D(A)$.

By definition of $\mc D(A)$, $A\psi \in \mc W^*$. Let
$(\beta_k)_{k\in\mathbb N}$ the approximation of $\beta$ of Lemma \ref{approx bis}. 
By \eqref{nov2 eq:2}, we have
\begin{equation}\label{nov5 eq:2}\begin{split}
&\big| \Scal{\tau(\beta)}{A\psi}_{F^*,F}\big| =\big| \Scal{A\psi}{\beta}_{\mc W^*,\mc W}\big|
=\lim_k \big|\Scal{A\psi}{\beta_k}_{\mc W^*,\mc W}\big|
\\&  \hphantom{xxx}
= \lim_k\big| \Scal{A\psi}{\beta_k}_{\mc D',\mc D}\big| = \lim_k \big|\Scal{\partial_c  T_\psi}{\beta_k}_{\mc D',\mc D}\big|
\\&  \hphantom{xxx}
 = \lim_k \big| \Scal{ T_\psi}{d_c\beta_k}_{\mc D',\mc D}\big| = \lim_k \big| \int_{\he n}\psi\wedge d_c\beta_k\big|.
% \\&  \hphantom{xxx}
%= \lim_k \int_{\he n}d_c\psi\wedge \beta_k\qquad\mbox{(by Lemma \ref{by parts})}
\end{split}\end{equation}
Let now $k\in\mathbb N$ be fixed for a while, and let $\chi\in \mc D(\he n)$, $0\le \chi\le 1$
be a cut-off function, $\chi\equiv 1$ on $\supp \beta_k$. Then, keeping in mind that
$\chi\psi$ is a test form,
\begin{equation}\label{nov7 eq:1}\begin{split}
& \big | \int_{\he n}\psi\wedge d_c\beta_k\big | = \big |\int_{\he n}(\chi \psi)\wedge d_c\beta_k\big |
\\&  \hphantom{xxx}
= \big |\int_{\he n} d_c(\chi \psi)\wedge \beta_k\big |  \qquad\mbox{(by Lemma \ref{by parts})}
\\&  \hphantom{xxx}
= \big |\Scal{T_{\beta_k}}{d_c(\chi \psi)} \big | \qquad\mbox{(by \eqref{nov5 eq:1})}
\\&  \hphantom{xxx}
=\big |\Scal{\partial_c  T_{\beta_k}}{\chi \psi} \big |
\le \mc M(\partial_c  T_{\beta_k}) \| \chi \psi\|_E \le  \mc M(\partial_c  T_{\beta_k}) \| \psi\|_E.
\end{split} \end{equation}
Thus, combining \eqref{nov7 eq:1} with \eqref{nov5 eq:2} and keeping in mind Lemma \ref{approx bis}, ii),
we obtain eventually
\begin{equation*}
\big| \Scal{\tau(\beta)}{A\psi}_{F^*,F}\big| \le \mc M(\partial_c  T_{\beta}) \| \psi\|_E,
\end{equation*}
i.e. \eqref{apr 22 eq:1 bis} is proved.

We are now left to prove \eqref{nov8 eq:5}. First, we remind that, following \cite{brezis}, Section 2.6, 
if $\tau(\beta)\in\mc D(A^*)$, then $A^*(\tau(\beta))$ is uniquely determined by the identity
$$
\Scal{A^*(\tau(\beta))}{\psi}_{E^*,E} = \Scal{\tau(\beta)}{A\psi}_{F^*,F}\qquad\mbox{for $\psi\in\mc D(A)$.} 
$$

Finally, take $\psi\in \mc D(\he n, E_0^h)$.
%and
%set $\tilde\beta:=\tau^{-1}\beta$.
By Lemma \ref{approx bis}, i) there exists a sequence compactly supported smooth
forms $(\beta_k)_{k\in\mathbb N}$ converging to $\beta$ in $\mc W$.
Now, by \eqref{tau2},
\begin{equation*}\begin{split}
&
\Scal{A^*(\tau(\beta))}{\psi}_{E^*,E} = \Scal{\tau(\beta)}{A\psi}_{F^*,F} 
= \Scal{A\psi}{\beta}_{\mc W^*,\mc W} 
\\&
=\lim_k \Scal{A\psi}{\beta_k}_{\mc W^*,\mc W}
=\lim_k \Scal{A\psi}{\beta_k}_{\mc D',\mc D}
\\&
= \lim_k \Scal{\partial_c  T_\psi}{\beta_k}_{\mc D',\mc D}
= \lim_k\int_{\he n} \psi\wedge d_c \beta_k
\\&
=\lim_k\int_{\he n} d_c\psi\wedge \beta_k
= 
\int_{\he n} d_c\psi\wedge \beta
= \Scal{\partial_c  T_{\beta}}{\psi}_{\mc D', \mc D},
\end{split}\end{equation*}
and we are done.

\end{proof}

In order to prove Theorem \ref{poincareglobal bis} we  need a last
 result, that is akin to Lemma \ref{parabolic} which was used in the proof of Theorem \ref{main global}. To
avoid cumbersome notations, let us write again $\mc W:= \BLh{1}{Q/(Q-1)}{n}$.

\begin{lemma}\label{parabolic bis} Suppose $\beta\in \BLh{1}{Q/(Q-1)}{n}$ 
and $\Omega\in(\BLh{1}{Q/(Q-1}{n})^*$  are such that 
$$\partial_c  T_\beta=0 \qquad\mbox{and}\qquad \partial_c \Omega = 0.$$
Then
$$
\Scal{\Omega}{\beta}_{\mc W^*,\mc W}=0.
$$
\end{lemma}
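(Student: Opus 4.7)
The plan is to reduce to an algebraic identity via mollification and cutoffs, mimicking the cutoff argument of Lemma \ref{parabolic} but exploiting instead the duality $\mc W^{\ast}\times \mc W$, where $\mc W:=\BLh{1}{Q/(Q-1)}{n}$. The key elementary observation is this: for any test form $\phi\in\mc D(\he n,E_0^{n-1})$, the compatibility \eqref{nov2 eq:2} and the hypothesis $\partial_c\Omega=0$ give
\[
\Scal{\Omega}{d_c\phi}_{\mc W^{\ast},\mc W}=\Scal{\Omega}{d_c\phi}_{\mc D',\mc D}=\Scal{\partial_c\Omega}{\phi}_{\mc D',\mc D}=0.
\]
The aim is therefore to construct a sequence of test forms whose $d_c$-image approximates $\beta$ suitably in $\mc W$.

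First I would mollify $\beta$ by a symmetric Friedrichs mollifier $J_\eps=\ccheck J_\eps$: set $\beta^\eps:=J_\eps\ast\beta$. Convolution commutes with $W$ and with $d_c$, so $\beta^\eps\in\mc E(\he n,E_0^n)\cap\mc W$, $d_c\beta^\eps=0$, and $\beta^\eps\to\beta$ in $\mc W$. Since by Theorem \ref{embedding} the embedding $\mc W\hookrightarrow L^{Q/(Q-2)}(\he n,E_0^n)$ holds, $\beta^\eps$ is a $d_c$-closed $L^{Q/(Q-2)}$-form in a degree distinct from $n+1$. The Poincar\'e inequality of \cite{BFP2} (Theorem $1.1$) therefore produces a smooth primitive $\phi^\eps$ with $d_c\phi^\eps=\beta^\eps$ and $\phi^\eps\in L^{Q/(Q-3)}(\he n,E_0^{n-1})$; moreover, the explicit realization $\phi^\eps=d_c^{\ast}d_cd_c^{\ast}\Delta_{\he{},n}^{-1}\beta^\eps$ is a type-one operator applied to $\beta^\eps$, so by Theorem \ref{hls folland}-iii) one also obtains $W\phi^\eps\in L^{Q/(Q-2)}(\he n,E_0^{n-1})$.

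Then I would truncate $\phi^\eps$ by a dyadic cutoff $\chi_N\in\mc D(\he n)$ with $\chi_N\equiv 1$ on $B(e,N)$, $\mathrm{supp}\,\chi_N\subset B(e,2N)$, and $|W^k\chi_N|\le CN^{-k}$ for $k=1,2$. Set $\phi^\eps_N:=\chi_N\phi^\eps\in\mc D(\he n,E_0^{n-1})$. Since $\phi^\eps$ has degree $n-1\ne n$, Leibniz's formula (Lemma \ref{leibniz}-i) gives
\[
d_c\phi^\eps_N=\chi_N\beta^\eps+P_0^{n-1}(W\chi_N)(\phi^\eps),
\]
and the preliminary observation applied to $\phi=\phi^\eps_N$ reads
\[
0=\Scal{\Omega}{\chi_N\beta^\eps}_{\mc W^{\ast},\mc W}+\Scal{\Omega}{P_0^{n-1}(W\chi_N)(\phi^\eps)}_{\mc W^{\ast},\mc W}.
\]
One then verifies, using H\"older on the annular supports of $W\chi_N$ and $W^2\chi_N$, the estimates $\|W^k\chi_N\|_{L^Q}\le CN^{1-k}$ and $|B(e,2N)|\le CN^Q$, and dominated convergence against the global majorants $\beta^\eps\in L^{Q/(Q-2)}$, $W\beta^\eps\in L^{Q/(Q-1)}$, $\phi^\eps\in L^{Q/(Q-3)}$, $W\phi^\eps\in L^{Q/(Q-2)}$, that $\chi_N\beta^\eps\to\beta^\eps$ and $P_0^{n-1}(W\chi_N)(\phi^\eps)\to 0$ in $\mc W$ as $N\to\infty$. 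By continuity of $\Omega\in\mc W^{\ast}$, this forces $\Scal{\Omega}{\beta^\eps}_{\mc W^{\ast},\mc W}=0$, and sending $\eps\to 0$ yields $\Scal{\Omega}{\beta}_{\mc W^{\ast},\mc W}=0$.

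The main technical obstacle lies in the $\mc W$-control of the commutator-like term $P_0^{n-1}(W\chi_N)(\phi^\eps)\sim(W\chi_N)\phi^\eps$. Its horizontal derivative decomposes into $(W^2\chi_N)\phi^\eps$---handled by the standard Poincar\'e output $\phi^\eps\in L^{Q/(Q-3)}$ together with $\|W^2\chi_N\|_{L^Q}\le C/N$ and $|B(e,2N)|^{1/Q}\le CN$---and $(W\chi_N)W\phi^\eps$, which requires the extra Beppo Levi regularity $W\phi^\eps\in L^{Q/(Q-2)}$ to balance the same $CN$ factor. This Sobolev-type gain on the primitive, parallel in spirit to the Gagliardo-Nirenberg estimate of Theorem \ref{bfp1}, is exactly what the type-one kernel representation of $\phi^\eps$ furnishes via Theorem \ref{hls folland}-iii).
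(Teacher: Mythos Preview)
Your proposal is correct and follows essentially the same route as the paper: mollify $\beta$ to a smooth closed form, take the explicit primitive $\phi=d_c^{\ast}d_cd_c^{\ast}\Delta_{\he{},n}^{-1}\beta$ (a type-$1$ operator applied to $\beta$), cut off $\phi$ by $\chi_N$, and use $\partial_c\Omega=0$ together with Leibniz to reduce to showing $P_0^{n-1}(W\chi_N)\phi\to 0$ in $\mc W$; the three H\"older estimates you outline (using $\phi\in L^{Q/(Q-3)}$ for the $(W^2\chi_N)\phi$ and $L^{Q/(Q-2)}$ terms, and $W\phi\in L^{Q/(Q-2)}$ via the type-$0$ kernel for the $(W\chi_N)W\phi$ term) are exactly the paper's Steps~1, 2a, 2b. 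The only cosmetic difference is that you carry the mollification parameter $\eps$ through to the end rather than fixing a smooth $\beta$ at the outset.
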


\begin{proof} By Remark \ref{mar 2 rem:1}, $d_c\beta=0$ in the sense of distributions.
If $\eps>0$, let $J_\eps = \ccheck J_\eps$ be a (group) Friedrichs' mollifier. Then
$$
J_\eps \ast \beta \to \beta \quad\mbox{in $\quad \BLh{1}{Q/(Q-1)}{n}$}
$$
and
$$
d_c(J_\eps \ast \beta) = 0.
$$
Thus, without lack of generality, we can assume $\beta\in \mc W\cap \mc E(\he n, E_0^n)$.
In particular, $\beta\in L^{Q/(Q-2)}(\he n, E_0^n)$.
Keeping in mind Theorem \ref{hls folland}, let us set
$$
\phi:= d^*d_c d_c^*\Delta_c^{-1} \beta \in L^{Q/(Q-3)}(\he n, E_0^{n-1}),
$$
since $d^*d_c d_c^*\Delta_c^{-1}$ is a kernel of type 1. As in \cite{BFP2}, Section 1.6,
$$
d_c\phi = \beta.
$$
Let us fix a sequence of cut-off functions $\{\chi_N\}_{N\in\N}\subset\mc D(\he n)$  such that for any $N\in\N$,   ${\rm supp} (\chi_N)\subset {B(e,2N)}$, 
$\chi_N\equiv 1$ in  ${B(e,N)}$, and 
$$N\,|W\chi_N| +N^2\,|W^2\chi_N| \le C$$ 
for all $N\in\mathbb N$.

We notice  that $\chi_N \beta\in \mc D(\he n, E_0^n)$ and
$$
\chi_N \beta \to \beta\qquad \mbox{in $\mc W$.}
$$
The proof of this fact is already done in Lemma \ref{approx bis}.
%{\color{blue} but for reader convenience we repeat it. {\bf FOR ME WE CAN OMIT THIS PART}. 
%Indeed, trivially, $\chi_N \beta \to \beta$ in $L^{Q/(Q-2)}(\he n,E_0^n)$. In addition, 
%if $W_i$ is a horizontal vector field,
%$$
%W_i(\chi_N \beta) = \chi_N W_i\beta + (W\chi_N)\beta.
%$$
%Again, trivially, $\chi_N W_i\beta\to W_i\beta$ 
% in $L^{Q/(Q-1)}(\he n, E_0^n)$.
% 
% On the other hand,
% \begin{equation*}\begin{split}
%\int_{\he n} &|(W\chi_N)\beta |^{Q/(Q-1}\, dp \le C
%N^{-Q/(Q-1} \int_{\he n} |\beta |^{Q/(Q-1}\, dp
%\\&
%\le C N^{-Q/(Q-1} \Big(\int_{B(e,2N)\setminus B(e,N)} |\beta |^{Q/(Q-2}\, dp\Big)^{(Q-2)/(Q-1)} |B(e,N)|^{1/(Q-1)}
%\\&
%\le
%C \Big(\int_{B(e,2N)\setminus B(e,N)} |\beta |^{Q/(Q-2}\, dp\Big)^{(Q-2)/(Q-1)} \to 0
%\end{split}\end{equation*}
%as $N\to\infty$ since $\beta\in L^{Q/(Q-2)}(\he n E_0^n)$.}

Thus we can write
$$
\Scal{\Omega}{\beta}_{\mc W^*,\mc W} = \lim_N  \Scal{\Omega}{\chi_N\beta}_{\mc W^*,\mc W}
 = \lim_N  \Scal{\Omega}{\chi_N d_c\phi}_{\mc W^*,\mc W}.
$$
Therefore, the statement of the Lemma will follow by showing that
\begin{equation}\label{dec5 eq:1}
\lim_N  \Scal{\Omega}{\chi_N d_c\phi}_{\mc W^*,\mc W} = 0.
\end{equation}
First of all, by Lemma \ref{leibniz},
$$
d_c(\chi_N \phi) = \chi_N d_c\phi + P_0^{n-1}(W\chi_N )\phi,
$$
where $P_0^{n-1}(W\chi_N ): E_0^{n-1} \to E_0^{n}$ is a linear homogeneous 
differential operator of order zero with coefficients depending
only on the horizontal derivatives of $\chi_N$. Thus, by \eqref{nov2 eq:2},
\begin{equation}\label{dec5 eq:2}\begin{split}
&\Scal{\Omega}{\chi_N d_c\phi}_{\mc W^*,\mc W}
\\&
  \hphantom{xxx} = \Scal{\Omega}{d_c(\chi_N \phi)}_{\mc W^*,\mc W} - \Scal{\Omega}{P_0^{n-1}(W\chi_N )\phi}_{\mc W^*,\mc W}
\\&
\hphantom{xxx} = \Scal{\Omega}{d_c(\chi_N \phi)}_{\mc D',\mc D} - \Scal{\Omega}{P_0^{n-1}(W\chi_N )\phi}_{\mc W^*,\mc W}
\\&
\hphantom{xxx}
= \Scal{\partial_c \Omega}{\chi_N \phi}_{\mc D',\mc D} - \Scal{\Omega}{P_0^{n-1}(W\chi_N )\phi}_{\mc W^*,\mc W}
\\&
\hphantom{xxx} = - \Scal{\Omega}{P_0^{n-1}(W\chi_N )\phi}_{\mc W^*,\mc W},
\end{split}\end{equation}
since $\partial_c \Omega=0$.

We notice now that $P_0^{n-1}(W\chi_N )\phi$ can be seen as a linear combination of the element
of the basis $\Xi_0^n$ of $E_0^n$ of the form $(W_k\chi_N)\phi_\ell$, where the $\phi_\ell$ are
coordinates of $\phi$ with respect to the basis $\Xi_0^{n-1}$.

Thus, in order to prove \eqref{dec5 eq:1}, we have but to show that
\begin{equation}\label{dec5 eq:3}
(W_k\chi_N)\phi_\ell \to 0\quad\mbox{in $\BL{1}{Q/(Q-1)}$.}
\end{equation}
The proof of \eqref{dec5 eq:3} will be articulated in three steps.

\noindent {\bf Step 1}. Consider first
\begin{equation*}\begin{split}
&\| (W_k\chi_N) \phi_\ell\|_{L^{Q/(Q-2)}(\he n)}^{Q/(Q-2)} \le 
N^{-Q/(Q-2)} \int_{B(e,2N)\setminus B(e,N)} |\phi |^{Q/(Q-2)}\, dp
\\&
\le C N^{-Q/(Q-2)}\Big(\int_{B(e,2N)\setminus B(e,N)} |\phi |^{Q/(Q-3)}\, dp\Big)^{(Q-3)/(Q-2)} |B(e,2N)|^{1/(Q-2)}
\\&
\le C \Big(\int_{B(e,2N)\setminus B(e,N)} |\phi |^{Q/(Q-3)}\, dp\Big)^{(Q-3)/(Q-2)} \to 0
\end{split}\end{equation*}
as $N\to\infty$, since $\phi\in L^{Q/(Q-3)}(\he n, E_0^{n-1})$.

\noindent {\bf Step 2.} If $W_i$ is a horizontal derivative, consider now the $L^{Q/(Q-1)}$-norm 
of $W_i ((W_k\chi_N)  \phi_\ell)$. It holds,
$$
W_i ((W_k\chi_N)  \phi_\ell) = (W_iW_k \chi_N)\phi_\ell + (W_k\chi_N) W_i(\phi_\ell).
$$
\noindent {\bf Step 2a} Now,
\begin{equation*}\begin{split}
\| & (W_iW_k \chi_N)  \phi_\ell\|_{L^{Q/(Q-1)}(\he n)}^{Q/(Q-1)} \le 
N^{-2Q/(Q-1)} \int_{B(e,2N)\setminus B(e,N)} |\phi |^{Q/(Q-1)}\, dp
\\&
\le C N^{-2Q/(Q-1)}\Big(\int_{B(e,2N)\setminus B(e,N)} |\phi |^{Q/(Q-3)}\, dp\Big)^{(Q-3)/(Q-1)} |B(e,2N)|^{2/(Q-1)}
\\&
\le C \Big(\int_{B(e,2N)\setminus B(e,N)} |\phi |^{Q/(Q-3)}\, dp\Big)^{(Q-3)/(Q-1)} \to 0
\end{split}\end{equation*}
as $N\to\infty$, again since $\phi\in L^{Q/(Q-3)}(\he n, E_0^{n-1})$.

\noindent {\bf Step 2b.} Consider eventually the $L^{Q/(Q-1)}$-norm 
of 
$$(W_k\chi_N) W_i(\phi_\ell) =(W_k\chi_N)  W_i (d_c^*d_c d_c^* \Delta_c^{-1}\beta)_\ell =: (W_k\chi_N) \tilde\phi_\ell.
$$
We notice now that the map $\beta\to \tilde\phi_\ell$ is associated with a kernel of type 0 and then
is continuous from $L^{Q/(Q-2)}(\he n)$ to itself (again by Theorem
\ref{hls folland}, since $Q/(Q-2)>1$). Therefore
\begin{equation*}\begin{split}
\| & (W_k \chi_N)  \tilde\phi_\ell\|_{L^{Q/(Q-1)}(\he n)}^{Q/(Q-1)} \le 
N^{-Q/(Q-1)} \int_{B(e,2N)\setminus B(e,N)} |\tilde\phi_\ell |^{Q/(Q-1)}\, dp
\\&
\le C N^{-Q/(Q-1)}\Big(\int_{B(e,2N)\setminus B(e,N)} |\tilde\phi_\ell |^{Q/(Q-2)}\, dp\Big)^{(Q-2)/(Q-1)} |B(e,2N)|^{1/(Q-1)}
\\&
\le C \Big(\int_{B(e,2N)\setminus B(e,N)} |\tilde\phi |^{Q/(Q-2)}\, dp\Big)^{(Q-2)/(Q-1)} \to 0
\end{split}\end{equation*}
since $\tilde\phi_\ell\in L^{Q/(Q-2)}(\he n)$.

This completes the proof of \eqref{dec5 eq:3} and hence of the Lemma.
\end{proof}

We are now in a position to conclude the proof of Theorem \ref{main global bis}.

\begin{proof}[Proof of Theorem \ref{main global bis}] 
With the notations of Definition \ref{spaces bis},
let us show preliminarily that
\begin{equation}\label{feb 15 eq:1 bis}
A^*(\mc D(A^*))\qquad\mbox{is closed in $E^*$.}
\end{equation}
To this end, let $(T_k)_{k\in\mathbb N}$ a sequence of $n$-currents
in $A^*(\mc D(A^*))$ that converges to a current 
$T\in E^*$ (i.e. in the mass norm). Hence, $\mc M(T_k)
= \|T_k\|_{E^*}\le C_1$ for all $k\in\mathbb N$.
In particular
 $T_k\to T$ weakly* in  $\mc D'(\he n E_0^n)$, i.e.
 $$
 \Scal{T}{\sigma}=\lim_{k\to\infty} \Scal{T_k}{\sigma}
 $$
 for all $\sigma\in \mc D(\he n,E_0^n)$.
By \eqref{oct19 eq:4} and \eqref{nov8 eq:5} there exists a corresponding
sequence $\beta_k\in 
 \BLh{1}{Q/(Q-1)}{n}$ such that
\begin{equation*}\label{feb 15 eq:2 bis}
T_k = \partial_c  T_{\beta_k}
\end{equation*}
for any $k\in\mathbb N$. Since the $n$-currents $\partial_c  T_{\beta_k}$'s 
satisfy
$\mc M(\partial_c  T_{\beta_k}) = \mc M(T_k) <\infty$, by Theorem \ref{poincareglobal currents bis},
for any $k\in\mathbb N$ there exists $\phi_k\in \BLh{1}{Q/(Q-1)}{n}$
such that $\partial_c  T_{\phi_k} = \partial_c  T_{\beta_k}$, and 
$$
\|\phi_k\|_{\BLh{1}{Q/(Q-1)}{n}} \le 
 C\mc M(T_k) \le CC_1.
$$
Since $Q/(Q-1)>1$ we cas assume that 
$$\phi_k\to \phi\qquad\mbox{weakly in
$\BLh{1}{Q/(Q-1)}{n} $.}
$$
Thus, by Proposition \ref{weak and weak*},
$T_{\phi_k} \to T_{\phi}$ weakly* in $\mc D'(\he n,E_0^{n+1})$ and $T_k 
=\partial_c  T_{\phi_k} \to \partial_c  T_{\phi}$ weakly* in $\mc D'$; therefore,
since also $T_k\to T$ weakly*, it follows that $T= \partial_c  T_{\phi}$.
To prove that $T\in A^*(\mc D(A^*))$, by \eqref{oct19 eq:4} we need merely 
show that $\mc M(\partial_c  T_\phi)<\infty$. Because of the lower
semicontinuity of the mass with respect to the weak* convergence,
we have
$$
\mc M(\partial_c  T_\phi)\le\liminf_{k\to \infty}\mc M(\partial_c  T_{\beta_k})
= \liminf_{k\to \infty}\mc M(T_k) = \mc M(T).
$$
Thus \eqref{feb 15 eq:1 bis} is proved.

By Theorem \ref{predual}, 1), \eqref{feb 15 eq:1 bis} implies that
\begin{equation}\label{feb 19 eq:1 bis}
A(\mc D(A)) = (\ker A^*)^\perp.
\end{equation}
Moreover, by Lemma \ref{predualpoin}, \eqref{feb 15 eq:1 bis} implies that
there exists $C>0$ such that  for all  $f\in A(D(A))$ there exists $e\in D(A)$, 
satisfying 
\begin{equation}\label{mar 3 eq:1 bis}
Ae=f\quad\mbox{and}\quad \|e\|_{E}\leq C\,\|f\|_{F}.
\end{equation}

To complete the proof, taking into account \eqref{feb 19 eq:1 bis}, we only need to show that 
\begin{equation}\label{feb 19 eq:2 bis}
\{\Omega\in(\BLh{1}{Q/(Q-1}{n})^*, \, \partial_c \Omega = 0\}\subset (\ker A^*)^\perp.
\end{equation}

 Indeed, take $\Omega\in(\BLh{1}{Q/(Q-1}{n})^*$ such that $ \partial_c \Omega = 0$.
For any $\beta\in \BLh{1}{Q/(Q-1}{n}$ belonging to $\ker A^*$ (i.e. $\partial_c  T_\beta=0$),
by Lemma \ref{parabolic bis}
$$
\Scal{\Omega}{\beta}= 0,
$$
that proves \eqref{feb 19 eq:2 bis}.
%Thus, combining  \eqref{feb 19 eq:2 bis}  and \eqref{feb 19 eq:1 bis}, 
%\begin{equation*}
%\{\Omega\in(\BLh{1}{Q/(Q-1}{n})^*, \, \partial_c \Omega = 0\} \subset
%A(\mc D(A)),
%\end{equation*}
%and hence, by \eqref{mar 3 eq:1 bis}, 
Hence, we have proved the theorem.

\end{proof}

\section{Continuous primitives of locally defined forms: proof of Theorem \ref{main local}}
\label{local}

From the global Poincar\'e inequalities proved in the previous sections, we can now quite easily obtain the interior ones,  stated in Theorem \ref{main local}. In the next section we need first to collect some results already proven in \cite{BFP2} and \cite{BFP5}.

\subsection{Intermediate tools}

The proof of Theorem \ref{main local}, when $h=n+1$,  relies also on an approximated homotopy formula proved in \cite{BFP2} for closed forms $\omega$. Indeed, 
if we take $\lambda >\lambda' >1$  and set
$B':= B(e,\lambda')$ and  $B_\lambda:= B(e,\lambda)$,
in Theorem 
5.19 in \cite{BFP2} formula (56),  it is proved  
that\begin{equation}\label{9 nov}
\omega = d_cT\omega + S\omega \qquad\mbox{in $B'$}
\end{equation}
where
$$  T: L^{Q}(B_\lambda,E_0^{h}) \to W^{1,Q}(B',E_0^{h-1})\quad \mbox{for $h\neq n+1$} 
$$
and
$$T: L^{Q/2}(B_\lambda,E_0^{n+1}) \to W^{2,Q/2}(B',E_0^{n}).$$
Moreover, 
 $S$ is a regularizing operator, i.e. maps $L^{p}(B_\lambda,E_0^{n+1})$
into $\mc E (B',E_0^{n+1})$. In particular, $S$
is bounded from $W^{m,p}(B_\lambda,E_0^{n+1})$ to $W^{s,p}(B',E_0^{n+1})$ for any $m,s$,
$0\le m<s$ and $1<p<\infty$ (see also Theorem 5.14 and Theorem 5.15 of \cite{BFP2}). 

In addition, again in \cite{BFP2} (see formula (37) and Lemmata 5.7 and 5.8) an operator $K$ that inverts Rumin's differential $d_c$ on closed forms of the same degree has been defined.
 More
precisely, we have:
\begin{lemma}\label{homotopy 1} If $\omega$ is a smooth $d_c$-exact differential form in the ball $B'$, then
\begin{equation}\label{homotopy closed}
\omega = d_cK\omega \quad\mbox{if $1\le h\le 2n+1$.}
\end{equation}
Morever,
\begin{itemize}
\item[i)] if $1< p < \infty$ and $h=1,\dots, 2n+1$, then $K :W^{1,p}(B', E_0^h) \to
L^p(B', E_0^{h-1})$ is bounded;
\item[ii)] if $1 < p < \infty$ and $n+1<h \le 2n+1$, then $K :L^{p}(B', E_0^h) \to
L^p(B', E_0^{h-1})$ is compact;
\item[iii)] if $1< p< \infty$ and $h=n+1$, then $K:L^{p}(B', E_0^{n+1}) \to
L^p(B', E_0^{n})$ is bounded.
\end{itemize}
%In addition, if $\omega$ is compactly supported in $B$, then $J\omega$ is still compactly supported in $B$.
\end{lemma}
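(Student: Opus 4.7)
The plan is to construct $K$ explicitly as a composition of $d_c^*$ with the inverse of Rumin's Laplacian from Theorem \ref{global solution}, suitably localized to $B'$, and then to verify the identity $\omega = d_c K\omega$ for closed $\omega$ via the commutation relations of Lemma \ref{comm}. Concretely, given smooth $\omega$ on $B'$, I would fix a cut-off $\chi\in\mc D(B_\lambda)$ with $\chi\equiv 1$ on $B'$, set $\tilde\omega:=\chi\omega$, and take
\begin{equation*}
K\omega := \bigl(d_c^*\,\Delta_{\he{},h}^{-1}\tilde\omega\bigr)\big|_{B'}\qquad\text{if }h\neq n,n+1,
\end{equation*}
replacing $d_c^*$ by $d_c^* d_c d_c^*$ when $h=n$ and using the analogous fourth-order modification when $h=n+1$. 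This is the operator defined in \cite{BFP2}, formula (37).

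For the homotopy identity \eqref{homotopy closed}, in the generic case $h\neq n, n+1$, writing $\Delta_{\he{},h}=d_cd_c^*+d_c^*d_c$ and invoking Lemma \ref{comm}(i) yields
\begin{equation*}
d_c K\omega = d_cd_c^*\Delta_{\he{},h}^{-1}\omega = \omega - d_c^*d_c\Delta_{\he{},h}^{-1}\omega = \omega - d_c^*\Delta_{\he{},h+1}^{-1}d_c\omega = \omega,
\end{equation*}
since $d_c\omega=0$. For $h=n$, one uses $\Delta_{\he{},n}=(d_cd_c^*)^2+d_c^*d_c$ so that $\Delta_{\he{},n}\omega=(d_cd_c^*)^2\omega$ for closed $\omega$, and concludes in the same spirit using the appropriate items of Lemma \ref{comm}; the case $h=n+1$ is parallel.

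The mapping properties will follow from type-counting the convolution kernel associated with $K$, using Proposition \ref{kernel} and Theorem \ref{hls folland}. Since $\Delta_{\he{},h}^{-1}$ has type $a\in\{2,4\}$ and each horizontal derivative lowers the type by one, $K$ turns out to be of type $\ge 1$ in general and of type exactly $2$ in degree $h=n+1$. Statement (iii) is then Theorem \ref{hls folland}(i)--(ii) applied to the type-$2$ kernel on bounded sets. Statement (i) follows by pairing one horizontal derivative of the $W^{1,p}$-input with one derivative of the kernel, reducing to the $L^p$-boundedness of a type-$0$ convolution via Theorem \ref{hls folland}(iii). For (ii), when $h>n+1$ the type-$1$ kernel yields a map with values in $W^{1,p}$, and composition with the compact Folland--Stein embedding $W^{1,p}(B')\hookrightarrow L^p(B')$ produces the compactness of $K$.

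The main obstacle is bridging the global calculus on $\he n$, where Lemma \ref{comm} and the kernel-type count naturally live, with the local bounded setting of $B'$. The truncation $\tilde\omega=\chi\omega$ introduces a commutator $[d_c,\chi]$ which, combined with the cut-off away from $B'$, contributes the smoothing term $S\omega$ appearing in the general homotopy \eqref{9 nov}. One must verify that this term can be absorbed or vanishes when the input is genuinely closed on all of $B_\lambda$, yielding the clean identity \eqref{homotopy closed}. The bookkeeping needed here is exactly what \cite{BFP2}, Lemmata 5.7 and 5.8 carry out, so the plan reduces to importing those statements and checking that the local truncation preserves the kernel-type estimates above.
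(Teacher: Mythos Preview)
Your construction is not the operator $K$ of the paper. Immediately after the statement of Lemma~\ref{homotopy 1}, the paper recalls the definition explicitly: $K=\Pi_{E_0}\circ \Pi_E \circ K_{\mathrm{Euc}} \circ \Pi_E$, where $K_{\mathrm{Euc}}$ is the Iwaniec--Lutoborski homotopy on the convex set $B'$ coming from Cartan's formula. This is an intrinsically local construction on $B'$; no global fundamental solution and no cut-off enter. The identity \eqref{homotopy closed} then follows from the Euclidean homotopy identity $\omega=dK_{\mathrm{Euc}}\omega+K_{\mathrm{Euc}}d\omega$ together with the chain properties of $\Pi_E$ and $\Pi_{E_0}$, and the mapping properties (i)--(iii) come from those of $K_{\mathrm{Euc}}$ (an integral operator of potential type on a bounded convex domain) combined with the order count for $\Pi_E$. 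So you have misidentified formula~(37) of \cite{BFP2}.

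More importantly, your proposed operator does \emph{not} satisfy the exact identity $\omega=d_cK\omega$ on $B'$. With $K\omega=(d_c^*\Delta_{\he{},h}^{-1}(\chi\omega))|_{B'}$ you compute, for $h\neq n,n+1$,
\[
d_cK\omega=\omega-\bigl(d_c^*\Delta_{\he{},h+1}^{-1}d_c(\chi\omega)\bigr)\big|_{B'}
=\omega-\bigl(d_c^*\Delta_{\he{},h+1}^{-1}[d_c,\chi]\omega\bigr)\big|_{B'}.
\]
The commutator $[d_c,\chi]\omega$ is supported in the annulus $B_\lambda\setminus B'$, but $\Delta_{\he{},h+1}^{-1}$ is convolution with a kernel that is smooth and nonvanishing away from the origin, so its output is nonzero on $B'$. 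The extra term is a smoothing remainder, exactly the $S\omega$ in \eqref{9 nov}, but it does not vanish; the whole point of Lemma~\ref{homotopy 1} is to have an operator with \emph{no} remainder on exact forms, so that one can later write $\Phi=(KS+T)\omega$ and get $d_c\Phi=\omega$ exactly. Your plan to ``absorb'' this term or appeal back to Lemmata~5.7--5.8 of \cite{BFP2} is circular: those lemmata are about the Iwaniec--Lutoborski $K$, not yours. The kernel-type analysis you sketch for (i)--(iii) is fine for your operator, but it is proving boundedness of the wrong map.
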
 

Let us recall the precise definition of the operator $K$.
In \cite{IL} the authors proved  that, starting from Cartan's homotopy formula, 
if $D \subset \rn {N}$ is a convex set and
$1<p<\infty$, $1\le h \le N$, then  there exists a bounded linear map: 
\begin{equation*}
K_{\mathrm{Euc}}  : L^p(D, {\bigwedge}\vphantom{!}^h)\to W^{1,p}_{\mathrm{Euc}}(D, {\bigwedge}\vphantom{!}^{h-1})
\end{equation*}
 that
is a homotopy operator, i.e.
\begin{equation}\label{may 4 eq:1}
	\omega =  dK_{\mathrm{Euc}} \omega + K_{\mathrm{Euc}}d\omega \qquad \mbox{for all 
	$\omega\in C^\infty (D, {\bigwedge}\vphantom{!}^h)$}.
\end{equation}
(see Proposition 4.1 and Lemma 4.2 in \cite{IL}). 

The operator $K$ defined in \cite{BFP2} is
\begin{eqnarray}\label{may 4 eq:2}
K=\Pi_{E_0}\circ \Pi_E \circ K_{\mathrm{Euc}}   \circ \Pi_E\,.
\end{eqnarray}
%(for the sake of simplicity, from now on we drop the index $h$ - the degree of the form -
%writing, e.g., $K_{\mathrm{Euc}}$ instead of $K_{\mathrm{Euc},h}$).

\bigskip

The following theorem provides a  continuity result in $W^{k,p}$ of Iwaniec \& Lutoborski's kernel $K_{\mathrm{Euc}} $,
though with a loss on domain. It has been proved in \cite{BFP5}, Theorem 5.1. 

\begin{theorem}\label{chapeau}
 Let $B_{\mathrm{Euc}}(0,a)$ and $B_{\mathrm{Euc}}(0,a')$, $0<a<a'$, be concentric Euclidean balls in $\rn N$. 
Then for $k\in \mathbb N$ and $p\in[1,\infty]$, Iwaniec  and Lutoborski's homotopy $K_{\mathrm{Euc}}$ is a bounded operator
\begin{equation*}
K_{\mathrm{Euc}}  : W^{k,p}_{\mathrm{Euc}}(B_{\mathrm{Euc}}(0,a'), {\bigwedge}\vphantom{!}^h )\to W^{k,p}_{\mathrm{Euc}}
(B_{\mathrm{Euc}}(0,a), {\bigwedge}\vphantom{!}^{h-1})
\end{equation*}
(the spaces $W^{m,p}_{\mathrm{Euc}}(U,\cov{h})$  and
 $W^{m,p}_{\mathrm{Euc}}(U,E_0^{h})$ 
are defined replacing Folland-Stein Sobolev spaces by usual Sobolev spaces).
\end{theorem}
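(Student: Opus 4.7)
The strategy is induction on $k$. The base case $k=0$ follows from direct kernel estimates: Iwaniec--Lutoborski's explicit formula for $K_{\mathrm{Euc}}$ yields a kernel dominated by a locally integrable Riesz-type potential $\lesssim|x-z|^{1-N}$, so Young's convolution inequality gives $L^p\to L^p$ for every $p\in[1,\infty]$ (and for $p\in(1,\infty)$ one even has the original Iwaniec--Lutoborski $L^p\to W^{1,p}$ estimate).

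For the inductive step, the plan is to start from $K_{\mathrm{Euc}}\omega(x)=\int\phi(y)\,T_y\omega(x)\,dy$, where $\phi$ is the smooth bump function used to define $K_{\mathrm{Euc}}$. Thanks to the strict inclusion $a<a'$, I can choose $\phi$ compactly supported in $B_{\mathrm{Euc}}(0,a_0')$ with $\max(a,a_0')<a'$, so that by convexity, for $x\in B_{\mathrm{Euc}}(0,a)$ the kernel obtained below is supported in $z\in B_{\mathrm{Euc}}(0,\max(a,a_0'))\Subset B_{\mathrm{Euc}}(0,a')$. The change of variables $z=y+t(x-y)$ gives the explicit kernel representation
$$
\mc K^{i}(x,z)\;=\;(x^i-z^i)\int_0^1 t^{h-1}(1-t)^{-N-1}\,\phi\!\left(\tfrac{z-tx}{1-t}\right)dt.
$$
Using the two identities $\partial_{x_j}\phi((z-tx)/(1-t))=-t\,\partial_{z_j}\phi((z-tx)/(1-t))$ and $t^{h-1}(1-t)^{-N-1}-t^h(1-t)^{-N-1}=t^{h-1}(1-t)^{-N}$, a direct calculation yields the kernel-level commutation
$$
\partial_{x_j}\mc K^{i}(x,z)\;=\;\delta^{i}_{\,j}\,G(x,z)\;-\;\partial_{z_j}\!\bigl[(x^i-z^i)F(x,z)\bigr],
$$
where $G(x,z)=\int_0^1 t^{h-1}(1-t)^{-N}\phi(y)\,dt$ and $F(x,z)=\int_0^1 t^{h}(1-t)^{-N-1}\phi(y)\,dt$ both satisfy the very same Riesz-potential bound $\lesssim|x-z|^{1-N}$ and the same support condition in $z$ as $\mc K^{i}$.

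Integrating by parts in $z$ --- with no boundary term, since $\mc K(x,\cdot)$ vanishes near $\partial B_{\mathrm{Euc}}(0,a')$ --- produces, for smooth $\omega$, the operator-level commutation
$$
\partial_{x_j}K_{\mathrm{Euc}}\omega\;=\;A\,\omega\;+\;B(\partial_{z_j}\omega),
$$
where $A$ and $B$ are integral operators whose kernels have the same Riesz-potential structure and support properties as the kernel of $K_{\mathrm{Euc}}$. Applying the inductive hypothesis to $A$ and $B$ yields $\|\partial_{x_j}K_{\mathrm{Euc}}\omega\|_{W^{k,p}_{\mathrm{Euc}}(B_{\mathrm{Euc}}(0,a))}\lesssim\|\omega\|_{W^{k+1,p}_{\mathrm{Euc}}(B_{\mathrm{Euc}}(0,a'))}$, which combined with the $L^p\to L^p$ bound on $K_{\mathrm{Euc}}$ itself closes the induction. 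A standard density argument extends the estimate from smooth forms to general $W^{k,p}_{\mathrm{Euc}}$-forms when $p<\infty$; the case $p=\infty$ follows by duality or from the pointwise kernel bounds via Young's inequality. The main obstacle is precisely the commutation step: naively, differentiating the kernel in $x$ generates an extra factor $(1-t)^{-1}$ near $t=1$ that would destroy the Riesz bound; the argument only works because of the one-order cancellation displayed above, which ensures that every operator appearing in the commutation formula is still of the same Riesz-potential type, allowing the induction to close without loss in exponent and with only the required loss on domain.
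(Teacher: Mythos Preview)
The paper does not give its own proof of this statement; it merely cites \cite{BFP5}, Theorem 5.1. So there is nothing in the present paper to compare against directly.

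Your argument is essentially correct and is the natural hands-on proof: represent $K_{\mathrm{Euc}}$ via its explicit kernel, use the identity $\partial_{x_j}\phi((z-tx)/(1-t))=-t\,\partial_{z_j}\phi((z-tx)/(1-t))$ to trade an $x$-derivative on the kernel for a $z$-derivative (plus a zero-order correction) and then integrate by parts onto $\omega$. Two small points deserve tightening. First, your claim that ``$G$ and $F$ both satisfy $\lesssim |x-z|^{1-N}$'' is not quite right for $F$: one has $F(x,z)\lesssim |x-z|^{-N}$, and it is only the full kernel $(x^i-z^i)F(x,z)$ of the operator $B$ that obeys the Riesz bound $\lesssim |x-z|^{1-N}$. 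This is harmless for the argument, since $B$ is what you actually use. Second, your induction hypothesis as stated is for $K_{\mathrm{Euc}}$ alone, but at the inductive step you need it for $A$ and $B$, whose kernels are $G(x,z)=\int t^{\alpha}(1-t)^{-N}\phi$ and $(x^i-z^i)\int t^{\alpha}(1-t)^{-N-1}\phi$ for various $\alpha\ge 0$. You should phrase the induction over this whole (finite-parameter) family of kernels; the same commutation computation applies verbatim to each member and the family is closed under the operation $\partial_{x_j}\mapsto (G\text{-type term}) - \partial_{z_j}[(x-z)\times F\text{-type term}]$, so the induction does close. With these cosmetic fixes your proof is complete.
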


\subsection{The interior result}

\begin{theorem}\label{main local fin}
There exists $\lambda>1$, such that, if we set $B:=B(e,1)$ and $B_\lambda:=B(e,\lambda)$,  
then there exists a geometric constant
$C>0$ such that
\begin{itemize}
\item[i)] if $2\le  h \le { 2n+1}$, $h\neq n+1$, then a $d_c$-exact  form  $\omega\in L^Q(B_\lambda, E_0^h)$
admits a primitive in $\phi\in \mc C(B, E_0^{h-1})$ such that
\begin{equation}\label{feb 19 eq:3 fin}
\|\phi\|_{\mc C(\overline B, E_0^{h-1})} \le C \|\omega\|_{L^Q(B_\lambda, E_0^h)};
\end{equation}
\item[ii)] a $d_c$-exact form  $\omega\in L^{Q/2}(B_\lambda, E_0^{n+1})$
admits a primitive in $\hat\phi\in \mc C(B, E_0^{n})$ such that
\begin{equation}\label{feb 19 eq:4 fin}
\|\phi\|_{\mc C(\overline B, E_0^{h-1})} \le C \|\omega\|_{L^{Q/2}(B_\lambda, E_0^{n+1})}.
\end{equation}
\end{itemize}
\end{theorem}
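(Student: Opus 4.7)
The overall plan is a cut-off reduction to the global Theorem \ref{main global}. Given $\omega$ on $B_\lambda$, I will first produce a local primitive $\psi$ of $\omega$ on some intermediate ball $B_{\lambda'}$ with enough regularity, then multiply by a smooth cut-off $\chi\in\mc D(\he n)$ with $\chi\equiv 1$ on $B$ and $\supp\chi\subset B_{\lambda'}$, obtaining a compactly supported form $\eta:=\chi\psi$ on all of $\he n$. By construction $d_c\eta$ is $d_c$-exact on $\he n$, and on $B$, where $\chi\equiv 1$ and all horizontal derivatives of $\chi$ vanish, the Leibniz commutator $[d_c,\chi]\psi$ vanishes, so $d_c\eta=\omega$. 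Thus, once I verify that $d_c\eta$ lies in the critical $L^p$-space demanded by Theorem \ref{main global} with norm controlled by that of $\omega$, the global continuous-primitive theorem will supply $\phi\in\mc C_0(\he n,E_0^{h-1})$ with $d_c\phi=d_c\eta$; restricting $\phi$ to $\overline B$ and using $d_c\phi=\omega$ on $B$ yields the local estimates \eqref{feb 19 eq:3 fin}--\eqref{feb 19 eq:4 fin}.

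For case (i), where $h\neq n+1$ (so $h-1\neq n$), I will obtain $\psi\in L^\infty(B_{\lambda'},E_0^{h-1})$ from the interior $L^Q$-$L^\infty$ Poincar\'e inequality of Theorem \ref{poincare infty}(i) proved in \cite{BFP5}, satisfying $\|\psi\|_{L^\infty(B_{\lambda'})}\leq C\|\omega\|_{L^Q(B_\lambda)}$. Lemma \ref{leibniz}(i) then gives
\[
d_c\eta = \chi\omega+P_0^{h-1}(W\chi)\psi,
\]
which is supported in $B_{\lambda'}$ and lies in $L^Q(\he n,E_0^h)$ with norm bounded by $C\|\omega\|_{L^Q(B_\lambda)}$ (H\"older absorbs the $L^\infty$-bound on $\psi$ using the compactness of $\supp\chi$). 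Theorem \ref{main global}(i) then provides the required continuous primitive, finishing case (i).

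For case (ii), where $h=n+1$ and thus $h-1=n$, the Leibniz formula (Lemma \ref{leibniz}(ii)) makes $[d_c,\chi]\psi$ the first-order expression $P_1^n(W\chi)\psi+P_0^n(W^2\chi)\psi$. To force $d_c\eta\in L^{Q/2}(\he n,E_0^{n+1})$, I therefore need $\psi$ and $W\psi$ locally in $L^Q$. I will build such a $\psi$ by combining the approximated homotopy $\omega=d_cT\omega+S\omega$ on $B_{\lambda'}$ from \cite{BFP2}, in which $T:L^{Q/2}(B_\lambda,E_0^{n+1})\to W^{2,Q/2}(B_{\lambda'},E_0^n)$ is bounded and $S$ is smoothing, with the homotopy operator $K$ of Lemma \ref{homotopy 1}: since $\omega$ and $d_cT\omega$ are $d_c$-exact, so is $S\omega$, and its smoothness gives $\alpha:=K(S\omega)$ smooth on a slightly smaller ball $B''$ with $d_c\alpha=S\omega$. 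Setting $\psi:=T\omega+\alpha$ yields $d_c\psi=\omega$ on $B''$ together with the bound $\|\psi\|_{W^{2,Q/2}(B'')}\leq C\|\omega\|_{L^{Q/2}(B_\lambda)}$. The Folland-Stein Sobolev embedding $W^{1,Q/2}\hookrightarrow L^Q$ (Theorem \ref{embedding}) then places $\psi$ and $|W\psi|$ in $L^Q_{\mathrm{loc}}$. Picking $\chi$ with $\supp\chi\subset B''$,
\[
d_c\eta=\chi\omega+P_1^n(W\chi)\psi+P_0^n(W^2\chi)\psi
\]
is compactly supported and in $L^{Q/2}(\he n,E_0^{n+1})$ with $\|d_c\eta\|_{L^{Q/2}(\he n)}\leq C\|\omega\|_{L^{Q/2}(B_\lambda)}$, and Theorem \ref{main global}(ii) closes the argument.

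The main technical obstacle is case (ii): the naive attempt to reuse the $L^{Q/2}$-$L^\infty$ Poincar\'e inequality of Theorem \ref{poincare infty}(ii) fails because a mere $L^\infty$-bound on $\psi$ cannot control $P_1^n(W\chi)\psi$, which genuinely involves $W\psi$. Bootstrapping regularity via the parametrix $T$ combined with the smooth correction $K(S\omega)$ is what allows the second-order Leibniz commutator to be absorbed in $\|d_c\eta\|_{L^{Q/2}}$; once this extra regularity is in hand, the cut-off reduction runs exactly as in case (i), with $L^Q$ replaced by $L^{Q/2}$ and Theorem \ref{main global}(i) replaced by Theorem \ref{main global}(ii).
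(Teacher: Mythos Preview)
Your proposal is correct and follows essentially the same cut-off--plus--global-theorem strategy as the paper: in case (ii) you and the paper both take $\psi=\Phi=(KS+T)\omega$, while in case (i) you invoke the interior $L^Q$--$L^\infty$ Poincar\'e inequality of Theorem~\ref{poincare infty} whereas the paper uses the cheaper interior $L^Q$--$L^Q$ inequality from \cite{BFP2}, which already suffices since $P_0^{h-1}(W\chi)$ is order zero. One point to tighten: in case (ii) only a $W^{1,Q/2}$ bound on $\psi$ is needed (not $W^{2,Q/2}$), and obtaining it for the $KS\omega$ piece requires more than smoothness of $S\omega$---the paper runs the chain of estimates through the explicit structure $K=\Pi_{E_0}\Pi_E K_{\mathrm{Euc}}\Pi_E$ together with Theorem~\ref{chapeau} and the quantitative smoothing $S:L^{Q/2}\to W^{4,Q/2}$, which you should cite rather than just Lemma~\ref{homotopy 1}.
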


\begin{proof}We suppose
first $h\neq n+1$. As above, we take $\lambda >\lambda' >1$ such that
the interior
{Poincar\'e} inequality proved in \cite{BFP2} (Theorem 5.19-i) with $p=q=Q$)  holds for the pair
$B'$ and $B_\lambda$. That is,
if $\omega\in
L^Q(B_\lambda, E_0^h)$ is a closed form, 
then there exists $\tilde \phi\in L^Q(B', E_0^{h-1})$ such that $d_c\tilde\phi=\omega$
in $B'$ and 
\begin{equation}\label{feb 22 eq:1 new}
\| \tilde\phi\|_{L^Q(B', E_0^{h-1})}\| \le C \|\omega\|_{L^Q(B_\lambda, E_0^h)}.
\end{equation}
Let now $\zeta\in \mc D(B')$, and set $\omega':=d_c(\zeta \tilde\phi)$ continued
by zero outside of $B'$, $\zeta\equiv 1$ in $B$.
Keeping in mind \eqref{feb 22 eq:1 new}, by Lemma \ref{leibniz}
\begin{equation}\begin{split}\label{feb 22 eq:2}
\| \omega' & \|_{L^Q(\he n, E_0^h)} 
= \| \omega'  \|_{L^Q(B', E_0^h)}  \le C_\zeta \big(\| \tilde\phi \|_{L^Q(B', E_0^{h-1})} +
\|d_c \tilde\phi \|_{L^Q(B', E_0^h)}\big)
\\&
\le C_\zeta \|\omega \|_{L^Q(B_\lambda, E_0^h)}.
\end{split}\end{equation}

Since $\omega'$ is closed,  Theorem \ref{main global}
yields the existence of a continuous primitive $\phi$ of $\omega'$,
 $\phi\in \mc C_0(\he n, E_0^{h-1})$ such that
\begin{equation}\label{feb 22 eq:3}
\|\phi\|_{\mc C_0(\he n, E_0^{h-1})} \le C \|\omega' \|_{L^Q(\he n, E_0^h)}
%= C \|\omega' \|_{L^Q(B', E_0^h)}.
\end{equation}
On the other hand, in $B$
$$
d_c\phi = \omega' = d_c(\zeta \tilde\phi) = d_c\tilde\phi = \omega.
$$
Moreover, by \eqref{feb 22 eq:3} and \eqref{feb 22 eq:2},
\begin{equation*}\begin{split}
\|\phi & \|_{\mc C(\overline B, E_0^{h-1})} \le \|\phi\|_{\mc C_0(\he n, E_0^{h-1})}
%\\&
\le C \|\omega' \|_{L^Q(\he n, E_0^h)} \le C_\zeta \|\omega \|_{L^Q(B_\lambda, E_0^h)}.
\end{split}\end{equation*}
This proves Theorem \ref{main local} when $h\neq n+1$.

\medskip

Now we deal with the case $h=n+1$.

We use the homotopy formula 
 \eqref{9 nov}. In particular, $S\omega$ is closed in $B'$, so that we can
apply Lemma \ref{homotopy 1} to $\omega$. Set now
$$
\Phi:= (KS + T) \omega \qquad\mbox{in $B'$.}
$$
Trivially, by \eqref{9 nov} and Lemma \ref{homotopy 1}, $d_c\Phi = d_cKS\omega + d_cT\omega = S\omega +
d_cT\omega = \omega$ in $B'$. With the notations of \eqref{balls inclusion}, take $\lambda >\lambda'>\lambda'' >1$ so that 
$c_0\sqrt{\lambda''}< \lambda'$. Then
\begin{equation}\label{befana}\begin{split}
B'' &:= B(e,\lambda'') \Subset B_E:=B_{\mathrm{Euc}}(0, c_0\sqrt{\lambda''}) 
\\&\Subset
B_{\mathrm{Euc}}':=B_{\mathrm{Euc}}(0, 2c_0\sqrt{\lambda''})
\Subset B' \Subset B_\lambda.
\end{split}\end{equation}

Let us prove that
\begin{equation}\label{speriamo}
\| \Phi \|_{W^{1,Q/2}(B'',E_0^n)}  \le C   \|\omega\|_{L^{Q/2}(B_\lambda,E_0^{n+1})} \,.
\end{equation}

Indeed, we have
\begin{equation*}\begin{split}
 \| & \Phi \|_{W^{1,Q/2}(B'',E_0^n)} 
  \le
\|  KS\omega \|_{W^{1,Q/2}(B'',E_0^n)} + \|  T\omega \|_{W^{2,Q/2}(B'',E_0^n)}
\end{split}\end{equation*}
By the continuity of the operator $T$ we have that 
\begin{equation}\label{oggi}
	\|  T\omega \|_{W^{2,Q/2}(B'',E_0^n)}\le  \|\omega\|_{L^{Q/2}(B'', E_0^{n+1})}\,.
\end{equation}
Let us consider now the term $\|  KS\omega \|_{W^{2,Q/2}(B'',E_0^n)}$ and remember that, in $B''$, 
 by Theorem \ref{folland stein varia}, v), 
$W^{1,Q/2}_{\mathrm{Euc}} (B'') \subset W^{1,Q/2}(B'')  $.
By \cite{adams}, Theorem 4.12, keeping in mind that $\Pi_E$ is an operator of order 0 on $(n+1)$-forms and it is a differential operator of order 1 on $n$-forms, we have
%\begin{equation}\label{oggi 2}\begin{split}
%\| & KS\omega \|_{W^{2,Q/2}(B'',E_0^n)} \le C \|KS\omega\|_{W^{1,Q/2}_{\mathrm{Euc}}(B'',E_0^{n})} 
%\\&
%= C \|(\Pi_{E_0}\circ \Pi_E \circ K_{\mathrm{Euc}}   \circ \Pi_E) S\omega\|_{W^{1,Q/2}_{\mathrm{Euc}}(B'',E_0^{n})}
%\\&
%\le C \|(\Pi_E \circ K_{\mathrm{Euc}}   \circ \Pi_E) S\omega\|_{W^{1,Q/2}_{\mathrm{Euc}}(B'', {\bigwedge}\vphantom{!}^n )}
%\\&
%\le C \|( K_{\mathrm{Euc}}   \circ \Pi_E) S\omega\|_{W^{2,Q/2}_{\mathrm{Euc}}(B'', {\bigwedge}\vphantom{!}^{n})}
%\\&
%\le C \| \Pi_E S\omega\|_{W^{2,q}_{\mathrm{Euc}}(B', {\bigwedge}\vphantom{!}^{n+1})} \qquad\mbox{(by Theorem \ref{chapeau})}
%\\&
%\le C \|S\omega\|_{W^{2,Q/2}_{\mathrm{Euc}}(B',E_0^{n+1})} \le C   \|\omega\|_{L^{Q/2}(B_\lambda,E_0^{n+1})} \,,
%\end{split}\end{equation}
\begin{equation}\label{oggi 2}\begin{split}
\|  KS & \omega \|_{W^{1,Q/2}(B'',E_0^n)} 
\le C \|  KS\omega\|_{W^{1,Q/2}_{\mathrm{Euc}}(B'',E_0^{n})} 
\\&
= 
C \|(\Pi_{E_0}\circ \Pi_E \circ K_{\mathrm{Euc}}   \circ \Pi_E) S\omega\|_{W^{1,Q/2}_{\mathrm{Euc}}(B'',E_0^{n})}
%\\&
%\le C \|(\Pi_{E_0}\circ \Pi_E \circ K_{\mathrm{Euc}}   \circ \Pi_E) S\omega\|_{W^{1,Q/2}(B'',E_0^{n})}
\\&
\le C \|(\Pi_E \circ K_{\mathrm{Euc}}   \circ \Pi_E) S\omega\|_{W^{1,Q/2}_{\mathrm{Euc}}(B'', {\bigwedge}\vphantom{!}^{n})}
\\&
\le C \|( K_{\mathrm{Euc}}   \circ \Pi_E) S\omega\|_{W^{2,Q/2}_{\mathrm{Euc}}(B'', {\bigwedge}\vphantom{!}^{n})}
\\&
\le C \|( K_{\mathrm{Euc}}   \circ \Pi_E) S\omega\|_{W^{2,Q/2}_{\mathrm{Euc}}(B_E, {\bigwedge}\vphantom{!}^{n})}
\\&
\le C \| \Pi_E S\omega\|_{W^{2,Q/2}_{\mathrm{Euc}}(B_E', {\bigwedge}\vphantom{!}^{n+1})} \qquad\mbox{(by Theorem \ref{chapeau})}
\\&
\le C \| \Pi_E S\omega\|_{W^{2,Q/2}_{\mathrm{Euc}}(B', {\bigwedge}\vphantom{!}^{n+1})}
%\\&
%\le C \| \Pi_E S\omega\|_{W^{2,Q/2}_{\mathrm{Euc}}(B_E, {\bigwedge}\vphantom{!}^{n+1})} 
%\\&
%= C \| \Pi_E\Pi_{E_0}\Pi_E S\omega\|_{W^{2,Q/2}_{\mathrm{Euc}}(B', {\bigwedge}\vphantom{!}^{n+1})} 
%\\&
%\le C \| \Pi_{E_0}\Pi_E S\omega\|_{W^{2,Q/2}_{\mathrm{Euc}}(B', {\bigwedge}\vphantom{!}^{n+1})} 
%\\&
%\le C \| \Pi_E S\omega\|_{W^{2,Q/2}_{\mathrm{Euc}}(B',E_0^{n+1})} 
\\&
\le C \| S\omega\|_{W^{2,Q/2}_{\mathrm{Euc}}(B', E_0^{n+1})}
\\&
 \le C \|S\omega\|_{W^{4,Q/2}(B',E_0^{n+1})}  \qquad\mbox{(by Theorem \ref{folland stein varia}-vi))}
\\&
  \le C   \|\omega\|_{L^{Q/2}(B_\lambda,E_0^{n+1})} 
\qquad\mbox{(by \eqref{befana}),} 
\end{split}\end{equation}
where the last inequality uses the fact that $S$ is a smoothing operator.
Therefore, we get \eqref{speriamo}.

Let now $\zeta\in \mc D(B'')$, $\zeta\equiv 1$ in $B$, and set $\omega'':=d_c(\zeta \Phi)$ continued
by zero outside of $B''$.

Keeping in mind \eqref{speriamo} and by Leibniz formula stated in Lemma \ref{leibniz},
\begin{equation}\label{feb 22 eq:2 bis}\begin{split}
\| \omega'' & \|_{L^{Q/2}(\he n, E_0^{n+1})} 
= \| \omega''  \|_{L^{Q/2}(B'', E_0^{n+1})} \\& \le C_\zeta \big(\| \Phi \|_{W^{1,Q/2}(B'', E_0^{n})} +
\|d_c \Phi \|_{L^{Q/2}(B'', E_0^{n+1})}\big)
%\\&
%=
%C_\zeta \big(\|\tilde \phi \|_{L^\infty(B', E_0^{n})} +
%\|\omega \|_{L^{Q/2}(B_\lambda, E_0^{n+1})}\big)
\\&
\le C_\zeta \|\omega \|_{L^Q(B_\lambda, E_0^h)}.
\end{split}\end{equation}

Since $\omega''$ is closed,  Theorem \ref{main global}
yields the existence of a continuous primitive $\hat\phi$ of $\omega''$,
 $\hat\phi\in \mc C_0(\he n, E_0^{n})$ such that
\begin{equation}\label{feb 22 eq:3 bis}
\|\hat\phi\|_{\mc C_0(\he n, E_0^{n})} \le C \|\omega'' \|_{L^{Q/2}(\he n, E_0^{n+1})}
%= C \|\omega' \|_{L^Q(B', E_0^h)}.
\end{equation}
On the other hand, in $B$
$$
d_c\hat\phi = \omega'' = d_c(\zeta \Phi) = d_c\Phi = \omega.
$$
Moreover, by \eqref{feb 22 eq:3 bis} and \eqref{feb 22 eq:2 bis},
\begin{equation*}\begin{split}
\|\hat\phi & \|_{\mc C(\overline B, E_0^{n})} \le \|\hat\phi\|_{\mc C_0(\he n, E_0^{n})}
\le C \|\omega'' \|_{L^{Q/2}(\he n, E_0^{n+1})} \le C \|\omega \|_{L^{Q/2}(B_\lambda, E_0^{n+1})}.
\end{split}\end{equation*}
and we are done.

\end{proof}

\section{Compact Riemannian and contact subRiemannian manifolds}
\label{compact}

\subsection{A smoothing homotopy}

\begin{proposition}\label{sh}
There exists $\lambda>1$ such that, if we denote by $B:=B(e,1)$ and $B_\lambda:=B(e,\lambda)$ two concentric Heisenberg balls,   
there exist operators $S:\mc C^{\infty}(B_\lambda,E_0^\bullet)\to \mc C^{\infty}(B,E_0^{\bullet})$ and $T:\mc C^{\infty}(B_\lambda,E_0^\bullet)\to \mc C^{\infty}(B,E_0^{\bullet-1})$ such that $S+d_cT+Td_c$ equals the restriction operator from $B_\lambda$ to $B$. Furthermore, $T$ extends to a bounded operator on $\mc C^{0}$, i.e. from $\mc C(B_\lambda,E_0^\bullet)$ to $\mc C(B,E_0^{\bullet-1})$ and on $\mc C^1$, i.e., from $\mc C^1(B_\lambda,E_0^\bullet)$ to $\mc C^1(B,E_0^{\bullet-1})$. $S$ extends to a bounded operator from $\mc C(B_\lambda,E_0^\bullet)$ to $\mc C^\ell(B,E_0^{\bullet-1})$ for every $\ell\in\mathbb{N}$. % In degree $n+1$, $T$ gains one derivative, i.e. it is bounded from $C(B_\lambda,E_0^\bullet)$ to $C^1(B_\lambda,E_0^{\cdot-1})$.
\end{proposition}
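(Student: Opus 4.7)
The plan is to recycle the smoothing homotopy $T,S$ constructed in Theorem 5.19 of \cite{BFP2}. On $\omega\in\mc C^\infty(B_\lambda,E_0^\bullet)$ these operators are finite sums of terms of the form $\zeta_1\cdot\bigl((\zeta_2\omega)\ast \mc K\bigr)$, where $\zeta_1,\zeta_2$ are smooth cutoffs adapted to $B\Subset B'\Subset B_\lambda$ and $\mc K$ is a suitable combination of $d_c$, $d_c^*$ applied to the left-invariant kernel of $\Delta_{\mathbb H,h}^{-1}$ from Theorem \ref{global solution}. The identity $\mathrm{id}=d_c T+Td_c+S$ on $B$ is the Cartan-type decomposition of $\omega=\Delta_{\mathbb H,h}^{-1}\Delta_{\mathbb H,h}\omega$ (and its four-term analogue in degrees $n,n+1$) rearranged via the commutation rules of Lemma \ref{his-her} and Leibniz' formula of Lemma \ref{leibniz} for the cutoffs; the closed version \eqref{9 nov} used in Section \ref{local} is simply its restriction to $d_c\omega=0$. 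What is new is thus the extension of $T$ and $S$ to the continuous categories.

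For the $\mc C^0$ bound on $T$, I would use that each kernel $\mc K$ appearing in $T$ is of strictly positive type $\mu\ge 1$, so by Proposition \ref{kernel} (iii) it lies in $L^1_{\mathrm{loc}}(\he n)$ with singularity only at the origin. For $\omega\in\mc C(B_\lambda,E_0^\bullet)$ the integral $(\zeta_2\omega)\ast \mc K(p)=\int \zeta_2(q)\omega(q)\mc K(q^{-1}p)\,dq$ is then absolutely convergent, uniformly bounded by $\|\omega\|_\infty\cdot\|\mc K\|_{L^1(K)}$ on the compact set $K\supset (\supp\zeta_1)\cdot(\supp\zeta_2)^{-1}$, and continuous in $p$ by $L^1$-continuity of left translations. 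Multiplication by $\zeta_1$ then yields the bound $T\colon\mc C(B_\lambda,E_0^\bullet)\to\mc C(B,E_0^{\bullet-1})$.

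For the $\mc C^1$ bound on $T$, let $W$ be a left-invariant horizontal vector field. Applying $W$ to $\zeta_1\cdot((\zeta_2\omega)\ast\mc K)$ produces, by the Leibniz rule and left-invariance, a term $(W\zeta_1)\cdot((\zeta_2\omega)\ast\mc K)$ plus $\zeta_1\cdot((\zeta_2\omega)\ast W\mc K)$. The former is of the same form as $T\omega$ and is controlled by the previous step. For the latter, I would integrate by parts against Haar measure to transfer $W$ onto $\zeta_2\omega$:
\begin{equation*}
(\zeta_2\omega)\ast W\mc K=-\bigl(W(\zeta_2\omega)\bigr)\ast\mc K+(\zeta_2\omega)\ast(R-W)\mc K,
\end{equation*}
where $R$ is the right-invariant field agreeing with $W$ at the origin. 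In the step-2 nilpotent algebra of $\he n$, the field $R-W$ equals a linear polynomial in coordinates times $\partial_t=[X_i,Y_i]$, so $(R-W)\mc K$ is again a kernel of positive type (one integration by parts in $\partial_t$ moves a horizontal derivative back onto $\zeta_2\omega$, yielding at worst a type-$1$ kernel multiplied by a polynomial bounded on $\supp\zeta_2$). The vertical derivative $\partial_t$ of $T\omega$ is handled similarly by writing $\partial_t=[X_i,Y_i]$. Putting everything together, $\|WT\omega\|_{\mc C^0}$ is bounded by $\|\omega\|_{\mc C^1}$.

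Finally, for $S\colon\mc C(B_\lambda)\to\mc C^\ell(B)$: inspection of the construction of $S$ in \cite{BFP2} shows that every summand of $S\omega$ has the form $\zeta_1\cdot\bigl((\eta\omega)\ast\mc K\bigr)$ with $\supp\zeta_1\subset B$ and $\supp\eta\subset B_\lambda\setminus B'$, the cutoff $\eta$ arising from Leibniz-formula commutators during the derivation of the homotopy. Hence for $p\in\supp\zeta_1$ and $q\in\supp\eta$ the argument $q^{-1}p$ stays in a fixed compact subset of $\he n\setminus\{e\}$, where $\mc K$ is $\mc C^\infty$. One can then differentiate under the integral sign in $p$ arbitrarily many times, giving the claimed bound on $S$ for every $\ell\in\mathbb N$. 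The main obstacle will be the bookkeeping of the $\mc C^1$ estimate for $T$ in the critical degrees $h\in\{n,n+1\}$, where the order of Rumin's Laplacian jumps to four and $d_c$ or $d_c^*$ becomes second order; but the nilpotent step-$2$ structure of $\he n$ continues to keep all integration-by-parts commutators confined to kernels of manageable positive type, so the argument closes uniformly in the degree.
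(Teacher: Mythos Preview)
Your approach is workable but takes a markedly different route from the paper's, and the $\mc C^1$ part deserves a closer look.

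The paper does not analyze the kernels directly at all. It simply fixes $q>Q$ and invokes Theorem~5.14 of \cite{BFP2}, which already gives $T:L^q\to W^{1,q}$; the $\mc C^0$ bound then follows from the trivial inclusion $\mc C(B_\lambda)\subset L^q(B_\lambda)$ together with the Folland--Stein Sobolev embedding $W^{1,q}(B)\subset\mc C(B)$ (Theorem~\ref{donne}). For the $\mc C^1$ bound the paper just says that the same argument in \cite{BFP2} shows $T:W^{1,q}\to W^{2,q}$, and then $\mc C^1\subset W^{1,q}$ and $W^{2,q}\subset\mc C^1$ finish the job. This bypasses every integration-by-parts computation and is uniform over all degrees, including $h=n,n+1$.

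Your direct kernel argument is fine for the $\mc C^0$ bound on $T$ and for the smoothing of $S$ (separated supports). For the $\mc C^1$ bound, however, your scheme is thinner than it looks. After one integration by parts you face $(\zeta_2\omega)\ast(R-W)\mc K$ with $(R-W)\mc K=y_iT\mc K$; for a type-$1$ kernel $\mc K$ this has homogeneity $-Q$, i.e.\ it behaves like a Calder\'on--Zygmund kernel, which your ``$L^1_{\mathrm{loc}}$ plus $\|\omega\|_\infty$'' estimate cannot handle. Your proposed fix (use $T=[X_j,Y_j]$, push $T$ across by centrality, then move one horizontal factor back via $(Wf)\ast g=f\ast R g$) does close, but only because the polynomial weight $y_i$ gains exactly one degree of homogeneity at each stage: one ends with $(W'(\zeta_2\omega))\ast(\text{polynomial}\cdot W''\mc K)$ where the polynomial factor restores positive type. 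This is correct, but it is genuine bookkeeping rather than a one-line remark, and in degrees $n,n+1$ (where $T$ involves higher-order pieces of $\Delta_{\he{},h}^{-1}$) the number of such commutator terms multiplies. None of this is needed if you go through $L^q$ with $q>Q$: type-$0$ kernels are bounded on $L^q$ (Theorem~\ref{hls folland}~iii)), so the singular-integral obstruction simply does not appear.
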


\begin{proof}
Pick $q>Q$. Then Theorem 5.14 of \cite{BFP2} provides such a homotopy, where $T$ is bounded from $L^q$ to $W^{1,q}$. Since $\mc C(B,E_0^\bullet)\subset L^q(B,E_0^\bullet)$ and Sobolev's embedding implies that $W^{1,q}(B,E_0^{\bullet-1})\subset C(B,E_0^{\bullet-1})$, $T$ is bounded from $\mc C^0$ to $\mc C^0$. A slight extension of the proof of Theorem 5.14 of \cite{BFP2} shows that $T$ is bounded from $W^{1,q}$ to $W^{2,q}$, hence from $\mc C^1$ to $\mc C^1$. 
\end{proof}

\subsection{A $C^0$ Poincar\'e inequality}

\begin{proposition}\label{C0}
There exists $\lambda>1$ such that, if we denote by $B:=B(e,1)$ and $B_\lambda:=B(e,\lambda)$ two concentric Heisenberg balls,   
there exists an operator $P:\mc C^{\infty}(B_\lambda,E_0^\bullet)\to \mc C^{\infty}(B_\lambda,E_0^{\cdot-1})$ such that $d_cP+Pd_c$ equals the restriction operator from $B_\lambda$ to $B$. Furthermore, $P$ extends to a bounded operator on $\mc C^{0}$, i.e. from $\mc C(B_\lambda,E_0^\bullet)$ to $\mc C(B_\lambda,E_0^{\cdot-1})$.
\end{proposition}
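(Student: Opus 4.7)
The starting point is Proposition~\ref{sh}: for $\lambda > 1$ chosen sufficiently large, there exist operators $S$ and $T$ on smooth forms with
\begin{equation*}
S + d_c T + T d_c = \mathrm{Res}_{B_\lambda \to B},
\end{equation*}
where $T$ extends to a bounded operator $\mc C^0(B_\lambda,E_0^\bullet) \to \mc C^0(B,E_0^{\bullet-1})$ and $S$ extends to a bounded operator $\mc C^0(B_\lambda,E_0^\bullet) \to \mc C^\ell(B,E_0^\bullet)$ for every $\ell \in \mathbb N$. The operator $T$ is already of the required type; the task is to rewrite the smoothing remainder $S$ as $d_c U + U d_c$ with $U$ bounded on $\mc C^0$, so that $P = T + U$ (extended by a cutoff to $B_\lambda$) answers the question.

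First I would record that $S$ commutes with $d_c$. Indeed, applying $d_c$ to the identity above and using $d_c^2 = 0$ gives $d_c S = \mathrm{Res}\,d_c - d_c T d_c$, while composing the identity on the right with $d_c$ yields $S d_c = \mathrm{Res}\,d_c - d_c T d_c$, hence $d_c S = S d_c$.

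Next, on a slightly shrunken ball $B \Subset B' \Subset B_\lambda$, I would appeal to the smooth Poincar\'e lemma for Rumin's complex. Concretely, build $K = \Pi_{E_0}\circ \Pi_E\circ K_{\mathrm{Euc}}\circ \Pi_E$ as in \eqref{may 4 eq:2}; using the chain-map identities $\Pi_E \Pi_{E_0}\Pi_E = \Pi_E$, $\Pi_{E_0}\Pi_E \Pi_{E_0} = \Pi_{E_0}$, together with Cartan's identity $dK_{\mathrm{Euc}} + K_{\mathrm{Euc}}d = \mathrm{Id}$, one obtains the full homotopy identity $d_c K + K d_c = \mathrm{Id}$ on smooth forms, at the cost of shrinking from $B'$ to $B$ via Theorem~\ref{chapeau}. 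The only price is a loss of a fixed finite number $m$ of derivatives in the $\mc C^\ell$ scale. Setting $U := KS$, the arbitrarily strong smoothing of $S$ absorbs this loss, so $U : \mc C^0(B_\lambda,E_0^\bullet) \to \mc C^0(B,E_0^{\bullet-1})$ is bounded, and
\begin{equation*}
d_c U + U d_c = d_c K S + K S d_c = d_c K S + K d_c S = (d_c K + K d_c)\, S = S.
\end{equation*}

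Finally, I would set $P := \chi\,(T + U)$, where $\chi \in \mc D(B_\lambda)$ is a cutoff identically $1$ on $B$. The multiplication places $P$ into $\mc C^\infty(B_\lambda, E_0^{\bullet-1})$ (extended by zero outside $\mathrm{supp}\,\chi$) and preserves $\mc C^0$-boundedness. Since $\chi \equiv 1$ on $B$, the commutator $[d_c,\chi]$ vanishes there, and the desired homotopy identity $(d_c P + P d_c)|_B = \mathrm{Res}_{B_\lambda\to B}$ follows from $d_c(T+U) + (T+U)d_c = S + d_c T + Td_c = \mathrm{Res}_{B_\lambda \to B}$. The main obstacle is verifying the full homotopy identity $d_c K + K d_c = \mathrm{Id}$ on smooth Rumin forms (and controlling the loss of derivatives it entails), since Lemma~\ref{homotopy 1} as stated only inverts $d_c$ on closed forms; the trick that makes the proof work is precisely that $S$ is smoothing to \emph{every} order, so any finite loss in $K$ is negligible.
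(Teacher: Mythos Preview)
Your construction $P=T+KS$ is exactly the operator the paper uses (it cites Theorem~5.19 of \cite{BFP2} for this formula), and your verification of the homotopy identity via $d_cS=Sd_c$ and $d_cK+Kd_c=\mathrm{Id}$ is correct --- the latter follows from $d\Pi_E=\Pi_E d$, $\Pi_E\Pi_{E_0}\Pi_E=\Pi_E$, $\Pi_{E_0}\Pi_E\Pi_{E_0}=\Pi_{E_0}$ exactly as you outline. The only difference is in the $\mc C^0$-boundedness argument: the paper observes that for any $q>Q$ the operator $P=KS+T$ is bounded $L^q\to W^{1,q}$ (this is the content of Theorem~5.19 of \cite{BFP2}) and then uses the embeddings $\mc C^0\subset L^q$ and $W^{1,q}\subset\mc C^0$, whereas you absorb the finite derivative loss of $K$ into the infinite smoothing of $S$; both routes are valid.
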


\begin{proof}
The proof of Theorem 5.19 of \cite{BFP2} provides such an operator. It is of the form $P=KS+T$ where $S,T$ is the homotopy occurring in Proposition \ref{sh} and $K$ is cooked up from the Euclidean homotopy of Iwaniec-Lutoborsky and Rumin's homotopy between de Rham and Rumin complexes. For every $q>Q$, $P$ is bounded from $L^q(B_\lambda,E_0^\bullet)$ to $W^{1,q}(B_\lambda,E_0^{\bullet-1})$, hence on $\mc C^0$.
\end{proof}

\subsection{Leray's acyclic covering theorem}

Leray's acyclic covering theorem relates the de Rham cohomology of a manifold and the simplicial cohomology of a simplicial complex, the nerve of an acyclic covering. Acyclic means that  de Rham cohomology of all intersections of pieces of the covering vanishes. The nerve is the complex with one vertex per piece, and with a simplex through a set of vertices each time the corresponding pieces have a nonempty common intersection.

An application of Leray's acyclic covering theorem to Poincar\'e inequalities on bounded geometry Riemannian manifolds is given in \cite{PR}. A persistent version of Leray's acyclic covering theorem is described in \cite{Pcup}. It provides the same conclusion under weaker requirements on the covering. The analytical ingredients are
\begin{itemize}
  \item A smoothing homotopy, as in Proposition \ref{sh}, to circumvent the obstacle arising from the Leibniz formula.
  \item Poincar\'e inequalities provided by linear operators, with losses on domains allowed, as in Proposition \ref{C0}.   \end{itemize}

The context here is slightly different from \cite{PR} and \cite{Pcup}. In these papers, manifolds are noncompact and are assumed to have vanishing cohomology. Here, manifolds are compact and the vanishing assumption is replaced with the following fact: there exists a radius $r_0$ such that balls of radius $<r_0$ have vanishing cohomology. 

There is a second difference. To prove Theorem \ref{main compact} in degree $h$, one uses the vector of exponents $(\infty,\ldots,\infty,Q)$, meaning that the $L^Q$ norm is used on $h$-forms (replace $Q$ with $Q/2$ if $h=n+1$), and the $L^\infty$ norm on spaces of continuous $j$-forms is used when $j<h$. To apply the machinery of \cite{Pcup}, one would need a linear operator $P$ that solves $d_c$ on a pair of concentric Heisenberg balls $(B,B_\lambda)$, with estimates: $P$ is bounded from $L^{Q}(B_\lambda,E_0^h)$ to $\mc C(B,E_0^{h-1})$. As observed by Bourgain and Brezis, such a linear operator cannot exist. Theorem \ref{main local} merely provides a nonlinear map $\omega\mapsto\phi$. It turns out that this does not seriously affect the argument.

Morally (this is the point of view adopted in \cite{PR}), Leray's method goes as follows. Pick a suitable finite covering $\{U_i\}$ by small enough balls, so that each of them comes with a nearly isometric contactomorphism to an open set in Heisenberg group. Given a $d_c$-closed $L^Q$ $h$-form $\omega$ on $M$, pick a continuous primitive on each $U_i$, using Theorem \ref{main local}. This yields a 0-cochain $\omega^1$ of the covering, with values in spaces of continuous $(h-1)$ forms on pieces $U_i$. Let $\delta\omega^1$ denote the 1-cochain which on a pair $(i,i')$ is equal to the restriction to $U_i\cap U_{i'}$ of $\omega^1_i -\omega^1_{i'}$. It is $d_c$-closed. Pick a primitive on each $U_i\cap U_{i'}$, using the operator $P$ of Proposition \ref{C0}. 
This yields a 1-cochain $\omega^2$ of the covering with values in spaces of continuous $(h-2)$-forms on intersections $U_i\cap U_{i'}$. And so on. After $h$ steps, one gets a $h$-cochain with values in spaces of constant functions, i.e. a combinatorial $h$-cochain. 

Conversely, given a $j$-cocycle $\psi$ with values in spaces of continuous $j$-forms on $h+1$-uple intersections $U_{i}\cap U_{i'}\cap\cdots$, using a smooth partition of unity to extend forms to intersections of less pieces and adding them up, one produces a $(j-1)$-cochain $\epsilon\psi$ such that $\delta\epsilon\psi=\psi$. In order to be able to apply $d_c$ to $\epsilon\phi$, one applies to it the smoothing operator $S$ of Proposition \ref{sh}. Given a combinatorial $h$-cocycle $\kappa\in \ell^{Q}K^h$, i.e. a skew-symmetric real valued function on 
$h$-simplices of the nerve which satisfies $\delta\kappa=0$ and is $Q$-summable, one defines $\kappa^1=d_c S\epsilon\kappa$, which is a $(h-1)$-cocycle with values in spaces of smooth $1$-forms on $h$-uple intersections, and which is again $Q$-summable. And so on. After $h$ steps, one gets a $0$-cocycle with values in spaces of smooth $h$-forms, i.e. a smooth globally defined $L^Q$ $h$-form. 

The machinery shows that both constructions are inverses of each other in cohomology. Indeed, the nonlinear step occurs only once, at the very beginning, so its nonlinear character does not affect the needed identities. The constructions define a biLipschitz bijection in cohomology. Since one of them is linear, one gets a linear and bounded bijection between de Rham cohomology 
$$
(\mathrm{ker}(d_c)\cap L^Q(M,E_0^{h}))/d_c(C(M,E_0^{h-1})\cap d_c^{-1}(L^Q))
$$
and combinatorial cohomology
$$
(\mathrm{ker}(\delta)\cap \ell^{Q}K^h)/\delta(\ell^{\infty}K^{h-1}\cap \delta^{-1}\ell^{Q}).
$$
It also shows that exact cohomology
$$
(\mathrm{im}(d_c)\cap L^Q(M,E_0^{h}))/d_c(C(M,E_0^{h-1})\cap d_c^{-1}(L^Q))
$$
is mapped to exact cohomology
$$
(\mathrm{im}(\delta)\cap \ell^Q K^h)/\delta(\ell^{\infty}K^{h-1})\cap \delta^{-1}\ell^{Q}),
$$
which is $0$. Indeed, the nerve is finite, all cochains belong to $\ell^Q$ or $\ell^{\infty}$, so exact cohomology vanishes. It follows that exact cohomology vanishes too on the de Rham/Rumin side: every exact $L^Q$ $h$-form is the $d_c$ of a continuous $(h-1)$-form, with estimates. This proves Theorem \ref{main compact}.

\section*{Acknowledgements}

A.B. and B.F. are supported by the University of Bologna, funds for selected research topics.
A.B. is supported by PRIN 2022 Italy (ref. 2022F4F2LH), and by GNAMPA of INdAM (Istituto Nazionale di Alta Matematica ``F. Severi''), Italy.

P.P. is supported by Agence Nationale de la Recherche, ANR-22-CE40-0004 GOFR.

\bibliographystyle{amsplain}

\bibliography{BFP6_submitted}

\providecommand{\bysame}{\leavevmode\hbox to3em{\hrulefill}\thinspace}
\providecommand{\MR}{\relax\ifhmode\unskip\space\fi MR }
% \MRhref is called by the amsart/book/proc definition of \MR.
\providecommand{\MRhref}[2]{%
  \href{http://www.ams.org/mathscinet-getitem?mr=#1}{#2}
}
\providecommand{\href}[2]{#2}
\begin{thebibliography}{10}

\bibitem{adams}
Robert~A. Adams and John J.~F. Fournier, \emph{Sobolev spaces}, second ed.,
  Pure and Applied Mathematics (Amsterdam), vol. 140, Elsevier/Academic Press,
  Amsterdam, 2003. \MR{2424078}

\bibitem{ambrosio_fusco_pallara}
Luigi Ambrosio, Nicola Fusco, and Diego Pallara, \emph{Functions of bounded
  variation and free discontinuity problems}, Oxford Mathematical Monographs,
  The Clarendon Press, Oxford University Press, New York, 2000. \MR{1857292}

\bibitem{AT}
Luigi Ambrosio and Paolo Tilli, \emph{Topics on analysis in metric spaces},
  Oxford Lecture Series in Mathematics and its Applications, vol.~25, Oxford
  University Press, Oxford, 2004. \MR{2039660}

\bibitem{BM2}
A.~Baldi and F.~Montefalcone, \emph{The distributional divergence of horizontal
  vector fields vanishing at infinity on {C}arnot groups}, Matematiche
  (Catania) \textbf{78} (2023), no.~1, 239--271. \MR{4622357}

\bibitem{baldi_pini}
Annalisa Baldi, \emph{Sobolev-{P}oincar\'{e} inequalities for differential
  forms and currents in {$\Bbb R^n$}}, Bruno {P}ini {M}athematical {A}nalysis
  {S}eminar 2019, Bruno Pini Math. Anal. Semin., vol.~10, Univ. Bologna, Alma
  Mater Stud., Bologna, 2019, pp.~14--27. \MR{4075926}

\bibitem{BBF}
Annalisa Baldi, Marilena Barnabei, and Bruno Franchi, \emph{A recursive basis
  for primitive forms in symplectic spaces and applications to {H}eisenberg
  groups}, Acta Math. Sin. (Engl. Ser.) \textbf{32} (2016), no.~3, 265--285.
  \MR{3456421}

\bibitem{BFP1}
Annalisa Baldi, Bruno Franchi, and Pierre Pansu, \emph{Gagliardo-{N}irenberg
  inequalities for differential forms in {H}eisenberg groups}, Math. Ann.
  \textbf{365} (2016), no.~3-4, 1633--1667. \MR{3521101}

\bibitem{BFP3}
\bysame, \emph{{$L^1$}-{P}oincar\'{e} inequalities for differential forms on
  {E}uclidean spaces and {H}eisenberg groups}, Adv. Math. \textbf{366} (2020),
  107084. \MR{4070308}

\bibitem{BFP2}
\bysame, \emph{Poincar\'{e} and {S}obolev inequalities for differential forms
  in {H}eisenberg groups and contact manifolds}, J. Inst. Math. Jussieu
  \textbf{21} (2022), no.~3, 869--920. \MR{4404128}

\bibitem{BFP5}
\bysame, \emph{Cohomology of annuli, duality and {$L^\infty$}-differential
  forms on {H}eisenberg groups}, J. Funct. Anal. \textbf{285} (2023), no.~2,
  Paper No. 109944, 48. \MR{4571869}

\bibitem{BFTT}
Annalisa Baldi, Bruno Franchi, Nicoletta Tchou, and Maria~Carla Tesi,
  \emph{Compensated compactness for differential forms in {C}arnot groups and
  applications}, Adv. Math. \textbf{223} (2010), no.~5, 1555--1607.

\bibitem{BFT1}
Annalisa Baldi, Bruno Franchi, and Maria~Carla Tesi, \emph{Fundamental solution
  and sharp ${L}^p$ estimates for {L}aplace operators in the contact complex of
  {H}eisenberg groups}, Ricerche Mat. \textbf{55} (2006), no.~1, 119--144.

\bibitem{BFT2}
\bysame, \emph{Compensated compactness in the contact complex of {H}eisenberg
  groups}, Indiana Univ. Math. J. \textbf{57} (2008), 133--186.

\bibitem{BLU}
Andrea Bonfiglioli, Ermanno Lanconelli, and Francesco Uguzzoni,
  \emph{Stratified {L}ie groups and potential theory for their
  sub-{L}aplacians}, Springer Monographs in Mathematics, Springer, Berlin,
  2007. \MR{MR2363343}

\bibitem{BB2003}
Jean Bourgain and Ha{\"{\i}}m Brezis, \emph{On the equation {${\rm div}\, Y=f$}
  and application to control of phases}, J. Amer. Math. Soc. \textbf{16}
  (2003), no.~2, 393--426 (electronic). \MR{1949165 (2004d:35032)}

\bibitem{brezis}
Haim Brezis, \emph{Functional analysis, {S}obolev spaces and partial
  differential equations}, Universitext, Springer, New York, 2011. \MR{2759829}

\bibitem{CvS2009}
Sagun Chanillo and Jean Van~Schaftingen, \emph{Subelliptic {B}ourgain-{B}rezis
  estimates on groups}, Math. Res. Lett. \textbf{16} (2009), no.~3, 487--501.
  \MR{2511628 (2010f:35042)}

\bibitem{DePaw-Torres}
Thierry De~Pauw and Monica Torres, \emph{On the distributional divergence of
  vector fields vanishing at infinity}, Proc. Roy. Soc. Edinburgh Sect. A
  \textbf{141} (2011), no.~1, 65--76. \MR{2773439}

\bibitem{DL}
J.~Deny and J.~L. Lions, \emph{Les espaces du type de {B}eppo {L}evi}, Ann.
  Inst. Fourier (Grenoble) \textbf{21} (1945), 305--370. \MR{74787}

\bibitem{dieudonne}
Jean Dieudonn{\'e}, \emph{\'{E}l\'ements d'analyse. {T}ome {III}: {C}hapitres
  {XVI} et {XVII}}, Cahiers Scientifiques, Fasc. XXXIII, Gauthier-Villars
  \'Editeur, Paris, 1970. \MR{MR0270377 (42 \#5266)}

\bibitem{federer}
Herbert Federer, \emph{Geometric measure theory}, Die Grundlehren der
  mathematischen Wissenschaften, Band 153, Springer-Verlag New York Inc., New
  York, 1969. \MR{MR0257325 (41 \#1976)}

\bibitem{folland}
Gerald~B. Folland, \emph{Subelliptic estimates and function spaces on nilpotent
  {L}ie groups}, Ark. Mat. \textbf{13} (1975), no.~2, 161--207. \MR{MR0494315
  (58 \#13215)}

\bibitem{folland_stein}
Gerald~B. Folland and Elias~M. Stein, \emph{Hardy spaces on homogeneous
  groups}, Mathematical Notes, vol.~28, Princeton University Press, Princeton,
  N.J., 1982. \MR{MR657581 (84h:43027)}

\bibitem{FSSC_houston}
Bruno Franchi, Raul Serapioni, and Francesco Serra~Cassano,
  \emph{Meyers-{S}errin type theorems and relaxation of variational integrals
  depending on vector fields}, Houston J. Math. \textbf{22} (1996), no.~4,
  859--890. \MR{1437714}

\bibitem{FSSC_advances}
\bysame, \emph{Regular submanifolds, graphs and area formula in {H}eisenberg
  groups}, Adv. Math. \textbf{211} (2007), no.~1, 152--203. \MR{MR2313532
  (2008h:49030)}

\bibitem{GHL}
Sylvestre Gallot, Dominique Hulin, and Jacques Lafontaine, \emph{Riemannian
  geometry}, third ed., Universitext, Springer-Verlag, Berlin, 2004.
  \MR{2088027}

\bibitem{GromovCC}
Mikhael Gromov, \emph{Carnot-{C}arath\'eodory spaces seen from within},
  Sub-Riemannian geometry, Progr. Math., vol. 144, Birkh\"auser, Basel, 1996,
  pp.~79--323. \MR{MR1421823 (2000f:53034)}

\bibitem{IL}
Tadeusz Iwaniec and Adam Lutoborski, \emph{Integral estimates for null
  {L}agrangians}, Arch. Rational Mech. Anal. \textbf{125} (1993), no.~1,
  25--79. \MR{MR1241286 (95c:58054)}

\bibitem{jerison}
David Jerison, \emph{The {P}oincar\'e inequality for vector fields satisfying
  {H}\"ormander's condition}, Duke Math. J. \textbf{53} (1986), no.~2,
  503--523. \MR{MR850547 (87i:35027)}

\bibitem{Lu94}
Guozhen Lu, \emph{The sharp {P}oincar\'{e} inequality for free vector fields:
  an endpoint result}, Rev. Mat. Iberoamericana \textbf{10} (1994), no.~2,
  453--466. \MR{1286482}

\bibitem{Moonens-Picon}
Laurent Moonens and Tiago~H. Picon, \emph{Solving the equation {${\rm
  div}\,v=F$} in {$\mathcal C_0(\Bbb R^n,\Bbb R^n)$}}, Proc. Edinb. Math. Soc.
  (2) \textbf{61} (2018), no.~4, 1055--1061. \MR{3866518}

\bibitem{Pcup}
Pierre Pansu, \emph{Cup-products in $l^{q,p}$-cohomology: discretization and
  quasi-isometry invariance}, arXiv 1702.04984, 2017.

\bibitem{PR}
Pierre Pansu and Michel Rumin, \emph{On the {$\ell^{q,p}$} cohomology of
  {C}arnot groups}, Ann. H. Lebesgue \textbf{1} (2018), 267--295. \MR{3963292}

\bibitem{PT}
Pierre Pansu and Francesca Tripaldi, \emph{Averages and the $\ell ^{q,1}$
  cohomology of {H}eisenberg groups}, Ann. Math. Blaise Pascal \textbf{26}
  (2019), no.~1, 81--100 (en).

\bibitem{rumin_jdg}
Michel Rumin, \emph{Formes diff\'erentielles sur les vari\'et\'es de contact},
  J. Differential Geom. \textbf{39} (1994), no.~2, 281--330. \MR{MR1267892
  (95g:58221)}

\bibitem{Stein}
Elias~M. Stein, \emph{Harmonic analysis: real-variable methods, orthogonality,
  and oscillatory integrals}, Princeton Mathematical Series, vol.~43, Princeton
  University Press, Princeton, NJ, 1993, With the assistance of Timothy S.
  Murphy, Monographs in Harmonic Analysis, III. \MR{MR1232192 (95c:42002)}

\bibitem{treves}
Fran{\c{c}}ois Tr{\`e}ves, \emph{Topological vector spaces, distributions and
  kernels}, Academic Press, New York, 1967. \MR{MR0225131 (37 \#726)}

\bibitem{trudinger}
Neil~S. Trudinger, \emph{On imbeddings into {O}rlicz spaces and some
  applications}, J. Math. Mech. \textbf{17} (1967), 473--483. \MR{0216286}

\bibitem{VarSalCou}
Nicholas~Th. Varopoulos, Laurent Saloff-Coste, and Thierry Coulhon,
  \emph{Analysis and geometry on groups}, Cambridge Tracts in Mathematics, vol.
  100, Cambridge University Press, Cambridge, 1992. \MR{MR1218884 (95f:43008)}

\bibitem{vittone}
Davide Vittone, \emph{Lipschitz graphs and currents in {H}eisenberg groups},
  Forum Math. Sigma \textbf{10} (2022), Paper No. e6, 104. \MR{4377000}

\bibitem{wheeden}
Richard~L. Wheeden and Antoni Zygmund, \emph{Measure and integral}, second ed.,
  Pure and Applied Mathematics (Boca Raton), CRC Press, Boca Raton, FL, 2015,
  An introduction to real analysis. \MR{3381284}

\end{thebibliography}

\bigskip
\tiny{
\noindent
Annalisa Baldi and Bruno Franchi 
\par\noindent
Universit\`a di Bologna, Dipartimento
di Matematica\par\noindent Piazza di
Porta S.~Donato 5, 40126 Bologna, Italy.
\par\noindent
e-mail:
annalisa.baldi2@unibo.it, 
bruno.franchi@unibo.it.
}

\medskip

\tiny{
\noindent
Pierre Pansu 
\par\noindent  Universit\'e Paris-Saclay, CNRS, Laboratoire de math\'ematiques d'Orsay
\par\noindent  91405, Orsay, France.
\par\noindent 
e-mail: pierre.pansu@universite-paris-saclay.fr
}

\end{document}